\newcommand\ZZ{\mathbb{Z}}
\newcommand\CC{\mathbb{C}}
\newcommand\PP{\mathbb{P}}
\newcommand\DD{\mathcal{D}}
\newcommand\OO{\mathcal{O}}
\newcommand\Q{\mathcal{Q}}
\newcommand\FF{\mathcal{F}}
\newcommand\LL{\mathcal{L}}
\newcommand\Fol{\text{Fol}}
\newcommand\Aut{\text{Aut}}
\newcommand\Hom{\text{Hom}}
\DeclareMathOperator{\sing}{Sing}
\DeclareMathOperator{\exc}{Exc}
\DeclareMathOperator{\supp}{Supp}
\DeclareMathOperator{\vol}{\mathrm{vol}}
\newcommand{\setfoliation}[1]{\mathfrak F^{\rm adj, #1}_{N, v}}
\newcommand{\setgenpair}[1]{\mathfrak G^{#1}_{N,v}}
\newcommand{\setgpairsdcc}[1]{\mathfrak G^{\rm g#1}_{v, I, N}}
\newcommand{\loccit}{{\it loc.~cit.}}
\newtheorem{theorem}{Theorem}[section]
\newtheorem{proposition}[theorem]{Proposition}
\newtheorem{lemma}[theorem]{Lemma}
\newtheorem{corollary}[theorem]{Corollary}
\newtheorem{claim}[theorem]{Claim}
\newtheorem{conjecture}[theorem]{Conjecture}
\newtheorem*{question}{Question}
\theoremstyle{definition}
\newtheorem{construction}[theorem]{Construction}
\newtheorem{definition}[theorem]{Definition}
\newtheorem{remark}[theorem]{Remark}
\newtheorem{ej}[theorem]{Example}
\newtheoremstyle{named}{}{}{\itshape}{}{\bfseries}{.}{.5em}{\thmnote{#3 }#1}
\theoremstyle{named}
\title{On moduli of foliated surfaces}
\author{Calum Spicer, Roberto Svaldi and Sebasti\'{a}n Velazquez}
\subjclass[2020]{37F75, 14J10, 14E30}
\date{}
\address{Department of Mathematics, King's College London, Strand,
London WC2R 2LS, UK}
\email{calum.spicer@kcl.ac.uk}
\address{Dipartimento di Matematica ``F. Enriques'', Universit\`a degli Studi di Milano, Via Saldini 50, 20133 Milano (MI), Italy}
\email{roberto.svaldi@unimi.it}
\address{Department of Mathematics, King's College London, Strand,
London WC2R 2LS, UK}
\email{sebastian.velazquez@kcl.ac.uk}
\begin{document}

\begin{abstract}
    We present a definition of stable family of foliations and show that the corresponding moduli functor for foliated surfaces is representable by a Deligne-Mumford stack.
\end{abstract}

\maketitle

\tableofcontents

\section{Introduction}
Recently, there has been considerable progress in the study of the birational geometry of foliations coming from the  development of the Minimal Model Program (MMP) for foliations.  The main aim of this paper is to develop this study further by applying techniques from the MMP for foliations to the problem of constructing moduli spaces of foliations.  Our main result is the existence of a moduli space for foliated surfaces.  This can be viewed as the final step in the classification of surface foliations begun in \cite{Brunella00, McQuillan08, Mendes00}.

The study of moduli of foliations goes back to at least Jouanolou, \cite{Jouanolou79}, who---motivated by earlier contributions of Jacobi and Darboux---constructed a parameter space for codimension one foliations on projective space.  The moduli space of foliations on $\mathbb P^n$ has been an intensively studied object, and most of the work on the subject has focused on understanding the various components of the moduli space, see for instance 
\cite{cerveau1996irreducible}. 

However, for a complete theory of moduli of foliated varieties it is crucial to allow both the foliation and the underlying variety to vary, see for instance \cite{pourcin1987deformations, gomez1988transverse}.  This is the point of view we adopt here.
Our main goal is to begin the study of moduli of foliations in analogy with the moduli of stable curves, or KSB stable varieties more generally.

\medskip

In this paper we propose a definition of stable family of foliations and prove two 
main results on the existence of moduli spaces for the moduli functor of stable foliations on surfaces.

\begin{theorem}[= Theorem \ref{thm:rep2}]
\label{thm_main_2}
    The moduli functor of stable foliated surfaces, denoted $\mathcal M^{2, 1}$
    is represented by a Deligne-Mumford stack locally of finite type over $\mathbb C$ and which satisfies the valuative criterion for properness with respect to DVRs which are finite type over $\mathbb C$.
\end{theorem}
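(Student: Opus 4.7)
The plan is to imitate the KSB(A) construction of the moduli stack of stable pairs, replacing the log pair $(X,\Delta)$ by the foliated pair $(X,\mathcal{F},\Delta)$ and using the MMP for surface foliations as the birational engine. I would verify four standard ingredients: descent (so that $\mathcal{M}^{2,1}$ is a stack), boundedness combined with local finite presentation (giving the structure of a stack locally of finite type over $\mathbb{C}$), finiteness of automorphism groups (giving the Deligne--Mumford property), and the valuative criterion for properness with respect to finite-type DVRs.

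First, for descent: a family of stable foliated surfaces $(X/S,\mathcal{F},\Delta)$ is given by a flat projective morphism together with a saturated involutive subsheaf $\mathcal{F}\subseteq T_{X/S}$ with fibrewise adjoint-type foliated singularities and ample relative $K_{\mathcal{F}}+\Delta$. All this data descends under fpqc descent, and each stability condition is fibrewise, so $\mathcal{M}^{2,1}$ is a stack fibered in groupoids. Next, for boundedness, after fixing numerical invariants (foliated volume, and DCC coefficients for $\Delta$), I would combine the MMP for surface foliations with an effective very-ampleness statement for a uniform multiple of $K_{\mathcal{F}}+\Delta$, thereby embedding all such pairs into a single projective space and realizing them as points of a finite-type Hilbert-scheme-type parameter space. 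Inside that parameter space, the locus of saturated involutive subsheaves with the prescribed singularities is locally closed, using openness of adjoint foliated singularities in families (an inversion-of-adjunction-type statement for foliated surfaces), and the ampleness condition is locally closed as well, which yields a finite-type atlas. Finiteness of automorphism groups then follows because $\mathrm{Aut}(X,\mathcal{F},\Delta)$ preserves the ample class $K_{\mathcal{F}}+\Delta$ and so sits inside a finite group.

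The hard part will be the valuative criterion. Given a stable foliated surface $(X_K,\mathcal{F}_K,\Delta_K)$ over the fraction field $K$ of a DVR $R$ of finite type over $\mathbb{C}$, one must produce a stable extension over $\mathrm{Spec}(R)$ and show it is unique. My approach would be: after a finite base change of $R$, apply foliated semistable reduction to obtain some proper foliated pair $(\mathcal{X},\widetilde{\mathcal{F}},\mathcal{D})$ over $\mathrm{Spec}(R)$ extending the generic fibre, and then run the relative MMP for this foliated pair over $\mathrm{Spec}(R)$, contracting $(K_{\widetilde{\mathcal{F}}}+\mathcal{D})$-negative and trivial strata until we reach the relative canonical model. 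The existence of this relative canonical model for foliated surface pairs---viewed as a threefold foliated by the given leaves together with the fibres of $\mathcal{X}\to\mathrm{Spec}(R)$---is the crucial input, and uniqueness of canonical models will give separatedness. Combined with descent, local finite presentation, and finiteness of automorphisms, this yields a Deligne--Mumford stack locally of finite type over $\mathbb{C}$ satisfying the valuative criterion with respect to finite-type DVRs.
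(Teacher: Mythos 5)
Your high-level plan resembles the paper's, but two of its central points are wrong or insufficient, and both stem from the same source: abundance fails for surface foliations, so $K_{\mathcal F}+\Delta$ is \emph{not} an ample polarisation on a stable foliated surface.

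First, the polarisation. You propose to build the atlas by embedding all stable foliated surfaces via a uniform multiple of $K_{\mathcal F}+\Delta$, and you phrase the stability condition as ``ample relative $K_{\mathcal F}+\Delta$.'' This is false in general: there are stable foliated surfaces on which $K_{\mathcal F}$ is big and nef but not semi-ample (the Hilbert modular example), and stable families in which $K_{\mathcal F}$ is not even big on some component of the special fibre. The correct polarisation must be a perturbation in the direction of the variety's canonical class, i.e.\ $K_{\mathcal F}+\epsilon(K_X+\Delta)$ for small $\epsilon>0$ (equivalently $(2d+1)NK_{\mathcal F}+K_X+\Delta$ after fixing the Cartier index $N$). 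This is not a notational choice: it changes the boundedness argument (one needs a boundedness result for generalised pairs with nef moduli part $K_{\mathcal F}$, not a boundedness statement polarised by $K_{\mathcal F}$), and it changes the properness argument. In the valuative criterion you want a ``relative canonical model,'' but after running the $K_{\mathcal F}$-MMP on the semi-stable threefold the foliated canonical class need not be semi-ample; you must instead perturb by $tK_X$ (viewed as a generalised pair with nef moduli part) and invoke semi-ampleness for generalised pairs, then take the resulting contraction. Without the perturbation, the contraction you want to take simply may not exist.

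Second, finiteness of automorphisms. You argue that $\operatorname{Aut}(X,\mathcal F,\Delta)$ is finite because it preserves an ample class. That argument is incorrect: preserving an ample class only gives a linear algebraic group, which can be positive-dimensional (e.g.\ $\mathbb P^n$ with $\mathcal O(1)$). The paper obtains finiteness as a formal consequence of the separatedness theorem (the uniqueness of extensions of isomorphisms over a punctured curve), and that in turn uses the negativity lemma together with the foliated inversion-of-adjunction results to compare two extensions of the same stable family. You need an actual argument here, not a one-line appeal to ampleness.

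There are also two smaller omissions. You never address why $\mathcal M^{2,1}$ is only \emph{locally} of finite type: the paper gets this by fixing the Cartier index $N$ of $K_{\mathcal F}$ and the adjoint volume $v$ and taking the disjoint union over all $(N,v)$; the index must be fixed because it cannot be bounded by the Hilbert function alone (Example~\ref{example_index}). And you should note that a family of foliations can degenerate to a non-saturated integrable distribution, so the parameter space must be a space of quotient husks rather than a Quot scheme, and the functor parametrises integrable distributions rather than foliations.
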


$\mathcal M^{2, 1}$ will have infinitely many components and will not be of finite type, however by fixing two natural invariants we are able to show that the corresponding sub-functor is represented by a Deligne-Mumford stack of finite type.

\begin{theorem}[= Theorem \ref{thm:rep}]
\label{thm_main}
    Fix a positive integer $N$
    and a positive real number $v$.
The moduli functor of stable foliated surfaces of index $=N$ and adjoint volume $=v$,  denoted $\mathcal{M}^{2, 1}_{N, v}$,  is represented by a Deligne-Mumford stack of finite type over $\mathbb C$.
\end{theorem}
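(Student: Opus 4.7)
The plan is to follow the classical blueprint for constructing KSBA-stable moduli as a Deligne-Mumford stack, adapted to the foliated setting. First I would establish boundedness: the family of stable foliated surfaces $(X, \Delta, \FF)$ with fixed index $N$ and fixed adjoint volume $v$ forms a bounded family. The index condition guarantees that $N$ times the adjoint divisor $K_\FF + \Delta$ (or the appropriate stable-foliation analogue) is Cartier, while the volume condition fixes $\vol(N(K_\FF + \Delta))$. Combined with the foliated MMP machinery and Riemann–Roch type estimates on canonical models of foliated surfaces, there should exist a uniform $M = M(N,v)$ such that $|M(K_\FF + \Delta)|$ defines an embedding of $X$ into a fixed projective space $\PP^R$, with Hilbert polynomial depending only on $(N,v)$.

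Once boundedness is in hand, the next step is to realize the functor as an open/locally closed subfunctor of a concrete parameter space. I would parametrize the embedded pairs $(X \subset \PP^R, \Delta)$ by a component of a relative Hilbert scheme $H$, and then parametrize the foliation $\FF$ itself as a saturated rank-one subsheaf of $T_X$ via a relative $\Quot$ scheme $Q \to H$. Stability conditions — that $(X, \Delta)$ is a foliated log canonical pair in the prescribed class, that the singularities of $\FF$ are of the allowed type, that $K_\FF + \Delta$ has Cartier index exactly $N$, that the adjoint volume equals $v$, and that the embedding is by a fixed multiple of the adjoint — carve out a locally closed subscheme $U \subset Q$.

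The main obstacle will be verifying that these stability conditions really do define a locally closed condition in families. Two subtleties must be handled: (i) openness of the relevant foliated singularities (foliated log canonical or terminal) in flat families, which should follow from the inversion of adjunction and deformation invariance results for foliated singularities developed earlier in the paper; and (ii) the constancy of the Cartier index $N$ and of the adjoint volume, which is a constructible condition by semicontinuity of cohomology applied to the adjoint linear systems, and becomes locally closed after intersecting with the flat locus. A subtle point is ensuring that the foliation extends in a suitable stable way over limits — here the valuative criterion established in Theorem \ref{thm_main_2} can be invoked to guarantee separatedness of the resulting stack.

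Finally, once $U$ has been constructed and shown to carry a universal family, the moduli stack is the quotient $\mathcal M^{2,1}_{N,v} = [U / \mathrm{PGL}_{R+1}]$. The Deligne-Mumford property reduces to showing that $\Aut(X, \Delta, \FF)$ is finite for every stable foliated surface; this follows because the adjoint divisor is big and nef (in fact ample on the canonical model) and preserved by $\Aut$, so the automorphism group acts faithfully on a projective embedding and fixes a polarization, hence is finite. Finite-type-ness is automatic from the finite-type-ness of $H$ and $Q$, concluding the proof.
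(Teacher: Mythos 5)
There is a genuine gap in the middle of your construction. You propose to parametrize the foliation ``as a saturated rank-one subsheaf of $T_X$ via a relative $\Quot$ scheme,'' but this choice fails for two closely related reasons, both of which the paper is specifically designed to circumvent. First, the moduli functor $\mathcal M^{2,1}_{N,v}$ parametrizes families of \emph{integrable distributions}, not only saturated foliations: as Example \ref{easiest_example} illustrates, a flat family of (saturated) foliations can degenerate to an integrable distribution whose Pfaff field has cokernel supported in codimension one, i.e., the limit is not saturated. Restricting to saturated subsheaves of $T_X$ therefore would not yield a functor with the required limits. Second, and more fundamentally, even the quotient (the image of the Pfaff field $\Omega^1_{X/T}\to L$, or dually the normal sheaf $N_{\mathcal F}$) is not in general flat over the base, so a $\Quot$ scheme does not see these families at all. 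This is precisely the content of Example \ref{ex:QHusk is necessary}, and the reason the paper works instead with the space of \emph{quotient husks} $\mathrm{QHusk}_r(\Omega^1_{X_{M_q}/M_q})$: a quotient husk records the flat divisorial sheaf $L$ and the Pfaff field $\Omega^1_{X/T}\to L$, without demanding flatness of the intermediate image. Your $\Quot$-based approach would therefore miss exactly the degenerate fibres that are needed for the stack to carry the right points, and the locally closed subscheme you carve out would not represent $\mathcal M^{2,1}_{N,v}$.

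A few smaller discrepancies: the paper's polarisation is a mixture $m_0 K_{\mathcal F}+n_0(K_{X/S}+\Delta)$ rather than a multiple of $K_{\mathcal F}+\Delta$ alone, which matters because $K_{\mathcal F}$ by itself is only nef (Proposition \ref{prop_threshold}) and the adjoint perturbation is what provides ampleness. The local closedness of the Kollár-type conditions is handled via Theorem \ref{t_prop3.31} on hull pull-backs of mostly flat divisorial sheaves, not by semicontinuity of cohomology, and fixing the index $N$ enters essentially here (cf. Example \ref{example_index}): it is not merely a constructibility bookkeeping device. The remaining outline — boundedness, embedding into a fixed $\PP^{k_0}$, quotient by $\mathrm{PGL}_{k_0+1}$, and finiteness of automorphisms giving the Deligne--Mumford property — matches the paper's structure, so once you replace the $\Quot$ scheme of saturated subsheaves with the QHusk space of Pfaff fields on integrable distributions, the rest of your argument can be made to go through as the paper does it.
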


\medskip

\subsection{Stable families of foliations} 
The first step in developing such a moduli theory for foliations is to choose the correct definition of stable family of foliations.  
To help fix ideas consider the following prototype example of a family of stable foliations:

Let $f\colon X \to \mathbb A^1$ be a smooth proper fibration over $\mathbb A^1$, let $\mathcal F$ be a smooth foliation on $X$ such that $T_{\mathcal F} \subset T_{X/\mathbb A^1}$ and such that $K_{\mathcal F}$ is $f$-ample.  Then $f\colon (X, \mathcal F) \to \mathbb A^1$ gives an example of a stable family of foliations.  
However, it will usually not be possible to extend $f$ to a family of smooth foliations with ample canonical class over $\mathbb P^1$. Typically, the foliation and underlying variety will both become singular in the limit.
It follows that smooth foliations with ample canonical class are too restrictive a class of foliations for achieving a proper moduli space.
The challenge in determining the correct class of stable foliations is therefore to 
enlarge the class of stable foliations ``just enough'' to ensure that the moduli functor is proper, without losing the representability of the functor by a separated space of finite type.

The study of moduli of curves (and higher dimensional varieties) indicates  that the definition of stable family of foliations should consist of four parts:
\begin{itemize}
    \item A naturally defined polarisation, e.g., the canonical bundle is ample;
    \item A condition on the singularities, e.g. nodal;
    \item A restriction on the kind of allowable families, e.g., flatness; and 
    \item A restriction on the numerical invariants of the objects considered, e.g. genus.
\end{itemize}
One should choose this definition in such a way which guarantees that the corresponding moduli functor is represented by a proper algebraic space.

\subsection{Definition of the moduli functor}
\label{s_intro_def_mod_functor}
We now define (roughly) the moduli functors that we will focus on in this paper.  We refer to Section \ref{s_precise_definition} for a precise definition.

We say that a flat family of integrable distributions\footnote[1]{See Section \ref{note_on_terminolgy} for an explanation of the use of the term ``distribution'' instead of ``foliation''.}
$f\colon (X, \mathcal F) \rightarrow T$
is stable 
provided 
\begin{enumerate}
    \item 
\label{intro_cond_Qfactorial}
    $K_{X/T}$ and $K_{\mathcal F}$
are $\mathbb Q$-Cartier; 

\item
\label{intro_cond_kollar_condition}
for all $j, k \in \mathbb Z$, $\mathcal O_X(jK_{\mathcal F}+kK_{X/T})$ commutes with base change;

\item 
\label{intro_cond_slc}
for all closed points $t \in T$, $X_t$ and $\mathcal F_t$
are semi-log canonical; and 
\item  
\label{intro_cond_polarization}
there exists $\eta \in \mathbb R_{>0}$ such that $K_{\mathcal F}+\epsilon K_{X/T}$ is $f$-ample for all $0<\epsilon<\eta$.
\end{enumerate}

We refer to Section \ref{s_locally_stable_def} for a precise definition of stable family and to Section \ref{s_singularities} for a precise definition of semi-log canonical, but we remark that the class of log canonical singularities is natural (and unavoidable, see Example \ref{easiest_example}) from the perspective of birational geometry and moduli theory.
We also remark that we will also need to work in the generality of the log setting, i.e., we will need to consider families of the form $(X, \Delta, \mathcal F) \rightarrow T$ where 
$(X, \Delta) \rightarrow T$ is a family of log varieties with $\Delta$
a reduced divisor.

We refer to Section \ref{s_precise_definition} for a precise definition of our moduli functor, but we briefly sketch its definition here.
Fix $d, r, N \in \mathbb Z_{>0}$ and $v \in \mathbb R_{>0}$.
We define $\mathcal M^{d, r}_{N, v}(T)$ to be the set of flat families of
integrable distributions over $T$ (modulo isomorphism), $f\colon (X, \Delta, \mathcal F) \rightarrow T$
such that the following hold:

  \begin{enumerate}[label=(\roman*)]
    \item
    \label{cond_bla}
    $f$ is flat and proper of relative dimension $d$;

\item $f\colon (X, \Delta, \mathcal F) \rightarrow T$ is a stable family of integrable distributions of rank $r$;

    \item $NK_{\mathcal F}$ is Cartier; and 

    \item ${\rm vol}((2d+1)NK_{\mathcal F_t}+K_{X_t}+\Delta_t
    ) = v$ for all $t \in T$.

\end{enumerate}

As examples show, see Section \ref{s_examples}, it is necessary to fix both $N$ and $v$ to ensure that the moduli space is of finite type.  Moreover, it does not suffice to fix $K_{\mathcal F_t}^2$ 
to ensure that the moduli space is of finite type, see \cite[Example 4.1]{passantino2024}.
We denote by $\mathcal M^{d, r}$ the moduli functor parametrising families of foliations only satisfying (i) and (ii) above.  Clearly $\mathcal M^{d, r}$ is not of finite type, but as we will see it is locally of finite type.

\subsection{Deformations of singularities}
In order to construct the moduli space it is essential to show that our stability condition is a locally closed condition in any family of foliations.  We therefore need to understand how singularities of foliations vary in families.

Our first result in this direction is that semi-log canonical singularities is an open condition in families.

\begin{theorem}[ = Theorem \ref{thm_lc_deform}]
\label{thm_intro_lc_deform}
        Let $(X, \mathcal F) \rightarrow T$ be a flat
    family of integrable distributions with $\dim (X/T) = 2$ and suppose that $K_{\mathcal F}$
    is $\mathbb Q$-Cartier.

 Suppose that $\mathcal F_t$
    is semi-log canonical for some closed point $t \in T$.  Then there exists a Zariski open subset $t \in U \subset T$ such that for all $s \in U$
    $\mathcal F_{s}$ is semi-log canonical.    
\end{theorem}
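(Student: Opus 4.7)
\emph{Proof plan.} I will argue that the non-slc locus on $T$ is closed; equivalently, the slc locus is open.

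The question is local on $T$, so assume $T = \operatorname{Spec} R$ is the spectrum of a Noetherian local ring with closed point $t$, and $\mathcal F_t$ is slc. The slc condition on $\mathcal F_t$ decomposes into a log-canonicity condition on the normalization $(\overline{X_t}, \overline{\mathcal F_t})$ (with the conductor as boundary) together with a gluing/compatibility condition along the double locus, and the latter is transparently an open condition in $T$ since it is cut out by finitely many equations on a relative scheme of distributions. Hence one reduces to proving: if $\mathcal F_t$ is lc on the normal surface $X_t$, then $\mathcal F_s$ is lc on $X_s$ for all $s$ in a neighborhood of $t$.

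The plan for the lc case is to imitate the strategy used for openness of lc singularities in flat families of varieties, replacing log resolutions by foliated log resolutions. The classification of lc foliated singularities on surfaces (Brunella, McQuillan, Mendes) yields a foliated log resolution $\pi_t \colon Y_t \to X_t$ whose exceptional divisors carry only the bounded list of singularity types permitted in the lc classification (reduced simple singularities and non-dicritical saddle-nodes on the ambient resolved surface, possibly pulled back from cyclic quotients). After shrinking $T$ (and if needed taking a finite cover in the style of Brieskorn-Tyurina), I would spread $\pi_t$ out to a proper birational morphism $\pi \colon \mathcal Y \to X$ over $T$ with special fiber equal to $\pi_t$. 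This relies on three ingredients: (i) $\sing(\mathcal F)$ is cut out by the vanishing of a twisted tangent field / pfaffian and defines a $T$-flat subscheme after shrinking; (ii) blowing up $T$-flat centers commutes with base change; and (iii) the minimal resolution of slc surface singularities spreads out after a finite base change. Iterating Seidenberg's reduction in families then gives the desired simultaneous resolution.

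With the simultaneous resolution in hand, write
\[
K_{\mathcal F_{\mathcal Y}/T} - \pi^* K_{\mathcal F/T} = \sum_i a_i \mathcal E_i,
\]
where the $a_i \in \mathbb Q$ are constant along $T$. The lc condition on $\mathcal F_s$ amounts to the inequalities $a_i \geq -\epsilon(\mathcal E_{i,s})$ for every $i$, together with the local singularities of $\mathcal F_{\mathcal Y}$ on $Y_s$ remaining of the permitted types. The first condition holds on the whole of $T$ by fiber-independence of the $a_i$, and the second is open under deformation of foliation germs. Combined with the reduction at the start, this yields openness of slc.

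\emph{Main obstacle.} The crux is constructing the simultaneous foliated log resolution. Ordinary simultaneous resolution of surface singularities is not automatic and requires a finite base change, and in the foliated setting one must additionally ensure that the blowup centers — the singular points of $\mathcal F$ — behave compatibly with that base change and with each successive blowup. The delicate interplay between resolving the ambient singularities of $X_t$ and the foliated singularities of $\mathcal F_t$, uniformly over $T$, is where the bulk of the technical work sits. An alternative that bypasses explicit resolution would be to prove an inversion-of-adjunction statement for foliations in families and then deduce openness from openness of slc for the total pair $(X, \mathcal F + X_t)$, but this itself is nontrivial and likely requires similar tools.
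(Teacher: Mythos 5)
Your main line of attack — simultaneous foliated log resolution over $T$, followed by a constant-discrepancy argument — has two serious problems, one of which you flag yourself and one of which you do not.

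The one you flag is real but secondary: simultaneous (foliated) resolution does not spread out in families of surface singularities except after a finite base change in the Brieskorn--Tyurina range, and it is not at all clear how to organize a ``foliated Brieskorn--Tyurina'' argument for general slc foliated surface singularities. Still, one might hope to finesse this.

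The deeper problem is in your passage from the $3$-fold discrepancy formula $K_{\mathcal F_{\mathcal Y}/T} - \pi^* K_{\mathcal F/T} = \sum a_i \mathcal E_i$ to conclusions about the fibres $\mathcal F_s$. The exceptional divisors $\mathcal E_i$ live over the $3$-fold $X$; the lc condition on $\mathcal F_s$ concerns discrepancies of divisors over the surface $X_s$. Comparing the two is an adjunction statement along $X_s \hookrightarrow X$. But the fibres $X_s$ are $\mathcal F$-invariant divisors (since $T_{\mathcal F}\subset T_{X/T}$), and adjunction on foliation singularities \emph{fails} for invariant divisors; the paper points precisely to \cite[Example 3.20]{CS23b} and explicitly flags this failure as the chief technical obstacle in proving this theorem. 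So the discrepancy bookkeeping in your plan does not transfer across the fibre inclusion, and this gap is not cosmetic: it is exactly where the naive resolution-based approach breaks.

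The alternative you mention in your final sentence — an inversion-of-adjunction statement for foliations — is in fact the route the paper takes, but the implementation is not ``similar tools.'' It hinges on a structural fact special to rank one: a rank-one foliation is lc at a point if and only if the linear part of its generating vector field (after passing to an index-one cover) is non-nilpotent (Proposition \ref{prop_lc_nonnilp}). This is a pointwise, Zariski-open, resolution-free criterion. The paper then proves inversion of adjunction along an invariant subvariety $V$ (Theorem \ref{thm_inversion_of_adjunction}) by lifting the vector field to the normalisation of $V$ and propagating non-nilpotency of the linear part via Lemma \ref{lem_nonnilp_normal}. That handles ``$\mathcal F_t$ lc $\Rightarrow$ $\mathcal F$ lc near $X_t$.'' For the converse direction (adjunction from the total space to a fibre), the paper instead uses a family-specific computation (Proposition \ref{prop_locally_stable_adjunction} and Lemma \ref{lem_sing_adjunction_families}), which is careful precisely because the general invariant-divisor adjunction fails. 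Combining these two steps, the openness follows immediately after reducing $T$ to a curve and $X$ to its normalisation — no simultaneous resolution is needed. Your proposal would need to be rebuilt around this non-nilpotency criterion to close the gap.
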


A key ingredient in the proof Theorem \ref{thm_intro_lc_deform} is the following result on the inversion of adjunction of log canonical singularities for foliations. 

\begin{theorem}[= Theorem \ref{thm_inversion_of_adjunction_full}]
Let $X$ be a normal variety, let $\mathcal F$ be a rank one foliation on $X$, let $S$ be a prime divisor on $X$ such that $K_{\mathcal F}+\varepsilon(S)S$ is $\mathbb Q$-Cartier.
Let $n\colon T \rightarrow S$ be the normalisation and write (via foliation adjunction) \begin{align*}n^*(K_{\mathcal F}+\varepsilon(S)S) = K_{\mathcal F_T}+B_T.\end{align*} 
If $(\mathcal F_T, B_T)$ is log canonical, then 
$(\mathcal F, \varepsilon(S)S)$ is log canonical in a neighbourhood of $S$.
\end{theorem}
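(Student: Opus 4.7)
The plan is to argue by contradiction using a foliated log resolution together with the compatibility of foliated adjunction with birational pullback. Suppose for contradiction that $(\mathcal F, \varepsilon(S)S)$ is not log canonical in any Zariski neighborhood of $S$. Choose a foliated log resolution $\pi \colon (Y, \mathcal G) \to (X, \mathcal F)$, where $\mathcal G := \pi^{-1}\mathcal F$, so that $\mathcal G$ has at worst canonical foliated singularities and the strict transform $S'$ of $S$ together with $\exc(\pi)$ forms a simple normal crossing divisor. Write
\begin{equation*}
K_{\mathcal G} + \varepsilon(S)S' = \pi^{\ast}\bigl(K_{\mathcal F}+\varepsilon(S)S\bigr) + \sum_i a_i E_i,
\end{equation*}
with $E_i$ the prime $\pi$-exceptional divisors; the failure of log canonicity near $S$ yields some index $i$ with $a_i < -\varepsilon(E_i)$ whose center on $X$ meets $S$.

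Since $S'$ is smooth, $\pi|_{S'}\colon S' \to S$ factors through the normalization as $\pi|_{S'} = n\circ \tau$ for a birational morphism $\tau\colon S' \to T$. Restricting the displayed equation to $S'$, applying foliated adjunction on $Y$ along $S'$, and invoking the hypothesis $n^{\ast}(K_{\mathcal F}+\varepsilon(S)S) = K_{\mathcal F_T} + B_T$, one obtains an identity on $S'$ of the form
\begin{equation*}
K_{\mathcal G_{S'}} + B_{S'} - \sum_i a_i\, E_i|_{S'} = \tau^{\ast}(K_{\mathcal F_T} + B_T).
\end{equation*}
If some bad $E_i$ (one with $a_i < -\varepsilon(E_i)$) meets $S'$, then comparing coefficients on a prime component of $E_i|_{S'}$ produces a divisorial valuation over $T$ whose discrepancy for $(\mathcal F_T, B_T)$ is strictly less than $-\varepsilon$ of that valuation, contradicting the hypothesis that $(\mathcal F_T, B_T)$ is log canonical.

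The main obstacle is ensuring that \emph{some} bad $E_i$ actually intersects $S'$: a priori its center on $X$ might lie in a lower dimensional stratum of $S$ that gets separated from $S'$ upstairs. To handle this I would establish (or invoke) a connectedness statement for the non log canonical locus of the foliated pair analogous to the Kollár--Shokurov connectedness theorem: in a formal neighborhood of $S$, the non log canonical locus of $(\mathcal F,\varepsilon(S)S)$ should be connected along $S$, so its preimage on $Y$ must meet $S'$. For rank one foliations this connectedness should be accessible either from the explicit local classification of log canonical singularities in the rank one case, or by running a suitable foliated MMP on $(Y,\mathcal G)$ over $X$ and tracking the behavior of the offending components near $S'$. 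Overcoming this connectedness step is where the rank-one hypothesis is essential.
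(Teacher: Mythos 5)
Your proposal takes a genuinely different route from the paper, and it has a real gap. The paper does not go through a discrepancy comparison on a log resolution at all. Instead it splits on the coefficient $\varepsilon(S)$: for $\varepsilon(S)=1$ it simply cites \cite[Theorem~3.16]{CS23b}, and for $\varepsilon(S)=0$ it invokes Theorem~\ref{thm_inversion_of_adjunction}, whose proof is a short, resolution-free local argument. There one reduces to the index one cover so that $\mathcal F$ is generated by a vector field $\partial$, uses the rank-one characterization of log canonicity as non-nilpotency of the linear part $\partial_1$ (Proposition~\ref{prop_lc_nonnilp}), and applies Lemma~\ref{lem_nonnilp_normal} to transfer non-nilpotency from the normalization of the invariant subvariety up to the ambient space via the surjection $\mathfrak m/\mathfrak m^2 \twoheadrightarrow \mathfrak n/\mathfrak n^2$. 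This completely sidesteps both of your difficulties.

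The gap in your proposal is the connectedness step. You are right that your argument needs some bad $E_i$ to meet the strict transform $S'$, and the Koll\'ar--Shokurov connectedness theorem that would supply this for pairs relies on Kawamata--Viehweg vanishing, which has no adequate foliated analogue in this generality; no such connectedness statement for the non-lc locus of a foliated pair is established in the paper or in the references it uses, and you offer only a heuristic for why it ``should be accessible.'' Moreover, even granting connectedness, the restriction step is itself suspect precisely in the hardest case $\varepsilon(S)=0$: the paper explicitly warns (in the remark after Conjecture~\ref{conj_inversion_adjunction} and in the introduction) that foliation adjunction along \emph{invariant} divisors fails as a statement about singularities, citing \cite[Example~3.20]{CS23b}. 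So the identity on $S'$ you write down, and the subsequent comparison of coefficients, cannot be used the way you want when $S$ is $\mathcal F$-invariant. Your instinct that ``the explicit local classification of log canonical singularities in the rank one case'' is relevant is correct, but the paper uses it directly, not to prove connectedness: log canonical at a point is equivalent to non-nilpotent linear part, and that equivalence passes through the normalization, which is the whole argument.
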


In fact, we conjecture that these two theorems hold in much greater generality, see Conjectures \ref{conjecture sing} and \ref{conj_inversion_adjunction} below.  Both of these conjectures are known for varieties, but we expect that the study of these conjectures will be of interest for general foliations outside of their role in the study of moduli problems. 

We remark that Theorem \ref{thm_intro_lc_deform} is false if ``semi-log canonical'' is replaced by ``canonical'', see Example \ref{easiest_example}.  Indeed, canonical singularities only impose a very general condition in families and so there is no algebraically representable functor parametrising foliations with canonical singularities.

We also remark that adjunction on singularities of foliations fails when $\epsilon(S) = 0$ (see \cite[Example 3.20]{CS23b}), and represents one key difference between the birational theory of foliations and that of varieties.  This presents several additional technical challenges, especially when proving Theorem \ref{thm_intro_lc_deform}.

\subsection{Versality}
We recall the following result due to Gomez-Mont, \cite[Theorem 3.4]{gomez1988transverse}, on the existence of certain versal deformation spaces of foliations.

\begin{theorem} Let $\FF_0$ be an integrable distribution on a compact connected manifold $X_0$. Then there exist a deformation of $(X,\FF)\to (D,0)$ such that $N_\FF$ is flat over $D$ and is miniversal among deformation with flat normal sheaf, i.e., if $(X',\FF')\to (S,s)$ is another deformation of $(X_0,\FF_0)$ such that $N_{\mathcal F'}$ is flat over $S$, then there exists an analytic neighborhood $V\ni s$ and a morphism $\varphi:(V,s)\to (D,0)$ such that $(X',\FF')\vert_V\simeq \varphi^*(X,\FF)$. Moreover, this yields an isomorphism between $T_0D$ and the tangent space to the corresponding deformation functor.
\end{theorem}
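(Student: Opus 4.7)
The approach is classical Kuranishi-style analytic deformation theory applied to the functor $\mathrm{Def}^{\mathrm{flat}}_{(X_0,\FF_0)}$ parametrising deformations of $(X_0,\FF_0)$ whose normal sheaf stays flat. The four steps are: identify the complex of sheaves governing these deformations, produce a formal miniversal family via Schlessinger, establish analytic convergence via Kuranishi's harmonic method, and finally verify that the flatness of $N_\FF$ cuts out the desired analytic germ.

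First I would write down the relevant tangent--obstruction complex. Starting from the short exact sequence
\[ 0 \to T_{\FF_0} \to T_{X_0} \to N_{\FF_0} \to 0, \]
one builds a two-term complex $L^\bullet = [T_{\FF_0} \to T_{X_0}]$ whose hypercohomology encodes infinitesimal deformations of $\FF_0$ as an integrable subbundle of the tangent bundle of a deformation of $X_0$. First-order deformations are then classified by $\mathbb H^1(X_0, L^\bullet)$, and obstructions to extending a given deformation to higher order live in $\mathbb H^2(X_0, L^\bullet)$. The condition that $N_\FF$ remain flat is precisely the condition that $T_{\FF_0}$ deforms as a subbundle (rather than as an arbitrary coherent subsheaf of the same rank), which is what this complex captures.

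Next, one checks that the Dolbeault resolution of $L^\bullet$ can be promoted to a differential graded Lie algebra whose Maurer--Cartan elements, modulo gauge equivalence, classify deformations of the pair. Either by verifying Schlessinger's conditions (H1)--(H4) directly or by invoking the general DGLA-deformations formalism, one obtains a formal miniversal deformation with base a power series ring of dimension $\dim \mathbb H^1(X_0,L^\bullet)$; compactness of $X_0$ guarantees the relevant hypercohomology groups are finite-dimensional.

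The technical heart of the proof, and the main obstacle, is to upgrade the formal miniversal family to a genuine analytic germ $(D,0)$. For this I would follow Kuranishi's original method: fix a Hermitian metric on $X_0$, pick harmonic representatives of $\mathbb H^1(X_0,L^\bullet)$, and solve the Maurer--Cartan equation using the Green operator and the Banach inverse function theorem on Sobolev completions of the spaces of $(0,q)$-forms with values in $L^\bullet$. This produces an analytic subspace $D$ of a small neighbourhood of $0 \in \mathbb H^1(X_0, L^\bullet)$ carrying a convergent family $(X,\FF)\to D$ that by construction has flat normal sheaf. Since $D$ is cut out of $\mathbb H^1(X_0,L^\bullet)$ by analytic equations of order $\geq 2$, we automatically have $T_0 D \cong \mathbb H^1(X_0,L^\bullet)$, giving the tangent space statement; versality among deformations with flat normal sheaf follows from the universal property of the Kuranishi construction, as any such deformation admits a Maurer--Cartan representative whose harmonic part determines the classifying map to $D$ up to gauge.
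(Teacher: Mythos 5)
The paper does not prove this statement: it is recalled as \cite[Theorem 3.4]{gomez1988transverse}, so there is no internal argument to compare against. That said, your overall framework --- write down a controlling complex, verify Schlessinger's conditions or a DGLA formulation, use Kuranishi's elliptic method on the compact base to pass from formal to convergent --- is the correct shape of a proof and is essentially the strategy Gomez-Mont employs.

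The concrete gap is your choice of controlling complex. The inclusion $T_{\FF_0}\hookrightarrow T_{X_0}$ is injective, so the two-term complex $L^\bullet=[T_{\FF_0}\to T_{X_0}]$ is quasi-isomorphic to a shift of $N_{\FF_0}$; with $T_{\FF_0}$ in degree $0$ one finds $\mathbb H^1(X_0,L^\bullet)\cong H^0(X_0,N_{\FF_0})$ and $\mathbb H^2(X_0,L^\bullet)\cong H^1(X_0,N_{\FF_0})$. These are not the right spaces: they only see cohomology of the normal sheaf, so they miss deformations of the complex structure of $X_0$ altogether, and even over a fixed $X_0$ first-order deformations of $T_{\FF_0}$ as a subsheaf of $T_{X_0}$ with flat cokernel are classified by $H^0(\mathcal{H}om(T_{\FF_0},N_{\FF_0}))$, not by $H^0(N_{\FF_0})$. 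A sanity check makes the failure stark: take $\FF_0=T_{X_0}$, so $N_{\FF_0}=0$; your complex becomes acyclic and predicts $T_0D=0$, yet in that case the theorem reduces to Kuranishi's theorem for $X_0$ and must return $T_0D\cong H^1(X_0,T_{X_0})$.

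The object you want is, schematically, a complex of the form
\[
T_{X_0}\;\longrightarrow\;\mathcal{H}om(T_{\FF_0},N_{\FF_0}),
\qquad v\longmapsto\bigl(\xi\mapsto [v,\xi]\bmod T_{\FF_0}\bigr),
\]
possibly prolonged by a third term in $\mathcal{H}om(\wedge^2 T_{\FF_0},N_{\FF_0})$ to record the linearised integrability obstruction. (The differential is a first-order operator, not $\OO_{X_0}$-linear, which is standard in foliation deformation theory.) Its hypercohomology long exact sequence correctly interpolates between $H^0(\mathcal{H}om(T_{\FF_0},N_{\FF_0}))$, controlling deformations of $\FF_0$ with $X_0$ fixed, and $H^1(X_0,T_{X_0})$, controlling deformations of $X_0$ with the foliation forgotten. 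With this substitution, the remainder of your outline --- promoting the complex to a DGLA, solving Maurer--Cartan via the Green operator on Sobolev completions, and identifying the Kuranishi germ inside $\mathbb H^1$ --- goes through as you describe.
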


The condition on the flatness of $N_{\mathcal F}$ is not natural in many situations of interest, see Example \ref{ex:QHusk is necessary}. Our main theorem has as an immediate corollary a natural statement on the existence of versal deformation spaces for stable foliations.

\begin{theorem}[ = Theorem \ref{thm_versalilty_stable_def}]
    Let $(X_0,\Delta_0,\FF_0)$ be a stable foliated triple, where $X_0$ is a surface. Then there exists a  stable deformation of $(X,\Delta,\FF)\to (D,0)$ which is miniversal among stable deformations, i.e. for every stable deformation $(X',\Delta',\FF')\to (V,s)$ of $(X_0,\Delta_0,\FF_0)$ there exists an analytic neighbourhood $V\ni s$ and a morphism $\varphi: (V,s)\to (D,0)$ such that $(X',\Delta',\FF')\simeq \varphi^*(X,\Delta,\FF)$. Moreover, this yields an isomorphism between $T_0D$ and the tangent space to the functor of stable deformations of $(X_0,\Delta_0,\FF_0)$.
\end{theorem}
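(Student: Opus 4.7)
The plan is to derive this theorem as a direct corollary of Theorem \ref{thm:rep}. The key principle is that a miniversal deformation of any object of a Deligne--Mumford stack of finite type can be obtained by taking an étale atlas chart through the corresponding point and restricting to an analytic neighbourhood. First I would fix the invariants $N, v$: pick a positive integer $N$ such that $N K_{\FF_0}$ is Cartier (which exists because $K_{\FF_0}$ is $\mathbb Q$-Cartier by the stability hypothesis), and set $v := \vol(5N K_{\FF_0} + K_{X_0} + \Delta_0)$. Then $(X_0, \Delta_0, \FF_0)$ defines a $\mathbb C$-point $[x_0]$ of $\mathcal M^{2,1}_{N,v}$.

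Next, Theorem \ref{thm:rep} gives that $\mathcal M^{2,1}_{N,v}$ is a DM stack of finite type over $\mathbb C$, so there exists a scheme $U$ of finite type over $\mathbb C$, a closed point $u \in U$, and an étale morphism $U \to \mathcal M^{2,1}_{N,v}$ sending $u$ to $[x_0]$. Pulling back the universal family yields a stable family $(X_U, \Delta_U, \FF_U) \to U$ with fibre over $u$ identified with $(X_0, \Delta_0, \FF_0)$. Taking $D$ to be a sufficiently small analytic neighbourhood of $u$ in $U$ and restricting the family produces the candidate $(X, \Delta, \FF) \to (D, 0)$.

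To verify miniversality, consider any stable deformation $(X', \Delta', \FF') \to (V, s)$ of $(X_0, \Delta_0, \FF_0)$. I would first argue that after shrinking $V$ we may assume this family lies in $\mathcal M^{2,1}_{N,v}$: the Cartier index of $K_{\FF}$ is locally constant in a connected flat family (Cartierness of a reflexive rank-one sheaf is an open--closed condition that agrees with the central fibre's index), and the adjoint volume is constant on connected components by flatness. The classifying morphism $V \to \mathcal M^{2,1}_{N,v}$ then lifts through the étale cover $U \to \mathcal M^{2,1}_{N,v}$, giving $\varphi \colon (V, s) \to (D, 0)$ defined on an étale---and hence, by passing to the analytic topology, open---neighbourhood of $s$, such that $\varphi^*(X, \Delta, \FF) \simeq (X', \Delta', \FF')$ on this neighbourhood. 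The tangent-space isomorphism $T_0 D \simeq T_{[x_0]}\mathcal M^{2,1}_{N,v}$ is automatic from étaleness, and the right-hand side is, by definition of the moduli functor, the tangent space to the functor of stable deformations at $(X_0, \Delta_0, \FF_0)$.

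The main subtlety is not computational but rather bookkeeping: one must check that the notion of \emph{stable deformation} as in the theorem's statement truly matches the families parametrised by $\mathcal M^{2,1}_{N,v}$ once one fixes the invariants determined by the central fibre. The argument above reduces this to showing that both $N$ and $v$ are constant in connected stable families, which is a standard flatness argument. Non-uniqueness of $\varphi$ is expected and reflects the presence of automorphisms of $(X_0, \Delta_0, \FF_0)$; this is precisely the reason that the result is miniversal rather than universal.
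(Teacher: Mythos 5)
Your proof follows the same route as the paper: fix $N$ and $v$ from the central fibre, recognize that $(X_0,\Delta_0,\FF_0)$ defines a point of $\mathcal M^{2,1}_{N,v}$, take an \'etale atlas $U \to \mathcal M^{2,1}_{N,v}$, and use that \'etale maps of finite-type $\CC$-schemes are local analytic isomorphisms to produce the miniversal family. The paper's proof is precisely this, stated slightly more tersely.

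One step where your justification is not quite right, however, is the claim that ``the Cartier index of $K_{\FF}$ is locally constant in a connected flat family (Cartierness of a reflexive rank-one sheaf is an open--closed condition that agrees with the central fibre's index).'' As stated this is false in the generality you invoke it: Example~\ref{slt_index_jumps} in the paper exhibits a \emph{locally stable} family of rank-one integrable distributions in which the Cartier index of $K_{\mathcal F_t}$ jumps. What actually forces the index to be constant near the central fibre is the Koll\'ar-type condition~(S3) baked into the definition of a \emph{stable} family: $\mathcal O_X(NK_{\mathcal F})$ commutes with base change, so it restricts on $X_0$ to $\mathcal O_{X_0}(NK_{\FF_0})$, which is a line bundle; together with flatness of $\mathcal O_X(NK_{\mathcal F})$ over the base this forces $\mathcal O_X(NK_{\mathcal F})$ to be invertible in a neighbourhood of $X_0$, hence $NK_{\mathcal F}$ Cartier there. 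The paper elides this with a bare ``Observe that\ldots,'' but the mechanism is condition~(S3), not a general open--closedness of Cartier index for reflexive sheaves. With this correction your argument matches the paper's.
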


\subsection{Boundedness (or lack thereof) of the moduli functor $\mathcal M^{2,1}$}

In Section~\ref{s_boundedness}, we show the boundedness of $\mathcal M^{2,1}_{N, v}$: namely, we show that there exists only finitely family of algebraic deformations parametrising stable foliated pairs appearing in 
$\mathcal M^{2,1}_{N, v}$.
Our result generalises the previous works on foliated boundedness in dimension two, see \cite{HL19, chen21, SS23}, finally including also the case of semi-log canonical foliated pairs that had not been previously discussed at all:
indeed, one of the important contributions of this manuscript is the definition of semi-log canonical foliated surfaces.
As an immediate consequence, the results in Section~\ref{s_boundedness} imply that 
$\mathcal M^{2,1}_{N, v}$ 
can be coarsely represented by an algebraic space of finite type.

As $\mathcal M^{2,1}_{N, v}$ is not necessarily proper, whereas the larger moduli space $\mathcal M^{2,1}$ is valuative proper, it is then natural to ask whether or not the closure 
$\overline{\mathcal M^{2,1}_{N, v}}$
of 
$\mathcal M^{2,1}_{N, v}$ inside 
$\mathcal M^{2, 1}$ 
is of finite type.
In this manuscript, we are not able to establish such a result;
on the other hand, we do not know of any examples in which 
$\overline{\mathcal M^{2,1}_{N, v}}$
fails to be of finite type.

In general, in order to prove the boundedness of 
$\overline{\mathcal M^{2,1}_{N, v}}$, 
for fixed $N \in \mathbb Z_{>0}$
and $v \in \mathbb Q_{>0}$,
we need to be able control the behaviour of the Cartier index of the canonical divisor of a foliation in families. 
While we are able to show the constancy of the Cartier index of the foliation in stable families for many types of singularities, cf. Section~\ref{s_index_families}, a similar result may not hold for strictly semi log terminal singularities, cf. Example~\ref{slt_index_jumps}.
Hence, it is natural to pose the following question, a positive answer to which would allow us to show that $\overline{\mathcal M^{2, 1}_{N, \nu}}$ is of finite type, at least for $N$ sufficiently large, and, moreover, it is contained in 
$\mathcal M^{2, 1}_{N', \nu'}$
for a suitable choice of an integer
$N' > N$ 
and 
$\nu' \in \mathbb Q$.

\begin{question}
Does there exist an integer $N$ depending only the Hilbert function $\chi(X, \mathcal O(mK_{\mathcal F}+n(K_X+\Delta))$ such that $NK_{\mathcal F}$ is Cartier at all the semi-log terminal points of $(X, \Delta, \mathcal F)$?
\end{question}

\subsection{Infinitesimal structure of the moduli functor}
We remark that in our construction we capture the non-reduced structure of the moduli space.  
Developing a theory of moduli of pairs $(X, \Delta)$ has proven to be very difficult when $\Delta$ is not a reduced divisor.  One key issue is defining families of pairs $(X, \Delta) \to S$ when $S$ is not a reduced scheme. Since we are interested in understanding the nilpotent structure of the moduli space, we therefore restrict our attention to the case 
where $\Delta$ is reduced.  

Many of our techniques apply more generally to the case where $\Delta$ has arbitrary coefficients, or we consider an integrable distribution together with a boundary with arbitrary coefficients, but in this setting we only capture the behaviour of families of foliations over normal bases.

\subsection{Comparison between moduli of stable foliations and of stable varieties.}
\label{s_intro_challenges}
Even though our approach to the construction of a moduli space for stable foliations is inspired by the moduli theory of stable varieties, 
there are several key ways in which the situation for stable foliations differs from the theory of moduli for stable curves and varieties.

As the analogue of the Abundance Conjecture fails already for rank one foliations on surfaces, the canonical divisor of the foliation does not in general define a polarisation (see Example \ref{not_ample_example}), and so a naturally defined substitute polarisation must be decided upon.
    As finite generation holds for sufficiently small (positive) perturbations of $K_{\mathcal F}$ in the $K_X$ direction, cf. 
    \cite{SS23}, we can handle this issue by considering foliated surfaces polarised by 
    $K_{\mathcal F}+\epsilon K_{X}$ for all $0<\epsilon\ll 1$ 
    (cf. condition (\ref{intro_cond_polarization}) in Section \ref{s_intro_def_mod_functor}).
    We remark that for such choice of polarisation many of the good properties of ample log divisors (e.g., effective generation, effective cohomological vanishing, cf. \cite{PS16}) hold and play an important role in the realization of our moduli construction, cf. Section \ref{s_intro_role_MMP}
 
 Flatness alone is not sufficient to guarantee that a family of foliations is ``reasonable'' from the perspective of moduli (see Example \ref{example_index}). Some additional conditions on our family of foliations are required.
    In the construction of moduli of stable varieties, this issue is overcome by introducing the so-called Koll\'ar condition, which is also an integral part of our definition of moduli functor (cf. conditions (\ref{intro_cond_Qfactorial})-(\ref{intro_cond_kollar_condition}) in Section \ref{s_intro_def_mod_functor}).  In our situation, though,  it is moreover necessary that we also fix the index of the canonical divisor of the foliation to ensure the local closedness of our Koll\'ar-type conditions.
    This is a remarkable difference with the moduli of stable varieties, where the index of the canonical divisor in stable families can be bounded in terms of the canonical volume of the fibres of the family.

Already when constructing the moduli spaces of stable surfaces (or higher dimensional stable varieties), it becomes evident that in order to obtain a proper functor the appearance of non-normal (semi-log canonical) surfaces must be allowed.
Moreover, the moduli picture diverges significantly from the curve case:
there is no longer an ``interior'' to the moduli space  consisting of smooth varieties, as is the case for $\overline{\mathcal M_g}$.  
In fact, there are already components of the moduli space of stable surfaces such that every parametrised variety is not even normal.
Similarly, for the moduli functor defined in Section \ref{s_intro_def_mod_functor}, we must consider semi-log canonical foliations on semi-log canonical surfaces (cf. condition 
(\ref{intro_cond_slc}) in Section \ref{s_intro_def_mod_functor}).
While this is not a very surprising aspect of our definition, at least for those familiar with the moduli theory of stable varieties, it necessitates our initiation of the development of the birational theory of semi-log canonical foliated surfaces.

\subsection{Foliation vs. integrable distribution}
\label{note_on_terminolgy}
Another important aspect of the moduli functor to keep in mind is the following.
On a normal variety a foliation is defined as a saturated subsheaf of the tangent sheaf which is closed under Lie bracket (see Section \ref{s_integrable_distribution}), easy examples show that a family of foliations can degenerate to a subsheaf of the tangent sheaf which is closed under Lie bracket but not saturated (see, e.g., Example \ref{easiest_example}).
In the literature such a subsheaf is usually referred to as an integrable distribution.
This phenomenon explains why in Section \ref{s_intro_def_mod_functor} we define stable families of integrable distributions.
In order to be consistent with the existing literature on the birational geometry of foliations, we will adopt this terminology and reserve the term foliation for saturated integrable distributions.  This terminology also seems preferable to us when working on non-normal varieties.  We refer to \cite{CS23b} or Section \ref{s_integrable_distribution} below for the definition of an integrable distribution on a non-normal variety.

\subsection{Role of the Minimal Model Program}
\label{s_intro_role_MMP}
Starting from  \cite{KSB88}, and through the contribution of many others, work on the moduli of higher dimensional stable varieties has highlighted the importance of techniques from the MMP in the study of moduli problems.  
Thanks to recent developments in the MMP for foliations on threefolds, see \cite{McQ05, Spicer20, CS20, CS21, SS22, 2025basepointfreenessrank}, we are in a position to successfully apply the ideas from the MMP in many of the steps involved in the construction of the moduli space of foliations on surfaces.

In order to prove that the moduli functor
$\mathcal{M}^{2, 1}$
satisfies the valuative criteria of properness and separatedness, we must show that stable families over 1-dimensional bases can be completed in a unique way using stable foliated surfaces with the same fixed invariants.
To achieve that, we follow the same principles that were used to construct the moduli of stable varieties, and we run a carefully devised sequence of runs of the MMP, both with respect to the canonical divisor of the foliation and also that of the total space of a semistable completion of the family.

Another essential use of the MMP in the proof of our main result is in demonstrating the boundedness of foliated surfaces parametrized by 
$\mathcal{M}^{2, 1}_{N, v}$.
Here, once again, our choice of polarisation via the perturbed log divisors of the form
$K_{\mathcal F}+\epsilon K_X$
proves to be crucial.
In fact, such choice allows us to turn the boundedness problem for foliated pairs 
$(X, \mathcal F)$ 
into a boundedness problem for generalized pairs
$(X, K_{\mathcal F})$
obtained by merely considering 
$K_\mathcal{F}$, which is a nef divisor, as the moduli part of the generalized pairs; such modified boundedness problem is set up and positively resolved in Section \ref{s_boundedness}.
To this end, it is important to notice that also the required bound of the Cartier index of $K_\mathcal{F}$ by an integer $N$, in addition to fixing the volume of $5NK_\mathcal{F}+K_X$, is necessary to proving the boundedness:
indeed, it is possible to construct unbounded collections of foliated pairs of fixed volume (and unbounded index), cf. Example~\ref{example_index} for a local example of this behaviour.

As the MMP is often quite technical and the literature can be difficult to navigate; for the reader's convenience we have summarised in Appendix \ref{mmp} the main results from the MMP that we will use.

We remark that the construction of proper moduli of varieties (or foliations) in dimension $=n$ requires the MMP and resolution of singularities in dimension $=n+1$.  We are therefore restricted by the current state of knowledge of both the MMP and resolution of singularities to considering moduli of foliated surfaces.

\subsection{Future directions}
We now list some connections with the work in this paper and other directions of study on foliations and related geometric structures.

{\it Projectivity of coarse moduli.}
If properness of the coarse moduli $\mathsf M^{2, 1}_{N, v}$ of $\mathcal M^{2, 1}_{N, v}$ can be decided upon, it is desirable to know whether $\mathsf M^{2, 1}_{N, v}$ is in fact a projective scheme (as opposed to a proper algebraic space).  
\begin{question}
If $\mathsf M^{2, 1}_{N, v}$ is proper, is $\mathsf M^{2, 1}_{N, v}$ a projective scheme?
\end{question}

In the case of moduli of stable varieties, it is known that the moduli space has projective coarse moduli thanks to \cite{KP17}.  These results rely on positivity properties of variations of Hodge structures associated to families of varieties.  It is unclear if there are analogous positivity statements for families of foliations, and so it is unclear if the approach taken in \cite{KP17} generalises to the setting considered here.

{\it Unfoldings of foliations.}
There is another notion of deformation of a foliations, namely, that of an unfolding.  
Roughly speaking, an unfolding of a family of foliations $f\colon (X, \mathcal F) \to T$ is a foliation $\widetilde{\mathcal F}$ on $X$ such that $T_{X/T} \cap T_{\widetilde{\mathcal F}} = T_{\mathcal F}$.  Not every family of foliations admits an unfolding.  Intuitively, a family of foliations which admits an unfolding is  a family where the solutions (or leaves) of the foliation vary smoothly in the family.  Unfoldings are the analogue for general foliations of isomonodromic deformation of connections on vector bundles.

While we do not study unfoldings here we expect that the moduli space of foliations is equipped with a foliation which represents the ``universal'' unfolding, cf. \cite{genzmer}.

{\it (Complete) Moduli of fibred surfaces.}
There has been much interest in constructing and compactifying moduli spaces of fibred surfaces, 
see for instance \cite{Alexeev96, MR1862797, MR1849290, MR4621922, MR4137071, MR3961332, MR3985107}.
In general, a stable (resp. elliptic) fibration can easily be seen to degenerate to something other than a stable (resp. elliptic) fibration.  It is therefore an interesting question to determine the best way to compactify the moduli space of such fibrations.
This compactification have been previously studied either by considering the space of stable maps, or utilising KSB techniques.

Our main theorem provides an alternate compactification of the moduli of fibred surfaces, by reinterpreting a fibred surface as surface together with a foliation.
For instance, if  $S$ is a smooth projective curve and $X \to S$ is a family of a stable curves, if we denote by $\mathcal F$ the induced foliation, then it is easy to see that $(X, \mathcal F)$ gives an example of a stable foliation.
Investigating these alternate compactifications will be the subject of future work

{\it Alterations of foliations.}  In general, it is not possible to resolve a singular foliation to a smooth foliation using blow ups.  Rather, the best that one can hope for is to reduce foliation singularities to singularities in some distinguished class via a sequence of blow ups.  For co-rank one foliations such a distinguished class is given by the class of simple singularities, and when the dimension of the underlying variety is $\leq 3$ the existence of a reduction of singularities is known thanks to Seidenberg, \cite{Seidenberg68}, and Cano, \cite{Cano}.  However, in higher dimensions such a reduction statement is still unknown.   

The work of de Jong, \cite{Jong96}, suggests a different approach to reduction statements through alterations. The existence of the moduli space of stable curves plays a key role in de Jong's approach to alterations.  We expect that the moduli space we have produced here to play a similar role in a potential approach to reducing co-rank one foliation singularities in arbitrary dimension using alterations.

{\it Other notions of stability.}
We remark that there are likely to be several other reasonable candidates for ``stable'' foliation: these alternate definitions could depend on the context and kind of intended applications.  Studying these other notions of stability and their relation to the notion of stability considered here will be the topic of future work.

Of particular importance, in \cite[\S III.3]{McQuillan08} a definition of canonical model of a foliation is proposed which contracts elliptic Gorenstein leaves appearing on the minimal model of the foliation to points.  In general, the canonical model in the sense of \cite{McQuillan08} is not equipped with a natural polarisation (and so differs from the notion of stable foliation considered here), however it would be desirable to produce a moduli functor 
which parametrises these canonical models.

\subsection{Structure and sketch of proof}

In broad outline, the proof of our main theorem follows the proof of the existence of $\mathcal M_g$ and its compactification $\overline{\mathcal M_g}$.
However, owing to unique features in the birational geometry of foliations, there are considerable technical obstacles to realising our main goal.

Our first step is to show that (after fixing some appropriate numerical invariants) the set of stable surface foliations with this fixed set of invariants is bounded. This is done in Corollary \ref{boundedness.slc.triples.cor}. 

Once boundedness is established, it is then easy to see that the $\mathbb C$-valued points points our moduli functor all arise as $\mathbb C$-valued points of some fixed Hilbert scheme, $H$, of finite type (or more precisely a space parametrising quotient husks, ${\rm QHusk}$). 

A priori, there are more points in $H$ than the points corresponding 
to stable surface foliations, and so we next need to show that the set 
$H^{{\rm s}} \subset H$ corresponding to stable surface foliations is a locally closed subset.  This is done in Corollary \ref{cor_stab_rep}.  There are three main ingredients at play here:
\begin{itemize}
    \item We need to show that our singularity condition (i.e., semi-log canonical) is an open condition in families.  This was alluded to earlier, and is shown in Theorem \ref{thm_lc_deform}.

    \item We need to show that our positivity condition, namely $K_{\mathcal F}+tK_X$ is ample for all $0<t \ll 1$ is open in families.  This follows as a consequence of the $K_{\mathcal F}$-MMP.

    \item We need to show that the set of points corresponding to surface foliations where $K_X$ and $K_{\mathcal F}$ are $\mathbb Q$-Cartier is locally closed.  This is surprisingly subtle and points to why the bound on the Cartier index of $K_{\mathcal F}$ is essential, cf. Example \ref{example_index}.
\end{itemize}

Once this is done, our moduli space can be constructed as an appropriate quotient
of $H^{s}$.  Some care needs to be taken to ensure that this quotient represents any $T$-valued point of the moduli functor (and not just the $\mathbb C$-valued points).  The finiteness of stabilisers for this group action is handled in Theorem \ref{thm_separatedness}, and so we conclude that our moduli functor is represented by a Deligne-Mumford stack.

Finally, it remains to show that our moduli stack is proper.  This is handled in Theorem \ref{thm_compactify_to_family}.  The main ingredients here are the following.
\begin{itemize}
    \item A locally stable reduction theorem for families of foliations: this is proven in Theorem \ref{thm_locally_stable_reduction}.

    \item Existence of log canonical closures for families of stable foliations:  this follows from the existence of the $K_{\mathcal F}$-MMP and results on the existence of log canonical closures of generalised pairs.
\end{itemize}
These items show that if given a pointed smooth curve $c \in C$ and a family of stable foliations $(X^\circ, \mathcal F^\circ) \to C \setminus \{c\}$, then we can extend this family of a family of stable foliations $(X, \mathcal F) \to C$.

\subsection*{Acknowledgements}

We would like to especially thank M. McQuillan for many important discussions and suggestions on this work.

We would also like to thank F. Bernasconi, P. Cascini, J. Liu, S. Filipazzi, Z. Patakfalvi, J. V. Pereira and Q. Posva for many useful discussions. We thank S. Druel, G. Inchiostro and Q. Posva for comments on an earlier draft of this work. 

The first and third authors were partially supported by EPSRC.
The second author was supported by the “Programma per giovani ricercatori Rita Levi Montalcini” of MUR and by PSR 2022 – Linea 4 of the University of Milan. 
He is a member of the GNSAGA
group of INDAM.

\section{Preliminaries}

We will work over $\mathbb C$ throughout this paper.  All schemes are assumed to be Noetherian, separated and excellent, unless otherwise stated.

\subsection{Divisorial sheaves, Mumford divisors and relative Mumford divisors} \label{Prelim. sheaves}
We recall the notion of (relative) Mumford divisors, 
see \cite[\S 4.8]{modbook} for a more complete discussion.

Let $X$ be a reduced scheme. By a {\bf Mumford divisor} $D$
we mean a Weil divisor on $X$ such that $X$ is regular at 
all the generic points of $D$.

Let $f\colon X \to S$ be a flat morphism with $S_2$ fibres.
A subscheme $D \subset X$ is a {\bf relative Cartier divisor}
if $D$ is flat over $S$ and $D|_{X_s}$ is a Cartier divisor on the fibre $X_s$.

Let $f\colon X \rightarrow S$ be a flat morphism 
with $S_2$ fibres. By an {\bf effective relative Mumford divisor} $D$ we mean a subscheme $D \subset X$ such that there exists a closed subset $Z \subset X$ satisfying the following properties:
\begin{enumerate}
\item ${\rm codim}_{X_s}(Z \cap X_s) \ge 2$ for all $s \in S$; 

\item $D|_{X \setminus Z}$ is a relative Cartier divisor;

\item $D$ is the closure of $D|_{X\setminus Z}$ in $X$; and 

\item $X_s$ is regular at all the generic points of $D\vert_{X_s}$. 
\end{enumerate}
By a {\bf relative Mumford divisor} we mean a $\mathbb Z$-linear combination of effective relative Mumford divisors.

Let $X$ be a scheme.  By a {\bf divisorial sheaf} $L$
we mean a coherent $S_2$ sheaf such that there exists a closed subset $Z \subset X$ of codimension $\ge 2$ such that $L\vert_{X \setminus Z}$ is a line bundle. 

Let $f\colon X \rightarrow S$ be a morphism. 
We say that a coherent sheaf $L$ on $X$ is a {\bf flat family of divisorial 
sheaves} if $L$ is flat over $S$ and $L\vert_{X_s}$ is a divisorial sheaf for all $s \in S$.

By \cite[Proposition 4]{Kollar2018}, if $X$ is reduced the group 
of Mumford divisors is isomorphic to the the group
of divisorial sheaves, and so we will occasionally use these notions interchangeably.
There is likewise a close connection between relative Mumford divisors and flat families of divisorial sheaves.  Indeed, given a flat family of divisorial sheaves $L$ and a global section $f$ of $L$ 
which does not vanish at the generic points of $X_s$ or the codimension one points of 
$\sing X_s$ for any $s \in S$, 
the equation $(f = 0)$ defines a relative effective Mumford divisor.

\subsection{Integrable distributions}
\label{s_integrable_distribution}
The following definition of an integrable distribution is found in \cite[\S 2.2]{CS23b}.

Let $X$ be an $S_2$ scheme.
A rank $r$ {\bf integrable distribution} $\mathcal F$ on $X$ is the data of  
\begin{enumerate}
\item a divisorial sheaf $L$; and
\item a Pfaff field, i.e., a morphism $\phi\colon \Omega^r_X \rightarrow L$,
satisfying the following integrability condition:
in some neighbourhood $U$
of the generic point of each irreducible component of 
$X$ there exists a coherent rank $r$ sheaf $E$
and a surjective morphism $q\colon \Omega^1_U \rightarrow E$ such that the $r$-th wedge
power of this morphism agrees with $\phi|_U$ 
and its dual $E^*\hookrightarrow T_U$ is closed under Lie bracket. 
\end{enumerate}

We define the {\bf canonical class} of the integrable distribution $\mathcal F$ to be any Mumford divisor
$K_{\mathcal F}$ on $X$
such that $\mathcal O_X(K_{\mathcal F}) \cong L$.
  A rank $r$ {\bf foliation} on $X$ is a rank $r$ integrable distribution on $X$ whose Pfaff field $\phi$ is such that ${\rm coker}~ \phi$ 
is supported in codimension at least two. 
 Given a rank $r$ integrable distribution $\mathcal F$ on a normal scheme $X$ 
we define the {\bf singular locus} of $\mathcal F$, denoted $\sing \mathcal F$, to be the co-support
of the ideal sheaf defined by the image of  the induced map 
$(\Omega^r_X\otimes \mathcal O_X(-K_{\mathcal F}))^{**} \rightarrow \mathcal O_X$.

Let $X$ be a normal variety and let $\mathcal F$ be an integrable distribution 
on $X$ such that $K_{\mathcal F}$ is $\mathbb Q$-Cartier.
Given a birational morphism $\pi\colon X' \rightarrow X$
we define the transform of $\mathcal F$, denoted $\pi^{-1}\mathcal F$
to be the unique integrable distribution $X'$ such that no $\pi$-exceptional 
divisor is contained in $\sing \pi^{-1}\mathcal F$ and such that 
$\pi^{-1}\mathcal F$ agrees with $\mathcal F$ on the dense open subset where $\pi$ is an isomorphism.

\begin{construction}
\label{fol_to_dist}
We recall that given a normal variety $X$ and an integrable distribution $\mathcal F$
there is an associated foliation $\tilde{\mathcal F}$ and a canonically defined Weil divisor $D \ge 0$ such that $K_{\mathcal F} = K_{\tilde{\mathcal F}}+D$, see \cite[Lemma 2.2]{CS23b}.  We refer to the
pair $(\tilde{\mathcal F}, D)$ as the {\bf foliated pair associated to} $\mathcal F$.

Conversely, given an integrable distribution $\mathcal F$ of rank $r$ 
with associated Pfaff field $\Omega^r_X \rightarrow \mathcal O(K_{\mathcal F})$
and an integral Mumford divisor $D \ge 0$
we can define an integrable distribution $\mathcal F'$ by taking the Pfaff field
$\Omega^r_X \rightarrow \mathcal O(K_{\mathcal F}+D)$ induced by the natural inclusion
$\mathcal O(K_{\mathcal F}) \rightarrow \mathcal O(K_{\mathcal F}+D)$.  It is easy to verify that these two constructions are inverses of each other.
\end{construction}

\begin{remark}
\label{rmk_log_dist_is_dist}
We could define a log integrable distribution on a normal variety 
$X$ 
to be the data
$(\mathcal F, D)$
of a integrable distribution
$\mathcal F$ on $X$ and an effective Mumford divisor $D$ on $X$.
However, by the above construction, we see that the category of log integrable distributions is naturally isomorphic to the category of integrable distributions.  We will therefore freely pass between the two notions.
\end{remark}

For a rank one foliation on normal variety it is often convenient to consider a slightly different definition of singular locus.

Let $X$ be a variety and let $\partial$ 
be a vector field on $X$.  We define the singular locus $\sing \partial$
to be the set of points 
\begin{align*}\{x \in X: \partial(\mathcal O_{X, x}) \subset \mathfrak m_x\}\end{align*}
where $\mathfrak m_x$ is the maximal ideal at the point $x \in X$.

Let $X$ be a normal variety, let $\mathcal F$ be a rank one foliation on $X$ such that 
$K_{\mathcal F}$ is $\mathbb Q$-Cartier.  Let $x \in X$ be a point and let $U$ be an open neighbourhood of $x$.  
Up to replacing $U$ by a smaller neighbourhood we may find an index one cover $\sigma\colon U' \rightarrow U$ associated to $K_{\mathcal F}$
and such that $\sigma^{-1}\mathcal F$ is generated by a vector field $\partial$.

We say that $\mathcal F$ is {\bf singular in the sense of McQuillan} at $x \in X$ provided that  $\sigma^{-1}(x) \subset \sing \partial$.
We denote by $\sing^+{\mathcal F}$ the locus of points $x \in X$ where $\mathcal F$ is singular in the sense 
of McQuillan. Note that $\sing^+{\mathcal F}$ does not depend on the choice of $U'$ and it is a closed subset of $X$. 

On a smooth variety it is easy to see that $\sing^+{\mathcal F} = \sing{\mathcal F}$, but in general we only have a containment
$\sing{\mathcal F} \subset \sing^+{\mathcal F}$, see \cite[Lemma 4.1]{CS23b}.

\subsection{Pull-back integrable distributions}
Given a normal schemes $X$ and $X'$, a foliation $\mathcal F$ 
on $X$, and a dominant rational map $p\colon X' \dashrightarrow X$,
there is a well defined pull-back foliation $p^{-1}\mathcal F$, cf.  \cite[\S 2.9]{Druel15b}

If we suppose that $\mathcal F$ is an integrable distribution on a normal variety $X$ and that $p\colon X' \rightarrow X$
is a birational morphism we define the pull-back integrable distribution $p^{-1}\mathcal F$ as follows.
Let $(\tilde{\mathcal F}, D)$ be the associated foliated pair, let $\mathcal G\coloneqq p^{-1}\tilde{\mathcal F}$ be 
the pull-back foliation and let $E \coloneqq p_*^{-1}D$ be the strict transform of $D$.
We then define $p^{-1}\mathcal F$ to be the integrable 
distribution associated to $(\mathcal G, E)$.

Let $X$ and $Y$ be $S_2$ schemes and let $\mathcal F$ be an integrable distribution on $X$. Let $s\colon Y \rightarrow X$ be a quasi-\'etale morphism and let $Z \subset X$ be a codimension two subset such that $s\colon Y \setminus s^{-1}(Z) \rightarrow X\setminus Z$
is \'etale and such that $K_{\mathcal F}\vert_{X\setminus Z}$ is Cartier.
Via the isomorphism $\Omega^1_{Y\setminus s^{-1}(Z)} \cong s^*\Omega^1_{X\setminus Z}$
we may pull-back the Pfaff field defining $\mathcal F$ to a Pfaff field 
\begin{align*}\Omega^1_{Y \setminus s^{-1}(Z)} \rightarrow s^*\mathcal O(K_{\mathcal F})|_{X \setminus Z}.\end{align*}
It is easy to check that this defines an integrable distribution on $Y\setminus s^{-1}(Z)$, and since 
$s^{-1}(Z)$ is codimension at least 2 in $Y$, it follows that this extends uniquely 
to an integrable distribution on all of $Y$ which we denote by $\sigma^{-1}\mathcal F$.
Note that when $X$ is normal this pull-back coincides with the pull-back defined above.

Let $X$ be an $S_2$ scheme and let $\mathcal F$ be an integrable distribution
on $X$. Let $n\colon X^n \rightarrow X$ be normalisation map. Let $Z \subset X$ be a codimension two subset such that $K_{\mathcal F}|_{X \setminus Z}$ is locally free.
By \cite[Lemma 4.3 and Proposition 4.5]{ADK08} (see also \cite{MR188247}) the Pfaff field $\Omega^r_{X \setminus Z} \rightarrow \mathcal O(K_{\mathcal F})|_{X \setminus Z}$ lifts to a Pfaff field 
$\Omega^r_{X^n \setminus n^{-1}(Z)} \rightarrow n^*\mathcal O(K_{\mathcal F})\vert_{X\setminus Z}$.  Again, it is easy to check that this defines an integrable distribution on 
$X^n \setminus n^{-1}(Z)$ and extends to an integrable distribution 
on $X^n$ which we denote by $n^{-1}\mathcal F$.

It is easy to verify that all these notions of pull-back coincide when mutually well defined.

\subsection{Flat families of integrable distributions}
\label{s_flat_families}
Let $f\colon X \rightarrow T$
be a flat morphism with $S_2$ fibres 
and let $L$ be a flat family of divisorial sheaves.  

A {\bf flat family of integrable distributions of rank $r$},
denoted $f\colon (X, \mathcal F) \rightarrow T$,
is the data of a
morphism \begin{align*}\phi\colon \Omega^r_{X/T} \rightarrow L\end{align*} such that the following hold.
\begin{enumerate}
    \item The restricted morphism \begin{align*}\phi_t\colon \Omega^r_{X_t} \rightarrow L_t\end{align*} defines an integrable distribution for all $t \in T$.
(Notice that $\phi$ is surjective at the generic points of $f^{-1}(t)$ for all $t \in T$).

\item In a neighbourhood of every generic point of $X$ the obvious morphism $\Omega^r_X \to L$ defines an integrable distribution.

\end{enumerate}

We remark that if either $r = 1$ or $T$ is reduced, then Condition (2) is redundant.

We will denote by $K_{\mathcal F}$ any relative Mumford divisor such that $\mathcal O(K_{\mathcal F}) = L$.

Notice that given a flat family of integrable distributions $(X, \mathcal F) \rightarrow T$
we have a natural morphism 
$\Omega^r_X \rightarrow L$.  If $X$ is $S_2$, then $L$ is a divisorial sheaf by \cite[Proposition 3.5]{MR2047697}.  Thus, this morphism defines an integrable distribution of rank $r$ on all of $X$.

Conversely, suppose that $X$ is $S_2$, $f\colon X \rightarrow T$
is a flat morphism with $S_2$ fibres and $\mathcal F$ is an integrable distribution on $X$ such that $\sing \mathcal F$ does not contain any components of fibres of $f$.  Then, if 
$\mathcal O(K_{\mathcal F})$ is a flat family of divisorial sheaves and the morphism $\Omega^r_X \to \mathcal O(K_{\mathcal F})$ factors 
as $\Omega^r_{X/T} \to \mathcal O(K_{\mathcal F})$, then $(X, \mathcal F) \to T$ is a flat family 
of integrable distributions.

As mentioned earlier we will often need to consider a Weil divisor $\Delta$ together with our underlying variety.  
In these cases we will use the notation $f\colon (X, \Delta, \mathcal F) \rightarrow T$ to denote a flat family of integrable distributions.

\subsection{Classes of singularities in the Minimal Model Program}
\label{s_singularities}

By a {\bf normal crossing singularity} we mean a singularity which is analytically isomorphic to 
$\{x_1x_2\cdots x_j = 0\} \subset \mathbb A^k$ 
for some integer 
$j \leq k$.

We say that a variety $X$ is {\bf deminormal} provided that is is $S_2$ and its codimension one points 
are either smooth or normal crossing singularities.

\begin{definition}
    \label{def.pair}
A {\bf couple} 
$(X, \Gamma)$ 
is the datum of a deminormal variety 
$X$ 
and a Mumford divisor 
$\Gamma$
on 
$X$.

A {\bf pair} 
$(X, \Theta)$ 
is the datum of a deminormal variety 
$X$ 
and an effective Weil 
$\mathbb R$-divisor 
$\Theta$ 
on 
$X$
with coefficients in 
$[0, 1]$
such that 
$(X, \lceil \Theta \rceil)$
is a couple.

A {\bf log pair} 
$(X, \Delta)$ 
is the datum of a pair such that 
$K_X+\Delta$ 
is 
$\mathbb R$-Cartier.

A {\bf foliated triple} 
$(X, \Delta, \mathcal F)$
is the datum of a pair
$(X, \Delta)$
together with an integrable distribution 
$\mathcal F$
on 
$X$ such that $K_{\mathcal F}$ is $\mathbb Q$-Cartier.
\end{definition}

We refer to 
\cite{KM98, Kollar13, CS20}
for complete definitions
on classes of singularities from the perspective of the MMP.
In \cite{CS20} these definitions are stated for foliations and not integrable distributions in general, for the readers convenience we explain these definitions in the case of an integrable distribution.

Given a normal variety $X$ and an integrable distribution $\mathcal F$ on $X$ 
we say that $\mathcal F$ has {\bf terminal (resp. canonical, resp. log terminal, resp. log canonical) singularities} 
provided for every birational morphism $\pi\colon X' \rightarrow X$ if we write 
\begin{align*}K_{\pi^{-1}\mathcal F} = \pi^*K_{\mathcal F}+\sum a(E_i, \mathcal F)E_i\end{align*}
then $a(E_i, \mathcal F) > 0$ (resp. $\geq 0$, resp. $>-\varepsilon(E_i)$, resp. $\ge -\varepsilon(E_i)$).
If $(\tilde{\mathcal F}, D)$ is foliated pair associated to the integrable distribution $\mathcal F$
it is easy to see that $a(E, \mathcal F) = a(E, \tilde{\mathcal F}, D)$  and so $\mathcal F$ is log terminal (resp. log canonical) if and only if $(\tilde {\mathcal F}, D)$
is log terminal (resp. log canonical).

Let $P \in X$ be a, not necessarily closed, point of $X$.
We  say  that $(\mathcal F, \Delta)$ is {\bf terminal} (resp. {\bf canonical, log canonical})
{\bf at $P$} if
for any birational morphism $\pi\colon \widetilde X\to X$ and for any  $\pi$-exceptional  divisor $E$ on $\widetilde X$ whose centre in $X$ is the Zariski closure $\overline P$ of $P$,
we have that $a(E, \mathcal F)>0$ (resp. $\geq 0$, $\geq -\epsilon(E)$).

We say that a pair 
$(X, 
\Delta)$ of deminormal scheme $X$
and a Mumford $\mathbb Q$-divisor is {\bf semi-log canonical (slc)} provided that 
the pair $(X^n, n^*\Delta+D)$ is log canonical where 
$n \colon X^n=\sqcup_{i=1}^{s} X_i^n \to X$ 
is the normalisation and
$D$ 
on 
$X^n$
is the divisorial (counted with multiplicity)
part of the conductor ideal.
As $(X, \Delta)$ is slc, 
$X$ has nodal singularities in codimension one along the image of $D$ on $X$.
We denote by 
$D^n$
the normalization of $D$ 
and by 
$\tau 
\colon 
D^n \to D^n$
the natural gluing involution on the double locus.

Let $X$ be a deminormal scheme and let $\mathcal F$ be an integrable distribution on $X$. We say that $\mathcal F$ is {\bf semi-log canonical (slc) (resp. semi-log terminal (slt))} provided
that $\mathcal F^n$ is log canonical (resp. log terminal)
where $\mathcal F^n:=n^\ast \mathcal F$ is the pullback to the normalisation $n \colon X^n \to X$.

These definitions are equally valid for more general schemes, for instance, in the setting of \cite[Definition 1.5]{Kollar13}.

\subsection{Well-formed integrable distributions}
As noted, we will need to work in the generality of integrable distributions on varieties equipped with a boundary divisor.
Frequently this boundary divisor will differ from the divisor associated 
to the integrable distribution, however, it is often necessary to impose 
a compatibility condition between these two divisors which we now describe.

Consider a triple $(X, \Delta, \mathcal F)$
where $X$ is a deminormal variety, $\Delta$ is a Mumford divisor and $\mathcal F$ is an integrable distribution on $X$.
Let $n\colon X^n \rightarrow X$ be the normalisation, let $(\tilde{\mathcal F^n}, \Gamma)$ be the associated foliated pair to $n^{-1}\mathcal F$ and let $\Delta^n = n^*\Delta+D$ where $D$ is the pre-image of the double locus of $X$.

We say that $(X, \Delta, \mathcal F)$ is {\bf well-formed} if 
$\Gamma \ge \Delta^n_{{\rm non-inv}}$,
where invariance is considered with respect to $\tilde{\mathcal F^n}$.

\subsection{Locally stable families of integrable distributions}
\label{s_locally_stable_def}
We recall the definition of a locally stable family of pairs $(X, \Delta) \rightarrow T$ (see \cite[Definition-Theorem 4.7]{modbook} or \cite[\S 6.22]{modbook}). 
There are many issues surrounding the correct definition of a locally stable family when $T$ is not reduced and $\Delta$ is allowed to have arbitrary coefficients, but for our purposes it will be enough to consider the case where the coefficients of $\Delta$ are all $=1$ in which case the situation is simpler.

Consider the following set up.
Let 
\begin{enumerate}
    \item $f\colon X \rightarrow T$ be a flat morphism of finite type with deminormal fibres, and
    \item $\Delta = \sum D_i$ where $D_i$ is a reduced relative Mumford divisor and $D_i \to T$ is flat with divisorial subschemes as fibres.
\end{enumerate}

We say that
$f\colon (X, \Delta = \sum D_i) \rightarrow T$ is {\bf locally stable} provided
that $K_{X/T}+\Delta$ is $\mathbb Q$-Cartier
and $(X_t, \Delta_t)$ is semi-log canonical for all 
closed points $t \in T$ (we refer to \cite[\S 2.68]{modbook} for the definition of the relative dualising sheaf).

Now, let $f\colon (X, \Delta, \mathcal F) \rightarrow T$ be a flat family of integrable distributions and fix an integer $N>0$.  
We say that $f\colon (X, \Delta, \mathcal F) \rightarrow T$
is {\bf locally stable} (resp. {\bf locally stable of index $=N$})
provided 
\begin{enumerate}
\item[(S1)] $(X, \Delta) \rightarrow T$
is locally stable (in particular $K_{X/T}+\Delta$ is $\mathbb Q$-Cartier);

    \item[(S2)] $K_{\mathcal F}$
is $\mathbb Q$-Cartier (resp. $NK_{\mathcal F}$ is Cartier); 

\item[(S3)] for all $j, k \in \mathbb Z$, $\mathcal O(jK_{\mathcal F}+k(K_{X/T}+\Delta))$ commutes with base change, i.e., if $S \rightarrow T$ is a morphism then $\mathcal O_X(jK_{\mathcal F}+k(K_{X/T}+\Delta))_S \cong  \mathcal O_{X_S}(jK_{\mathcal F_S}+k(K_{X_S/S}+\Delta_S))$; 

\item[(S4)] for all closed points $t \in T$, $\mathcal F_t$
is semi-log canonical; and 

\item[(S5)] for all closed points $t \in T$, $(X_t, \Delta_t, \mathcal F_t)$ is well-formed.
\end{enumerate} 

We say that a family of integrable distributions
is {\bf stable} (resp. {\bf stable of index} $=N$) if in addition the following holds:
\begin{enumerate}
    \item[(S6)] there exists $\eta \in \mathbb R_{>0}$ such that $K_{\mathcal F}+\epsilon(K_{X/T}+\Delta)$ is $f$-ample for all $0< \epsilon < \eta$.
\end{enumerate}

If $f\colon (X, \Delta, \mathcal F) \to T$ is locally stable (resp. stable) where $T$
is a point, then we will say that $(X, \Delta , \mathcal F)$ is a locally stable (resp. stable) foliated triple.

\begin{proposition}
\label{prop_threshold}
    Suppose that $(X, \Delta, \mathcal F)$ is stable of index $=N$.  Then 
    \begin{enumerate}
        \item $K_{\mathcal F}$ is nef; and 
        \item    $(2\dim X+1)NK_{\mathcal F}+(K_X+\Delta)$ is ample.
    \end{enumerate}
\end{proposition}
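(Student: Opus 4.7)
The plan is to deduce both statements from the ampleness threshold (S6) together with the Cone Theorem for slc pairs. For (1), I would observe that by (S6) the class $K_\mathcal{F} + \epsilon(K_X+\Delta)$ is ample, hence nef, for every $0 < \epsilon < \eta$; as the nef cone is closed, letting $\epsilon \to 0^+$ places $K_\mathcal{F}$ in the nef cone.

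For (2), set $d := \dim X$ and $D := (2d+1)NK_\mathcal{F} + (K_X+\Delta)$. I would apply Kleiman's criterion by verifying $D \cdot C > 0$ for every nonzero $[C] \in \overline{NE}(X)$. The Cone Theorem for slc pairs furnishes the decomposition
\[
\overline{NE}(X) = \overline{NE}(X)_{K_X+\Delta \geq 0} + \sum_i \mathbb{R}_{\geq 0}[C_i],
\]
where each $C_i$ is an integral rational curve generating a $(K_X+\Delta)$-negative extremal ray with length bound $-(K_X+\Delta)\cdot C_i \leq 2d$. For such a $C_i$, two subcases arise: either $K_\mathcal{F}\cdot C_i > 0$, in which case the Cartier-ness of $NK_\mathcal{F}$ and the integrality of $C_i$ force $NK_\mathcal{F}\cdot C_i$ to be a positive integer, yielding $D\cdot C_i \geq (2d+1) - 2d = 1$; or $K_\mathcal{F}\cdot C_i = 0$, in which case ampleness of $K_\mathcal{F}+\epsilon(K_X+\Delta)$ forces $(K_X+\Delta)\cdot C_i > 0$ and hence $D\cdot C_i > 0$. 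For a nonzero class $C_0 \in \overline{NE}(X)_{K_X+\Delta \geq 0}$, both $K_\mathcal{F}\cdot C_0$ and $(K_X+\Delta)\cdot C_0$ are non-negative by (1), and they cannot simultaneously vanish (else (S6) would give $(K_\mathcal{F}+\epsilon(K_X+\Delta))\cdot C_0 = 0$), so $D \cdot C_0 > 0$. Combining these contributions over the cone decomposition yields $D \cdot C > 0$ for every nonzero class, and Kleiman's criterion gives the ampleness.

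The main technical input to verify is the Cone Theorem with length bound $2d$ in the slc (possibly non-normal) setting, which on surfaces is classical via Fujino's extension of Kawamata's length estimate. A conceptual point the plan illuminates is that the coefficient $(2d+1)$ is tight: it is precisely the smallest integer strictly exceeding the extremal length bound $2d$, so that the integer lower bound $NK_\mathcal{F}\cdot C_i \geq 1$ coming from the index condition (S2) can dominate the maximal negative contribution $2d$ from $(K_X+\Delta)\cdot C_i$ on an extremal ray. This explains why fixing the Cartier index $N$ is essential for this ampleness statement, in contrast to the theory of stable varieties, where an analogous index bound is controlled by the fibrewise volume.
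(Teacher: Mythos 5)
Your proof is correct and follows essentially the same route as the paper: part (1) by the closedness of the nef cone, and part (2) by combining the Cone Theorem's length bound $-(K_X+\Delta)\cdot C \leq 2\dim X$ on extremal rays with the integrality of $NK_{\mathcal F}\cdot C$. The paper differs only in two cosmetic ways: it first passes to the normalisation of $X$, and it phrases the ampleness argument as a proof by contradiction (assume a bad extremal ray exists) rather than directly verifying positivity on the cone decomposition; both ultimately reduce to the same inequality $(2\dim X+1)NK_{\mathcal F}\cdot C \geq 2\dim X + 1 > -(K_X+\Delta)\cdot C$.

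One small point worth tidying: in your case $K_{\mathcal F}\cdot C_i = 0$, the conclusion $(K_X+\Delta)\cdot C_i > 0$ contradicts the hypothesis that $C_i$ generates a $(K_X+\Delta)$-negative extremal ray, so this case is actually vacuous; as written, it reads as though the case occurs and contributes positively, which is a logically valid but confusingly phrased vacuous implication. Your closing observation on the tightness of the coefficient $2\dim X + 1$ and its role in forcing the index bound $N$ is a nice point that the paper does not spell out.
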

\begin{proof}
    Without loss of generality we may replace $X$ by its normalisation and so may assume that $X$ is normal.

Since $mK_{\mathcal F}+(K_X+\Delta)$
is ample for all $m \gg 0$ we deduce that 
\begin{itemize}
    \item $K_{\mathcal F}$ is nef (which proves item (1)); and
    \item if $\gamma \in \overline{NE}(X)$ is any non-zero curve class such that $(K_X+\Delta)\cdot \gamma\le 0$
    then $K_{\mathcal F}\cdot \gamma >0$.
\end{itemize}

   Now, suppose for sake of contradiction that $(2\dim X+1)N K_{\mathcal F}+(K_{X}+\Delta)$ is not ample.
   By Kleiman's criterion there exists an extremal ray $R$
   on which $(2\dim X+1)N K_{\mathcal F}+K_{X}+\Delta$
   is non-positive.  By our previous two points we see that this is only possible if $R$ is  $K_{X}$-negative and $K_{\mathcal F}$-positive.
   
   The Cone Theorem 
\cite[Theorem 5.6]{MR4298910} guarantees that there exists a curve $C \subset X$ such that 
   $R = \mathbb R_+[C]$ and $0<-(K_{X}+\Delta)\cdot C \leq 2\dim X$.  Since $K_{\mathcal F}\cdot C>0$ we have $(2\dim X+1)N K_{\mathcal F}\cdot C \geq 2\dim X+1$ and so $((2\dim X+1)N K_{\mathcal F}+(K_{X}+\Delta))\cdot C > 0$, a contradiction. 
\end{proof}

\subsection{Definition of moduli functor}
\label{s_precise_definition}
In this section we define precisely the moduli functors that will interest us.

We first define our notion of equivalence between two families of integrable distributions. 
Consider two families of integrable distributions of rank $r$ over a scheme $T$, 
$f_1\colon (X_1, \Delta_1, \mathcal F_1) \to T$ and 
$f_2\colon (X_2, \Delta_2, \mathcal F_2) \to T$.  Let $\phi_i\colon \Omega^r_{X_i/T} \to L_i$ be the Pfaff field defining the family of foliations $f_i\colon (X_i, \Delta_i, \mathcal F_i) \to T$.
We say that $f_1 \colon (X_1, \Delta_1, \mathcal F_1) \to T$ is equivalent to $f_2 \colon (X_2, \Delta_2, \mathcal F_2) \to T$,  provided there exists an isomorphism of $T$-schemes
$F\colon X_1 \to X_2$ such that 
\begin{itemize}
    \item $F^*(\Delta_2) = \Delta_1$; and 
    \item there is an isomorphism of divisorial sheaves $\psi\colon F^*L_2 \to L_1$
    making the following diagram commute
    \begin{center}
        \begin{tikzcd}
F^*\Omega^r_{X_2/T} \arrow[r, "F^*\phi_2"] \arrow[d, "F^*"] & F^*L_2 \arrow[d, "\psi"] \\
\Omega^r_{X_1/T} \arrow[r, "\phi_1"] & L_1
        \end{tikzcd}
    \end{center}
    where $F^*\colon F^*\Omega^r_{X_2/T} \to \Omega^r_{X_1/T}$ is the natural isomorphism.
\end{itemize}

Fix $d, r, N \in \mathbb Z_{>0}$ and $v \in \mathbb R_{>0}$. 
Let $T$ be a $\mathbb C$-scheme and let $f\colon (X, \Delta, \mathcal F) \to T$
be a flat family of integrable distributions over $T$.
Consider the following conditions:
\begin{enumerate}
    \item[(M1)] $f\colon (X, \Delta, \mathcal F) \to T$ is a stable family of integrable distributions of rank $r$; 
    \item[(M2)] $\dim (X/T) = d$;
    \item[(M3)] $\Delta$ is a reduced divisor;
    \item[(M4)] $NK_{\mathcal F}$ is Cartier; and 
    \item[(M5)] for all $t \in T$ 
    \begin{align*}{\rm vol}(K_{X_t}+\Delta_t+5NK_{\mathcal F_t})=v.\end{align*}
\end{enumerate}

We define
\begin{align*}
\mathcal M^{d, r}_{N, v}(T) = \{(X, \Delta, \mathcal F) \to T: 
\text{flat families of integrable } \\
\text{distributions satisfying conditions 
(M1) - (M5) above}\}/\sim
\end{align*}
where $\sim$ is the notion of equivalence established above.
Similarly, we define 
\begin{align*}
\mathcal M^{d, r}(T) = \{(X, \Delta, \mathcal F) \to T: 
\text{flat families of integrable } \\
\text{distributions satisfying conditions 
(M1) - (M3) above}\}/\sim
\end{align*}

The following proposition guarantees that our proposed moduli functor is, in fact, a functor.

\begin{proposition}
\label{prop_base_change}
Let \begin{align*}f\colon (X, \Delta, \mathcal F) \rightarrow T\end{align*}
be a locally stable (resp. stable) family of integrable distributions.  Let $S \rightarrow T$ be a morphism and let $X_S = X \times_TS$ with projection $p\colon X_S \to X$.

Then there is a well defined base change family of integrable distributions
\begin{align*}f_S\colon (X_S, \Delta_S, \mathcal F_S) \rightarrow S\end{align*}
which is a locally stable (resp. stable).
Moreover, if $(X, \Delta, \mathcal F) \to T$ satisfies conditions (M1)-(M5) above, 
then $(X_S, \Delta_S, \mathcal F_S) \to S$ satisfies conditions (M1)-(M5).
\end{proposition}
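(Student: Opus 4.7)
The plan is to construct the base-change family $(X_S, \Delta_S, \mathcal F_S) \to S$ by pulling back the defining data of $\mathcal F$ along $p\colon X_S \to X$, and then to verify each of the conditions (S1)-(S6) and (M1)-(M5) in turn, using that every such condition is either a base-change property of a distinguished sheaf or a fibre-wise property unchanged under the identification of fibres induced by base change.

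Concretely, I first define the Pfaff field for $\mathcal F_S$. Let $L = \mathcal O_X(K_{\mathcal F})$ with defining map $\phi\colon \Omega^r_{X/T} \to L$, set $L_S := p^*L$, and use the natural base-change isomorphism $p^*\Omega^r_{X/T} \cong \Omega^r_{X_S/S}$ for relative Kähler differentials to pull $\phi$ back to a morphism $\phi_S\colon \Omega^r_{X_S/S} \to L_S$. Flatness of $L_S$ over $S$ follows from flatness of $L$ over $T$, and for any $s \in S$ lying over $t \in T$ the fibre map $(\phi_S)_s$ is identified with $\phi_t$, hence defines an integrable distribution on $(X_S)_s \cong X_t$. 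The generic-point integrability condition is automatic in the $r=1$ case of primary interest, and in general follows from the fact that generic points of $X_S$ lie over generic points of $X$ when $S$ is reduced. Setting $\Delta_S$ to be the relative Mumford pullback of $\Delta$ gives a reduced relative Mumford divisor on $X_S$, since each component of $\Delta$ is flat over $T$ with reduced divisorial fibres.

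I would then check each stability condition. Condition (S1) is the standard base-change property of locally stable families of pairs with reduced boundary. (S2) is immediate since $N K_{\mathcal F_S} = p^*(N K_{\mathcal F})$ and pullbacks of Cartier divisors are Cartier. For (S3), one invokes transitivity: for any $S' \to S$ with composition $S' \to T$, the pullback from $X$ of $\mathcal O_X(jK_{\mathcal F}+k(K_{X/T}+\Delta))$ to $X_{S'}$ agrees by hypothesis with $\mathcal O_{X_{S'}}(jK_{\mathcal F_{S'}}+k(K_{X_{S'}/S'}+\Delta_{S'}))$, and this pullback factors through $X_S$, yielding the desired base-change compatibility over $S$. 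Conditions (S4), (S5), and the volume constancy in (M5) are fibre-wise and preserved because $(X_S)_s \cong X_t$ as foliated triples; (M2) and (M4) are likewise immediate. The ampleness condition (S6) is preserved with the same $\eta$, as $f$-ampleness of $K_{\mathcal F}+\epsilon(K_{X/T}+\Delta)$ descends to $f_S$-ampleness under arbitrary base change.

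The step requiring the most care is (S3), because it involves divisorial sheaves, which are only locally free in codimension one, and one must ensure that the identification $L_S = p^*L$ agrees as a divisorial sheaf (and not just in codimension one) with $\mathcal O_{X_S}(K_{\mathcal F_S})$, so that the relative Mumford divisor $K_{\mathcal F_S}$ is well-defined and satisfies the expected compatibilities. This is precisely what the original hypothesis (S3) for $(j,k)=(1,0)$ enforces, and combined with its transitivity along $X_{S'} \to X_S \to X$ it yields the full base-change property for all $(j,k)$ on the family over $S$.
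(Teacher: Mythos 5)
Your approach matches the paper's: pull back the Pfaff field along $p$ via $(\Omega^r_{X/T})_S \cong \Omega^r_{X_S/S}$, note that $L_S = p^*L$ is a flat family of divisorial sheaves, and then split the stability conditions into those that are base-change statements (S1--S3) and those checked fibrewise (S4--S6, M1--M5); your transitivity argument for (S3) is exactly what the paper compresses into ``holds by definition,'' and the moreover part is the same fibrewise observation. The one substantive difference is cosmetic: the paper derives (S2) from the $(j,k)=(N,0)$ instance of (S3), whereas you state $N K_{\mathcal F_S} = p^*(N K_{\mathcal F})$ directly, but this identification itself rests on that same base-change compatibility of $\mathcal O(N K_{\mathcal F})$, so it is the same argument with the order of presentation swapped.

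One inaccuracy to flag: the claim that ``generic points of $X_S$ lie over generic points of $X$ when $S$ is reduced'' is false. For instance, let $X = \mathbb{A}^2 \to T = \mathbb{A}^1$ be the projection and $S = \{0\} \hookrightarrow T$; then $X_S$ is the fibre over $0$, whose generic point maps to a codimension-one point of $X$, not a generic point. The actual reason condition~(2) in the definition of a flat family of integrable distributions becomes redundant when $S$ is reduced is the paper's own remark after that definition: generic points of $X_S$ map to generic points of $S$, whose local rings are fields, so near such a point the relative and absolute pictures coincide. For general rank $r$ with non-reduced $S$ neither your proof nor the paper's explicitly verifies condition~(2), but since the moduli statements concern $r=1$ (where the condition is vacuous) this is not a material gap---just be aware that the fact you invoked to cover the general case is not correct.
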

\begin{proof}
We first recall that the base change of a locally stable family of varieties is again locally stable, see \cite[Theorem 3.2]{modbook}.  

We next claim that 
 the base change of a flat family of integrable distributions is again a
flat family of integrable distributions.
Indeed, note that $L_S = p^*L$ is a flat family of divisorial sheaves
(see \cite[Definition 3.25]{modbook})
and $(\Omega^r_{X/T})_S \cong 
\Omega^r_{X_S/S}$.
Thus we have a natural morphism $\Omega^r_{X_S/S} \rightarrow L_S$
which defines our base change flat family of integrable distributions, 
$(X_S, \mathcal F_S) \rightarrow S$.

To see that $(X_S, \Delta_S, \mathcal F_S) \to S$ is locally stable, observe that 
Condition (S1) holds by what has already been observed, Condition (S3) holds by definition and Condition (S2) is implied by Condition (S3) and the fact that $K_{\mathcal F}$ is $\mathbb Q$-Cartier.  Conditions (S4) and (S5) (resp. Conditions (S4), (S5) and (S6)) hold because they may be checked fibrewise.

The moreover part follows because Conditions (M2)-(M5) may all be checked fibrewise.
\end{proof}

\section{Boundedness of foliated pairs in dimension two}
\label{s_boundedness}

The goal of this section is to prove some boundedness results for foliations that will be used in the construction of the moduli functor that we proposed in the previous section.
Our proof will utilise the notion and theory of generalised pairs; the reader interested in more details and results than those we provide here may want to refer to, for example, 
\cite{birkar-zhang}.

 \subsection{Notions of boundedness}
Let us recall the following basic definitions of boundedness for pairs and foliations.

\begin{definition}
\label{def.boundedness}
    A bounded family of proper two dimensional couples 
    (resp. pairs, log pairs) 
    is the datum of a couple 
    (resp. a pair, a log pair) 
    $(\mathcal X, \Delta_{\mathcal X})$
    together with a proper flat morphism 
    $h 
    \colon 
    \mathcal X 
    \to 
    T$
    of finite type varieties such that 
    $\Delta_{\mathcal X}$
    contains no fiber of 
    $h$, 
    and for all 
    $t \in T$, 
    $(\mathcal X_t, \Delta_{\mathcal X, t})$
    is a couple
    (resp. a pair, a log pair), 
    where 
    $\Delta_{\mathcal X, t} := \Delta_{\mathcal X}\vert_{\mathcal X_t}$.

    A bounded family of integrable distributions is the datum of a triple 
    $(\mathcal X, \Delta_{\mathcal X},   \mathcal G)$
    together with  a proper flat morphism with deminormal fibres
        $h \colon \mathcal X \to T$ such that
    \begin{itemize}  
        \item $(\mathcal X, \Delta_{\mathcal X}) \to T$
        is a bounded family of couples; and

        \item $(\mathcal X, \mathcal G) \to T$ is a flat family of integrable distributions of rank $r$.

    \end{itemize}

\end{definition}

\begin{definition}
    We say that a collection 
    $\mathfrak D$
    of two-dimensional proper foliated triples is 
    log bounded (resp. strongly log bounded)
    if there exists a bounded family of normal foliated triples 
    $(\mathcal X, \Delta_{\mathcal X}, \mathcal G)$
    such that for all foliated triples
    $(X, \Delta, \mathcal F) \in \mathfrak D$, 
    there exists 
    $t \in T$
    and an isomorphism
    $\psi \colon \mathcal X_t \to X$
    such that 
    $\psi^\ast \mathcal F=\mathcal G_t$, 
    and 
    $\psi_\ast {\rm Supp}(\Delta_{\mathcal X, t})={\rm Supp}(D)$
    (resp.
    $\psi_\ast \Delta_{\mathcal X, t}=D$).

\end{definition}

Let us introduce also the necessary definition of boundedness for generalized pairs.

\begin{definition}
\label{def.boundedness.gen.pair}
    We say that a collection 
    $\mathfrak E$
    of two-dimensional projective deminormal
    generalized pairs is log bounded (resp. strongly log bounded) if there exists a bounded family of deminormal couples
    $(\mathcal X, \Delta_{\mathcal X})$
    such that for all generalized pairs 
    $(X, \Delta, M) \in \mathfrak E$, 
    there exists 
    $t \in T$
    and an isomorphism
    $\psi \colon \mathcal X_t \to X$
    such that 
    $\psi_\ast {\rm Supp}(\Delta_{\mathcal X, t})={\rm Supp}(\Delta)$
    (resp.
    $\psi_\ast \Delta_{\mathcal X,t}=\Delta$). 
    Moreover, we require the existence of a relatively ample divisor $\mathcal H$ on $\mathcal X$ and a positive integer $C$, independent of $t$,  
    such that 
    $\mathcal H\vert_{\mathcal X_t} \cdot \psi^\ast M \leq C$.
\end{definition}

\subsection{Boundedness results}
We are now ready to prove the main boundedness results of this section.

We first state a general boundedness result for certain classes of generalized pairs on surfaces which we will prove at the end of this subsection.
We then explain how to deduce an associated boundedness result for minimal models of foliations that are polarised once we perturb the foliated canonical divisor in the direction of the canonical divisor of the underlying variety.

\begin{theorem}
\label{boundedness.glc.pairs.thm.finite.coeff}
Fix 
$v \in \mathbb Q_{>0}$, 
$N \in \mathbb Z_{>0}$, 
and 
$I \subset \mathbb Q \cap [0, 1]$
a finite set.
\\
The set
$\setgpairsdcc{lc}$
of generalized pairs
$(X,  \Delta, M)$ 
such that
\begin{enumerate}
\item 
\label{cond.normal.thm.bound}
$X$ 
is a normal projective surface, 
\item 
$\Delta$ 
is a Weil divisor on 
$X$ 
whose coefficients are in 
$I$,
\item 
$M$ is a nef divisor
 
\item
\label{cond.index.thm.bound}
$NM$ is Cartier, 
\item 
$(X, \Delta, M)$ is generalized log canonical,
where the moduli b-divisor coincides with the Cartier closure of $M$,  

\item 
\label{cond.nef.thm.bound}
there exists $\eta \in \mathbb R_{>0}$ such that 
$M+\epsilon (K_X+\Delta)$ is ample for all $0< \epsilon < \eta$, and
\item 
\label{cond.vol.thm.bound}
${\rm vol}(K_X+\Delta+5NM)=v$

\end{enumerate}
is bounded.
\end{theorem}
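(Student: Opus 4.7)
The plan is to reduce the statement to known boundedness theorems for two-dimensional generalized log canonical pairs polarised by an ample $\mathbb{Q}$-divisor of fixed volume and bounded Cartier index, and then to extract the intersection bound demanded by Definition~\ref{def.boundedness.gen.pair}. The central observation is that conditions (\ref{cond.normal.thm.bound})--(\ref{cond.vol.thm.bound}) collectively force the divisor $H := K_X+\Delta+5NM$ to be ample on every $(X,\Delta,M) \in \setgpairsdcc{lc}$.

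First I would establish the ampleness of $H$ following the pattern of Proposition~\ref{prop_threshold}. Condition~(\ref{cond.nef.thm.bound}) rephrases as: $(K_X+\Delta)+tM$ is ample for all sufficiently large $t>0$, which together with the nefness of $M$ ensures $M \cdot C > 0$ for every curve $C$ with $(K_X+\Delta)\cdot C \leq 0$. The cone theorem in dimension two, cf.~\cite[Theorem 5.6]{MR4298910}, guarantees that every $(K_X+\Delta)$-negative extremal ray of $\overline{NE}(X)$ is spanned by a curve $C$ with $0 < -(K_X+\Delta)\cdot C \leq 4$; since $NM$ is Cartier by (\ref{cond.index.thm.bound}) and $M \cdot C > 0$, one has $5NM \cdot C \geq 5 > -(K_X+\Delta)\cdot C$, and hence $H \cdot C > 0$. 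On $(K_X+\Delta)$-non-negative extremal rays, $H$ is strictly positive by nefness of $M$ together with the ampleness of $(K_X+\Delta)+tM$ for some large $t$. Kleiman's criterion then yields that $H$ is ample.

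Second, I would reinterpret the problem as boundedness of the generalized pair $(X,\Delta,5NM)$: its moduli part $5NM$ is now Cartier by~(\ref{cond.index.thm.bound}), its boundary has coefficients in the finite set $I$, it is glc by condition~(5), and the polarising divisor $K_X+\Delta+5NM = H$ is ample of fixed volume $v$ by~(\ref{cond.vol.thm.bound}) and the previous step. Appealing to boundedness results for two-dimensional glc pairs with DCC boundary coefficients, bounded Cartier index of the moduli part, and bounded volume (see \cite{birkar-zhang} and the references therein), one obtains a bounded family of couples $(\mathcal X, \Delta_{\mathcal X}) \to T$ parametrising the underlying pairs $(X,\Delta)$, together with a relatively ample polarisation $\mathcal H$ on $\mathcal X$ whose restriction to each fibre is numerically equivalent to a fixed multiple of the corresponding $H$. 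Finally, to verify the intersection bound in Definition~\ref{def.boundedness.gen.pair}, I would use $5NM = H - (K_X+\Delta)$ to compute
\[ 5N \, (H \cdot M) \;=\; H^2 \;-\; H \cdot (K_X+\Delta) \;=\; v \;-\; H \cdot (K_X+\Delta), \]
and observe that on the bounded family the intersection $\mathcal H|_{\mathcal X_t} \cdot (K_{\mathcal X_t}+\Delta_{\mathcal X_t})$ is uniformly bounded, so the equation yields a universal constant $C$ bounding $\mathcal H|_{\mathcal X_t} \cdot \psi^\ast M$ from above.

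The main obstacle I expect is the second step, namely locating and, if necessary, adapting the precise boundedness statement for glc surface pairs with fixed volume and moduli part of bounded Cartier index. Analogous results for klt generalized pairs are due to Birkar, but the glc refinement requires care: it is precisely hypothesis~(\ref{cond.index.thm.bound}) on the Cartier index of $M$ that prevents the pathologies in which the moduli part degenerates in a limit of the family, a phenomenon which cannot occur for the canonical divisor of the underlying variety but is genuinely possible for the moduli part of a generalized pair, as highlighted in Example~\ref{example_index}.
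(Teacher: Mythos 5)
Your Step 0 (ampleness of $H = K_X+\Delta+5NM$ via the Cone Theorem) and your final intersection computation match the paper's Step 0 and Step 6, respectively; the intersection-bound calculation you give via $5N(H\cdot M) = v - H\cdot(K_X+\Delta)$ is in fact a cleaner version of what the paper does there.

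The gap is in your middle step, and it is not merely a matter of ``locating and, if necessary, adapting'' a reference. You cite \cite{birkar-zhang} (and gesture at Birkar's results) for boundedness of two-dimensional glc generalized pairs with bounded moduli Cartier index and fixed volume, but no such off-the-shelf boundedness theorem exists to invoke here: establishing it is precisely the content of the theorem. The closest available input is \cite[Prop.~5.2, Thm.~1.3]{bir.2103}, which yields only \emph{birational} boundedness together with a DCC/finiteness statement for volumes, and the paper must then do genuine work to convert this into actual boundedness. Concretely, the paper's Steps 1--5 do the following: extract a bounded log-smooth family of snc birational models $(X',\Sigma')$; show via the negativity lemma and \cite[Lemma 2.17]{bir.2103} that the resulting map $X'\dashrightarrow X$ is in fact a birational \emph{morphism}; run a $(K_{X'}+\Delta'+5\widetilde{M}_{X'})$-MMP to produce a generalized dlt modification $\overline{X}'\to X$; and then bound the Cartier index of $K_{\overline{X}}+\overline{\Delta}$ uniformly by analysing the determinants of the intersection matrices of the contracted curves on the stratified bounded family of snc models. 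It is this last uniform Cartier-index bound, together with effective very-ampleness \cite{fujino-2016}, that upgrades birational boundedness to boundedness. Your proposal skips all of this and instead asserts the existence of a bounded family equipped with a relatively ample polarisation restricting fibrewise to $H$, which is exactly the assertion that needs proving. As written, the argument is circular at this point.

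A secondary issue: your volume-positivity argument for $H$ on $(K_X+\Delta)$-non-negative rays is slightly informal (``$H$ is strictly positive by nefness of $M$ together with the ampleness of $(K_X+\Delta)+tM$ for some large $t$''), and should really be phrased as in Proposition~\ref{prop_threshold}, namely that if $\gamma\in\overline{NE}(X)\setminus\{0\}$ has $(K_X+\Delta)\cdot\gamma\leq 0$ then $M\cdot\gamma>0$, so $H\cdot\gamma>0$; while on $(K_X+\Delta)$-negative extremal rays the cone theorem bound $0<-(K_X+\Delta)\cdot C\leq 2\dim X$ together with $NM$ Cartier and $M\cdot C>0$ gives $5NM\cdot C\geq 5$, hence $H\cdot C>0$. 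But this is a presentation matter; the substantive gap is the one in the middle step.
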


We postpone the proof of the above theorem until the end of this section, and we first explain its applications to the boundedness of foliations.
The first consequence is the following corollary which can be deduced using the above theorem and effective very ampleness results.

\begin{corollary}
\label{boundedness.lc.triples.cor.finite.coeff}
Fix 
$v \in \mathbb Q_{>0}$, 
$N \in \mathbb Z_{>0}$, 
and 
$I \subset \mathbb Q \cap [0, 1]$
a finite set.
The set
$\setfoliation{lc}$
of triples 
$(X, \Delta, \mathcal F)$ 
that satisfy the following properties
\begin{enumerate}
\item 
\label{cond.normal.cor.bound}
$X$ 
is a normal projective surface, 
\item 
$\Delta$ 
is a Weil divisor on 
$X$ 
whose coefficients are in 
$I$,
\item 
$\mathcal F$ is an integrable distribution on $X$,

\item 
$(X, \Delta)$ is log canonical, 
\item 
$(\mathcal F, 0)$ is log canonical,
\item
\label{cond.index.cor.bound}
$NK_{\mathcal F}$ is Cartier, 
\item 
\label{cond.nef.cor.bound}
there exists $\eta \in \mathbb R_{>0}$ such that 
$K_\mathcal{F}+\epsilon (K_X+\Delta)$ is ample for all $0< \epsilon < \eta$, and
\item 
\label{cond.vol.cor.bound}
${\rm vol}(K_X+\Delta+5NK_{\mathcal F})=v$

\end{enumerate}
is strongly log bounded.
In particular, there exists a 
$m=m(N) \in \mathbb Z_{>0}$, 
such that for any triple 
$(X,\Delta , \mathcal{F}) \in \mathfrak{F}_{v, N}^{\rm adj, lc}$, 
$m(K_X+\Delta+5NK_\mathcal{F})$ is a very ample Cartier divisor.
\end{corollary}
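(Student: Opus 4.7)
The plan is to apply Theorem~\ref{boundedness.glc.pairs.thm.finite.coeff} with the choice $M := K_\mathcal{F}$ to bound the underlying pair $(X, \Delta)$ as a generalized log canonical pair, and then promote this to a bound on the entire foliated triple via effective very ampleness and a quot-husk argument on Pfaff fields.

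First I would verify the hypotheses of Theorem~\ref{boundedness.glc.pairs.thm.finite.coeff} for $(X, \Delta, M := K_\mathcal{F})$. Cartierness of $NM$ is exactly condition (\ref{cond.index.cor.bound}). The nefness of $M$ follows from condition (\ref{cond.nef.cor.bound}): the class $K_\mathcal{F} = \lim_{\epsilon \to 0^+} \big(K_\mathcal{F} + \epsilon(K_X + \Delta)\big)$ lies in the closure of the ample cone. For generalized log canonicity, for any birational morphism $\pi\colon X' \to X$ one writes
\begin{align*}
\pi^*(K_X + \Delta + K_\mathcal{F}) = K_{X'} + \Delta' + \pi^* K_\mathcal{F},
\end{align*}
where $\Delta' = \pi^*(K_X + \Delta) - K_{X'}$ has coefficients $\le 1$ by the log canonicity of $(X, \Delta)$. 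Thus $(X, \Delta, K_\mathcal{F})$ is a generalized log canonical pair whose moduli b-divisor is the Cartier closure of $K_\mathcal{F}$. The remaining hypotheses match directly, and applying the theorem yields a bounded family $(\mathcal{X}, \Delta_{\mathcal{X}}) \to T$ of deminormal couples realizing the pairs $(X, \Delta)$, equipped with a relatively ample divisor $\mathcal{H}$ and a uniform bound $\mathcal{H}\vert_{\mathcal{X}_t} \cdot \psi^* K_\mathcal{F} \le C$.

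Combining this intersection bound with the fixed volume $v$ and the finite coefficient set $I$ uniformly bounds all intersection numbers among $K_X$, $\Delta$, and $K_\mathcal{F}$ on the surfaces of the collection. An effective Matsusaka-type statement for log canonical surface pairs with fixed numerical invariants, together with the standard boundedness of the Cartier index of $K_X + \Delta$ on a bounded family of lc surface pairs with coefficients in a fixed finite set, then produces a single integer $m = m(N, v, I)$ such that $m(K_X + \Delta + 5NK_\mathcal{F})$ is very ample and Cartier on every member of the collection; this yields the ``in particular'' statement and embeds each $(X, \Delta)$ as a fibre of a bounded family inside a fixed projective space via $L := m(K_X + \Delta + 5NK_\mathcal{F})$.

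Finally, to promote the boundedness of the pairs to strong log boundedness of the triples, I would parametrize the Pfaff fields defining $\mathcal{F}$. Each such field is a morphism $\Omega^1_X \to \mathcal{O}_X(K_\mathcal{F})$, and the Hilbert polynomial $\chi\big(\mathcal{O}_X(jK_\mathcal{F} + kL)\big)$ is determined up to finitely many possibilities by the fixed invariants, via Riemann--Roch on the bounded family. Because $K_\mathcal{F}$ is Weil $\mathbb Q$-Cartier but only Cartier after multiplication by $N$, the natural parameter space is the relative quot-husk scheme over $T$ associated to $\Omega^1_{\mathcal{X}/T}$ and the prescribed Hilbert polynomials; this scheme is of finite type over $T$ by the standard construction. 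Integrability is a closed condition on Pfaff fields, and stratifying over the finitely many polynomials then produces the desired bounding family of foliated triples. The main technical step is this last one: controlling the quot-husk scheme in the absence of Cartierness of $K_\mathcal{F}$, and it is precisely the uniform Cartier-index bound imposed by condition (\ref{cond.index.cor.bound}) that makes this part of the argument tractable.
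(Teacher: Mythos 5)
Your plan is correct and shares its first two stages with the paper's argument (apply Theorem~\ref{boundedness.glc.pairs.thm.finite.coeff} to the generalised pair $(X,\Delta, K_\mathcal{F})$, then get a uniform Cartier index for $K_X+\Delta$ and invoke Fujino's effective basepoint-freeness / very ampleness for log canonical surfaces to produce the polarisation $m(K_X+\Delta+5NK_\mathcal{F})$). The hypothesis verification for Theorem~\ref{boundedness.glc.pairs.thm.finite.coeff} is correct, including the nefness-by-limit argument for $K_\mathcal{F}$ and the observation that generalised log canonicity reduces to the log canonicity of $(X,\Delta)$ because the moduli b-divisor is nef.

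Where you diverge is in the last step, promoting boundedness of the underlying pairs to strong log boundedness of the triples. The paper's route is more ``classical'': using a uniform global generation of $\Omega^1_X(m_1 H_X)$ on the bounding family (Serre) together with the short exact sequence $0 \to N_\mathcal{F}^\ast \to \Omega^{[1]}_X \to \mathcal J_Z\mathcal O_X(K_\mathcal{F}) \to 0$, it produces a nonzero section of $\mathcal O_X(K_\mathcal{F}+m_1 H_X)$ for every triple, giving an explicit divisor $D$ with bounded degree against $H_X$; then the set of couples $(X, D)$ is bounded and \cite[Lemma~2.22]{SS23} upgrades this to boundedness of the integrable distributions. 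Your route skips the construction of an explicit divisor and parametrises the Pfaff field $\Omega^1_X \to \mathcal O_X(K_\mathcal{F})$ directly via a relative ${\rm QHusk}$ scheme over the bounding family, exploiting the finiteness of the possible Hilbert polynomials of $\mathcal O_X(jK_\mathcal{F}+kL)$. Both routes are sound: the ${\rm QHusk}$ approach is more self-contained and conceptually direct (the Pfaff field is, after all, the object being parametrised), while the paper's route leans on existing foliation-boundedness machinery. One subtlety worth making explicit in your version: the finiteness of the Hilbert polynomials of $\mathcal O_X(jK_\mathcal{F}+kL)$ is not a consequence of bounded intersection numbers alone --- on singular surfaces, Riemann--Roch for a non-Cartier Weil divisor carries local correction terms at the singularities, and these are only controlled because the underlying surfaces are already bounded and because the Cartier index of $K_\mathcal{F}$ is uniformly bounded by $N$ (condition~(\ref{cond.index.cor.bound})). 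You allude to the Cartier-index bound as the crucial input, which is right, but the Riemann--Roch correction terms should be flagged as the precise reason. Also, the ``effective Matsusaka-type statement'' you invoke is most cleanly supplied by \cite[Theorem~1.1]{fujino-2016} as in the paper; a general Matsusaka statement would need positivity of $K_X+\Delta+5NK_\mathcal{F}$ plus the bounded index, which is exactly what Steps~4--5 of the proof of Theorem~\ref{boundedness.glc.pairs.thm.finite.coeff} deliver.
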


\begin{proof}
		By Theorem~\ref{boundedness.glc.pairs.thm.finite.coeff}, 
		$(X, \Delta, K_\mathcal{F}) 
		\in \setfoliation{lc}$
		which is bounded.
		In particular, by the proof of Theorem~\ref{boundedness.glc.pairs.thm.finite.coeff}, 
		cf. Step 4-5, there exists 
		$k \in \mathbb Z_{>0}$,
		independent of the chosen foliated triple in 
		$\setfoliation{lc}$,
		such that  
		$k(K_X+\Delta)$ is Cartier.
		Thus, the generalized log divisor
		$k(K_X+\Delta+5NK_\mathcal{F})$ 
		is ample and Cartier.
		By
		\cite[Theorem~1.1]{fujino-2016}, 
		there exists a positive integer 
		$m=m(N)$, 
		independent of the choice of triple in 
		$\setfoliation{lc}$, 
		such that 
		$H_X=mk (K_X+\Delta+5NK_\mathcal{F})$
		is very ample of volume 
		$(mk)^2v$.
		That implies that the collection of varieties 
		\begin{align*}
		\mathcal S :=
		\left \{
		X \;
		\middle \vert \;
		X \in \setfoliation{lc}
		\right \}
		\end{align*}
		is bounded.
		
		There exists another positive integer 
		$m_1=m_1(N)$ 
		such that 		
		$\Omega^1_X(m_1H_X)$
		is globally generated:
 		indeed, if 
 		$\mathcal X \to T$ 
 		is a projective family showing the boundedness of 
		$\mathcal S$, 
		we can assume that there exists a relatively ample Cartier divisor 
		$\mathcal H$
		on 
		$\mathcal X$
		such that for any 
		$t \in T$ with 
		$\mathcal X_t \simeq X \in \mathcal S$, 
		$\mathcal H\vert_{\mathcal X_t}$
		is linearly equivalent to $H_X$.
		By Serre's property
		\cite[Theorem II.5.17]{Hartshorne77}, 
		there exists 
		$m_1=m_1(\mathcal X)$ 
		such that 
		$\Omega^1_{\mathcal X/T}(m_1\mathcal H)$
		is relatively globally generated, which proves our claim.
		
		By the previous paragraph and the short exact sequence
		\begin{align*}
		\xymatrix{
		0 \ar[r]
		&
		N_{\mathcal F}^\ast \ar[r]
		&
		\Omega^{[1]}_X \ar[r]
		&
		\mathcal J_Z \mathcal O_X(K_{\mathcal F}) \ar[r]
		&
		0		
		}, 
		\end{align*}
		where 
		$\mathcal J_Z$ 
		denotes the ideal sheaf of the singular locus of 
		$\mathcal F$,
		then we obtain that 
		$H^0(\mathcal O_X(K_\mathcal{F}+m_1 H_X)) \neq 0$.
		At this point we can argue as in the proof of 
		\cite[Theorem 6.1]{SS23}:
		as $\mathcal S$ is bounded, there exists integers 
		$m_2, m_3$ and a positive integer
		$m_4$ such that 
		\begin{align*}
		m_2 \leq K_X \cdot H_X \leq m_3
		\quad 
		\text{and}
		\quad 
		0< \Delta \cdot H_X \leq m_4.
		\end{align*}
		In turn, that implies the existence of a positive integer $m_5$
		such that 
		$K_\mathcal{F} \cdot H_X \leq m_5$.
		Hence, there exists an effective Weil divisor
		$D \in 
		\vert K_\mathcal{F}+m_1 H_X)\vert$
		such that 
		$0< D \cdot H_X \leq m_5+m_1(mk)^2v$.
		Thus, the set of couples
		\begin{align*}
		\left \{
		(X, D) \
		\middle \vert \
		X \in \mathcal S, \ 
		D \in 
		\vert 
        K_\mathcal{F}+m_1 H_X
        \vert
		\right \}
		\end{align*}
		is bounded. 
		All the ingredients are now in place to directly apply
		\cite[Lemma 2.22]{SS23}
		to obtain the strongly log boundedness of  
		$\setfoliation{lc}$
		--
		let us note that while 
		\loccit\
		is stated for families of 	
		foliations, it holds equally well for families of integrable distributions).		
	\end{proof}

In turn, 
Corollary 
\ref{boundedness.lc.triples.cor.finite.coeff}
can be used to prove a boundedness statement that includes also slc foliated two-dimensional triples.

\begin{corollary}
\label{boundedness.slc.triples.cor}
Fix 
$v \in \mathbb Q_{>0}$, 
$N \in \mathbb Z_{>0}$, 
and 
$I \subset \mathbb Q \cap [0, 1]$
a finite set.
The set
$\setfoliation{slc}$
of triples 
$(X,  \Delta, \mathcal F)$ 
such that
\begin{enumerate}
\item 
$X$ is a deminormal projective surface; 
\item 
$\mathcal F$ is an integrable distribution on $X$;
\item 
$(X, \Delta)$ is semi log canonical;
\item 
$\mathcal F$ is semi log canonical;
\item 
$NK_{\mathcal F}$ is Cartier;
\item 
there exists $\eta \in \mathbb R_{>0}$ such that 
$K_\mathcal{F}+\epsilon K_X$ is ample for all $0< \epsilon < \eta$; and

\item 
\label{cond.vol.thm.slc.bound}
${\rm vol}(K_X+\Delta+5NK_{\mathcal F})=v$
\end{enumerate}
is strongly log bounded.
\end{corollary}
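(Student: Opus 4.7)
The plan is to reduce this corollary to its normal (log canonical) counterpart, Corollary \ref{boundedness.lc.triples.cor.finite.coeff}, by passing to the normalization and then using a bounded gluing argument to reassemble the semi-log canonical objects.

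First I would pass to the normalization. Given a triple $(X, \Delta, \mathcal F) \in \setfoliation{slc}$, let $n \colon X^n = \sqcup_i X^n_i \to X$ be the normalization and write $D = \sum_i D_i$ for the conductor, so that by definition of slc one has that $(X^n, n^\ast\Delta + D)$ is log canonical and $\mathcal F^n := n^{-1}\mathcal F$ is log canonical. Since $n$ is finite, $NK_{\mathcal F^n}$ is Cartier on each component and, for $0 < \epsilon \ll 1$, $K_{\mathcal F^n_i} + \epsilon(K_{X^n_i} + n^\ast\Delta\vert_{X^n_i} + D_i)$ remains ample by the behaviour of ampleness under finite pullback. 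Moreover, the coefficients of $n^\ast \Delta + D$ lie in the finite set $I \cup \{1\}$.

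Next, I would bound the normalized components. Because volumes are additive under finite pullback (and $K_\mathcal{F}$, $K_X+\Delta$ behave well under $n^\ast$ at the codimension-one nodal locus), we have
\begin{equation*}
\sum_i {\rm vol}\bigl(K_{X^n_i} + n^\ast\Delta\vert_{X^n_i} + D_i + 5NK_{\mathcal F^n_i}\bigr) = v.
\end{equation*}
Each summand is bounded above by $v$ and, as a consequence of effective generation and of the Noetherian structure established in the proof of Corollary \ref{boundedness.lc.triples.cor.finite.coeff}, the possible values form a finite set bounded below away from $0$; in particular, the number of components of $X^n$ is uniformly bounded by some $M = M(v, N, I)$. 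Applying Corollary \ref{boundedness.lc.triples.cor.finite.coeff} for each admissible volume value produces a single bounded family of lc foliated triples that contains every possible $(X^n_i, n^\ast\Delta\vert_{X^n_i} + D_i, \mathcal F^n_i)$.

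Then I would bound the gluing data. By Koll\'ar's theory of gluing for slc varieties (see \cite[\S 5]{Kollar13}), an slc pair is determined by its normalization together with an involution $\tau \colon D^\nu \to D^\nu$ on the normalization of the conductor that preserves the different. Since the normalized components live in a bounded family, the curves $D^\nu_i$, together with their finitely many marked points coming from intersections with other boundary components and with singularities, form a bounded family of pointed curves; the involutions between pairs of such curves compatible with the markings and the different therefore also form a bounded family (a disjoint union of finitely many relative schemes of isomorphisms between bounded families of pointed curves). Finally, to include the foliation, I would observe that an integrable distribution on $X$ corresponds to an integrable distribution on $X^n$ whose Pfaff field descends across the double locus; this compatibility is a closed condition cut out on the bounded parameter space of gluings, hence also bounded. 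Reassembling components with bounded gluings produces a bounded family for $\setfoliation{slc}$ itself.

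The main obstacle I anticipate is the last step: verifying that the compatibility of the Pfaff fields under the gluing involution indeed produces a parameter space of finite type. Technically, one needs to check that the condition ``$\tau$ respects the restrictions of the Pfaff fields from the two sides of the conductor'' is cut out by finitely many equations on the already bounded family of involutions; this should follow from the local description of integrable distributions on nodal surfaces given in Section \ref{s_integrable_distribution}, together with the fact that on each component the sheaf $\mathcal O(NK_{\mathcal F^n})$ is Cartier and varies in a bounded family, so that the comparison across $D^\nu$ is controlled by a bounded amount of data.
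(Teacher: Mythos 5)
Your plan is a genuinely different route from the paper's. The paper does not attempt to parametrize the gluing data at all: after showing in Step~1 that the normalized components live in a strongly log bounded family, the paper's Step~2 establishes a uniform bound $\bar{k}'$, depending only on $N$, $v$, $I$, on the Cartier index of $K_X+\Delta$ \emph{on the non-normal surface $X$ itself}, via a case analysis of strictly slc surface singularities following the classification in \cite{KSB88} (degenerate cusps and their $\mathbb Z/2$-quotients, pinch points, normal crossings, and finite quotients thereof). With a uniformly Cartier $\bar{k}'(K_X+\Delta+5NK_\mathcal{F})$, the paper then applies the effective very-ampleness results of \cite{fujino-2016} directly to $X$, embeds in a fixed projective space, and reduces to a Hilbert/QHusk argument as in the proof of Corollary~\ref{boundedness.lc.triples.cor.finite.coeff}. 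Your gluing approach buys conceptual generality but the paper's route completely sidesteps the need to control the space of gluing involutions and to construct a universal family of quotients.

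There are two real gaps in your argument as written. First, the existence of a bounded family: passing from ``the normalizations and the involutions are each bounded'' to ``the slc surfaces themselves form a bounded family'' requires constructing, over the finite-type parameter space of gluing data, a \emph{flat proper family} of deminormal surfaces whose fibers are the glued quotients. Koll\'ar's quotient theorem (cf.\ \cite[\S5]{Kollar13}) produces the quotient pointwise, but flatness and properness of the resulting family over the base of gluings is a genuine additional assertion that you do not establish; in the paper's approach one never has to face this because $X$ is exhibited directly inside a fixed Hilbert scheme. Second, and more delicately, strong log boundedness here requires a bounded family of \emph{integrable distributions} on deminormal surfaces in the sense of Definition~\ref{def.boundedness}; you would need to show that the glued Pfaff field $\Omega^1_X\to L$ with $L$ a divisorial sheaf on $X$ can itself be put in a flat family over the gluing base, which amounts to a descent problem for $\mathcal O(K_{\mathcal F})$ across the double locus. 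You acknowledge this in your last paragraph but do not resolve it; the paper's Step~2 again makes this moot, since a uniform Cartier index on $X$ for $K_X+\Delta$ (and the hypothesis $NK_{\mathcal F}$ Cartier) gives direct control of $\mathcal O(K_{\mathcal F})$ as a flat family of line bundles without any gluing analysis. If you want to pursue your route, you would need to supply both of these ingredients; as it stands they are the missing keystones, and the paper's Step~2 is precisely the device that avoids needing them.
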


	\begin{proof}
		By Corollary~\ref{boundedness.lc.triples.cor.finite.coeff},
		it suffices to show that the collection of triples
		$(X,  \Delta, \mathcal F) \in 
		\mathfrak{F}^{\rm adj, slc}_{l, v} 
		\setminus
		\mathfrak{F}^{\rm adj, lc}_{l, v}$ is bounded. 
		We write
		$X=\cup_{i=1}^{s_X} X_i$
		as a union its irreducible components, and denote by
		$n \colon X^n \to X$
		the normalization of 
		$X$, 
		$X^n = \sqcup_{i=1}^{s_X} X_i^n$.
		
		Then,
		\begin{align}
			\label{eqn.vol.slc}
			v=
			\vol(K_X+\Delta+5NK_\mathcal{F})=
			\sum_{i=1}^{s_X}
			\vol(K_{X_i^n }+\Delta\vert_{X^n_i}+D_i+5NK_\mathcal{F}\vert_{X_i^n}), 
		\end{align}
		where 
		$D_i$
		denotes the reduced divisor on 
		$X_i^n$
		supported on the divisorial part of the  double locus.
		It follows immediately from the hypotheses of the corollary that 
		$(X_i^n, \Delta\vert_{X^n_i}+D_i, K_\mathcal{F}\vert_{X^n_i})$
		is a stable foliated triple. 
		We also consider 
		$(X_i^n, \Delta\vert_{X^n_i}+D_i, 5NK_\mathcal{F}\vert_{X^n_i})$ 
		as a generalized log canonical pair whose moduli part is the Cartier closure of the nef divisor
		$5NK_\mathcal{F}\vert_{X^n_i}$, cf. Proposition \ref{prop_threshold}.(1).
		
		We now divide the proof into steps, for the reader's convenience.
		
		\medskip
		
		{\bf Step 1}.
		{\it In this step we show that the collection
			\begin{align*}
			\left \{
			(\hat{X}, \hat{\Delta}, \hat{\mathcal{F}}) \
			\middle \vert \
			\substack{
			(\hat{X}, \hat{\Delta}, \hat{\mathcal{F}})
			\text{ is obtained as a component in the normalization}
			\\
			\text{of a triple }
			(X, \Delta, \mathcal F) \in \setfoliation{slc}}
			\right \}
		\end{align*}
		is strongly log bounded.}
		
		By 
		\cite[Theorem 1.3]{bir.2103}, 
		$\vol(K_{X_i^n}+\Delta\vert_{X^n_i}+D_i+5NK_\mathcal{F}\vert_{X_i^n})$
		belongs to a DCC set 
		$V=V(N) \subset \mathbb Q_{>0}$
		which is independent of 
		$i$ 
		and the choice of triple in 
		$\setfoliation{slc}$.
		In view of that,
		\eqref{eqn.vol.slc} 
		implies the existence of a positive integer 
		$C_1=C_1(N, v)$,
		and a finite set 
		$J= J(N, v) \subset \mathbb Q_{>0}$
		such that for all 
		$(X,  \mathcal F, \Delta) \in \setfoliation{slc}$, 
		$s_X \leq C_1$, 
		and
		for all 
		$i=1, \dots, s_X$,
		$K_{X_i^n}+\Delta\vert_{X_i^n}+D_i+5NK_\mathcal{F}\vert_{X_i^n}$
		is ample with
		$\vol(K_{X_i^n}+\Delta\vert_{X_i^n}+D_i+5NK_\mathcal{F}\vert_{X_i^n}) \in J$.
		Hence, by 
		Theorem~\ref{boundedness.glc.pairs.thm.finite.coeff}
		and 
		Corollary 
		\ref{boundedness.lc.triples.cor.finite.coeff}, 
		we can conclude that 
		for all 
		$(X,  \Delta, \mathcal F) \in \setfoliation{slc}$
		and all
		$i=1, \dots, s_X$, 
		$(X^n_i, \Delta\vert_{X^n_i}+D_i+5NK_{\mathcal F}\vert_{X^n_i})$
		belongs to the bounded collection
		\[
		\bigsqcup_{v' \in J}
		\mathfrak{F}_{v', N}^{\rm adj, lc}.
		\]
		Moreover, there exists a positive integer 
		$\bar k$
		such that
		$    
		\bar k (K_{X^n_i}+\Delta\vert_{X^n_i}+D_i+5NK_{\mathcal F}\vert_{X^n_i})
		$
		is Cartier and very ample on 
		$X^n_i$
		--
		cf. Steps 4-5 in the proof of Theorem~\ref{boundedness.glc.pairs.thm.finite.coeff} for the latter.
		Thus, for any 
		$\mathfrak{F}_{v', N}^{\rm adj, lc}$,
		$v' \in J$,
		we can fix a bounding family 
		$(\mathcal X_{v', N}, \mathcal R_{v', N}) \to T_{v', N}$
		together with a relatively very ample line bundle 
		$\mathcal H_{v', N}$
		whose existence is guaranteed by 
		the definition of boundedness applied to 
		$\mathfrak{F}_{v', N}^{\rm adj, lc}$;
		in particular, we may freely assume that
		$\mathcal H_{v', N}$
		restricts to a Cartier divisor linearly equivalent to 
		$\bar k (K_{Y}+\Gamma+5NK_{\mathcal G})$
		at all those fibers that are isomorphic to a triple 
		$(Y, \Gamma, \mathcal G) \in \mathfrak{F}_{v', N}^{\rm adj, lc}$.
		
		\medskip
		
		{\bf Step 2}.
		{\it In this step we show that there exists a positive integer 
		$\bar k'$ such that 
		for all 
		$(X, \Delta, \mathcal F) \in \setfoliation{slc}$, 
		$\bar k'(K_X+\Delta)$ 
		is Cartier. 
		In particular, we also have that 
		$\bar k'(K_X+\Delta+5NK_{\mathcal F})$ is Cartier, 
		by assumption.}
		
		As being Cartier is a local condition, it suffices to consider the possible singularities appearing on triples in 
		$\setfoliation{slc}$, 
		cf. \cite[Theorems 4.24]{KSB88}.
		First of all, as 
		$I \subset \mathbb Q$ 
		is finite, we can find a positive 
		$k_1$ 
		such that for all
		$i \in I$, 
		$k_1i \in \mathbb Z$.
		Given a point 
		$x \in X$, 
		we now show that the Cartier index of 
		$K_X+\Delta$ 
		is bounded by a positive integer 
		$\bar k'$, 
		divisible by 
		$k_1$, 
		and  independent of the choice of
		$(X,\Delta, \mathcal F) \in \mathfrak{F}^{\rm adj, slc}_{l, v}$ and of 
		$x \in X$. 
		By the argument in the previous paragraph, we can take $\bar k'=\bar k$ whenever $X$ is normal at $x$.
		Hence, we only need to analyze the different possible cases in the classification of strictly slc singularities according to the classification contained in \loccit.
		\begin{itemize}

			\item 
		If  $x \in X$ is a 
		$\mathbb Z_2$ 
		quotient of a degenerate cusp (case (iv) of \loccit), 
		it suffices to take 
		$\bar k'=2$, 
		since in that case
		$x \not \in \Delta$
		and a degenerate cusp is Gorenstein.
		To see that $x \not \in \Delta$, it suffices to note that the minimal semi-resolution $\pi \colon X' \to X$ extracts a cycle $E$ of rational curves and $K_{X'}+E=\pi^\ast K_X$.
		Thus, the strict transform of $\Delta$ cannot intersect $E$, as that would violate the semi log canonicity of $K_X+\Delta$.

			\item 
		If $x \in X$ is Gorenstein (case (ii) of \loccit), we only need to consider the cases contained in 
		\cite[Theorem 4.21(i))]{KSB88}, as the others have already been addressed.
		\\
		If $x \in X$ is a pinch point, then it is a Gorenstein singularity,  
		and
		$2k_1\Delta$ is Cartier, since the local Picard group at $x \in X$ is cyclic of order $2$, cf. \cite[Example 4.8]{MR3549172}.
		\\
		If $x \in X$ is a normal crossing point, then it is a Gorenstein singularity, and
		$k_1\Delta$ is Cartier, since the local class group at $x \in X$ is torsion-free, cf. \cite[Example 4.6]{MR3549172}.
	
			\item 
		Finally, we consider the case where $x \in X$ is a semi-log-terminal singularity, see \cite[Theorem 4.23(iii)-(v)]{KSB88}.
		In those cases, the germ of singularity is a finite quotient of a pinch point or of a normal corssing point. 
		The cardinality of the quotient is bounded from above by a positive integer $k_2$, since the normalizations of the components of $X$ are bounded, cf. Step 1: in particular, in a bounded family of klt surface singularities, the order of the possible quotients are bounded.
		Hence, $2k_2k_1(K_X+\Delta)$ is Cartier, by the previous case.
		\end{itemize}
		
To conclude this step, it suffices to define 
$\bar k' := 2k_2k_1$.

\medskip

{\bf Step 3}.
{\it In this step we conclude the proof, showing the strongly log boundedness of the desired collection.}

As for all 
$(X, \Delta, \mathcal F) \in \setfoliation{slc} \setminus \setfoliation{lc}$, 
$\bar k(K_X+\Delta +5NK_{\mathcal F})$
is Cartier, 
\cite[Corollary 1.4]{fujino-2016}
implies that there exists a positive integer
$m'=m'(N, I)$ 
such that 
$m'\bar k(K_X+\Delta +5NK_{\mathcal F})$
is very ample.
Since 
$\vol(m'\bar k(K_X+\Delta +5NK_{\mathcal F})=(m'\bar k)^2v$,  
then
\begin{align*}
	\mathcal S' :=
	\left \{
	X \
	\middle \vert \
	\exists (X, \Delta, \mathcal F) \in \setfoliation{slc}
	\right \}
\end{align*}
is bounded: in fact, 
\[
h^0(\mathcal O_X(m'\bar k(K_X+\Delta +5NK_{\mathcal F})))
\leq 
\sum_{i} 
h^0(\mathcal O_{X_i}(m'\bar k(K_{X_i}+\Delta_i +5NK_{\mathcal F_i})))
\]
and the summation is bounded from above in view of the boundedness of the triples 
$(X_i, \Delta_i, \mathcal F_i)$, cf. Step 1.
To conclude, we can then follow verbatim the last paragraph of the proof of Corollary \ref{boundedness.lc.triples.cor.finite.coeff}.
\end{proof}

We finally present the proof of Theorem~\ref{boundedness.glc.pairs.thm.finite.coeff}.

\begin{proof}[Proof of Theorem~\ref{boundedness.glc.pairs.thm.finite.coeff}]

We divide the proof into steps for the reader's convenience.

\medskip

{\bf Step 0}.
{\it In this step we make standard reductions}.

Note that condition~\eqref{cond.nef.thm.bound} implies that 
$M$ 
is nef and that 
$K_X+\Delta+sM$
is ample for
$s \gg 1$.
As 
$(X, \Delta)$ 
is log canonical, the Cone Theorem 
\cite[Theorem 5.6]{MR4298910} 
and condition~\eqref{cond.index.thm.bound} together imply that 
$K_X+\Delta+ 5NM$ 
is ample.
We denote 
$\widetilde{M}_X:=NM$, which we think of as a nef Cartier b-divisor via its Cartier closure.
Thus, we will indicate by 
$\widetilde{M}_{X'}$ the trace of the Cartier closure of 
$\widetilde{M}_X$
on any birational model 
$X'\dashrightarrow X$.

\medskip

{\bf Step 1.}
{\it In this step we show that the set of elements of 
$\setgpairsdcc{lc}$
is birationally bounded by a family satisfying some additional properties}.

Applying \cite[Proposition 5.2]{bir.2103},
as the volume of 
$K_X+\Delta+\widetilde{M}_X$ 
is fixed, 
there exists a bounded set of couples 
$\mathcal{P}$ 
such that for each 
$(X, \Delta, \widetilde{M}_X) \in \setgpairsdcc{lc}$ 
we can find a 
log smooth log pair 
$(X',\Sigma')\in \mathcal{P}$,
such that  
$\Sigma'$ is reduced,
and there exists a birational map 
$\psi \colon X' \dashrightarrow X$ 
satisfying the following properties:
\begin{itemize}
\item 
$\Sigma' \ge \Delta'$, 
where 
$\Delta' := {\rm Exc}(\psi) + \psi_\ast^{-1} \Delta$;

\item  
the Cartier closure of 
$\widetilde{M}_X$  
descends 
on
$X'$; and, 
\item
$\vol(K_{X'}+\Delta'+5\widetilde{M}_{X'})=v$.
\end{itemize}

\medskip

{\bf Step 2}.
{\it In this step we show that we can assume that 
$\psi$
is a morphism.}

As 
$K_{X}+ \Delta + \widetilde{M}_X$ 
is ample by hypothesis, 
taking a resolution of indeterminacies
\begin{align*}
\xymatrix{
& X'' \ar[dr]^{p} \ar[dl]_{q} & 
\\
X' \ar@{-->}[rr]^{\psi} & & X,}
\end{align*}
it follows that 
\begin{align}
\label{ineq1}
p^\ast (K_{X}+ \Delta+ 5\widetilde{M}_X)
\leq 
q^\ast(K_{X'}+\Delta'+5\widetilde{M}_{X'})
\end{align}
Indeed, 
\begin{align*}
    p^\ast (K_{X}+ \Delta+ 5\widetilde{M}_X) & = 
    K_{X''}+ p_\ast^{-1}\Delta + p^\ast 5\widetilde{M}_X + {\rm Exc}(p)-G_1 
    & [G_1 \geq 0, \text{$p$-exceptional}]
    \\
    & = q^\ast(K_{X'}+\Delta'+5\widetilde{M}_{X'}-q_\ast G_1) - G_2  
    & [G_2 \geq 0, \text{$q$-exceptional}]
\end{align*} 
where the first equality is simply the log pullback formula, whereas the second follows by the negativity lemma, \cite[Lemma 3.39]{KM98},
as 
$q_\ast {\rm Exc}(p) = {\rm Exc}(\psi)$, 
$\Delta'=\psi_\ast^{-1} \Delta +{\rm Exc}(\psi)$, 
and
\begin{align*}
q_\ast (K_{X''}+ p_\ast^{-1}\Delta + p^\ast 5\widetilde{M}_X + {\rm Exc}(p)-G_1)= 
K_{X'}+\Delta'+5\widetilde{M}_{X'}-q_\ast G_1
\end{align*}
together with the fact that 
$K_{X}+ \Delta+ 5\widetilde{M}_{X}$ 
is ample.
By Step 1 we know that 
\begin{align*}
\vol(K_{X'}+\Delta'+5\widetilde{M}_{X'}) = \vol(K_{X}+\Delta+5\widetilde{M}_{X})=v,
\end{align*}
and we have also that  
$a(E, X, \Delta+5\widetilde{M}_X) \ge a(E, X', \Delta'+5\widetilde{M}_{X'})$
for any divisor 
$E$ 
over 
$X$, 
see (\ref{ineq1}).
Hence, we may apply
\cite[Lemma 2.17]{bir.2103}
to show that
$\psi^{-1} \colon 
X \dashrightarrow X'$ 
does not contract any divisor, which in turn implies that 
$(X, \Delta, 5\widetilde{M}_X)$
is the generalised log canonical model of 
$(X', \Delta', 5\widetilde{M}_{X'})$, 
and, thus, that
\begin{align}
\label{eqn.section.rings}
H^0(X, m(K_X+\Delta+5\widetilde{M}_X))=
H^0(X', m(K_{X'}+\Delta'+5\widetilde{M}_{X'})), 
\quad \forall m \in \mathbb Z_{>0}.
\end{align}

\medskip

{\bf Step 3}.
{\it In this step we show that there exists a factorization
\begin{align*}
    \xymatrix{
   X'
    \ar[r]_{s'}
    \ar@/^1.5pc/[rr]^{\psi}
    &
   \overline X'
    \ar[r]_{\phi'}
    &
    X
    }
\end{align*}
and a generalized dlt log pair
$(\overline X', \overline{\Delta}', 5\widetilde{M}_{\overline X'})$, 
where
$\overline \Delta:=s'_\ast \overline \Delta$, 
$\widetilde{M}_{\overline X}:=s'_\ast \widetilde{M}_{\overline X}$
such that 
$\phi'$
is a generalized dlt modification of the generalized log pair 
$(X, \Delta, M)$.}

Running the 
$(K_{X'}+\Delta'+5\widetilde{M}_{X'})$-MMP
starting on 
$X'$ will terminate with a $(K_{X'}+\Delta'+5\widetilde{M}_{X'})$-minimal model, 
i.e., there exists a finite sequence of birational morphisms
\begin{align*}
 \xymatrix{
X' =: X_0 \ar[r]
\ar @/^1.5pc/ [rrrr]^s
&
X_1 \ar[r]
&
\dots \ar[r]
&
X_{n-1} \ar[r]
&
X_n =: \overline X
}   
\end{align*}
that are 
$(K_{X'}+\Delta'+5\widetilde{M}_{X'})$-negative;
moreover, the strict transform 
$K_{\overline X}+ \overline \Delta+ 5\widetilde{M}_{\overline X}$
of 
$K_{X'}+\Delta'+ 5\widetilde{M}_{X'}$
is big and nef.
As each step of the above run of the MMP
is 
$(K_{X'}+\Delta'+ 5\widetilde{M}_{X'})$-negative 
and
$\widetilde{M}_{X'}$
is nef Cartier, then
$\widetilde{M}_{X'}$ descends along each step of the above MMP, i.e., 
\begin{align}
\label{eqn:descent.M}
\widetilde{M}_{X'} = s^\ast \widetilde{M}_{\overline X}, 
\qquad 
\text{where }
\widetilde{M}_{\overline X} := s_\ast \widetilde{M}_{X'}
\end{align}
and 
$\widetilde{M}_{\overline X}$
is nef and Cartier on 
$\overline{X}$.
Moreover, for all non-negative integers 
$m, t$,
\begin{align}
\label{eqn.section.rings2}
H^0(X, 
m(K_X+\Delta+M))=
H^0(\overline X, 
m(K_{\overline X}+ \overline \Delta +\widetilde{M}_{\overline X})).
\end{align}
The Iitaka fibration of 
$K_{\overline X}+ \overline \Delta +s\widetilde{M}_{\overline X}$
is given by a morphism
\begin{align*}
\phi \colon \overline X \to X=
{\rm Proj} 
\left (
\bigoplus_{m \geq 0} 
H^0 \left (
\overline X, 
\mathcal O_{\overline X}
\left (
n(K_{\overline X}+ \overline D+ 5\widetilde{M}_{\overline X})
\right )
\right)
\right ),
\end{align*} 
by 
\eqref{eqn.section.rings} and 
\eqref{eqn.section.rings2}, and 
since $K_{X}+ D+5 \widetilde{M}_X$ is ample.
Finally, as 
$(\overline X,  \overline D+ 5\widetilde{M}_{\overline X})$
is the outcome of a run of the 
$(K_{X'}+D'+5\widetilde{M}_{X'})$-MMP, 
cf. \eqref{eqn.section.rings}, 
then 
$\phi$
is a generalized dlt modification of the generalized log pair 
$(X, D+ 5\widetilde{M}_X)$, and 
\begin{align}
\label{eqn.dlt.mod.pullback.formula}
K_{\overline X}+ \overline D+ 5\widetilde{M}_{\overline X}
=
\phi^\ast
(K_X+D+5\widetilde{M}_X), 
\quad 
\widetilde{M}_{\overline X}=\phi^\ast \widetilde{M}_X.
\end{align}
We can write 
$\overline D=\phi_\ast^{-1}D + G$ 
for an effective divisor 
$G$
which we decompose as
$G=\lfloor G \rfloor + G_1$.
As for any prime component 
$E_1$
of 
$G_1$, 
$E_1^2 < 0$,
then  
$(K_{\overline X}+ \overline D+ 5\widetilde{M}_{\overline X} + t E_1) \cdot E_1 < 0 $
for all
$t>0$
As 
$(\overline X,  \overline \Delta+ 5\widetilde{M}_{\overline X} + \epsilon_1 E_1)$
is dlt for all 
$0< \epsilon_1 \ll 1$, 
then there exists a birational morphism 
$\tau_1 \colon \overline X \to \overline X_1$
such that 
$\mathrm{exc}(\tau_1)=E_1$ 
and 
$(\overline X_1,  \overline \Delta_1+ 5\widetilde{M}_{\overline X_1})$ is dlt, 
where 
$\overline \Delta_1 := (\tau_1)_\ast \overline \Delta$, 
$\widetilde{M}_{\overline X_1} := (\tau_1)_\ast \widetilde{M}_{\overline X}$.
Arguing by induction on the number of components of $G_1$, 
we can then construct a factorization 
\begin{align}
\label{diag.gdlt.mod}
    \xymatrix{
 \overline X
    \ar[r]_{\tau}
    \ar@/^1.5pc/[rr]^{\phi}
    &
   \overline X'
    \ar[r]_{\phi'}
    &
    X
    }
\end{align}
contracting 
$G_1$,
such that 
$(\overline X', \overline \Delta' + 5\widetilde{M}_{\overline X'})$
is generalized dlt, where 
$\overline \Delta' := \tau_\ast \overline D$, 
$\widetilde{M}_{\overline X'} := \tau_\ast \widetilde{M}_{\overline X'}$, 
and 
\begin{align}
\label{eqn.dlt.mod.pullback.formula2}
 K_{\overline X'}+ \overline \Delta'+ 5\widetilde{M}_{\overline X'}
=
(\phi')^\ast
(K_X+\Delta+5\widetilde{M}_X), 
\quad 
\widetilde{M}_{\overline X'}=(\phi')^\ast \widetilde{M}_X.   
\end{align}

\medskip

{\bf Step 4}.
{\it 
In this step we show that in order to show the boundedness of the log pairs contained in 
$\setgpairsdcc{lc}$
it suffices to show that there exist 
a positive integer 
$m$
such that 
$m(K_{\overline X}+\overline \Delta)$
is Cartier.}

If
$m$ 
exists and 
$m(K_{\overline X}+\overline \Delta)$ 
is Cartier, then 
also 
$m(K_{\overline X}+\overline \Delta+5\widetilde{M}_{\overline X})$ 
is Cartier by the construction performed in the previous step; 
moreover, 
\eqref{eqn.dlt.mod.pullback.formula}
together with 
\cite[\S 4.17, Lemma]{reid.chapter}
implies that also 
$m(K_{\overline X'}+\overline \Delta')$
and
$m(K_{\overline X'}+\overline \Delta'+5\widetilde{M}_{\overline X'})$ 
are Cartier, 
since 
$\overline X'$
has rational singularities.
In turn, that and the fact that the Cartier index of two-dimensional strictly log canonical singularities is $\leq 6$, 
cf. 
\cite{shok.complements},
together imply that 
$m(K_X+\Delta)$
and 
$m(K_X+\Delta+\widetilde{M}_X)$ are Cartier.

The claimed conclusion then follows from 
\cite[Theorem~1.1]{fujino-2016},
which ensures the existence of a positive integer 
$m_1=m_1(m)$
such that 
$\vert m_1m(K_{X}+\Delta+5\widetilde{M}_X)\vert$
is very ample, together with the fact that 
\begin{align*}
{\rm vol}(2m_1m(K_{X}+\Delta+5\widetilde{M}_X))= 
(m_1m)^2v
\end{align*}
is then independent of the choice of pair in
$\setgpairsdcc{lc}$.

\medskip

In order to conclude the existence of the positive integer 
$m$ 
and the finite set 
$I_0$
satisfying the property stated in Step 4, we will show that 
$(\overline X, \overline \Delta)$
is strongly log bounded.

\medskip

{\bf Step 5. }{\it In this step we show the existence of the positive integer $m$ satisfying the properties stated in Step 4.}

Let  
$f \colon (\mathcal X, \mathcal S) \to T$
be a family of reduced log pairs such that any log pair in
$\mathcal P$, see Step 1, appears as a fibre of $f$.
As the bounded collection pairs $\mathcal P$ is made of snc pairs, up to passing to a locally closed (finite) stratification in the Zariski topology of $T$, 
we can assume that 
$(\mathcal X, \mathcal S)$
is snc over 
$T$.
Moreover, up to substituting 
$T$ with the Zariski closure of the set
\begin{align*}
T_{\mathcal P} :=
    \left \{
t \in T \ 
\middle \vert \
(\mathcal X_t, \mathcal S_t) \in \mathcal P
    \right \}, 
\end{align*}
we can assume that 
$T_{\mathcal P}$ 
is Zariski dense in 
$T$.
Moreover, up to passing to a further finite locally closed stratification of 
$T$ 
in the Zariski topology, we can assume that each irreducible component of 
$T$
is also a connected component.
Let us observe that the last operation does not affect the denseness in each connected of $T$ of points supporting pairs in $\mathcal P$.
Finally, up to passing to a finite stratification of $T$ into locally closed subsets, we can assume that each irreducible (and connected) component of 
$T$
is smooth.
Again, by Noetherian induction, it is easy to see that such operation can be done ensuring the Zariski denseness of 
$T_{\mathcal P}$ 
in 
$T$.
Moreover, the final outcome 
$(\mathcal X, \mathcal S) \to T$ 
is still relatively snc.

Let us fix an irreducible (equivalently,  connected) component
$W_h$
of 
$T$.
We denote by 
$(\mathcal X^h, \mathcal S^h)$
the pullback of 
$(\mathcal X, \mathcal S)$
to 
$W_h$.
By our construction, for all 
$w \in W_h$
the number 
$k_{h}$ 
of irreducible components of 
$\mathcal S^h_{w}$
on 
$\mathcal X^h_{w}$
is independent of 
$w$;
moreover, also the intersection matrix 
$(m_{ij}:=
\mathcal S^1_{w, i} \cdot \mathcal S^1_{w,  j})_
{1\leq i, j \leq k_{h}}$
of the irreducible components of 
$\mathcal S^h_w$ 
is independent of 
$w \in W_h$.

Now for any $\overline{X}$ there exists $w \in W_h$ such that we have a contraction $\mathcal X^h_w \to \overline{X}$ whose exceptional locus is contained 
in $\mathcal S^h_w$.
Since $\mathcal X^h_w$ is smooth and $\overline{X}$ has rational singularities (since it is dlt) we see that the Cartier index of any Weil divisor on $\overline{X}$
divides the determinant of the intersection matrix of the exceptional locus of $\mathcal X^h_w \to \overline{X}$, cf. \cite[Theorem 4.6]{MR1805816}.  As noted, the intersection matrix of the exceptional locus takes on only finitely many possible values, and so the determinant of the intersection matrix takes on only finitely many values, call them $m_1, \dots, m_{k_h}$.  We can conclude this step by taking $m$ to be the least common multiple of $m_1, \dots, m_{k_h}$.

\medskip

From Step 5, we can conclude that there exists a positive integer 
$m$ such that 
$m(K_X+D+M)$ 
is Cartier and very ample
for all 
$(X+D+M) \in \setgpairsdcc{lc}$.
Moreover, 
$m$ 
is independent of the triple chosen in 
$\setgpairsdcc{lc}$ and instead only depends 
on the set of coefficients $I$.

\medskip

{\bf Step 6}.
{\it In this step we show that also 
$\widetilde{M}_X$ is bounded.}

Since $(X, D)$ is strongly log bounded, 
then there exists 
a positive integer 
$\theta$, 
independent of the chosen pair in 
$\setgpairsdcc{lc}$, 
such that 
$K_X+D+5\widetilde{M}_X
\pm
\frac 1 \theta (K_X+D)
$
is ample.
But then 
\begin{align*}
\left [ 
\left (
1\pm \frac 1 \theta 
\right ) 
\left (
K_X+D+5M 
\right ) 
\right ]^2 
& = 
\left [ 
\left ( 
\left (
1\pm \frac 1 \theta 
\right ) 
\left (
K_X+D 
\right )
+5M 
\right )
\pm \frac{5}{\theta} M 
\right ]^2
\\
& =
 \left ( 
 K_X+D +5M
 \pm
 \left (
\frac 1 \theta
 \right )
 \left (
K_X+D
 \right)
 \right )^2 \\
& \pm
\frac{10}{\theta}  
\left ( 
\left (
1 \pm \frac 1 \theta 
\right ) 
\left (
K_X+D 
\right )
+5M 
\right )
\cdot 
M
\\
& +  
\left (
\frac{5}{\theta} 
\right )^2 
M^2.
\end{align*}
On one hand, 
\begin{align*}
    \left [ 
    \left (
    1\pm \frac 1 \theta 
    \right ) 
    \left (
K_X+D+5lM 
\right ) 
\right ]^2
    = 
    \left (
    1\pm \frac 1 \theta 
    \right ) ^2 v;
\end{align*}
on the other hand, 
both 
$ \left ( 
 K_X+D +5lM
 \pm \left (
\frac 1 \theta
 \right )
 \left (
K_X+D
 \right)
 \right )^2$
 and 
 $M^2$
 are non-negative.
 Hence, 
 \begin{align}
 \label{ineq.final.bound.KF}
   0 
   & <
   \left (
   K_X+D 
   +5lM 
   \right )
   \cdot 
   M
   \leq 
   v \mp \frac 1 \theta (K_X+D)\cdot M.
 \end{align}
 By the strongly log boundedness of 
 $(X, D)$, 
 $(K_X+D)\cdot M$
 can only achieve finitely many possible values, when 
 $(X, D, M)$
 varies in 
 $\setgenpair{lc}$.
 Hence, there exists a positive integer 
 $C_4$ such that 
 for any 
 $(X, D, M)
 \in 
 \setgenpair{lc}$, 
 $\vert(K_X+D)\cdot M \vert \leq C_4$.
 Hence, that observation and \eqref{ineq.final.bound.KF} 
 together imply that 
 \begin{align*}
     0 
   & <
   \left (
   K_X+D 
   +5lM 
   \right )
   \cdot 
   M
   \leq 
   v +C_4,
 \end{align*}
 as desired.
\end{proof}

\section{Deformations of log canonical singularities}

\subsection{Two conjectures on singularities of foliations}

We present two closely related conjectures on the behaviour of log canonical
foliation singularities.  Studying these conjectures is a critical part of our 
study of moduli, but we expect these conjectures to be of broad interest.

\begin{conjecture}[Semi-log canonical singularities deform] \label{conjecture sing}
        Let $f\colon (X, \mathcal F) \rightarrow T$ be a flat
    family of integrable distributions such that $K_{\mathcal F}$ is $\mathbb Q$-Cartier and the fibres of $f$ are  deminormal.

 Suppose that $\mathcal F_t$
    is semi-log canonical for some closed point $t \in T$.  Then there exists a Zariski open subset $t \in U \subset T$ such that for all $s \in U$,
    $\mathcal F_{s}$ is semi-log canonical.    
\end{conjecture}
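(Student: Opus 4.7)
The plan is to mimic the proof of openness of log canonical singularities for varieties, replacing the standard inversion of adjunction with its foliated analogue (Theorem~\ref{thm_inversion_of_adjunction_full}). After shrinking $T$ around $t$, the goal is to rule out any non-slc point in any nearby fibre and then conclude by properness of $f$.

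First I would reduce to the case of normal fibres. Since the slc condition on $\mathcal F_s$ is defined as log canonicity of the pullback to the normalisation of $X_s$, I would form the relative normalisation $n\colon X^n \to X$ over $T$ and, after shrinking $T$, arrange for it to be a flat family of normal schemes whose fibres agree with the normalisations of $X_s$. Provided $n^{-1}\mathcal F$ extends to a flat family of integrable distributions on $X^n$ and the conductor is a relative Mumford divisor, the question reduces to openness of log canonicity for a family of normal foliated pairs $(X^n, \Gamma, \mathcal F^n)$. Next, I would need a semi-continuity statement: if $\pi\colon \tilde X \to X$ is a family of birational modifications extending a chosen log resolution (or dlt modification) of $\mathcal F_t$, then for each $\pi$-exceptional prime $E$ the locus in $T$ where $a(E, \mathcal F_s) < -\varepsilon(E)$ should be closed. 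This rests on the flatness and base change of $\mathcal O(K_{\mathcal F})$ and of $\Omega^r_{X/T}$, properties that are already built into the definition of a flat family of integrable distributions.

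The heart of the argument is then dimensional reduction via inversion of adjunction. If $\mathcal F_s$ were not lc at some $x \in X_s$ with $s$ arbitrarily close to $t$, specialising the associated non-lc centre would produce a non-lc configuration on $X_t$. Choosing a prime divisor $S \subset X$ that contains this centre and to which the foliated restriction is non-lc, one could apply Theorem~\ref{thm_inversion_of_adjunction_full} in contrapositive form to derive that $(\mathcal F, \varepsilon(S)S)$ is itself not log canonical in a neighbourhood of $S$; iterating this with an induction on dimension of the non-lc centre would contradict log canonicity of $\mathcal F_t$.

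The main obstacle is the failure of foliation adjunction along $\mathcal F$-invariant divisors, where $\varepsilon(S) = 0$: Theorem~\ref{thm_inversion_of_adjunction_full} only operates along non-invariant centres, so specialisations whose limit lies in the $\mathcal F$-invariant locus cannot be handled in this direct fashion. In the surface case one can sidestep this by running a suitable $K_{\mathcal F}$-MMP in the family to exchange an invariant exceptional non-lc divisor for a non-invariant one (a technique built into the proof of Theorem~\ref{thm_intro_lc_deform}), but in higher dimension and higher rank neither the required relative foliated MMP nor a suitable inversion of adjunction statement along invariant centres (a conjecture of McQuillan-type) is currently available. This is why I would expect the proof strategy above to yield the conjecture unconditionally only when $\dim X/T = 2$, which matches the level of generality of Theorem~\ref{thm_intro_lc_deform}.
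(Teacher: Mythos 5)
Your central factual claim about the tools is wrong, and it drives the rest of the plan off course. You say ``Theorem~\ref{thm_inversion_of_adjunction_full} only operates along non-invariant centres,'' name this as the fundamental obstacle, and then propose to sidestep it in the surface case by running a $K_{\mathcal F}$-MMP in the family. In fact Theorem~\ref{thm_inversion_of_adjunction_full} covers both $\varepsilon(S)=1$ \emph{and} $\varepsilon(S)=0$, and the $\varepsilon(S)=0$ case is handled precisely by Theorem~\ref{thm_inversion_of_adjunction}, which is the key ingredient your proposal misses. Theorem~\ref{thm_inversion_of_adjunction} is an inversion of adjunction across an $\mathcal F$-\emph{invariant} subvariety $V$ of arbitrary codimension, for rank one integrable distributions on a normal variety of \emph{any} dimension; it is proved not by an MMP but by McQuillan's non-nilpotence criterion (Proposition~\ref{prop_lc_nonnilp}) together with the lifting calculus of Lemma~\ref{lem_nonnilp_normal}. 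The ``McQuillan-type conjecture'' you say is unavailable for invariant centres is, for rank one, exactly Theorem~\ref{thm_inversion_of_adjunction}, and the paper has it.

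Consequently, the paper's proof of Theorem~\ref{thm_lc_deform} looks quite different from your sketch: there is no discrepancy semicontinuity argument, no specialisation of non-lc centres, and no MMP. It reduces $T$ to a curve and $X$ to a normal threefold, applies Theorem~\ref{thm_inversion_of_adjunction} with $V=X_t$ (which is $\mathcal F$-invariant, since $T_{\mathcal F}\subset T_{X/T}$) to conclude $\mathcal F$ is log canonical on the total space near $X_t$, and then passes to nearby fibres $X_s$ by \emph{forward} adjunction, Proposition~\ref{prop_locally_stable_adjunction}. You have also misdiagnosed why the theorem is restricted to $\dim(X/T)=2$: the invariant inversion of adjunction step works in any dimension, but the forward adjunction step, Proposition~\ref{prop_locally_stable_adjunction} and Lemma~\ref{lem_sing_adjunction_families}, rests on explicit threefold coordinate computations and on the availability of foliated reduction of singularities in dimension $\leq 3$, which is where the current restriction actually comes from.
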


We remark that if we replace ``semi-log canonical'' in the above conjecture by ``canonical'' the conjecture is false as can be seen in Example \ref{easiest_example}.

This conjecture is closely related  to the next conjecture

\begin{conjecture}[Inversion of adjunction]
\label{conj_inversion_adjunction}
Let $X$ be a normal variety, let $\mathcal F$ be a foliation on $X$, let $S$ be a prime divisor on $X$ and let $B \ge 0$
be a $\mathbb Q$-divisor such that $m_SB = \varepsilon(S)$
and such that $K_{\mathcal F}+B$ is $\mathbb Q$-Cartier.
Let $n\colon T \rightarrow S$ be the normalisation and write (via foliation adjunction, cf. \cite[Proposition-Definition 3.7]{CS23b}) \begin{align*}n^*(K_{\mathcal F}+B) = K_{\mathcal F_T}+B_T.\end{align*} 

If $(\mathcal F_T, B_T)$ is log canonical, then 
$(\mathcal F, B)$ is log canonical in a neighbourhood of $S$.
\end{conjecture}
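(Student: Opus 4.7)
The plan is to mirror the strategy used for the rank-one theorem (Theorem~\ref{thm_inversion_of_adjunction_full}), upgrading each step via the foliated MMP and foliation adjunction. First I would produce a foliated log resolution $\pi \colon Y \to X$ of $(X, \mathcal F, B)$ over a neighbourhood of $S$, arranging that the strict transform $S' := \pi_\ast^{-1}S$ is smooth, that the reduced exceptional divisor $E = \sum E_i$ together with $S'$ is simple normal crossings, and that each $E_i$ is either $\pi^{-1}\mathcal F$-invariant or generically transverse to $\pi^{-1}\mathcal F$. Writing
\[
K_{\pi^{-1}\mathcal F} + \varepsilon(S')S' + \pi_\ast^{-1}(B - \varepsilon(S)S) = \pi^\ast(K_{\mathcal F}+B) + \sum_i a_i E_i,
\]
the conjecture reduces to showing $a_i \geq -\varepsilon(E_i)$ for every $E_i$ whose centre meets $S$.

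Next I would restrict this identity to $S'$ and apply the foliation adjunction formula of \cite[Proposition-Definition 3.7]{CS23b}. The restricted formula expresses the discrepancies of divisors on $S'$ (with respect to $\pi^{-1}\mathcal F|_{S'}$ and a boundary coming from $B_Y$ together with the different) in terms of pullbacks from $(\mathcal F_T, B_T)$. The composition $S' \to S$ together with the normalisation $T \to S$ provides a birational comparison between the induced foliated pair on $S'$ and $(\mathcal F_T, B_T)$, and foliation adjunction produces an effective correction term; hence the hypothesis that $(\mathcal F_T, B_T)$ is log canonical directly bounds from below all discrepancies of exceptional divisors whose centre is contained in $S$. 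This closes the proof in the ``vertical'' direction.

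The remaining, and hardest, case concerns exceptional divisors $E$ whose centre meets $S$ but is not contained in $S$. For pairs this is handled via the Koll\'ar--Shokurov connectedness theorem, whose proof invokes Kawamata--Viehweg vanishing on $Y$ applied to the round-up of $K_Y - \pi^\ast(K_X+B)$ to ensure that the non-lc locus of $(X,B)$ cannot split off from $S$. I would attempt a foliated analogue, and this is where I expect the principal obstruction: general vanishing theorems for foliations are much more restrictive than for varieties, and the relevant non-lc locus need not be connected in the foliated setting. The case $\varepsilon(S) = 0$ (where $S$ is $\mathcal F$-invariant) is especially delicate, since foliation adjunction itself can degenerate there (cf.\ \cite[Example 3.20]{CS23b}), and would likely require a separate argument --- perhaps passing to an index-one cover of $K_{\mathcal F}+B$ to reduce to an explicit vector field calculation, or invoking a foliated semi-positivity/Bogomolov-type bound to rule out such ``detached'' non-lc strata.
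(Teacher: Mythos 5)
You are attempting to prove a statement the paper explicitly leaves open: this is stated as a \emph{conjecture}, not a theorem, and the authors give no proof of it in the generality stated (arbitrary-rank foliations with $\mathbb Q$-boundary). What the paper proves is only the rank-one case, Theorem~\ref{thm_inversion_of_adjunction_full}, with $B = \varepsilon(S)S$. So your candid admission that you cannot close the last step is not a shortcoming of your write-up but a correct diagnosis of the state of the art: the missing ingredient is precisely a foliated analogue of the Koll\'ar--Shokurov connectedness theorem, and nobody currently has one.

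That said, it is worth noting that your main line of attack (log resolution $+$ restriction $+$ connectedness) is not how the paper handles even the rank-one case, and the route it does take is essentially your final-paragraph ``fallback.'' For $\varepsilon(S) = 1$ the paper simply cites \cite[Theorem~3.16]{CS23b}. For $\varepsilon(S) = 0$, Theorem~\ref{thm_inversion_of_adjunction} passes to an index-one cover so that $\mathcal F$ is generated by a vector field $\partial$, then uses the characterisation (Proposition~\ref{prop_lc_nonnilp}, i.e.\ \cite[Fact~I.ii.4]{MP13}) that log canonicity at a singular point is equivalent to non-nilpotence of the linear part $\partial_1$, together with Lemma~\ref{lem_nonnilp_normal} which says this non-nilpotence descends from the normalisation of the invariant subvariety to the ambient germ. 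There is no resolution, no discrepancy bookkeeping, and no vanishing or connectedness anywhere. This is why the $\varepsilon(S)=0$ case, which you correctly flag as where adjunction can degenerate (\cite[Example~3.20]{CS23b}), is in fact the case the paper can handle directly: the vector-field criterion entirely sidesteps the need for a well-behaved adjunction formula. The trade-off is that the non-nilpotence criterion is special to rank one; your resolution-plus-connectedness template is the one that would scale to higher rank, if the connectedness input existed, which is exactly why the general statement remains a conjecture.

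Two smaller cautions on the proposal as written. First, in the ``vertical'' step you treat the foliation adjunction of \cite[Proposition-Definition~3.7]{CS23b} as applying uniformly; but when $S$ is invariant ($\varepsilon(S)=0$) there is no adjunction along $S$ for the foliated canonical, and what is actually used in the paper is adjunction for the \emph{restricted} distribution on the normalisation, which is a genuinely different object. Second, the reduction ``the conjecture reduces to showing $a_i \ge -\varepsilon(E_i)$ for every $E_i$ whose centre meets $S$'' is not quite a reduction: one still has to account for divisors over $X$ that do not appear on the chosen model $Y$, which for foliations requires knowing that resolution of foliation singularities is available in the relevant dimension (currently only $\le 3$, as the paper notes).
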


In the case of varieties both of these conjectures are known, see for instance, \cite{Kawakita05}, and in fact 
Conjecture \ref{conj_inversion_adjunction} easily implies Conjecture \ref{conjecture sing}.
For foliations, however, the relation between these two conjectures is far more subtle owing to the failure of adjunction on foliation singularities when $\varepsilon(S) = 0$, see \cite[Example 3.20]{CS23b}.

\subsection{Adjunction results for rank one foliations}
Consider a vector field $\partial$ on a scheme $X$ and a closed point $x \in \sing \partial$.
In this case, $\partial(\mathfrak m)\subset \mathfrak m$ where $\mathfrak m$ is the ideal of $x$ and we get an induced linear map
\begin{align*}\partial_1\colon \mathfrak m/\mathfrak m^2
\to \mathfrak m/\mathfrak m^2.\end{align*}
We call $\partial_1$ the linear part of $\partial$.

We recall the following results regarding log canonical singularities of rank one integrable distributions (although the results in \cite{MP13} are stated for rank one foliations we note that they hold equally well for rank one integrable distributions).

\begin{proposition}
\label{prop_lc_nonnilp}
    Let $X$ be a normal variety and let $\mathcal F$
    be an integrable distribution defined by a vector field 
    $\partial$ (equivalently $\mathcal F$ is rank one and $K_{\mathcal F}$ is Cartier).

    Let $x \in \sing \partial$ be a closed point.
    Then $\mathcal F$ is log canonical at $x$ if and only 
    if $\partial_1$ is non-nilpotent
\end{proposition}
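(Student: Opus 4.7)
The plan is to reduce, after passing to a formal neighbourhood of $x$ and using that $K_{\mathcal F}$ Cartier lets us trivialise $L$ locally, to the situation where $\mathcal F$ is given by a vector field $\partial = \sum f_i \partial_{x_i}$; then compute discrepancies of exceptional divisors under iterated point blow-ups by direct pullback of Pfaff fields in local coordinates.

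I would begin with the contrapositive of the ``only if'' direction: assume $\partial_1$ is nilpotent and produce a divisor over $x$ violating log canonicity. There are two sub-cases. First, if $r := \ord_x(\partial) \geq 2$, I would blow up $x$ directly and compute, in the chart $x_1 = y_1,\ x_i = y_1 y_i$, that $\pi^\ast \phi(dy_1)$ has order $r$ and $\pi^\ast \phi(dy_i)$ has order exactly $r-1$ along $E = \{y_1 = 0\}$. This forces the Pfaff field of $\pi^{-1}\mathcal F$ to take values in $\mathcal O(-(r-1)E)$, whence $a(E, \mathcal F) = 1 - r \leq -1$; the same computation shows $\pi^{-1}\mathcal F$ sends $y_1$ into $(y_1)$, so $E$ is $\pi^{-1}\mathcal F$-invariant, $\varepsilon(E) = 0$, and log canonicity fails. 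Second, if $\partial_1 \neq 0$ is nilpotent, I would argue via a Seidenberg-style reduction: blow up $x$ at the point of $E$ corresponding to the kernel direction of $\partial_1$; the transformed vector field at the resulting singular point is again nilpotent but with a strictly larger Jordan invariant, and iterating finitely many times raises $\ord$ above $1$, reducing to the first sub-case.

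For the ``if'' direction (non-nilpotent $\Rightarrow$ log canonical), I would establish a persistence lemma: if $\partial_1$ has a non-zero eigenvalue $\lambda$ with eigenvector $\partial_{x_1}$, so that $\partial = \lambda x_1 \partial_{x_1} + \sum_{i \geq 2} f_i \partial_{x_i}$ with $f_i \in \mathfrak{m}$, then in the standard blow-up chart the transform reads $\pi^{-1}\mathcal F = \langle \lambda y_1 \partial_{y_1} + \sum_{i \geq 2}(\tilde f_i - \lambda y_i) \partial_{y_i} \rangle$, and at every singular point of this vector field lying on $E$ the new linear part still has $\lambda$ as an eigenvalue---hence remains non-nilpotent. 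Since non-nilpotence forces $\ord_x(\partial) = 1$, the pullback computation above gives $a(E, \mathcal F) = 0$, and $\varepsilon(E) \in \{0,1\}$ makes the LC inequality automatic. Any divisor $E'$ over $x$ is extracted by a finite sequence of point blow-ups starting from a singular point, so persistence combined with the base case yields $a(E', \mathcal F) \geq -\varepsilon(E')$ by induction on the length of the sequence.

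The main obstacle I anticipate is the Seidenberg-type reduction in the sub-case $\partial_1 \neq 0$ nilpotent: identifying a strictly monotone invariant of the Jordan block structure of $\partial_1$ under iterated point blow-ups is classical in dimension two, but in arbitrary dimension the combinatorics of how the Jordan invariants evolve becomes delicate, and I expect one must use the Jordan--Chevalley decomposition of $\partial_1$ in the formal completion (or perform weighted blow-ups along the flag of generalised kernels of $\partial_1$) in order to show that after finitely many blow-ups the order of the transformed vector field exceeds one.
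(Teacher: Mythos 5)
The paper's ``proof'' of this proposition is a one-line citation to \cite[Fact I.ii.4]{MP13}, so you are attempting to re-prove McQuillan--Panazzolo's characterisation from scratch by direct blow-up computations. That is a legitimate thing to try, but your sketch has gaps that would need to be filled before it counts as a proof.

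The most serious problem is in the ``if'' direction. Your induction rests on the assertion that every divisor $E'$ over $x$ is extracted by a finite sequence of \emph{point} blow-ups. This is true for surfaces, but fails already in dimension three: a divisor obtained by first blowing up a curve through $x$ and then a point of the resulting exceptional divisor has centre $\{x\}$ on $X$, yet the associated valuation is not realised by any sequence of point blow-ups. (Weighted blow-ups give further examples.) The statement you are trying to prove is used in the paper for normal threefolds --- e.g.\ in Theorem~\ref{thm_inversion_of_adjunction}, which is where Theorem~\ref{thm_lc_deform} ultimately comes from --- so you cannot silently restrict to dimension two. To control divisors that are not reached by point blow-ups you would need an independent argument, for instance showing that for a rank-one foliation with non-nilpotent linear part any blow-up of a positive-dimensional centre through $x$ has non-negative foliated discrepancy (distinguishing invariant and non-invariant centres), and then using that to reduce to the point-centre case; as written this step is simply missing.

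A second, related gap: the proposition is stated on a \emph{normal} variety with $K_{\mathcal F}$ Cartier, not a smooth one, and your computations are all written in local coordinates $x_1,\dots,x_n$ with $\partial=\sum f_i\partial_{x_i}$, which presupposes that $X$ is smooth at $x$. The linear part $\partial_1$ acts on the Zariski tangent space $\mathfrak m/\mathfrak m^2$, which can have dimension larger than $\dim X$ when $X$ is singular, and the pull-back and discrepancy calculus for blow-ups of singular $X$ is not the smooth-coordinate calculus you invoke. You would either have to reduce to the smooth case (which seems hard here, as taking a resolution of $X$ changes both the vector field and the meaning of $\partial_1$) or redo the computations intrinsically. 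Finally, you yourself flag the Seidenberg-type reduction in the sub-case $\partial_1\neq 0$ nilpotent as ``delicate'' in dimension $\ge 3$ and only gesture at a strategy; that remains to be carried out. Taken together, your proposal is a plausible blueprint in the smooth surface case but does not yet prove the proposition in the generality in which the paper states and uses it.
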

\begin{proof}
This is \cite[Fact I.ii.4]{MP13}
\end{proof}

\begin{corollary}
\label{cor_lc_is_open}
    Let $X$ be a normal variety and let $\mathcal F$
    be a rank one integrable distribution such that $K_{\mathcal F}$
    is Cartier.  Then the set 
    $\{ x \in X: \mathcal F \text{ is log canonical at } x\} \subset X$ is Zariski open.
\end{corollary}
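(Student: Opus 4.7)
The plan is to reduce to a pointwise criterion at closed points using Proposition \ref{prop_lc_nonnilp}, and then to cut out the non-lc locus as a Zariski closed subscheme of $X$ via the characteristic polynomial of the linear part of a local generating vector field.

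Since Zariski openness is local on $X$, I would first shrink $X$ to the case where $K_{\mathcal F}$ is trivial; then $\mathcal F$ is generated by a single vector field $\partial \in H^0(X, T_X)$ and $Z := \sing \partial$ is the closed subscheme of $X$ where $\partial$ vanishes. On $X \setminus Z$, $\partial$ is nowhere vanishing, so $\mathcal F$ is canonical (and in particular log canonical) there. Along $Z$, the Lie derivative $\mathcal L_\partial \colon \Omega^1_X \to \Omega^1_X$ preserves the ideal sheaf $\mathcal I_Z$ (since $\partial$ vanishes along $Z$), and so descends to an $\mathcal O_Z$-linear endomorphism $\Phi$ of $\Omega^1_X|_Z$. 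At each closed point $x \in Z$, the fibre of $\Phi$ is precisely the linear part $\partial_1$ appearing in the preamble to Proposition \ref{prop_lc_nonnilp}.

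In characteristic zero, $\Phi|_x$ is nilpotent if and only if the power traces $\mathrm{tr}(\Phi|_x^k)$ vanish for $k = 1, \dots, \dim X$. These traces are global sections of $\mathcal O_Z$, so the nilpotent locus $B \subset Z$ is cut out by the simultaneous vanishing of finitely many regular functions, and is therefore Zariski closed in $X$. By Proposition \ref{prop_lc_nonnilp}, the closed points of $X$ at which $\mathcal F$ fails to be log canonical are exactly the closed points of $B$; combined with the fact that failure of lc at a non-closed point $P$ must specialise to failure at some closed point of $\overline{\{P\}} \cap B$, this identifies the full non-lc locus with $B$. Hence the lc locus is the Zariski open complement $X \setminus B$.

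The main technical step I expect to need care is verifying that the Lie derivative descends to a globally well-defined $\mathcal O_Z$-linear endomorphism of $\Omega^1_X|_Z$ when $X$ is singular along $Z$. This should follow by first constructing $\Phi$ on the smooth locus of $X$, where $\Omega^1_X$ is locally free and the construction is transparent, and then extending across the codimension-$\geq 2$ singular locus of $X$ using the $S_2$-ness of the relevant coherent sheaves; the characteristic polynomial coefficients of $\Phi$ on the smooth locus then extend uniquely to global sections of $\mathcal O_Z$ and encode the nilpotency condition everywhere.
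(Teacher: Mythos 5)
Your proof is a self-contained argument, whereas the paper's own proof simply cites \cite[Definition-Summary I.ii.6]{MP13}, so if it worked it would be a genuine elementary alternative to that reference. Unfortunately there is a gap at exactly the step you flag as needing care, and I do not see how your proposed patch closes it. The sheaf $\Omega^1_X \otimes \mathcal O_Z$ is generally not locally free, so the power traces $\mathrm{tr}(\Phi|_x^k)$ are defined only fibrewise, not as global sections of $\mathcal O_Z$; in fact, the fibrewise nilpotent locus of an endomorphism of a non-locally-free coherent sheaf on a Noetherian scheme need not be Zariski closed at all (take $\mathcal O_{\mathbb A^1}/(t)$ with the identity endomorphism: the fibre at $0$ is non-nilpotent, while the zero fibres at $t\neq 0$ are trivially nilpotent, so the nilpotent locus is the open set $t\neq 0$). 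Your fallback of constructing the characteristic polynomial coefficients on $X_{\rm sm}$ and extending across $\sing X$ requires $\sing X \cap Z$ to have codimension at least two in $Z$; but it is easy for $Z$ to lie entirely inside $\sing X$. Take $X = \{xy = z^2\}\times \mathbb A^1_w$ and $\partial = x\partial_x - y\partial_y$: then $Z = \sing\partial = \sing X = \{x=y=z=0\}$ and $Z\cap X_{\rm sm} = \emptyset$, so there is nothing to extend from. (Here $\partial_1$ has eigenvalues $\{1,-1,0,0\}$ at every closed point of $Z$ and the corollary holds, but the proposed construction cannot see this.) Even when $Z$ meets $X_{\rm sm}$, the closed subset $\sing X\cap Z$ can be a divisor in $Z$, so the extension step would additionally require $S_2$-type hypotheses on $Z$ that you have not verified.

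A further attempt at repair --- embed $X$ minimally in $\mathbb A^n$ near a point, lift $\partial$ to an ambient derivation $\delta$ preserving $\mathcal I_X$, and take the linear part of $\delta$ acting on the genuinely free sheaf $\mathcal O_Z^{\oplus n}$ --- also does not obviously work: $\delta_1|_x$ preserves the conormal subspace of $\mathfrak m_{x,\mathbb A^n}/\mathfrak m_{x,\mathbb A^n}^2$ and induces $\partial_1$ only on the quotient $\mathfrak m_x/\mathfrak m_x^2$, so $\delta_1$ may acquire nonzero eigenvalues purely from the conormal directions and nilpotence of $\partial_1$ is not equivalent to nilpotence of $\delta_1$. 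The openness statement appears to genuinely depend on the structure and resolution theory for rank-one singularities developed in \cite{MP13} (which is also what underlies Proposition \ref{prop_lc_nonnilp}), and I would encourage you to consult the cited Definition-Summary rather than trying to push the sheaf-theoretic approach through.
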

\begin{proof}
    This is \cite[Definition-Summary I.ii.6]{MP13}
\end{proof}

\begin{lemma}
    \label{lem_nonnilp_normal}
    Let $X = {\rm Spec}(R)$ be a reduced affine scheme, let $n\colon Y = {\rm Spec}(S) \rightarrow X$
    be the normalisation and let
    $\partial$
    be a vector field on $X$.
    Then the following hold:
    
    \begin{enumerate}
\item[(0)] there is a lift of $\partial$
to a vector field $\tilde{\partial}$
on $Y$,
    
\item[(1)] if $I \subset R$ is a $\partial$-invariant ideal, then 
$IS$ is a $\tilde{\partial}$-invariant ideal;
    
    \item[(2)] if $x \in X$ is a closed point such that $x \in \sing \partial$,
    then $n^{-1}(x) \subset \sing \tilde{\partial}$; and 

    \item[(3)] if $x \in \sing \partial$ and for any $y \in n^{-1}(x)$, if the linear part of $\tilde{\partial}$
    \begin{align*}\tilde{\partial}_1\colon \mathfrak n/\mathfrak n^2 \to \mathfrak n/\mathfrak n^2\end{align*}
    is non-nilpotent, where $\mathfrak n$ is the maximal ideal of $y$, 
    then the linear part of $\partial$
    \begin{align*}\partial_1\colon \mathfrak m/\mathfrak m^2 \to \mathfrak m/\mathfrak m^2\end{align*}
    is non-nilpotent.

    \end{enumerate}
\end{lemma}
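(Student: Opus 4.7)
The plan is to prove (0)--(2) by direct algebra and to reduce (3) to the case of a complete local normal domain where Proposition \ref{prop_lc_nonnilp} applies.

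For (0), we invoke Seidenberg's classical extension theorem: in characteristic zero any derivation on a Noetherian ring extends uniquely to its integral closure. Concretely, given $s \in S$ with minimal polynomial $P(T) = T^d + a_{d-1}T^{d-1} + \cdots + a_0 \in R[T]$, differentiating $P(s) = 0$ yields $P'(s)\tilde\partial(s) = -P^\partial(s)$ with $P^\partial(T) := \sum \partial(a_i) T^i$; separability of $P$ in characteristic zero ensures $P'(s)$ is a unit in $\mathrm{Frac}(S)$, so this formula determines $\tilde\partial(s)$ uniquely, and the Leibniz rule is routine to verify. Part (1) is immediate: $\tilde\partial(is) = \partial(i)\cdot s + i\cdot \tilde\partial(s) \in IS$ because $\partial(i) \in I$.

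For (2), $x \in \sing \partial$ means $\mathfrak m$ is $\partial$-invariant, so by (1) $\mathfrak m S$ is $\tilde\partial$-invariant. The finite Artinian quotient $A := S_\mathfrak m / \mathfrak m S_\mathfrak m$ factors as a product $A = \prod_{i=1}^{r} A_i$ of its local factors $A_i = S_{\mathfrak n_i}/\mathfrak m S_{\mathfrak n_i}$, indexed by $n^{-1}(x) = \{\mathfrak n_1, \ldots, \mathfrak n_r\}$. Applying $\tilde\partial$ to the relation $e_i^2 = e_i$ for the projector $e_i$ onto $A_i$ gives $(1 - 2e_i)\tilde\partial(e_i) = 0$; since $1-2e_i$ is a unit (being $\pm 1$ on each factor), $\tilde\partial(e_i) = 0$, forcing $\tilde\partial$ to preserve each $A_i$ and hence each $\mathfrak n_i$. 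Therefore every point of $n^{-1}(x)$ lies in $\sing \tilde\partial$.

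For (3), fix $y = \mathfrak n \in n^{-1}(x)$ with $\tilde\partial_1$ non-nilpotent and pass to the $\mathfrak m$-adic completion $\hat R$. Since $R$ is excellent, the minimal primes of $\hat R$ are in bijection with the preimages of $x$ in the normalisation, and each is $\partial$-invariant in characteristic zero. Let $P \subset \hat R$ be the minimal prime corresponding to $y$ and set $R' := \hat R / P$, a complete local domain whose normalisation is $\hat S_\mathfrak n$. Then $\partial$ descends to a derivation $\partial'$ on $R'$, and the quotient map induces a surjection $\mathfrak m / \mathfrak m^2 \twoheadrightarrow \mathfrak m_{R'} / \mathfrak m_{R'}^2$ intertwining $\partial_1$ with $\partial'_1$; hence it is enough to prove that $\partial'_1$ is non-nilpotent. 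By Proposition \ref{prop_lc_nonnilp} applied to the normal $\mathrm{Spec}(\hat S_\mathfrak n)$, non-nilpotence of $\tilde\partial_1$ is equivalent to log canonicity of the induced rank-one foliation at $\mathfrak n$; since the normalisation $\mathrm{Spec}(\hat S_\mathfrak n) \to \mathrm{Spec}(R')$ is finite birational with $n^* K_{\mathcal F_{R'}} = K_{\mathcal F_{\hat S_\mathfrak n}}$ (cf.\ Section \ref{s_integrable_distribution}), discrepancies of all divisorial valuations agree, so the foliation on $\mathrm{Spec}(R')$ is log canonical at $\mathfrak m_{R'}$.

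The hard part will be to translate log canonicity on the non-normal $R'$ back into non-nilpotence of $\partial'_1$, the direction of Proposition \ref{prop_lc_nonnilp} that fails without the normality assumption. We plan to handle this by a direct eigenvalue analysis: an eigenvector $w \in \mathfrak n$ of $\tilde\partial_1$ with non-zero eigenvalue $\lambda$ admits an integral relation over $R'$, and combining this relation with $\tilde\partial(w) \equiv \lambda w \pmod{\mathfrak n^2}$ should produce elements of $\mathfrak m_{R'}$ whose images in $\mathfrak m_{R'}/\mathfrak m_{R'}^2$ are eigenvectors of $\partial'_1$ with eigenvalues $d\lambda$ for positive integers $d$ equal to the $\mathfrak n$-adic valuation of the corresponding element of $R'$. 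Since $d\lambda \neq 0$, this yields the required non-nilpotence. Already the cuspidal example $R' = k[[t^2, t^3]] \subset k[[t]] = \hat S_\mathfrak n$, in which the eigenvalues $\{2\lambda, 3\lambda\}$ of $\partial'_1$ appear as positive-integer multiples of the unique eigenvalue $\lambda$ of $\tilde\partial_1$, illustrates precisely this mechanism.
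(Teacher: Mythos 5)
Parts (0) and (1) match the paper's argument. Part (2) is correct but takes a different route: the paper first reduces (by completing and passing to an irreducible component) to the case where $R$ is a complete local domain, so that $\mathfrak n := \sqrt{\mathfrak m S}$ is the \emph{unique} maximal ideal over $\mathfrak m$, and then shows $\tilde\partial(S) \subset \mathfrak n$ directly. Your idempotent decomposition of the Artinian fibre $A = S_{\mathfrak m}/\mathfrak m S_{\mathfrak m}$ handles all branches simultaneously and is a perfectly valid alternative; it implicitly uses (as you should make explicit) that the maximal ideal of the local factor $A_i$ is its nilradical, hence preserved by any derivation in characteristic zero.

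Part (3) has a genuine gap, one you partially flag yourself. Two problems. First, the detour through Proposition~\ref{prop_lc_nonnilp} and ``log canonicity of the foliation on $\mathrm{Spec}(R')$'' is circular: on the non-normal $R'$ the relevant notion is \emph{semi}-log canonicity, which by definition is checked on the normalisation $\hat S_{\mathfrak n}$, so transporting ``log canonicity'' from $\hat S_{\mathfrak n}$ to $R'$ and hoping to come back out with non-nilpotence of $\partial'_1$ is exactly what the lemma is supposed to prove; it gives you nothing. Second, and more seriously, the concrete mechanism you sketch---``$w$ admits an integral relation over $R'$, take a power $w^d$ landing in $\mathfrak m_{R'}$, read off the eigenvalue $d\lambda$''---works only when the integral dependence of $w$ on $R'$ is monomial, i.e.\ $w^d \in R'$. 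That holds for $k[[t^2,t^3]] \subset k[[t]]$ but fails in general: a typical integral relation $w^d + a_{d-1}w^{d-1} + \cdots + a_0 = 0$ with $a_i \in R'$ does not by itself place any power of $w$ inside $R'$, and if $\hat S_{\mathfrak n}$ has dimension $\geq 2$ there need not be any small power of a tangent eigenvector lying in $R'$ at all.

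The paper sidesteps both issues by never mentioning log canonicity in the proof of (3) and by never trying to find eigenvectors inside $\mathfrak m_{R'}/\mathfrak m_{R'}^2$. Instead, after reducing to $R$ a complete local domain, it observes that the natural map $\mathfrak m/\mathfrak m^2 \twoheadrightarrow (\mathfrak m S)/\mathfrak n(\mathfrak m S)$ is a surjection of $k$-vector spaces intertwining $\partial_1$ with the map $L$ induced by $\tilde\partial$ on the target; therefore nilpotence of $\partial_1$ would force nilpotence of $L$. To rule out the latter, it chooses $l$ so that $f^l \in \mathfrak m S$ but $f^l \notin \mathfrak n(\mathfrak m S)$ (possible because $\mathfrak n = \sqrt{\mathfrak m S}$), and computes that $[f^l]$ is a non-zero eigenvector of $L$ with eigenvalue $l\lambda \neq 0$. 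Working modulo $\mathfrak n(\mathfrak m S)$ rather than modulo $\mathfrak m_{R'}^2$ is precisely what makes the power trick work in general, because it only requires $f^l$ to enter $\mathfrak m S$, not to lie in $R'$. This is the step your proposal is missing.
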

\begin{proof}
All the claims are local about any closed point of $x \in X$, so without loss of generality we may replace $R$ by its completion along $\mathfrak m$, and so we may assume that $R$ is a complete local ring.

It suffices to check the  claims of the lemma on each irreducible component of $X$, so by replacing $X$ by one of its irreducible components we may assume that $X$ is an integral scheme. 

We first remark that (0) is a result of Seidenberg, see \cite{MR188247}.  Note that 
if we consider $\tilde{\partial}$ as a derivation on $S$, then the restriction of $\tilde{\partial}$
to $R \subset S$
is precisely the derivation given by $\partial$.

We now prove (1). By definition, $IS$ is generated by elements of the form 
$fs$ where $f \in I$ and $s \in S$.  Note that $\tilde{\partial}(f) = \partial(f) \in I$, thus 
\begin{align*}\tilde{\partial}(fs) = \tilde{\partial}(f)s +f\tilde{\partial}(s) \in IS,\end{align*}
as required.

 We now prove (2).  By (1), $\mathfrak mS$ is $\tilde{\partial}$-invariant, and therefore its radical is invariant as well
 (cf. \cite[Lemma 1.8]{MR460303}), i.e.,   $\tilde{\partial}(\sqrt{\mathfrak mS}) \subset \sqrt{\mathfrak mS}$.  Since $R$ is a domain, we see
    that $\mathfrak n \coloneqq \sqrt{\mathfrak mS}$
    is a maximal ideal, cf. \cite[\href{https://stacks.math.columbia.edu/tag/0C37}{Tag 0C37}]{stacks-project}.  
    For any $f \in S$, if we let $c = f \mod \mathfrak n \in k$ then we see that \begin{align*}\tilde{\partial}(f) = \tilde{\partial}(f-c) \in \tilde{\partial}(\mathfrak n) \subset \mathfrak n, \end{align*}
    i.e., $\tilde{\partial}(S) \subset \mathfrak n$, as required.

We finally prove (3).  Supposing that $\tilde{\partial}_1$ is non-nilpotent we may find 
$f \in \mathfrak n$ such that $\tilde{\partial}_1(f_1) = \lambda f_1$
where $0 \neq f_1 = f \mod \mathfrak n^2$ and $\lambda \neq 0$.
To show that the linear map
\begin{align*}\partial_1\colon \mathfrak m/\mathfrak m^2 \to \mathfrak m/\mathfrak m^2 \end{align*}
is non-nilpotent, it suffices to show that 
 the linear map
 \begin{align*}L\colon (\mathfrak mS)/\mathfrak n(\mathfrak mS)\to (\mathfrak mS)/\mathfrak n(\mathfrak mS)\end{align*}
 induced by $\partial$ is non-nilpotent.
 
Since $\mathfrak n = \sqrt{\mathfrak mS}$, there exists a positive integer $l$ such that 
$f^l \in \mathfrak mS$, but $f^l \notin \mathfrak n(\mathfrak mS)$.  The non-nilpotence of $L$ then follows by observing that $\partial(f^l) = l\lambda f^l \mod \mathfrak n^l$ and so $l\lambda \neq 0$
is an eigenvalue of $L$ as required.
\end{proof}

\begin{proposition}
\label{prop_locally_stable_adjunction}
Consider the following set up.
\begin{enumerate}
    \item Let $X$ be an $S_2$ variety with $\dim X = 3$. 
    \item Let $f\colon X \to T$ be a flat surjective morphism to a smooth curve.
    \item Let $\mathcal F$ be a rank one integrable distribution on $X$ such that
    \begin{enumerate} 
    \item $T_{\mathcal F} \subset T_{X/T}$ in a neighbourhood of every generic point of $X$; 
    \item $\sing \mathcal F$ does not 
contain any components of fibres of $f$; and
\item $K_{\mathcal F}$
is $\mathbb Q$-Cartier and $\mathcal F$ has semi-log canonical singularities.
\end{enumerate}
\end{enumerate}

Let $0 \in T$ be a closed point, let $n\colon S \rightarrow X_0 = f^{-1}(0)$ be the normalisation of an irreducible component and let $(\mathcal G, \Theta)$ be the foliated pair on $S$ associated to the restricted integrable distribution, cf. \cite[Proposition-Definition 3.12]{CS23b}.  
Then $(\mathcal G, \Theta)$ is log canonical.    
\end{proposition}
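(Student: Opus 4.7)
The plan is to verify the log canonicity of $(\mathcal{G}, \Theta)$ at an arbitrary closed point $p \in S$ by means of the criterion of Proposition \ref{prop_lc_nonnilp}, which reduces the check to the non-nilpotence of the linear part of a local defining vector field. We may assume that $\mathcal{G}$ is singular at $p$, since otherwise log canonicity is automatic.

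First, I would pass to the normalisation $m \colon X^m \to X$. Since $\mathcal{F}$ is semi-log canonical, $\mathcal{F}^m = m^{-1}\mathcal{F}$ is log canonical on $X^m$. Pick a preimage $q \in X^m$ of $n(p) \in X_0$. After shrinking and passing to an index-one cover for $K_{\mathcal{F}^m}$ around $q$, the foliation is defined by a vector field $\partial$. The hypothesis $T_{\mathcal{F}} \subset T_{X/T}$, together with the fact that $\sing \mathcal{F}$ does not contain components of fibres, guarantees that $\partial$ is tangent to the fibres of $f \circ m$; in local coordinates $(x,y,z)$ where $z$ is the pull-back of a uniformiser at $0 \in T$, one has $\partial = a\partial_x + b\partial_y$.

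Next, I would analyse the linear part of $\partial$ at $q$. Writing it as a matrix on $\mathfrak{m}_q/\mathfrak{m}_q^2$ with respect to the basis $\{x,y,z\}$, it takes the block form
\begin{align*}
\partial_1 = \begin{pmatrix} A & \ast \\ 0 & 0 \end{pmatrix},
\end{align*}
where $A$ is a $2\times 2$ matrix. The eigenvalues of $\partial_1$ are $0$ and the two eigenvalues of $A$, so non-nilpotence of $\partial_1$, guaranteed by the log canonicity of $\mathcal F^m$ via Proposition \ref{prop_lc_nonnilp}, is equivalent to non-nilpotence of $A$. Now $A$ is the linear part of the restriction $\partial_0 = \partial|_{z=0}$, which is a vector field on the relevant irreducible component of the fibre $(f\circ m)^{-1}(0)$ through $q$.

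Then I would transfer $\partial_0$ across the normalisation. The image of this component in $X$ is the component of $X_0$ normalised by $n \colon S \to X_0$, and Lemma \ref{lem_nonnilp_normal}.(0)--(3) produces a lift $\tilde \partial$ of $\partial_0$ to $S$ such that non-nilpotence of the linear part of $\partial_0$ implies non-nilpotence of the linear part of $\tilde \partial$ at $p$. By the construction of the foliated pair associated to the restricted integrable distribution (cf. \cite[Proposition-Definition 3.12]{CS23b} together with Construction \ref{fol_to_dist}), the integrable distribution on $S$ defined by the Pfaff field dual to $\tilde\partial$ is precisely the one corresponding to $(\mathcal{G}, \Theta)$. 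Applying Proposition \ref{prop_lc_nonnilp} one last time yields that $(\mathcal{G}, \Theta)$ is log canonical at $p$, as required.

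The main technical point is bookkeeping across the two normalisations $m \colon X^m \to X$ and $n \colon S \to X_0$: one must ensure that the component of a fibre of $f \circ m$ chosen upstairs actually maps onto the component of $X_0$ that $S$ normalises, and that the restriction of $\partial$ to this component agrees, after pushing to $X$ and then lifting via $n$, with a vector field whose associated integrable distribution is exactly the one encoded by the foliated pair $(\mathcal{G}, \Theta)$. Once these identifications are set up, the non-nilpotence propagates formally through the block decomposition above and through Lemma \ref{lem_nonnilp_normal}, and no further singularity-theoretic input is needed; in particular the subtlety arising when $\varepsilon(X_0)=0$ (so that naive adjunction on $K_\mathcal{F}$ fails) is bypassed by working directly with a defining vector field rather than a pull-back formula for canonical divisors.
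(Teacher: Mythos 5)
The core of your plan runs into a direction problem with Lemma \ref{lem_nonnilp_normal}. Parts (2)--(3) of that lemma give the implication ``linear part of the lift $\tilde\partial$ on the normalisation is non-nilpotent $\Rightarrow$ linear part of $\partial$ downstairs is non-nilpotent.'' You need the converse: from non-nilpotence of $\partial_0$ on the (possibly singular, possibly non-normal) component to non-nilpotence of its lift $\tilde\partial$ on $S$. The lemma does not say this, and the converse is false in general. For instance, on $X = \{xy=0\}$ with $\partial = x\partial_x + y^2\partial_y$, the linear part at the origin is ${\rm diag}(1,0)$, which is non-nilpotent, but the lift to the component $\{x=0\}$ of the normalisation is $y^2\partial_y$, whose linear part is zero. (In the setting of the proposition the semi-log canonicity hypothesis on $\mathcal F$ rules out this exact configuration after you normalise $X$ itself, but the same phenomenon can still occur when you subsequently restrict to a fibre component and normalise that --- and you give no argument that it cannot.) This is precisely the hard content of the proposition, which the paper flags explicitly: adjunction on singularities for invariant divisors is false in general, so the tangency to the fibration must be used in an essential way. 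Your final paragraph asserts that ``no further singularity-theoretic input is needed,'' but Lemma \ref{lem_sing_adjunction_families} is exactly the missing input. The paper's proof instead fixes an arbitrary divisor $F$ over $S$, passes to a sufficiently high log resolution of $(X,X_0)$ where $F$ is realised as the restriction of an exceptional divisor $E_1$, uses foliation adjunction to express $a(F,\mathcal G,\Theta) = -m_F\Gamma + a(E_1, \mathcal F)$, and controls the two terms separately: $a(E_1,\mathcal F)\ge 0$ because $E_1$ is invariant, and $m_F\Gamma \leq \epsilon(F)$ by the explicit blow-up computation in Lemma \ref{lem_sing_adjunction_families}. That second bound is what your argument silently assumes.

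There is also a smaller issue worth naming: your block-matrix step presupposes you can choose smooth coordinates $(x,y,z)$ on (the index-one cover of) $X^m$ with $z$ a uniformiser and $\partial$ having no $\partial_z$ component. This is fine at smooth points of the relevant fibre component, but the interesting case is exactly where the fibre component is singular, and there the coordinate picture and the block decomposition are not available without more care.
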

\begin{proof}
Without loss of generality we may freely replace $X$ by its normalisation and so we may assume that $X$ is normal and that $\mathcal F$ is log canonical.

Let $F$ be a prime divisor on some birational model $S'$ of $S$.  Since $F$ is arbitrary, in order to prove the Proposition it suffices to show that 
$a(F, \mathcal G, \Theta) \ge -\epsilon(F)$.  

We may find a sufficiently high log resolution of $(X, X_0)$, call it  $p\colon Y \rightarrow X$, so that we have a morphism 
$\tilde{S} \rightarrow S'$ where $\tilde{S}$
is the strict transform of $n(S)$.
Let $E_1, \dots, E_r$ be the $p$-exceptional divisors.
Perhaps replacing $Y$ by a higher model and relabelling the $E_i$ we may assume that 
$F = E_1 \vert_{\tilde{S}}$ and that $E_1$ is contained in the 
central fibre of $Y \rightarrow T$.

By \cite[III.iii.4]{MP13}
 up to replacing $Y$ by a higher model 
we may assume that $\mathcal F_Y$ has log canonical 
singularities in a neighbourhood of the generic point of $F$.
By foliation adjunction, cf. \cite[Proposition-Definition 3.12]{CS23b}, we may write $K_{\mathcal F_Y}\vert_{\tilde{S}} = 
K_{\mathcal G_{\tilde{S}}}+\Gamma$ where $\Gamma \ge 0$ and 
$f_*\Gamma = \Theta$, where $f\colon \tilde{S} \rightarrow S$
is the natural morphism.
We then have an equality \begin{align*}K_{\mathcal G_{\tilde{S}}}+\Gamma+\sum_{i = 1}^r a(E_i, \mathcal F)E_i\vert_{\tilde{S}} = f^*(K_{\mathcal G}+\Theta)\end{align*}
It follows that $a(F, \mathcal G, \Theta) = -m_F\Gamma+a(E_1, \mathcal F)$. 

Thus, to show that $a(F, \mathcal G, \Theta) \ge -\epsilon(F)$ it suffices to show that 
(i) $m_F\Gamma \leq \epsilon(F)$ and (ii) $a(E_1, \mathcal F) \ge 0$.  
 Item   (i) follows by noting $\mathcal F_Y$ has log canonical singularities in a 
neighbourhood of the generic point of $F$ and so
Lemma \ref{lem_sing_adjunction_families} 
guarantees that $m_F\Gamma \leq \epsilon(F)$.
Item (ii) follows by first noting that since $E_1$ is contained in the central fibre, 
$E_1$ is $\mathcal F_Y$-invariant.  Since $\mathcal F$ has log canonical singularities we have
$a(E_1, \mathcal F) \ge \epsilon(\mathcal F_Y, E_1) = 0$.
\end{proof}

\begin{remark}
    As mentioned, adjunction on singularities for invariant divisors 
is false in general, and so in Proposition \ref{prop_locally_stable_adjunction} it is crucial that our foliation is tangent to a fibration.
\end{remark}

\begin{lemma}
	\label{lem_sing_adjunction_families}

Consider the following set up.
\begin{enumerate}
    \item Let $X$ be a smooth variety with $\dim X = 3$. 
    \item Let $f\colon X \to T$ be a surjective morphism to a smooth curve with simple normal crossings fibres.
    \item Let $\mathcal F$ be a rank one integrable distribution on $X$ such that
    \begin{enumerate} 
    \item $T_{\mathcal F} \subset T_{X/T}$; 
    \item $\sing \mathcal F$ does not 
contain any components of fibres of $f$; and
\item  $\mathcal F$ is log canonical singularities.
\end{enumerate}
\end{enumerate}

	Let $0 \in T$ be a closed point
	and let $S_0 \subset X_0 = f^{-1}(0)$ be an irreducible component. 
	Let $(\mathcal F_0, D_0)$ be the restricted foliated pair on $S_0$. 
Then for any component $C$ of $D_0$ we have $m_CD_0 \leq \epsilon(\mathcal F_0, C)$.
\end{lemma}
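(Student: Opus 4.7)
The approach is to reduce to a local analysis at a general closed point $P \in C$, exploiting the characterization of log canonicity for rank one integrable distributions via the non-nilpotence of the linear part of a generating vector field (Proposition \ref{prop_lc_nonnilp}). Since $X$ is smooth, $K_{\mathcal F}$ is automatically Cartier and $\mathcal F$ is locally generated by a single vector field $\partial$; the hypothesis $T_{\mathcal F} \subset T_{X/T}$, extended reflexively from the generic open set, forces $\partial(f) = 0$. The snc hypothesis on $X_0$ at $P \in S_0$ splits into two cases. In Case (a), only $S_0$ passes through $P$, and we may take local coordinates $(x,y,z)$ with $f = x$, $S_0 = \{x = 0\}$, $C = \{x = y = 0\}$, and $\partial = a\partial_y + b\partial_z$. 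In Case (b), a second component $S_1 \subset X_0$ meets $S_0$ transversely along $C$, and in suitable coordinates $f = xy$, $S_0 = \{x = 0\}$, $S_1 = \{y = 0\}$; here $\ker(df) \subset T_X$ is freely generated by $x\partial_x - y\partial_y$ and $\partial_z$, so $\partial = p(x\partial_x - y\partial_y) + q\partial_z$.

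In both cases $S_0$ is $\mathcal F$-invariant (indeed $\partial(x) \in (x)$), so $\partial\vert_{S_0} \in T_{S_0}$. Writing $\partial\vert_{S_0} = h \cdot \tilde\partial$ with $\tilde\partial$ a local generator of $T_{\mathcal F_0}$, the short exact sequence $0 \to T_{\mathcal F}\vert_{S_0} \to T_{\mathcal F_0} \to \mathcal O_{S_0}/(h) \to 0$ yields the equality $m_C D_0 = v_C(h)$, where $h$ is the greatest common divisor of the $\partial_y$- and $\partial_z$-components of $\partial\vert_{S_0}$ at the generic point of $C$. If $m_C D_0 \geq 1$, both components vanish along $C$, forcing $\partial$ itself to vanish along $C$, and in particular $\mathcal F$ is singular at $P$. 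At this point I would write out the $3 \times 3$ matrix of $\partial_1$ in the basis $(x,y,z)$: it has a zero column in both cases (the $x$-column in Case (a); the $z$-column in Case (b), using $q \in (x,y)$), so the non-nilpotence guaranteed by Proposition \ref{prop_lc_nonnilp} reduces to the non-vanishing of a single entry --- the $y$-coefficient of $a$ in Case (a), or the value $p(P)$ in Case (b). In either case this forces $v_C(h) = 1$, hence $m_C D_0 \leq 1$.

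It remains to show $\varepsilon(\mathcal F_0, C) = 1$ whenever $m_C D_0 = 1$. Dividing through by $h$, the saturated generator $\tilde\partial$ has $\partial_y$-component equal to the non-vanishing Taylor coefficient identified above, up to a unit; hence $\tilde\partial(y) \notin (y)$ at the generic point of $C$, so $C$ is non-invariant under $\mathcal F_0$ and $\varepsilon(\mathcal F_0, C) = 1$. I expect the main obstacle to be the algebraic bookkeeping in Case (b), where the non-smoothness of $f$ along $C$ forces the use of the non-standard generators $x\partial_x - y\partial_y$ and $\partial_z$ of $\ker(df)$, and one must carefully relate the vanishing of $q$ on $C$ to both the coefficient of $C$ in $D_0$ and the structure of $\partial_1$. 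A minor subtlety is that $\mathcal F$ is a priori only an integrable distribution, so $\partial$ need not be saturated; this is transparent, since Proposition \ref{prop_lc_nonnilp} applies directly to arbitrary generating vector fields, and any non-saturation divisor of $\mathcal F$ contributes compatibly to $K_{\mathcal F}$ and to $D_0$ via Construction \ref{fol_to_dist}.
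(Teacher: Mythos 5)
Your proposal is correct, and it takes a genuinely different route from the paper's. The paper argues by contradiction: it assumes $m_CD_0 = k > \varepsilon(\mathcal F_0, C)$, uses the constraint $\partial \in T_{X/T}$ together with the invariance of $S_0$ (or $S_0 \cup S_1$) to show that all coefficients of $\partial$ lie in the square of the ideal of $C$, and then computes directly (Claims \ref{claim_order2} and \ref{claim_easy_discrep_calc}) that a blow-up centred at $C$ extracts an invariant divisor of discrepancy $\leq -1$, contradicting log canonicity. Your argument instead invokes Proposition \ref{prop_lc_nonnilp} (McQuillan's local characterization of log canonicity via non-nilpotence of $\partial_1$) at a general closed point $P \in C$. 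Both proofs split on whether $C$ lies in the smooth locus or the double locus of $X_0$ and use the same structural constraint on $\partial$ coming from $T_{\mathcal F} \subset T_{X/T}$; the difference is that the paper does the discrepancy computation by hand, whereas you outsource it to McQuillan's Fact I.ii.4 and then observe that the zero rows/columns forced on $\partial_1$ by the coordinate structure collapse the non-nilpotence condition to the non-vanishing of a single scalar, which simultaneously pins down $m_CD_0 = 1$ and shows $C$ is non-invariant. Your route is shorter and arguably cleaner at the cost of leaning on a heavier external input.

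One small bookkeeping slip: in Case (b) the $z$-column of $\partial_1$ (in the basis $x, y, z$, with columns $=$ images) is $(q_x, q_y, 0)^{\mathrm T}$, which need not vanish; what is zero is the third \emph{row}, i.e., the $z$-component of every image, since $\partial(z) = q \in (x, y)$ has no linear $z$-term. This is what makes the matrix block-upper-triangular with eigenvalues $p_0, -p_0, 0$ (resp.\ $0, a_y, 0$ in Case (a)); the conclusion that non-nilpotence $\iff p_0 \neq 0$ (resp.\ $a_y \neq 0$) is unaffected. You should also note that the snc hypothesis allows non-reduced fibres, so in Case (b) the local equation is $f = x^r y^s$ rather than $f = xy$; the generator of $T_{X/T}$ becomes $sx\partial_x - ry\partial_y$ and the eigenvalues become $sp_0, -rp_0, 0$, but since $r, s \geq 1$ the argument is unchanged. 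The same remark applies to $f = x^m$ in Case (a), where $\partial(f) = 0$ still forces $\partial(x) = 0$.
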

\begin{proof}
	Let $C$ be an irreducible component of $D_0$.  The question is local about a general point of  $C$, so we may freely choose a system of coordinates $(t, x, y)$ on $X$ so that
	 $C = \{x = t = 0\}$, $S_0 = \{t = 0\}$
  and, in the case that $X_0$ is not normal in a neighbourhood of $C$, $(X_0)_{{\rm red}} = \{xt = 0\}$.
	We may assume that $\mathcal F$ is defined by a vector field  $\partial = a\partial_x+b\partial_t+c\partial_y$
    where $a, b, c \in \mathcal O_X$.

Let us suppose for sake of contradiction that $m_CD_0 = k >\epsilon(\mathcal F_0, C)$.
Let us remark that any exceptional divisor centred over $C$ is mapped to $0 \in T$, and is therefore invariant by the transform of $\mathcal F$.  Thus to derive a contradiction it suffices to find an exceptional divisor $E$ dominating $C$ with discrepancy $a(E, \mathcal F) \leq -1 < 0 = \varepsilon(E)$.

Note that $m_CD_0$ is precisely the order of vanishing of the restricted vector field $\partial|_S$ along $C$.  Thus,
$m_CD_0 = k$
implies that $a, b, c \in (x^k, t)$.
So, let us write $a = A_1x^k+A_2t, b = B_1x^k+B_2t$
and $c = C_1x^k+C_2t$ where $A_i, B_i, C_i \in \mathcal O_X$.
Since $S_0$ is invariant by $\partial$ we deduce that $t|b$, so in fact we may assume that $B_1 = 0$.

The foliation $\mathcal F_0$ on $S_0$ is therefore defined by the vector field
$\alpha_1\partial_x+\gamma_1\partial_y$ where $\alpha_1$ (resp. $\gamma_1$) is the restriction of $A_1$ (resp. $C_1$) to $S_0$.

\begin{claim}
\label{claim_order2}
$A_1x^k \in (x, t)^2$.
\end{claim}
\begin{proof}[Proof of Claim]
We argue in cases based on $\varepsilon(\mathcal F_0, C)$.

If $\varepsilon(\mathcal F_0, C) = 1$, then $k \ge 2$, and so $A_1x^k \in (x, t)^2$.  

If $\varepsilon(\mathcal F_0, C) = 0$ then $k \ge 1$.   Since $x$ is invariant by 
$\alpha_1\partial_x+\gamma_1\partial_y$ we see that $x|\alpha_1$, which implies that $A_1 = xA'_1+tA''_1$ where $A'_1, A''_1 \in \mathcal O_X$, and so $A_1x^k \in (x, t)^2$.
\end{proof}

We next make the following observation.

\begin{claim}
\label{claim_easy_discrep_calc}
    If
$a, b\in (x, t)^2$
then the blow up in $C$ will extract a divisor of discrepancy $\leq -1$.
\end{claim}
\begin{proof}[Proof of Claim]
Consider the coordinate chart for the blow up $p\colon X' \rightarrow X$ in $C$ 
given by $(u, v, y) \mapsto (u, uv, y)$.
In these coordinates we can verify that 
\begin{align*}p^*((A_1x^k+A_2t)\partial_x+(B_1x^k+B_2t)\partial_t+(C_1x^k+C_2t)\partial_y =& \\
(A_1u^k+A_2uv)\partial_u +(B_1u^{k-1}+B_2v-A_1vu^{k-1}-A_2v^2)&\partial_v+(C_1u^k+C_2uv)\partial_y
\end{align*}
(to lighten notation we denote by $A_i$ (resp. $B_i$, resp. $C_i$) the pullback of $A_i$ (resp. $B_i$, resp. $C_i$)
by $p$).
Since $A_1x^k+A_2t, B_1x^k+B_2t \in (x, t)^2$, it follows that
$A_2, B_2 \in (x, t)$, hence $u|A_2$ and 
$u|B_2$. In particular, $u| (B_1u^{k-1}+B_2v-A_1vu^{k-1}-A_2v^2)$.  It follows that $p$ extracts a divisor of discrepancy $\le-1$.
\end{proof}

We argue in cases based on whether $X_0$ is normal at the generic point of $C$ or not.

{\bf Case I: }{\it  Suppose that $X_0$ is normal at the generic point of $C$.}

In this case we must have $\partial(t) = 0$ 
and so $b = 0$.
Consider the coordinate chart for the blow up $p\colon X' \rightarrow X$ in $C$ 
given by $(u, v, y) \mapsto (u, uv, y)$.
In these coordinates we can verify that 
\begin{align*}p^*((A_1x^k+A_2t)\partial_x+(C_1x^k+C_2t)\partial_y =& \\
(A_1u^k+A_2uv)\partial_u - &(A_1vu^{k-1}+A_2v^2)\partial_v+(C_1u^k+C_2uv)\partial_y
\end{align*}
in particular this blow up extracts an invariant divisor 
of discrepancy $\leq 0$.
By Claim \ref{claim_order2} $A_1x^k \in (x, t)^2$ and so 
we see that 
\begin{align*}(A_1u^k+A_2uv), (A_1vu^{k-1}+A_2v^2)
\in (u, v)^2.\end{align*}  By Claim \ref{claim_easy_discrep_calc} a blow up centred in $\{u = v = 0\}$
will extract a divisor of discrepancy $\leq -1$, a contradiction.

\medskip 

{\bf Case II: }{\it  Suppose that $X_0$ is not normal at the generic point of $C$.}

In this case $X_0 = \{x^rt^s = 0\}$ where $r, s \in \mathbb N$ and so 
\begin{align*}0 = \partial(x^rt^s) = rx^{r-1}t^sa+sx^rt^{s-1}b.\end{align*}
In particular, we see that $x|a$ and $t|b$.
Since $x|a$ we see that $x|A_2$ and since $A_1x^k \in (x, t)^2$ by Claim \ref{claim_order2} we see that $a\in (x, t)^2$.
Since $rta+sxb = 0$ we then deduce that $b \in (x, t)^2$ and so 
Claim \ref{claim_easy_discrep_calc} shows that
the blow up in $\{x = t = 0\}$ extracts a divisor of discrepancy $\le-1$, a contradiction.
\end{proof}

\begin{remark}
\label{rem_very_general_canonical}
    Let $X$ be a normal variety, let $f\colon X \to T$ be a fibration over a curve and let $\mathcal F$ be a foliation on $X$ such that $T_{\mathcal F} \subset T_{X/T}$.  We may restrict $\mathcal F$
 to a foliation $\mathcal F_\eta$ on the generic fibre.  It is clear that if $\mathcal F$ has canonical singularities, then $\mathcal F_\eta$ has canonical singularities.  However, as Example \ref{easiest_example} shows, it may not be the case that $\mathcal F_t$ has canonical singularities for all closed points in some Zariski open subset of $T$.  However, \cite[Fact III.i.3]{MP13} shows that 
 $\mathcal F_t$ will have canonical singularities for a very general point of $T$.
 \end{remark}

\subsection{Proof of Conjectures \ref{conjecture sing} and \ref{conj_inversion_adjunction} for rank one foliations}

\begin{theorem}
\label{thm_inversion_of_adjunction}
    Let $X$ be a normal variety, let $\mathcal F$ be a rank one integrable 
    distribution such that $K_{\mathcal F}$ is $\mathbb Q$-Cartier.  Let $V \subset X$
be a $\mathcal F$ invariant subvariety not contained in $\sing^+ \mathcal F$, 
let $n\colon W \rightarrow V$ be the normalisation and let $\mathcal F_W$
be the restricted integrable distribution, cf. \cite[Proposition-Defintion 3.12]{CS23b}.

Suppose that $\mathcal F_W$ has canonical (resp. log canonical) singularities, then $\mathcal F$ has canonical (resp. log canonical) singularities in a neighbourhood of $V$.
 \end{theorem}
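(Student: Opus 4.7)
The plan is to reduce the question, via an index one cover, to the case where $\mathcal{F}$ is generated by a vector field $\partial$, and then apply the linear-algebraic criterion (Proposition \ref{prop_lc_nonnilp}) for log canonicity: $\mathcal{F}$ is log canonical at $x \in \sing^+\mathcal{F}$ if and only if the linear part $\partial_1 \colon \mathfrak{m}_x/\mathfrak{m}_x^2 \to \mathfrak{m}_x/\mathfrak{m}_x^2$ is non-nilpotent. Since the log canonical locus is open by Corollary \ref{cor_lc_is_open}, it is enough to check log canonicity pointwise along $V$. Working around a closed point $x\in V$, I would pass to an index one cover $\sigma \colon X' \to X$ adapted to $K_\mathcal{F}$ so that $\sigma^{-1}\mathcal{F}$ is generated by a vector field $\partial$; as $\sigma$ is quasi-\'etale, singularity classes are preserved, and the hypotheses on $V$ pass to $\sigma^{-1}(V)$, so I may assume $\mathcal{F}$ itself is defined by $\partial$. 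If $\partial(x)\neq 0$ there is nothing to do; otherwise $x \in \sing \partial$ and I proceed to analyse the linear algebra of $\partial_1$.

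Next, the invariance $\partial(I_V) \subseteq I_V$ makes the image $\overline{I_V} := (I_V + \mathfrak{m}_x^2)/\mathfrak{m}_x^2$ a $\partial_1$-stable subspace, and the induced endomorphism on the quotient $(\mathfrak{m}_x/\mathfrak{m}_x^2)/\overline{I_V}$ is exactly the linear part of the restricted derivation $\partial|_V$ at $x$, acting on the cotangent space of $V$. It is then a formal observation that if the quotient operator is non-nilpotent, then so is $\partial_1$. To access information on $V$ via the hypothesis on $\mathcal{F}_W$, I would lift $\partial|_V$ to the Seidenberg lift $\widetilde{\partial|_V}$ on $W$ (cf.\ Lemma \ref{lem_nonnilp_normal}(0)); after a further index one cover this vector field generates $\mathcal{F}_W$. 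Using that $\mathcal{F}_W$ is log canonical, Proposition \ref{prop_lc_nonnilp} forces the linear part of $\widetilde{\partial|_V}$ to be non-nilpotent at every $y \in n^{-1}(x) \cap \sing\widetilde{\partial|_V}$; then Lemma \ref{lem_nonnilp_normal}(3) propagates this non-nilpotence back to $\partial|_V$, and the linear-algebraic remark above propagates it further to $\partial_1$. A last application of Proposition \ref{prop_lc_nonnilp} concludes that $\mathcal{F}$ is log canonical at $x$.

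For the canonical case the plan is to use the analogous refined criterion from \cite{MP13}, tracking the additional non-resonance conditions through the same short exact sequence $0 \to \overline{I_V} \to \mathfrak{m}_x/\mathfrak{m}_x^2 \to (\mathfrak{m}_x/\mathfrak{m}_x^2)/\overline{I_V} \to 0$ governed by the invariance of $V$. The main technical obstacle in the plan is the compatibility of the pointwise linear-part criterion with the two operations of restriction to the invariant subvariety $V$ and normalisation $W \to V$; this is precisely what Lemma \ref{lem_nonnilp_normal} is designed to handle, and the hypothesis $V \not\subset \sing^+ \mathcal{F}$ is what guarantees that $\mathcal{F}_W$ is well defined with the expected canonical class, so that the identification of $\widetilde{\partial|_V}$ with a generator of $\mathcal{F}_W$ is legitimate and adjunction can be invoked. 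A secondary difficulty in the canonical case is ruling out new resonances in $\partial_1$ that are not visible on $W$, which will require controlling the eigenvalues coming from the conormal directions $\overline{I_V}$ through the invariance of $V$.
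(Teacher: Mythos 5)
For the log canonical case your proposal is essentially the paper's argument: pass to an index one cover so that $\mathcal F$ is defined by a vector field $\partial$ (justified, as you note, because log canonicity can be checked on a quasi-\'etale cover, cf.\ \cite[Lemma 2.20]{SS23}), dispose of the case $x \notin \sing\partial$, then restrict $\partial$ to $V$, lift it to the normalisation via Lemma~\ref{lem_nonnilp_normal}(0), apply Proposition~\ref{prop_lc_nonnilp} to $\mathcal F_W$ to get non-nilpotence of $\partial_{W,1}$, propagate down with Lemma~\ref{lem_nonnilp_normal}(3), and finally push through the surjection $\mathfrak m_x/\mathfrak m_x^2 \twoheadrightarrow \mathfrak n/\mathfrak n^2$ (your quotient by $\overline{I_V}$) to deduce that $\partial_1$ is non-nilpotent. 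This matches the paper's proof step for step.

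For the canonical case, however, the ``secondary difficulty'' you raise --- ruling out resonances in $\partial_1$ coming from the conormal directions $\overline{I_V}$ --- does not arise in the paper's argument, and trying to control eigenvalues in those directions is the wrong direction of travel. The paper invokes \cite[Fact III.i.3]{MP13}, which characterises canonical (at an lc point) by non-radiality of the generating vector field, i.e.\ $\partial$ is not of the form $u\sum a_i x_i\partial_{x_i}$ with $a_i\in\mathbb Z_{\ge 0}$ and $u$ a unit. Since $\mathcal F_W$ is canonical, its generator $\partial_W$ is non-radial; conversely, if $\partial$ were radial then so would be its restriction to the invariant subvariety and hence the Seidenberg lift $\partial_W$, contradiction. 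Then the same Fact applied to $\partial$ gives that $\mathcal F$ is canonical, with no eigenvalue bookkeeping. One further minor point: the extra index one cover you propose before identifying the lift with a generator of $\mathcal F_W$ is not needed --- once $K_{\mathcal F}$ is Cartier, $K_{\mathcal F_W}$ is Cartier by construction of the restricted integrable distribution, and $\partial_W$ already generates $\mathcal F_W$.
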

 \begin{proof}
The problem is local about any closed point $x \in V$, so 
we may freely replace $X$ by a neighbourhood 
of $x$.
By \cite[Lemma 2.20]{SS23} it suffices to check that $\mathcal F$ is log canonical 
on any quasi-\'etale cover, and so by replacing $X$ by the index one
cover associated to $K_{\mathcal F}$ we may assume that 
$K_{\mathcal F}$ is Cartier.  Thus $\mathcal F$
is defined by a vector field $\partial$.

If $x \notin \sing \partial$, 
then $\mathcal F$ is terminal at $x \in X$, see \cite[Fact III.i.1]{MP13},
and there is nothing more to show.  So we may
assume that $x \in \sing \partial$.

Let $\mathfrak m$ be the ideal of $x \in X$ and let 
$\mathfrak n$ be the ideal of $x \in V$. 
Let $\partial_V$ denote the restriction of $\partial$
to a vector field on $V$, and let $\partial_W$ denote the lift of 
$\partial_V$ to $W$, which exists by Lemma \ref{lem_nonnilp_normal}.(0).

For any closed point $y \in n^{-1}(x)$, since $\mathcal F_W$ has log canonical singularities, Proposition \ref{prop_lc_nonnilp} implies that
the linear map 
$\partial_{W, 1} \colon \mathfrak m_y/\mathfrak m_y^2 \to \mathfrak m_y/\mathfrak m_y^2$ is non-nilpotent, where
$\mathfrak m_y$ is the ideal of $y$.
  We then apply Lemma \ref{lem_nonnilp_normal}
to deduce that $\partial_{V, 1}\colon \mathfrak n/\mathfrak n^2 \rightarrow \mathfrak n/\mathfrak n^2$ is non-nilpotent.
Since we have a surjection $\mathfrak m/\mathfrak m^2 \rightarrow \mathfrak n/\mathfrak n^2$
we deduce that $\partial_1 \colon \mathfrak m/\mathfrak m^2 \rightarrow \mathfrak m/\mathfrak m^2$ is non-nilpotent.
We conclude that $\mathcal F$ has log canonical singularities by applying Proposition \ref{prop_lc_nonnilp} and Corollary \ref{cor_lc_is_open}.

Suppose that $\mathcal F_W$ has canonical singularities.
By \cite[Fact III.i.3]{MP13} $\mathcal F_W$ is defined 
by a non-radial vector field, i.e., a vector field not of the form $u(\sum a_i x_i \partial_{x_i})$ where $a_i \in \mathbb Z_{\ge 0}$ and $u$ is a unit.  It follows that $\partial$ is not radial and therefore we may apply \cite[Fact III.i.3]{MP13}
to conclude that $\mathcal F$ has canonical singularities.
 \end{proof}

\begin{theorem}
\label{thm_inversion_of_adjunction_full}
Let $X$ be a normal variety, let $\mathcal F$ be a rank one foliation on $X$, let $S$ be a prime divisor on $X$ such that $K_{\mathcal F}+\varepsilon(S)S$ is $\mathbb Q$-Cartier.
Let $n\colon T \rightarrow S$ be the normalisation and write (via foliation adjunction, cf. \cite[Proposition-Definition 3.7]{CS23b}) \begin{align*}n^*(K_{\mathcal F}+\varepsilon(S)S) = K_{\mathcal F_T}+B_T.\end{align*} 
If $(\mathcal F_T, B_T)$ is log canonical, then 
$(\mathcal F, \varepsilon(S)S)$ is log canonical in a neighbourhood of $S$.
\end{theorem}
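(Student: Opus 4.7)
The approach is to reduce to Theorem \ref{thm_inversion_of_adjunction}, handling separately the invariant case $\varepsilon(S) = 0$ and the non-invariant case $\varepsilon(S) = 1$. First, since log canonicity is preserved under quasi-\'etale covers by \cite[Lemma 2.20]{SS23}, I would replace $X$ by the index-one cover associated to $K_\mathcal{F}$, making $K_\mathcal F$ Cartier and so $\mathcal F$ generated by a single vector field $\partial$. Since $\mathcal F$ is a foliation, $\sing \mathcal F = \sing \partial$ has codimension at least two. The question is local around a closed point $x \in S$, and if $x \notin \sing \partial$ then $\mathcal F$ is terminal at $x$ by \cite[Fact III.i.1]{MP13}, so we may assume $x \in \sing \partial$.

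In the invariant case $\varepsilon(S) = 0$, I plan to apply Theorem \ref{thm_inversion_of_adjunction} with $V = S$. The only hypothesis requiring verification is $S \not\subset \sing^+ \mathcal F$. Were $S$ contained in $\sing^+ \mathcal F$, the vector field $\partial$ would vanish identically along $S$, so we could write $\partial = f_S \partial'$ locally for a defining equation $f_S$ of $S$; the additional divisorial contribution from the factor $f_S$ would force the boundary $B_T$ produced by foliation adjunction to contain the pullback of $S$ with coefficient strictly greater than $1$, contradicting the log canonicity of $(\mathcal F_T, B_T)$. With $S \not\subset \sing^+ \mathcal F$ established, Theorem \ref{thm_inversion_of_adjunction} yields log canonicity of $\mathcal F$ in a neighbourhood of $S$, which, since $\varepsilon(S) = 0$, is exactly the statement that $(\mathcal F, \varepsilon(S)S)$ is log canonical there.

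In the non-invariant case $\varepsilon(S) = 1$, the vector field $\partial$ is transverse to $S$ at its generic point, so $S$ does not contribute any divisorial singularity to $\mathcal F$, and the coefficient-one condition on $S$ in the pair is automatic. Via Proposition \ref{prop_lc_nonnilp} and Corollary \ref{cor_lc_is_open} it suffices to prove that the linear part $\partial_1$ at $x$ is non-nilpotent. The adjunction data $(\mathcal F_T, B_T)$, described by \cite[Proposition-Definition 3.7]{CS23b}, encodes a transverse trace of $\partial$ along $S$; its log canonicity at each $y \in n^{-1}(x)$ translates, again via Proposition \ref{prop_lc_nonnilp}, into non-nilpotency of the linear part of the lifted vector field on $T$. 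Lemma \ref{lem_nonnilp_normal}, applied component-wise to the normalisation $n\colon T \to S$, then propagates this non-nilpotency back to $\partial_1$ on $X$, giving the conclusion.

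The main technical obstacle is the $\varepsilon(S) = 0$ case: the general failure of forward adjunction on invariant divisors (\cite[Example 3.20]{CS23b}) means the pathological situation $S \subset \sing^+ \mathcal F$ cannot be excluded on general principles and requires the careful local analysis outlined above, together with precise tracking of how the factorisation $\partial = f_S \partial'$ contributes to $K_{\mathcal F}$ and hence to $B_T$. A secondary subtlety in both cases is the contribution of the conductor of $n\colon T \to S$ to $B_T$ when $S$ itself is non-normal, which must be accounted for so that the offending coefficients in $B_T$ remain correctly bounded relative to the log canonical threshold.
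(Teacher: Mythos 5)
Your case split into $\varepsilon(S)=0$ and $\varepsilon(S)=1$ is exactly what the paper does, and for $\varepsilon(S)=0$ the reduction to Theorem~\ref{thm_inversion_of_adjunction} is the same. Two remarks there. First, the verification that $S\not\subset\sing^+\mathcal F$ is unnecessary: in the statement $\mathcal F$ is a \emph{foliation}, so $\sigma^{-1}\mathcal F$ on the index-one cover is generated by a vector field $\partial$ with $\sing\partial=\sing\sigma^{-1}\mathcal F$ of codimension $\geq 2$, hence $\sing^+\mathcal F$ is already of codimension $\geq 2$ and cannot contain the divisor $S$. (The hypothesis ``$V\not\subset\sing^+\mathcal F$'' in Theorem~\ref{thm_inversion_of_adjunction} is there because that theorem allows $\mathcal F$ to be a mere integrable distribution.) Second, your contradiction argument ``$S\subset\sing^+\mathcal F\Rightarrow B_T$ has a coefficient $>1$'' is not needed and, as phrased, conflates $\sing^+$ of an integrable distribution with the factorisation $\partial=f_S\partial'$; it is simply that this situation cannot arise for a foliation.

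The $\varepsilon(S)=1$ case is where your proposal diverges from the paper and where there is a real gap. The paper simply invokes the pre-existing inversion-of-adjunction theorem for non-invariant divisors, \cite[Theorem~3.16]{CS23b}. Your attempt to replace this with a linear-algebra argument via Proposition~\ref{prop_lc_nonnilp}, Corollary~\ref{cor_lc_is_open}, and Lemma~\ref{lem_nonnilp_normal} does not go through for two reasons. (a)~Proposition~\ref{prop_lc_nonnilp} characterises when $\mathcal F$ alone is log canonical at $x$; it says nothing about the \emph{pair} $(\mathcal F, S)$ with a coefficient-one non-invariant boundary, which is what the conclusion demands, and the two notions genuinely differ (e.g.\ a smooth point is terminal for $\mathcal F$ but $(\mathcal F,S)$ can fail to be log canonical along a tangency locus of $S$ and $\mathcal F$). (b)~Lemma~\ref{lem_nonnilp_normal} lifts the \emph{restriction} $\partial_V$ of $\partial$ to an invariant subvariety $V$ and then to its normalisation; when $\varepsilon(S)=1$ the divisor $S$ is not invariant, $\partial$ does not restrict to a vector field on $S$, and the foliation $\mathcal F_T$ is defined through the Pfaff field, not as a restricted vector field. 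So the lemma is not applicable, and your proposed transfer of non-nilpotency from $T$ back to $X$ has no support. If you want a self-contained proof in this case you would need to reprove the content of \cite[Theorem~3.16]{CS23b}, which goes through a careful analysis of the tangency between $S$ and $\mathcal F$ rather than through linear parts of vector fields.
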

\begin{proof}
    If $\varepsilon(S) = 0$, then we may apply Theorem \ref{thm_inversion_of_adjunction} to conclude.
    If $\varepsilon(S) = 1$, then we may apply \cite[Theorem 3.16]{CS23b} to conclude.
\end{proof}

 \begin{theorem}
 \label{thm_lc_deform}
     Let $f\colon (X, \mathcal F) \rightarrow T$
     be a flat family of integrable distributions of rank $=1$ where $T$ is scheme of finite type over $\mathbb C$, $\dim (X/T) = 2$, $K_{\mathcal F}$ is $\mathbb Q$-Cartier and such that the fibres of $f$ are deminormal.

Fix $t \in T$, and suppose that $\mathcal F_t$ is semi-log canonical.  Then there exists a Zariski open neighbourhood $t \in U \subset T$ such that 
$\mathcal F_s$ is semi-log canonical for all $s \in U$.
 \end{theorem}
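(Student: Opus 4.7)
\medskip

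The plan is in three steps: first reduce to the case where $T$ is a smooth curve, then use Theorem~\ref{thm_inversion_of_adjunction} applied to each irreducible component of $X_t$ as an $\mathcal F$-invariant divisor of the three-fold $X$ to conclude that $\mathcal F$ itself is log canonical in a Zariski open neighbourhood of $X_t$ in $X$, and finally transfer this back to the nearby fibres using Proposition~\ref{prop_locally_stable_adjunction}.

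The statement is Zariski-local on $T$, so after shrinking I may assume $T$ is affine and irreducible, and (by passing to normalisations of components) that $X$ is normal. Semi-log canonicity of a rank one integrable distribution is controlled by the non-nilpotence of the linear part of the generating vector field on an index-one cover, a local open condition by Proposition~\ref{prop_lc_nonnilp} and Corollary~\ref{cor_lc_is_open}; consequently, the locus $\{s\in T : \mathcal F_s \text{ is slc}\}$ is constructible. Hence it suffices to check the statement after base change along every smooth curve germ $C\to T$ sending a marked point to $t$, and I henceforth assume $T = C$ is a smooth curve. Under this assumption $X$ is a normal three-fold and each irreducible component $S_i$ of $X_t$ is a prime divisor that is $\mathcal F$-invariant, since the Pfaff field of the flat family factors through $\Omega^1_{X/T}$ and so $T_{\mathcal F} \subset T_{X/T}$; moreover, $S_i$ is not contained in $\sing^+\mathcal F$, since the fibrewise Pfaff field $\phi_t$ is generically nonzero on $X_t$ by the family axioms. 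The restricted integrable distribution to the normalisation $n_i\colon S_i^n\to S_i$ may be identified with $n_i^{-1}\mathcal F_t$, which is log canonical because $\mathcal F_t$ is slc. Theorem~\ref{thm_inversion_of_adjunction} then yields that $\mathcal F$ is log canonical in a Zariski neighbourhood $U \subset X$ of each $S_i$, and hence of $X_t$.

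To conclude, I shrink $T$ to a neighbourhood $V$ of $t$ such that $f^{-1}(V) \subset U$, which is feasible in the (implicitly proper) moduli setting --- without properness, one argues pointwise on $X_t$ and uses flatness to propagate to nearby fibres. Proposition~\ref{prop_locally_stable_adjunction} applied to $f\colon f^{-1}(V) \to V$ then forces $\mathcal F_s$ to be slc for every $s \in V$. The main technical obstacle is the identification of the restricted integrable distribution on $S_i^n$ with $n_i^{-1}\mathcal F_t$: one must carefully trace the Pfaff field $\phi\colon \Omega^1_{X/T} \to \mathcal O(K_{\mathcal F})$ through restriction to $S_i$ and pullback along $n_i$, and verify that contributions coming from the double locus of $X_t$ are compatible with the foliated adjunction formula. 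A related subtlety is ensuring that the reduction to $T$ a smooth curve is formally correct, which ultimately rests on the constructibility of the slc locus --- the foliated analogue of a classical fact for varieties, itself based on Corollary~\ref{cor_lc_is_open}.
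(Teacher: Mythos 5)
Your proposal is correct and follows the same two-step pipeline as the paper's proof: reduce to a smooth curve base with normal total space, then combine Theorem~\ref{thm_inversion_of_adjunction} (to lift semi-log canonicity from the special fibre to the total space) with Proposition~\ref{prop_locally_stable_adjunction} (to push it back down to nearby fibres). The paper's argument is a terse one-liner after those reductions, whereas you have correctly identified the same structure and made explicit the supporting technical points (invariance of the fibre components, non-containment in $\sing^+\mathcal F$, identification of the restricted distribution, and the constructibility-plus-generization reduction to curves) that the paper leaves implicit.
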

\begin{proof}
We may freely replace $T$ by a reduced curve $\Sigma \subset T$ and so we may assume that $\dim X = 3$
and that $X$ is a variety.
We are also free to replace $X$ by its normalisation and so without loss of generality we may assume that $X$ is normal.  The theorem is then an immediate consequence of Theorem \ref{thm_inversion_of_adjunction} and Proposition \ref{prop_locally_stable_adjunction}.
\end{proof}

\subsection{A criterion for local stability}
In this section we explain a criterion for verifying that 
a family of rank one foliations on surfaces is locally stable.

\begin{lemma}
\label{lem_picard_node}
    Let $x \in X = X_1 \cup X_2$ be a semi-log canonical surface where $X_i$ is irreducible and normal and suppose that $X_1 \cap X_2$ is a smooth curve.

Let \begin{align*}\phi\colon {\rm Pic}^{\rm loc}(x \in X)  \rightarrow {\rm Pic}^{\rm loc}(x \in X_1)\oplus {\rm Pic}^{\rm loc}(x \in X_2)\end{align*}
be the map given by $D \mapsto (D|_{X_1}, D|_{X_2})$.

Then $\ker \phi \cong \mathbb Z$.  In particular, if $D$ is $\mathbb Q$-Cartier and, 
$D|_{X_1}$ and $D|_{X_2}$ are Cartier, then $D$ is Cartier.
\end{lemma}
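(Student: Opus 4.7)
The plan is to derive the statement from the Mayer--Vietoris sequence associated with the pushout presentation $X = X_1 \sqcup_C X_2$, where $C := X_1 \cap X_2$. The defining short exact sequence of structure sheaves
\begin{align*}
0 \to \mathcal{O}_X \to \mathcal{O}_{X_1} \oplus \mathcal{O}_{X_2} \to \mathcal{O}_C \to 0
\end{align*}
gives rise to a short exact sequence of sheaves of units
\begin{align*}
1 \to \mathcal{O}_X^\ast \to \mathcal{O}_{X_1}^\ast \oplus \mathcal{O}_{X_2}^\ast \to \mathcal{O}_C^\ast \to 1,
\end{align*}
the surjectivity on the right being a consequence of the fact that any local section of $\mathcal{O}_{X_i}$ whose restriction to $\mathcal{O}_C$ is a unit must itself be a unit.

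Next, I would pass to the punctured local spectra $U := \mathrm{Spec}(\mathcal{O}_{X,x}) \setminus \{x\}$, $U_i := \mathrm{Spec}(\mathcal{O}_{X_i,x}) \setminus \{x\}$, and $V := \mathrm{Spec}(\mathcal{O}_{C,x}) \setminus \{x\}$. Since $C$ is smooth at $x$, $\mathcal{O}_{C,x}$ is a DVR, so $V$ is the spectrum of its fraction field $F$, giving $H^0(V, \mathcal{O}^\ast) = F^\ast$ and $\mathrm{Pic}(V) = 0$. By Hartog's lemma (applicable as $X$ is $S_2$ by the slc hypothesis and each $X_i$ is $S_2$ by normality), $H^0(U, \mathcal{O}^\ast) = \mathcal{O}_{X,x}^\ast$ and $H^0(U_i, \mathcal{O}^\ast) = \mathcal{O}_{X_i,x}^\ast$. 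Identifying $\mathrm{Pic}^{\mathrm{loc}}(x \in Y)$ with $\mathrm{Pic}(\mathrm{Spec}(\mathcal{O}_{Y,x}) \setminus \{x\})$ (valid under $S_2$), the relevant segment of the induced long exact cohomology sequence takes the form
\begin{align*}
\mathcal{O}_{X_1,x}^\ast \oplus \mathcal{O}_{X_2,x}^\ast \xrightarrow{\psi} F^\ast \to \mathrm{Pic}^{\mathrm{loc}}(x \in X) \xrightarrow{\phi} \mathrm{Pic}^{\mathrm{loc}}(x \in X_1) \oplus \mathrm{Pic}^{\mathrm{loc}}(x \in X_2),
\end{align*}
so $\ker \phi \cong \mathrm{coker}(\psi)$.

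To compute this cokernel, I would observe that the quotient $\mathcal{O}_{X_i,x} \twoheadrightarrow \mathcal{O}_{C,x}$ induces a surjection on units $\mathcal{O}_{X_i,x}^\ast \twoheadrightarrow \mathcal{O}_{C,x}^\ast$ (again because any lift of a unit is a unit). Hence the image of $\psi$ in $F^\ast$ equals $\mathcal{O}_{C,x}^\ast$, and therefore
\begin{align*}
\ker \phi \cong F^\ast/\mathcal{O}_{C,x}^\ast \cong \mathbb{Z},
\end{align*}
the last isomorphism being the valuation on the DVR $\mathcal{O}_{C,x}$.

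The ``in particular'' claim is then immediate, since $\mathbb{Z}$ is torsion-free: if $D$ is $\mathbb{Q}$-Cartier with $[D] \in \ker \phi$, then $m[D] = 0$ for some $m > 0$ forces $[D] = 0$, i.e.\ $D$ is Cartier. The main technical point to verify carefully is the identification of the local Picard group with the Picard of the punctured spectrum (standard for $S_2$ schemes) together with the correct setup of the Mayer--Vietoris sequence in this non-normal setting; this is ensured by the pushout structure coming from the slc hypothesis along the smooth double locus $C$.
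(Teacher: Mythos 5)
Your proof is correct and is essentially the approach that the paper delegates to \cite[Example 4.5]{MR3549172}: the ``obstruction to gluing'' the paper invokes is exactly the boundary map $H^0(V,\mathcal O^\ast)\to H^1(U,\mathcal O_X^\ast)$ in your Mayer--Vietoris sequence, and your careful computation correctly exhibits the obstruction group as $F^\ast/\mathcal O_{C,x}^\ast\cong \mathbb Z$ (which the paper somewhat loosely labels as a local Picard group of $C$).
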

\begin{proof}
This follows from essentially the same arguments as \cite[Example 4.5]{MR3549172}:
 if $D|_{X_1}$ and $D|_{X_2}$
are both Cartier (and hence trivial in the local Picard group), then the obstruction to gluing them to a Cartier divisor on $X$ is an element of ${\rm Pic}^{\rm loc}(x \in X_1 \cap X_2) \cong \mathbb Z$.
\end{proof}

\begin{lemma}
\label{lem_Cartier_criterion}
Consider the following set up.
\begin{enumerate}
\item Let $f\colon (X, \Delta) \rightarrow S$ be a locally stable family where $\dim (X/S) = 2$ and $S$ is a smooth curve. 
\item Let $\mathcal F$ be a rank one integrable distribution
 on 
$X$ such that 
\begin{enumerate}
    \item  $T_{\mathcal F} \subset T_{X/S}$ in a neighbourhood of every generic point of $X$; and 
    \item $K_{\mathcal F}$ is $\mathbb Q$-Cartier and $\mathcal F$ is 
semi-log canonical.
\end{enumerate}
\end{enumerate}

Then for any $s \in S$ and any codimension two point $W \in X$ such that $W \in X_s$, $K_{\mathcal F}$
is Cartier in a neighbourhood of $W$.
\end{lemma}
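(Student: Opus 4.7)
The plan is to reduce the question to verifying Cartier-ness of $K_{\mathcal F_s}$ on the fibre $X_s$ via flatness, and then exploit the deminormal structure of $X_s$ at the codimension one point $\xi$ that corresponds to $W$. Throughout, $\xi$ will denote the generic point of $W$; since $W$ has codimension two in $X$ and is contained in $X_s$, the point $\xi$ is a codimension one point of the surface $X_s$.

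For the reduction, one observes that by the very definition of a flat family of integrable distributions (cf. Section \ref{s_flat_families}), the divisorial sheaf $L := \mathcal O_X(K_{\mathcal F})$ is flat over $S$ and its restriction $L\vert_{X_s}$ coincides with $\mathcal O_{X_s}(K_{\mathcal F_s})$. A standard Nakayama-type argument then applies: lift a generator of $L\vert_{X_s, \xi}$ to a section of $L_\xi$, which by Nakayama at $\mathcal O_{X, \xi}$ generates $L_\xi$; by flatness of $L$ over $S$, the kernel of the resulting surjection $\mathcal O_{X, \xi} \to L_\xi$ has zero reduction modulo the uniformiser $t$ at $s$, and therefore vanishes by a second application of Nakayama. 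Thus it suffices to show that $K_{\mathcal F_s}$ is Cartier at $\xi$.

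The fibre $X_s$ is deminormal (being slc, as $(X, \Delta) \to S$ is locally stable), so at its codimension one point $\xi$ it is either smooth or nodal. If $X_s$ is smooth at $\xi$, then $\mathcal O_{X_s, \xi}$ is a DVR and every reflexive rank one sheaf on it is free, so $K_{\mathcal F_s}$ is automatically Cartier there. If $X_s$ is nodal at $\xi$, then after passing to a formal (or \'etale) neighbourhood to separate branches we may write $X_s = X_{s, 1} \cup X_{s, 2}$ with $X_{s, i}$ normal irreducible components whose intersection is the smooth curve $W$; by normality each $X_{s, i}$ is regular at the codimension one point $\xi$, so every divisorial sheaf on $X_{s, i}$ is Cartier near $\xi$, and in particular $K_{\mathcal F_s}\vert_{X_{s, i}}$ is. Since $K_{\mathcal F_s}$ is $\mathbb Q$-Cartier (because $X_s$ is a Cartier divisor in $X$ and $K_{\mathcal F}$ is $\mathbb Q$-Cartier), Lemma \ref{lem_picard_node} applied to the deminormal surface $X_s$ at $\xi$ then yields that $K_{\mathcal F_s}$ itself is Cartier at $\xi$. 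The main technical point is the flatness reduction from $X$ to $X_s$, which crucially uses that a flat family of integrable distributions comes equipped with a flat family of divisorial sheaves $\mathcal O(K_{\mathcal F})$; the remainder is a direct case analysis on the local structure of the deminormal surface $X_s$ at its codimension one point $\xi$.
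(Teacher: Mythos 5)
The proposal is circular at its first step. You begin by ``observing that by the very definition of a flat family of integrable distributions (cf.\ Section \ref{s_flat_families}), the divisorial sheaf $L := \mathcal O_X(K_{\mathcal F})$ is flat over $S$ and its restriction $L\vert_{X_s}$ coincides with $\mathcal O_{X_s}(K_{\mathcal F_s})$.'' But that is not among the hypotheses of Lemma \ref{lem_Cartier_criterion}. The lemma only assumes that $(X,\Delta)\to S$ is a locally stable family of \emph{pairs} and that $\mathcal F$ is a rank one integrable distribution on the threefold $X$ (tangent to the fibration near generic points, $K_{\mathcal F}$ $\mathbb Q$-Cartier, $\mathcal F$ slc). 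It does \emph{not} assume that $(X,\Delta,\mathcal F)\to S$ is a flat family of integrable distributions, nor that $\mathcal O_X(K_{\mathcal F})$ is flat over $S$ with divisorial restrictions to fibres. In fact, establishing exactly that flatness is the first step in the proof of Proposition \ref{prop_family_criterion}, and Lemma \ref{lem_Cartier_criterion} is one of the two ingredients used to do it (the other being Lemma \ref{lem_rel_mumford_crit}, followed by an application of \cite[Proposition 2.79]{modbook}). So assuming the flatness here is assuming what you are ultimately trying to prove.

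Once the flatness assumption is dropped, the rest of the argument has no foothold: without knowing that $L$ is flat over $S$ and that $L\vert_{X_s}$ is a divisorial sheaf on $X_s$, the Nakayama lifting step cannot be run and there is no well-defined ``$K_{\mathcal F_s}$'' to argue about. (Incidentally, if you \emph{did} know that $L\vert_{X_s}$ is a divisorial sheaf on the surface $X_s$, then by definition it is a line bundle away from a zero-dimensional set, hence automatically a line bundle at the codimension-one point $\xi$ of $X_s$ --- so the case analysis and the appeal to Lemma \ref{lem_picard_node} at the end of your argument would also be unnecessary. This is a further symptom that the argument is secretly assuming the conclusion.) The actual difficulty, which the paper's proof confronts head-on, is that $\mathcal O_X(K_{\mathcal F})$ is merely a reflexive rank one sheaf on the threefold $X$, and such a sheaf can a priori fail to be locally free along a codimension-two locus. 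The paper handles this by working directly on $X$: after reducing to the normal case and local models, it realizes a neighbourhood of $W$ in $X$ as a cyclic quotient of $\{xy-t^k=0\}\subset\mathbb A^4$ (or a normal crossings locus in the non-normal case), lifts the defining vector field to the cover, uses $T_{\mathcal F}\subset T_{X/S}$ to extract the relation $va+ub=0$, invokes Proposition \ref{prop_lc_nonnilp} (via the slc hypothesis) to see the linear part is non-nilpotent, and concludes that the generating vector field is invariant under the group action, hence descends, hence $K_{\mathcal F}$ is Cartier. That analysis is precisely what the flatness shortcut tries to skip.
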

\begin{proof}
Either $X$ is normal in a neighbourhood of the generic point of $W$ or it is not, in which case 
$X$ is normal crossings at the generic point of $W$.

{\bf Case I: }{\it Suppose that $X$ is normal 
in a neighbourhood of $W$.}
Since $f\colon (X, \Delta) \rightarrow S$ is locally stable, we know that $X_s$ is semi-log canonical.  In particular, either 
$X_s$ is smooth at the generic point of $W$ or it is normal crossings at the generic point of $W$.  We will handle these cases separately.

{\bf Case I.a: }{\it  Suppose $X_s$ is smooth at the generic point of $W$.}
Since $X_s$ is a Cartier divisor this implies that $X$ is smooth at the generic point of $W$
and so $K_{\mathcal F}$ is Cartier at $W$.

{\bf Case I.b: }{\it  Suppose that $X_s$ is normal crossings 
at the generic point of $W$.}
In this case, in a neighbourhood of general point  $x \in \overline{\{W\}}$, $X \rightarrow S$ gives a smoothing
of a normal crossing point and so in appropriate local coordinates about $x \in \overline{\{W\}}$,
we have $(x \in X) \cong (0 \in   \{xy - t^k = 0\} \subset \mathbb A^4)$
and the map to $S$ is given by $(x, y, z, t) \mapsto t$.
In particular, $X$ is a quotient of $\mathbb A^3$
by $\mathbb Z/k\mathbb Z$ via the action 
$\zeta \cdot (u, v, w) = (\zeta u, \zeta^{-1}v, w)$
where $\zeta$ is a primitive $k$-th root of unity.

Let $q\colon \mathbb A^3 \rightarrow X \subset \mathbb A^4$ be the quotient map. In coordinates this map
is given by $x = u^k, y = v^k, t = uv, z = w$.
Let $\partial = a\partial_u+b\partial_v+c\partial_z$ be a vector field generating $q^{-1}\mathcal F$ where $a, b, c \in \mathcal O_{\mathbb A^3, 0}$.
The condition that $T_{\mathcal F} \subset T_{X/S}$
implies that $\partial(q^*t) = \partial(uv) = 0$, and
so $va+ub = 0$.  This in turn implies that we may write
$a = va_0$ and $b = -ua_0$ where $a_0 \in \mathcal O_{\mathbb A^3, 0}$.
Up to rescaling $\partial$ by a unit, we may also assume that the $\mathbb Z/k\mathbb Z$-action
on $X$ acts on $\partial$ as $\partial \mapsto \zeta^r \partial$
where $r \in \{0, \dots, k-1\}$.  

We argue in cases based on whether $0 \in \sing \partial$ or $0 \not\in \sing \partial$.

If $0 \not\in \sing \partial$, then we must have that $c$ is a unit, so (up to rescaling by a constant) we may write $\partial = va_0\partial_v-ua_0\partial_u+\partial_w+\delta$ where $\delta \in (u, v, w)T_{\mathbb A^3, 0}$. Since $\partial_w$ is fixed by the $\mathbb Z/k\mathbb Z$-action, it follows that $\zeta^r = 1$, i.e., $\partial$
is fixed by by the $\mathbb Z/k\mathbb Z$-action. Thus $\partial$ descends to a vector field on $X$ and so $K_{\mathcal F}$
is Cartier.

If $0 \in \sing \partial$, then 
since $x \in \overline{\{W\}}$ is a general point, we may freely assume that $q^{-1}(W) = \{u = v = 0\} =  \sing \partial$, and so 
$c \in (u, v)$.
Since $\mathcal F$ is log canonical, $q^{-1}\mathcal F$ is log canonical as well, cf. \cite[Lemma 2.20]{SS23}, so by Proposition \ref{prop_lc_nonnilp} the linear part of $\partial$ at $0$ is non-nilpotent.
It follows that $a_0$ must be a unit, and so (up to rescaling by a constant) we may write 
$\partial = u\partial_u-v\partial_v+c\partial_w+\delta$ where $\delta \in (u, v)^2T_{\mathbb A^3, 0}$.
Since $u\partial_u-v\partial_v$ is fixed by the $\mathbb Z/k\mathbb Z$-action, we again conclude that $\zeta^r = 1$
and so
$\partial$
is fixed by $\mathbb Z/k\mathbb Z$-action.
Thus $\partial$ descends to a vector field on $X$
and so $K_{\mathcal F}$ is Cartier.

\medskip
{\bf Case II: }{\it  Suppose that $X$ is not normal 
in a neighbourhood of $W$.}
Since $X_s$ is semi-log canonical and not normal, it follows that $X_s$ is normal crossings at a general pont of the closure of $W$.
Since $X_s \subset X$ is a Cartier divisor and $X$ is not normal 
we deduce that $X$ is normal crossings at a general point of the closure of 
$W$ as well.
We may then conclude by applying Lemma \ref{lem_picard_node} at a general point of the closure of $W$.
\end{proof}

We make the following easy observation:

\begin{lemma}
\label{lem_rel_mumford_crit}
    Let $f\colon X \rightarrow S$ be a flat morphism with $S_2$ fibres, and let $D$ be a Mumford divisor
    on $X$ such that $D$ is Cartier at 
    every codimension two point of $X$ contained in a fibre of $f$.  
    Then there exists a relative Mumford divisor $D'$ such that $D \sim D'$.
\end{lemma}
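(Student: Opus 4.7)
The approach I would take is to represent $D$ as the difference of two effective relative Mumford divisors constructed as vanishing loci of general sections of an appropriate twist of $\mathcal{O}_X(D)$.

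First, I would verify that $L := \mathcal{O}_X(D)$ is a flat family of divisorial sheaves over $S$. The Mumford hypothesis on $D$ guarantees that $X$ is regular at every codimension-one point of $D$, so $L$ is locally free in codimension one on $X$. The additional hypothesis that $D$ is Cartier at every codimension-two point of $X$ contained in a fibre of $f$ then implies, by a straightforward codimension count, that the restriction $L|_{X_s}$ is locally free away from a codimension-two subset of $X_s$ for every $s \in S$. Hence each $L|_{X_s}$ is a divisorial sheaf on the $S_2$ fibre $X_s$, and $L$ is a flat family of divisorial sheaves in the sense of Section~\ref{Prelim. sheaves}.

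Next, I would pick an $f$-ample Cartier divisor $H$ on $X$ not containing any component of any fibre of $f$, so that $H$ itself is an effective relative (Cartier, hence) Mumford divisor. For $m \gg 0$, the sheaf $L \otimes \mathcal{O}_X(mH)$ is $f$-globally generated, so working locally on $S$ (where we may further twist by $f^*A$ for $A$ ample on $S$ if needed) we obtain a global section $\sigma$ of $L \otimes \mathcal{O}_X(mH)$ whose vanishing locus $D''$ satisfies the four conditions of the definition of a relative effective Mumford divisor. Concretely, after the twist the linear system is basepoint-free away from any prescribed codimension-two locus and so a generic $\sigma$ (i) does not vanish identically on any fibre, (ii) has $D''$ Cartier outside a codim-two locus $Z$ of each fibre coming from the non-free locus of $L$, (iii) satisfies $D'' = \overline{D''|_{X\setminus Z}}$ since $D''$ has no components in the codim-two locus $Z$, and (iv) by a relative Bertini-type argument, has $D''|_{X_s}$ avoiding the (finite set of) generic points of the codimension-one singular locus of $X_s$ for every $s$.

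Setting $D' := D'' - mH$, we have $D' \sim D$ as Weil divisors on $X$, and $D'$ is a $\mathbb{Z}$-linear combination of effective relative Mumford divisors, hence a relative Mumford divisor, as required. The main obstacle is ensuring that a general section of the twisted sheaf avoids the codimension-one singular locus of every fibre simultaneously; this is the content of the relative Bertini step, and is handled by combining relative global generation after the twist with the constructibility of the codimension-one singular locus of $X_s$ as $s$ varies in $S$.
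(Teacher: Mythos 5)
Your proposal has two genuine gaps, and also diverges in spirit from the paper's one-line intended argument.

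The first gap is in your Step 1. You assert that $L = \mathcal O_X(D)$ is a flat family of divisorial sheaves, but you only check that $L|_{X_s}$ is locally free in codimension one on $X_s$; you do not establish that $L$ is flat over $S$, nor that $L|_{X_s}$ is $S_2$ (both are required by the definition in Section~\ref{Prelim. sheaves}). Neither of these is a formal consequence of the hypotheses, and indeed in the paper's intended logic the flatness of $\mathcal O_X(D)$ is a \emph{downstream} consequence: one first produces a relative Mumford divisor $D'\sim D$ via the present lemma, and only then invokes \cite[Proposition 2.79]{modbook} to conclude that $\mathcal O_X(D')\cong\mathcal O_X(D)$ is a flat family of divisorial sheaves. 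Your proof therefore presupposes, in its first step, what the lemma is used to obtain later. The second gap is that your Steps 2--3 rely on $f$ being projective (to have an $f$-ample $H$, relative global generation, and a relative Bertini argument), but $f$ is only assumed flat with $S_2$ fibres; the lemma is in fact invoked in the paper after localising $X$ around a point, so no relatively ample divisor is available, and even granting projectivity the ``avoid the singular codimension-one locus of every fibre simultaneously'' step is only gestured at.

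The paper's argument is instead a purely local prime-avoidance argument, which sidesteps both issues. Working at a point $x\in X$ with $s=f(x)$, the bad locus consists of finitely many codimension-two points of $X$ (the non-regular codimension-one points of $X_s$ and the generic points of $X_s$ itself), at each of which $\mathcal O_X(D)$ is invertible by hypothesis. Prime avoidance applied to the module $\mathcal O_X(D)_x$ produces a rational section $g$ that generates $\mathcal O_X(D)$ at each of these finitely many primes, so that $D' = D + (g)$ avoids them; one checks directly that $D'$ then satisfies conditions (1)--(4) in the definition of a relative Mumford divisor near $x$. This needs no projectivity, no global generation, and crucially no a priori flatness of $\mathcal O_X(D)$. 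If you wish to salvage your globalised approach, you would need to (i) add a projectivity hypothesis, (ii) prove flatness of $L$ over $S$ by an independent argument rather than asserting it, and (iii) give a real relative Bertini argument for the fibrewise avoidance; but at that point the prime-avoidance route is both shorter and works in the generality actually required.
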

\begin{proof}
This is a consequence of prime avoidance.
\end{proof}

\begin{proposition}
\label{prop_family_criterion}
Consider the following set up.
\begin{enumerate}
\item Let $f\colon (X, \Delta) \rightarrow T$ be a locally stable family where $\dim (X/T) = 2$ and $T$ is a smooth curve.
\item Let $\mathcal F$ be a rank one integrable distribution
 on 
$X$ such that 
\begin{enumerate}
  \item $T_{\mathcal F} \subset T_{X/T}$ in a neighbourhood of every generic point of $X$; \item $K_{\mathcal F}$ is $\mathbb Q$-Cartier and  $\mathcal F$ is 
semi-log canonical; and 
\item $(X_\eta, \mathcal F_\eta, \Delta_\eta)$ is well-formed where $\eta \in T$ is the generic point.
\end{enumerate}
\end{enumerate}

Then $(X, \Delta, \mathcal F) \rightarrow T$ is a locally stable family of integrable distributions.
\end{proposition}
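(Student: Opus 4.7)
The plan is to verify all conditions (S1)--(S5) of Section \ref{s_locally_stable_def} together with the underlying flat family of integrable distributions structure. Conditions (S1) and (S2) are immediate from the hypotheses. The pivotal step is to apply Lemma \ref{lem_Cartier_criterion} directly: under hypotheses (a) and (b) it ensures that $K_{\mathcal{F}}$ is Cartier at every codimension two point $W \in X$ with $W$ contained in a fibre $X_s$. Combined with the local stability of $(X,\Delta)\to T$, it follows that every divisor of the form $jK_{\mathcal{F}}+k(K_{X/T}+\Delta)$ is likewise Cartier at every codimension two point of $X$ contained in a fibre.

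From this Cartier property I would deduce the remaining structural pieces. For the flat family of integrable distributions, the generic inclusion $T_{\mathcal{F}}\subset T_{X/T}$ dualises to a morphism $\Omega^1_{X/T}\to \mathcal{O}_X(K_{\mathcal{F}})$ defined on an open subset whose complement has codimension $\geq 2$ in $X$ and meets each fibre in codimension $\geq 2$; an $S_2$-extension then yields a global Pfaff field $\phi\colon\Omega^1_{X/T}\to L := \mathcal{O}_X(K_\mathcal{F})$. For the Koll\'ar condition (S3), Lemma \ref{lem_rel_mumford_crit} lets me represent each $L_{j,k} := \mathcal{O}_X(jK_\mathcal{F}+k(K_{X/T}+\Delta))$ by a relative Mumford divisor, and the Cartier-at-codimension-two-fibre-points property together with standard $S_2$-hull arguments, cf. \cite[\S 4]{modbook}, guarantees that $L_{j,k}$ is a flat family of divisorial sheaves whose formation commutes with base change.

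The two remaining conditions are verified fibrewise. For (S4), I would pass to the normalisation of $X$ (a reduction compatible with all hypotheses), and for each irreducible component $X_{t,i}$ of each closed fibre $X_t$ apply Proposition \ref{prop_locally_stable_adjunction}: its conclusion is precisely that the restricted foliated pair on the normalisation of $X_{t,i}$ is log canonical, which is the definition of $\mathcal{F}_t$ being semi-log canonical. For (S5), the well-formedness assumed at the generic fibre $\eta$ transfers to every fibre by horizontality: the components $D_i$ of $\Delta$ are horizontal relative Mumford divisors over $T$, invariance of an irreducible divisor by $\mathcal{F}$ is a generic codimension-one condition which persists on every fibre, and the divisorial part $\Gamma$ of the non-saturation of $n^{-1}\mathcal{F}$ has constant multiplicities along horizontal divisors (being computed from the order of vanishing of $\phi$). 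Hence the inequality $\Gamma_t\geq (\Delta^n_t)_{{\rm non-inv}}$ at $t=\eta$ propagates to every closed $t\in T$.

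The main obstacle in this chain is the application of Lemma \ref{lem_Cartier_criterion}, whose proof genuinely requires the combination of semi-log canonicity of $\mathcal{F}$ with the tangency $T_{\mathcal{F}}\subset T_{X/T}$; this codimension-two Cartier property is the technical crux from which everything else follows essentially formally. A secondary subtlety is the careful transfer of well-formedness from the generic fibre to every closed fibre, where one must track that the flat family Pfaff field $\phi$ genuinely controls both $\Gamma$ and the invariance of components of $\Delta$ simultaneously.
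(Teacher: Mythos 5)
Your treatment of the flat-family structure, (S1)--(S4) matches the paper's: Lemma \ref{lem_Cartier_criterion} gives the codimension-two Cartier property along fibres, Lemma \ref{lem_rel_mumford_crit} produces a relative Mumford representative, \cite[Proposition 2.79]{modbook} yields flatness and base change, and Proposition \ref{prop_locally_stable_adjunction} gives (S4) after normalising. That portion is sound.

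The gap is in (S5). Well-formedness of $(X_t,\Delta_t,\mathcal F_t)$ asks that $\Gamma^n_t \geq (\Delta^n_t)_{\rm non\text{-}inv}$, where $\Delta^n_t = n_t^*\Delta_t + D_t$ and $D_t$ is the preimage of the double locus of the closed fibre $X_t$ under its normalisation. Even after replacing $X$ by its normalisation, $X_t$ can fail to be normal, and its double locus acquires \emph{vertical} divisorial components contained entirely in $X_t$ which simply do not exist on the generic fibre $X_\eta$. Your ``propagation by horizontality'' argument controls the inequality only along components of $n^*\Delta$, which are horizontal, and says nothing about those vertical components of $D_t$. The paper must therefore reduce to checking $m_D\Gamma^n_t \geq \varepsilon(\tilde{\mathcal F}^n_t, D)$ for each divisor $D \subset n^{-1}(\sing X_t)$, and does so by a case analysis on whether $n(D) \subset \sing\partial$, exploiting that $\sing X_t$ is $\partial$-invariant (by \cite[Theorem 5]{MR0212027}) and lifting $\partial$ and its invariant ideals to the normalisation via Lemma \ref{lem_nonnilp_normal}. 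That argument --- essentially, either $D$ lies in the non-saturation locus so contributes $\geq 1$ to $\Gamma^n_t$, or $D$ is invariant so $\varepsilon(\tilde{\mathcal F}^n_t,D) = 0$ --- is the missing piece; without it you have no control over the vertical part of $\Delta^n_t$ on the closed fibres.
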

\begin{proof}
We first check that $(X, \Delta, \mathcal F) \to T$ is a flat family of integrable distributions. 
 To do this it suffices to show that $\mathcal O(K_{\mathcal F})$ is a flat family of divisorial sheaves.
This may be checked locally, so we are free to replace 
$X$ by a neighbourhood of any point.
By Lemma \ref{lem_Cartier_criterion} and Lemma \ref{lem_rel_mumford_crit}
we see that there exists a divisor $D \sim K_{\mathcal F}$
such that $D$ is a relative Mumford divisor.
We may then apply \cite[Proposition 2.79]{modbook} (cf. \cite[Theorem 7.20]{Kollar13}) to $\mathcal O(D)$ to see that $\mathcal O(K_{\mathcal F})$
is a flat family of divisorial sheaves.

It now remains to show that the conditions for being locally stable are satisfied.

(S1) and (S2) both hold by assumption. 

(S3) is seen to hold by another application of  \cite[Proposition 2.79]{modbook} (cf. \cite[Theorem 7.20]{Kollar13}) which shows that 
$\mathcal O(iD+j(K_{X/T}+\Delta))$ is a flat family of divisorial sheaves for all $i, j \in \mathbb Z$
(we remark that $K_{X/T}+\Delta$ is a relative Mumford divisor whenever $(X, \Delta) \rightarrow T$ is locally stable).

(S4) holds by
Proposition \ref{prop_locally_stable_adjunction}. 

Finally, we check that (S5) holds. To check well-formedness we may freely replace $X$ by its normalisation, and so we may assume that $X$ is normal.  We may also freely replace $X$ by a neighbourhood of any codimension one subvariety of $X_t$, 
and so by Lemma \ref{lem_Cartier_criterion} we may assume that $K_{\mathcal F}$
is Cartier, in particular $\mathcal F$ is defined by a vector field $\partial$.

Let $t \in T$ be a closed point. Let $n\colon X^n_t \to X_t$ be the normalisation, let $\mathcal F^n_t$
be the pulled back integrable distribution, and let $(\tilde{\mathcal F}^n_t, \Gamma^n_t)$ be the associated foliated pair.  By definition, $\supp \Gamma^n_t$
is the union of the codimension one components of $\sing \partial^n$, where $\partial^n$ is the lift of $\partial$ to $X^n_t$, which exists by Lemma \ref{lem_nonnilp_normal}.(0).
Let $(\tilde{\mathcal F}, \Gamma)$ be the foliated pair associated to $\mathcal F$.
Observe that if $D$ is any component of $\Delta$ and if $D_t$ is not
$\mathcal F_t$-invariant, then $D$ is not $\mathcal F$-invariant.
It follows that 
\begin{align*}(\Delta_t)_{{\rm non-inv}} \leq (\Delta_{{\rm non-inv}})_t.\end{align*}
Since $(X_t, \Delta_t)$ is slc we see that  $\Delta_t$ is reduced for all $t$, moreover, by assumption $\Delta_{{\rm non-inv}} \leq \Gamma$, and it is easy to see that $n^*\Gamma \leq \Gamma^n_t$. So to check 
that $(X_t, \Delta_t, \mathcal F_t)$ is well-formed it suffices to check for
any divisor $D \subset n^{-1}(\sing X_t)$ we have $m_D\Gamma^n_t \ge \varepsilon(\tilde{\mathcal F}^n_t, D)$.  

By \cite[Theorem 5]{MR0212027} $\sing X_t$
is $\partial$-invariant, and so by Lemma \ref{lem_nonnilp_normal}.(1) $D$ is $\partial^n$-invariant.
We argue in cases based on whether 
$n(D) \subset \sing \partial$ or not.

If $n(D) \subset \sing \partial$, then by Lemma \ref{lem_nonnilp_normal}.(2) $D \subset \sing \partial^n$ and so $D \subset \supp \Gamma^n_t$, hence $m_D\Gamma^n_t \ge 1$ (recall that $\Gamma^n_t$ is an integral divisor by construction).  

If $n(D) \not\subset \sing \partial$, then Lemma \ref{lem_nonnilp_normal}.(2) implies that 
$D \not\subset \sing \partial^n$.  However, since $D$ is $\partial^n$-invariant we conclude that $D$ is $\tilde{\mathcal F}^n_t$-invariant, i.e, $\varepsilon(\tilde{\mathcal F}^n_t, D) = 0$
and so clearly $m_D\Gamma_n^t \ge \varepsilon(\tilde{\mathcal F}^n_t, D)$, as required.

Since (S1)-(S5) all hold,  $(X, \Delta, \mathcal F) \rightarrow T$ is a 
locally stable family of integrable distributions.
\end{proof}

\begin{lemma}
    \label{lem_norm_loc_stable}
    Let $f\colon (X, \Delta, \mathcal F) \rightarrow T$
    be a locally stable family of integrable distributions where $\dim (X/T) = 2$ and $T$ is a smooth curve.
    Let $n\colon X^n \rightarrow X$ the normalisation, 
    let $\Delta^n = n^*\Delta+D$ where $D$ is the pre-image of the double locus and
    let $\mathcal F^n$ be the pulled back integrable distribution.

    Then $(X^n, \Delta^n, \mathcal F^n) \rightarrow S$
    is a locally stable family of integrable distributions.
\end{lemma}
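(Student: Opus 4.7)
The plan is to verify the hypotheses of Proposition \ref{prop_family_criterion} for the triple $(X^n, \Delta^n, \mathcal F^n) \to T$, which will directly yield the conclusion. These hypotheses decompose into: (a) $(X^n, \Delta^n) \to T$ is locally stable as a family of pairs; (b) $\mathcal F^n$ is a rank one integrable distribution on $X^n$ with $T_{\mathcal F^n} \subset T_{X^n/T}$ at every generic point, $K_{\mathcal F^n}$ is $\mathbb Q$-Cartier, and $\mathcal F^n$ is semi-log canonical; and (c) the generic fibre $(X^n_\eta, \Delta^n_\eta, \mathcal F^n_\eta)$ is well-formed.

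First I would establish (a) as a consequence of the classical statement that normalisation preserves local stability for families of slc pairs, cf. \cite[\S 5]{Kollar13} and the corresponding treatment in \cite{modbook}: in our setting $n_t \colon X^n_t \to X_t$ recovers the normalisation of each closed fibre, $\Delta^n_t = n_t^\ast\Delta_t + D_t$, and $(X^n_t, \Delta^n_t)$ is log canonical. For (b), the pull-back integrable distribution $\mathcal F^n = n^{-1}\mathcal F$ of Section \ref{s_integrable_distribution} satisfies $K_{\mathcal F^n} = n^\ast K_{\mathcal F}$, which is $\mathbb Q$-Cartier because $n$ is finite; the inclusion $T_{\mathcal F^n} \subset T_{X^n/T}$ on a dense open set follows because $n$ is a birational isomorphism at every generic point of $X$, so the tangency condition transports. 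Since $X^n$ is normal, semi-log canonicity of $\mathcal F^n$ coincides with log canonicity, which is built directly into the definition of $\mathcal F$ being slc (the normalisation used there is the very map $n$).

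The most delicate step is (c). Because $T$ is a smooth curve and $X \to T$ has deminormal fibres, taking the normalisation commutes with the flat base change to $\eta$, so $X^n_\eta$ is the normalisation of $X_\eta$ and no further normalisation intervenes in the definition of well-formedness for $(X^n_\eta, \Delta^n_\eta, \mathcal F^n_\eta)$. Under this identification, the divisor $\Gamma$ on the normalisation that appears in the definition is the same one that witnesses well-formedness for $(X_\eta, \Delta_\eta, \mathcal F_\eta)$, and the decomposition $\Delta^n_\eta = n_\eta^\ast\Delta_\eta + D_\eta$ reproduces the divisor tested against $\Gamma$. Hence the inequality $\Gamma \ge (\Delta^n_\eta)_{\text{non-inv}}$ reduces to well-formedness of $(X_\eta, \Delta_\eta, \mathcal F_\eta)$, which in turn follows from (S5) at closed points via a constructibility argument since the double locus and the non-invariant part of the boundary propagate flatly over the dense open locus where the family is already known to be well-formed. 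I expect this bookkeeping, especially the interplay between the divisorial part of the conductor, the non-invariant part of $\Delta^n$, and the divisorial singular locus of the pulled-back foliation, to be the main technical obstacle; the behaviour of singular loci under normalisation controlled by Lemma \ref{lem_nonnilp_normal} provides the needed tool in the rank one case.
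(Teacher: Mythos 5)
Your proof follows the same strategy as the paper's: verify the hypotheses of Proposition~\ref{prop_family_criterion} for $(X^n, \Delta^n, \mathcal F^n)\to T$ and invoke it. Items (a), the $\mathbb Q$-Cartier-ness of $K_{\mathcal F^n}=n^*K_{\mathcal F}$, the tangency condition, and the well-formedness at $\eta$ (which the paper gets from the identification of the normalisation of $X^n_t$ with that of $X_t$ — the same fact you state as normalisation commuting with localisation at $\eta$) are all handled the way the paper does.

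There is, however, a genuine slip in your verification of hypothesis (2b) of Proposition~\ref{prop_family_criterion}. You assert that ``semi-log canonicity of $\mathcal F^n$ coincides with log canonicity, which is built directly into the definition of $\mathcal F$ being slc.'' That argument would be correct if the hypothesis were that the total-space distribution $\mathcal F$ on the threefold $X$ is slc. But the hypothesis of the lemma is that $(X,\Delta,\mathcal F)\to T$ is a \emph{locally stable family}, and condition (S4) of that definition only says that the \emph{closed fibres} $\mathcal F_t$ are slc; it says nothing directly about $\mathcal F$ as a distribution on the threefold. Passing from slc fibres to an lc total space is not a tautology: it is an inversion-of-adjunction statement. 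Concretely, each closed fibre $X^n_t$ is $\mathcal F^n$-invariant and not contained in $\sing^+\mathcal F^n$ (by the flat-family condition), and the pullback of $\mathcal F^n$ to the normalisation of $X^n_t$ (which is also the normalisation of $X_t$) is lc because $\mathcal F_t$ is slc; Theorem~\ref{thm_inversion_of_adjunction} then gives that $\mathcal F^n$ is lc in a neighbourhood of every closed fibre, and hence on all of $X^n$. The paper's own write-up is terse here as well (it only records the fibrewise statement ``$\mathcal F^n_t$ is semi-log canonical for all $t$'' before invoking Proposition~\ref{prop_family_criterion}), but the implicit ingredient is Theorem~\ref{thm_inversion_of_adjunction}, and your proof should cite it explicitly rather than attribute the total-space lc-ness to the definition of slc.
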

\begin{proof}
By \cite[Proposition 2.12]{modbook} $(X^n, \Delta^n) \rightarrow T$ is locally stable, and so $K_{X^n/T}+\Delta^n$ is $\mathbb Q$-Cartier and $(X^n_t, \Delta^n_t)$ is semi-log canonical for all $t \in T$.

By construction $K_{\mathcal F^n} = n^*K_{\mathcal F}$ and so $K_{\mathcal F^n}$ is $\mathbb Q$-Cartier.

Notice that for any $t \in T$, the normalisation of $X^n_t$ is naturally isomorphic to the normalisation of $X_t$.
Thus it is clear that $(X^n_t, \Delta^n_t, \mathcal F^n_t)$ is well-formed and $\mathcal F^n_t$ is semi-log canonical for all $t$.

We may then apply Proposition \ref{prop_family_criterion} to conclude.
\end{proof}

\begin{lemma}
\label{lem_cover_s2}
    Let $f\colon (X, \Delta, \mathcal F) \rightarrow T$ be a locally stable family of rank one integrable distributions where 
    $\dim (X/T) = 2$ and $T$ is a smooth curve.
    Let $\sigma\colon Y \rightarrow X$ be quasi-\'etale morphism such that $\sigma_*\mathcal O_Y$ is an $S_2$-sheaf. 
    Then, \begin{align*}g= f \circ \sigma \colon (Y, \sigma^*\Delta, \sigma^{-1}\mathcal F) \rightarrow T\end{align*} is a locally stable family.
\end{lemma}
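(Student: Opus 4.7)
The strategy is to verify the hypotheses of Proposition \ref{prop_family_criterion} applied to the triple $(Y,\sigma^*\Delta,\sigma^{-1}\mathcal F)\to T$ rather than check conditions (S1)--(S5) head on. The starting point is the fact that $\sigma$ is étale in codimension one, so the ramification divisor is trivial; consequently
\[K_{Y/T}=\sigma^*K_{X/T},\quad K_{\sigma^{-1}\mathcal F}=\sigma^*K_{\mathcal F},\quad K_{Y/T}+\sigma^*\Delta=\sigma^*(K_{X/T}+\Delta),\]
and pullback of a $\mathbb Q$-Cartier divisor stays $\mathbb Q$-Cartier.

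First I would establish that $g\colon Y\to T$ is a flat proper morphism with deminormal $S_2$-fibres. Since $\sigma_*\mathcal O_Y$ is $S_2$, the scheme $Y$ is $S_2$. Flatness of $g$ follows from flatness of $f$ together with the flatness of $\sigma$ at every codimension one point (quasi-étaleness) and the $S_2$ property on $Y$. To see that each fibre $Y_t$ is deminormal, note that $\sigma_t\colon Y_t\to X_t$ inherits quasi-étaleness from $\sigma$ (the open locus of étaleness hits each fibre in codimension $\le 1$ since $T$ is a smooth curve), and both the $S_2$-property and the nodal-in-codimension-one condition are preserved under étale pullback. Similarly $\sigma^*\Delta$ is a reduced relative Mumford divisor.

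Next I would verify the remaining hypotheses. The pair $(Y,\sigma^*\Delta)\to T$ is locally stable: the $\mathbb Q$-Cartier property is automatic by the pullback formula, and semi-log canonicity of $(Y_t,(\sigma^*\Delta)_t)$ follows from semi-log canonicity of $(X_t,\Delta_t)$ using quasi-étaleness of $\sigma_t$ (cf.\ the pullback behaviour used in \cite[Lemma 2.20]{SS23}). The integrable distribution $\sigma^{-1}\mathcal F$ is rank one with $K_{\sigma^{-1}\mathcal F}$ $\mathbb Q$-Cartier; the containment $T_{\sigma^{-1}\mathcal F}\subset T_{Y/T}$ at every generic point of $Y$ follows from the corresponding containment on $X$ because $\sigma$ is étale there; and semi-log canonicity of $\sigma^{-1}\mathcal F$ follows from \cite[Lemma 2.20]{SS23} applied componentwise on the normalisation of $Y$. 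Finally, well-formedness of the generic fibre $(Y_\eta,(\sigma^*\Delta)_\eta,(\sigma^{-1}\mathcal F)_\eta)$ is deduced from well-formedness of $(X_\eta,\Delta_\eta,\mathcal F_\eta)$: the normalisation $Y_\eta^n$ maps quasi-étale to $X_\eta^n$, both the non-invariant part of the boundary and the divisorial part $\Gamma$ of the foliated pair associated to the pulled-back distribution pull back correctly along such covers, so the inequality $\Gamma\ge \Delta^n_{\rm non\text{-}inv}$ on the generic fibre of $X$ transfers to the analogous inequality on the generic fibre of $Y$.

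The main obstacle is the careful interaction between quasi-étale covers, normalisation, and the associated foliated pair decomposition in the definitions of semi-log canonicity and well-formedness. Once these functorial compatibilities are confirmed, Proposition \ref{prop_family_criterion} applies and yields that $g\colon(Y,\sigma^*\Delta,\sigma^{-1}\mathcal F)\to T$ is locally stable.
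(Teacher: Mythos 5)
Your overall strategy agrees with the paper's: verify the hypotheses of Proposition~\ref{prop_family_criterion} and conclude. The paper splits the work the same way, first getting local stability of $(Y,\sigma^*\Delta)\to T$ and then log canonicity of the pulled-back distribution on the normalisation via the quasi-\'etale pullback of $K_{\mathcal F}$, before invoking Proposition~\ref{prop_family_criterion}.

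There is, however, a genuine gap in the intermediate claim that $\sigma_t\colon Y_t\to X_t$ ``inherits quasi-\'etaleness from $\sigma$ since $T$ is a smooth curve.'' The ramification locus $Z$ of $\sigma$ has codimension $\ge 2$ in the threefold $X$, hence dimension $\le 1$; but nothing prevents such a curve $Z$ from being entirely contained in a special fibre $X_0$, in which case $Z$ is a \emph{divisor} in $X_0$ and $\sigma_0$ fails to be quasi-\'etale. A concrete instance is the standard smoothing $X=\{xy=t^k\}\subset\mathbb A^4$ with its cyclic quasi-\'etale cover $\mathbb A^3\to X$ (cf.\ the local picture in the proof of Lemma~\ref{lem_Cartier_criterion}): restricting to $t=0$ one gets a degree-$k$ cover of $\{xy=0\}$ ramified along the double locus, which is a divisor in the fibre. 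So one cannot deduce deminormality or semi-log canonicity of $(Y_t,(\sigma^*\Delta)_t)$ from fibrewise quasi-\'etaleness, and the step as written does not go through. The paper sidesteps this precisely by not arguing fibrewise: after noting $Y$ is deminormal (nodal in codimension one because $\sigma$ is \'etale there, $S_2$ by \cite[Proposition 5.4]{KM98}), it directly invokes \cite[Lemma 2.9]{modbook}, which asserts that a quasi-\'etale cover with $S_2$ total space of a locally stable family is again locally stable---handling exactly the bad case above. You should replace the fibrewise quasi-\'etaleness claim with a citation to that result (or reprove it); the remainder of your argument, in particular the treatment of $\sigma^{-1}\mathcal F$ on the normalisation and of well-formedness on the generic fibre, is fine and if anything slightly more explicit than the paper's.
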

\begin{proof}
We first show that $(Y, \sigma^*\Delta) \rightarrow T$
is a locally stable family of pairs.

    We claim that $Y$ is deminormal.
    Since $\sigma$ is \'etale in codimension one and $X$ is deminormal it is immediate that $Y$ has at worst nodal singularities in codimension $1$.
    We now show that $Y$ is $S_2$.  We apply \cite[Proposition 5.4]{KM98}
    to deduce that $\mathcal O_Y$ is $S_2$, i.e., $Y$ is $S_2$.
    We may then apply \cite[Lemma 2.9]{modbook} to conclude
    that $(Y, \sigma^*\Delta) \rightarrow T$ is locally stable.

\medskip

Let $\sigma^n\colon Y^n \rightarrow X^n$ be the induced map between the normalisations of $Y$ and $X$.  Note that $\sigma^n$
is quasi-\'etale and so $K_{\mathcal G^n} = \sigma^*K_{\mathcal F^n}$.  Since $\mathcal F^n$
is log canonical we deduce that $\mathcal G^n$
is log canonical (the proof of \cite[Proposition 5.20]{KM98} works equally well here).  We apply 
Proposition \ref{prop_family_criterion} to conclude.
\end{proof}

\begin{remark}
    The condition in Lemma \ref{lem_cover_s2} that 
    $\sigma_*\mathcal O_Y$ is $S_2$ holds in the case that 
    $Y$ is the index one cover associated to a relative Mumford divisor $D$ such that $mD \sim 0$ where $m>0$.

    Indeed, by construction, we have $\sigma_*\mathcal O_Y = \bigoplus_{j = 0}^{m-1}\mathcal O(-jD))$. Since $D$ is a Mumford divisor,  $\mathcal O(kD)$ is $S_2$ for all $k \in \mathbb Z$ (see for instance \cite[p. 136]{modbook})
    and so $\sigma_*\mathcal O_Y$ is an $S_2$-sheaf.
\end{remark}

    \section{Separatedness and finiteness of automorphisms}

\begin{theorem} 
\label{thm_separatedness}
For $i = 1, 2$, let $f_i\colon (X_i, \Delta_i, \mathcal F_i) \rightarrow S$
be stable families of integrable distributions of rank one
where $S$ is a smooth curve.
Let $s \in S$ be a closed point and set $S^\circ = S \setminus \{s\}$.

Suppose that there is an isomorphism
\begin{align*}\phi^\circ\colon (X_1, \Delta_1, \mathcal F_1)\times_SS^\circ \rightarrow (X_2, \Delta_2, \mathcal F_2)\times_SS^\circ\end{align*}
over $S^\circ$.
Then $\phi^\circ$ extends to a unique isomorphism
\begin{align*}\phi\colon (X_1, \Delta_1, \mathcal F_1) \rightarrow (X_2, \Delta_2, \mathcal F_2)\end{align*}
over $S$.
\end{theorem}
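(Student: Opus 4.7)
The plan is to exploit the uniqueness of relative canonical models of stable foliated triples, using the perturbed polarisation built into the definition of stability. Uniqueness of any extension $\phi$ is automatic: $X_2 \to S$ is separated and, since $f_1$ is flat, the dense open $X_1 \times_S S^\circ$ is schematically dense in $X_1$, so two extensions must agree. We therefore focus on existence.

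First I would reduce to the case where $X_1$ and $X_2$ are normal by applying Lemma \ref{lem_norm_loc_stable} to the normalisations $(X_i^n, \Delta_i^n, \mathcal F_i^n) \to S$. The map $\phi^\circ$ pulls back to an isomorphism of the normalised families over $S^\circ$, and since the gluing involutions on the double loci are determined by their restriction to a dense open, it suffices to extend the induced isomorphism between the normalisations. Next, let $W$ be the normalisation of the closure of the graph of $\phi^{\circ,n}$ in $X_1^n \times_S X_2^n$, with birational projections $p_i \colon W \to X_i^n$ which are isomorphisms over $S^\circ$. Passing to a further birational modification $Z \to W$ if needed, I may assume $Z$ is $\mathbb Q$-factorial with log canonical singularities and that the composed maps $q_i \colon Z \to X_i^n$ have purely divisorial exceptional loci.

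Because $Z$ is $S_2$, integrable distributions on $Z$ are determined by their restriction to any dense open subset, so $q_1^{-1}\mathcal F_1^n$ and $q_2^{-1}\mathcal F_2^n$, which agree on $Z\times_S S^\circ$ via $\phi^{\circ,n}$, define a single integrable distribution $\mathcal H$ on $Z$. The central step is to show that for all $0<\epsilon \ll 1$, each $X_i^n$ is the relative canonical model of the foliated triple $(Z, \Delta_Z, \mathcal H)$ over $S$ with respect to the polarisation $K_{\mathcal H} + \epsilon(K_{Z/S} + \Delta_Z)$. Indeed, pulling back $K_{\mathcal F_i^n} + \epsilon(K_{X_i^n/S} + \Delta_i^n)$ along $q_i$ and combining the foliated discrepancy formula with the log canonicity of $\mathcal F_i^n$ and of $(X_i^n, \Delta_i^n)$, the $q_i$-exceptional divisors contribute a non-negative combination; together with the $f_i$-ampleness of $K_{\mathcal F_i} + \epsilon(K_{X_i/S} + \Delta_i)$ from stability (S6), this identifies $q_i$ with the canonical model contraction. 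Uniqueness of canonical models, relying on the MMP for rank one foliations on threefolds and on finite generation of the perturbed foliated log canonical ring (cf. \cite{SS23} and the results recalled in Appendix \ref{mmp}), then yields an isomorphism $\phi^n \colon X_1^n \to X_2^n$ over $S$, which descends to $\phi \colon X_1 \to X_2$ by the gluing discussion above.

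It remains to verify that $\phi$ matches the full foliated triple data: the equalities $\phi^*\Delta_2 = \Delta_1$ and the compatibility of the Pfaff fields $\phi_i \colon \Omega^1_{X_i/S} \to \mathcal O(K_{\mathcal F_i})$ follow because $\Delta_i$ is a reduced relative Mumford divisor and $\mathcal O(K_{\mathcal F_i})$ is a flat family of divisorial sheaves commuting with base change by condition (S3), so both pieces of data are determined by their restriction to the dense open on which the isomorphism is already known. The principal obstacle I expect is the canonical model step: one must control the termination of the relative perturbed foliated MMP on the (possibly singular) threefold total space $Z$, handle the subtle contribution of the non-saturation divisor of $\mathcal H$ when $\mathcal H$ is not a foliation, and establish that the outcome does not depend on the auxiliary birational model chosen. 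This is precisely where the foliated MMP toolbox of \cite{CS20, CS21, SS22, SS23, 2025basepointfreenessrank} is deployed.
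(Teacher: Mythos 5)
Your proposal is essentially the right strategy and very close to the paper's argument, but you over-engineer the key step in a way that introduces difficulties the paper avoids. After normalising and passing to a common birational model $W$ dominating both $X_1^n$ and $X_2^n$, the paper does \emph{not} run a foliated MMP on $W$, nor invoke uniqueness of canonical models as a black box; rather it shows directly, using the negativity lemma, that $p_1^*\bigl(mK_{\mathcal F_1}+n(K_{X_1/S}+\Delta_1)\bigr) = p_2^*\bigl(mK_{\mathcal F_2}+n(K_{X_2/S}+\Delta_2)\bigr)$ for $m\gg n > 0$, and then identifies both $X_i$ with the same relative $\mathrm{Proj}$. The discrepancy inputs are: every $p_i$-exceptional divisor $E_j$ lies in the central fibre, hence is $\mathcal F_W$-invariant, so $\varepsilon(E_j)=0$ and log canonicity of $\mathcal F_i$ forces $a(E_j,\mathcal F_i)\ge 0$; and inversion of adjunction for varieties gives $a(E_j, X_i, \Delta_i + X_{i,s}) \geq -1$, which combined with $X_{i,s}$ being Cartier yields $a(E_j, X_i, \Delta_i) \geq 0$. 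Once one has $F_1, F_2 \geq 0$ supported on $\exc p_i$ with $p_1^*(\text{ample}_1)+F_1 = p_2^*(\text{ample}_2)+F_2$, two applications of the negativity lemma give $F_1=F_2$.

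This matters because your stated ``principal obstacle'' -- termination of the perturbed foliated MMP on the threefold $Z$, dependence on the auxiliary model, the non-saturation divisor -- is simply not encountered: the two candidate canonical models are \emph{given}, so one only needs to compare two pullbacks, never to construct anything by running an MMP. Your appeal to \cite{SS23} and Appendix \ref{mmp} for finite generation is likewise unnecessary here; the relevant algebras are finitely generated because they are sections of an ample sheaf on the $X_i$ you already have. In the non-normal case, the paper also uses a slightly more direct mechanism than your gluing-involution argument: after matching the normalisations, the isomorphism is known away from $W_i = \sing X_{i,s}$, which has codimension $\geq 2$, and then one uses $S_2$-ness of $X_i$ to extend the morphism across $W_i$ (\cite[Theorem 11.39]{modbook}). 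Your gluing argument can likely be made to work, but you should check that the extended isomorphism of normalisations actually sends the conductor $D_1$ to $D_2$ (this follows because it preserves $\Delta_i^n$, but it is worth stating) and that the involutions, a priori only matched over $S^\circ$, extend continuously across the central fibre. Finally, your verification at the end that $\phi^*\Delta_2=\Delta_1$ and the Pfaff fields match is fine but redundant in the paper's route: the isomorphism of the relative $\mathrm{Proj}$ carries that data automatically.
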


\begin{proof}
We first consider the case where the generic fibre of $X_i \rightarrow S$ is normal. By assumption, we have a birational map $\phi\colon X_1 \dashrightarrow X_2$ which is an isomorphism 
over $S \setminus \{s\}$.  We may resolve
$\phi$ by maps $p_i\colon W \rightarrow X_i$  such that $\exc p_i$ 
is contained in $g^{-1}(s)$ where $g = p_i\circ f_1$.  Let $\mathcal F_W$ be the transform 
of $\mathcal F_1$ (equivalently the transform of $\mathcal F_2$) on $W$ and let $\Delta_W$ be the strict transform of $\Delta_1$ (equivalently $\Delta_2$) on $W$.

By Theorem \ref{thm_inversion_of_adjunction},
we know that $\mathcal F_i$ has log canonical singularities
and so if we write \begin{align*}K_{\mathcal F_W} = p_i^*K_{\mathcal F_i}+\sum a(E_j, \mathcal F_i)E_j\end{align*} then $a(E_j, \mathcal F_i) \geq -\varepsilon(E_j)$.  Next, notice that  
    $\exc p_i$ is $\mathcal F_W$ invariant:
indeed, $g^{-1}(s)$ is $\mathcal F_W$ invariant and since $\exc p_i$
is a union of irreducible components of $g^{-1}(s)$, $\exc p_i$ is likewise invariant. Since $\exc p_i$ is invariant we
have $\varepsilon(E_j) = 0$ and so in 
fact $a(E_j, \mathcal F_i) \geq 0$.

By inversion of adjunction for varieties, \cite{Kawakita05}, we know
that the pair $(X_i, \Delta_i+X_{i, s})$ is log canonical
and so $a(E_j, X_i, \Delta_i+X_{i, s}) \geq -1$.
Since $X_{i, s}$ is Cartier it follows that 
$m_{E_j}(p_i^*X_{i, s}) \geq 1$.  Thus $a(E_j, X_i, \Delta_i) \geq 0$.

It follows that for all $m, n \in \mathbb Z_{\ge 0}$
we have that \begin{align*}mK_{\mathcal F_W}+n(K_{W/S}+\Delta_W) = p_i^*(mK_{\mathcal F_i}+n(K_{X_i/S}+\Delta_i)+F_i\end{align*} 
where $F_i \geq 0$.

For some fixed integers $m \gg n >0$ we have that $mK_{\mathcal F_i}+n(K_{X_i/S}+\Delta_i)$ is $f_i$-ample,
and so
\begin{align*}F_1-F_2 = -p_1^*(mK_{\mathcal F_1}+n(K_{X_1/S}+\Delta_1)+p_2^*(mK_{\mathcal F_2}+n(K_{X_2/S}+\Delta_2))\end{align*} is $p_1$-nef
and $p_{1*}(F_1-F_2) = -p_{1*}F_2 \leq 0$.
We apply the negativity lemma, \cite[Lemma 3.39]{KM98} to conclude that $F_2-F_1 \ge 0$. 
Exchanging the roles of $X_1$ and $X_2$ we conclude 
that $F_1-F_2 \ge 0$ and so $F_1= F_2$.
In particular, 
\begin{align}
\label{eq_crepant}
   p_1^*(mK_{\mathcal F_1}+n(K_{X_1/S}+\Delta_1)) = p_2^*(mK_{\mathcal F_2}+n(K_{X_2/S}+\Delta_2)). 
\end{align}

Since $mK_{\mathcal F_i}+n(K_{X_i}+\Delta_i)$ is $f_i$-ample, 
we also have
$X_i \cong {\rm Proj}_S\bigoplus_{j \in \mathbb Z_{\ge 0}} f_{i*}\mathcal O(j(mK_{\mathcal F_i}+n(K_{X_i/S}+\Delta_i)))$.
The equality (\ref{eq_crepant}) implies that 
\begin{align*}\bigoplus_{j \in \mathbb Z_{\ge 0}} f_{1*}\mathcal O(j(mK_{\mathcal F_1}+n(K_{X_1/S}+\Delta_1))) \cong \bigoplus_{j \in \mathbb Z_{\ge 0}} f_{2*}\mathcal O(j(mK_{\mathcal F_2}+n(K_{X_2/S}+\Delta_2)))\end{align*} as $\mathcal O_S$-algebras and we may conclude.    

\medskip

We now consider the case where the generic fibre 
of $X_i \rightarrow S$ is not necessarily normal.
By Lemma \ref{lem_norm_loc_stable}, the normalised families
$(X_i^n, \Delta^n_i, \mathcal F^n_i) \rightarrow T$
are locally stable.  By the previous case we see 
that $(X_1^n, \Delta^n_1, \mathcal F^n_1) \cong (X_2^n, \Delta^n_2, \mathcal F^n_2)$.

If $W_i = \sing X_{i, s}$, we therefore see that 
$(X_1\setminus W_1, \Delta_1|_{X_1\setminus W_1}, \mathcal F_1|_{X_1\setminus W_1}) \cong (X_2 \setminus W_2, \Delta_2|_{X_2 \setminus W_2}, \mathcal F_2|_{X_2 \setminus W_2})$.
Since $X_i$ is $S_2$ and ${\rm codim}_{X_i}(W_i) \ge 2$
we may apply \cite[Theorem 11.39]{modbook} to deduce that 
$(X_1, \Delta_1, \mathcal F_1) \cong (X_2, \Delta_2, \mathcal F_2)$.
\end{proof}

Let $(X, \Delta, \mathcal F)$ be 
a stable integrable distribution.
We define $\Aut (X, \Delta, \mathcal F)$ to be the group of automorphisms $(X, \Delta, \mathcal F)$.

\begin{corollary} \label{cor_finite_automorphisms}
    Let $(X, \Delta, \mathcal F)$
    be a stable integrable distribution where $\mathcal F$
is rank one.  Then $\Aut (X, \Delta, \mathcal F)$ is a finite group.
\end{corollary}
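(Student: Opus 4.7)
My plan is to show that $\Aut(X,\Delta,\mathcal F)$ is a finite-type affine algebraic group whose identity component is trivial; taken together these two facts yield finiteness.

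For the finite-type property I would use Proposition~\ref{prop_threshold} to produce the ample $\mathbb Q$-Cartier divisor $H=5NK_{\mathcal F}+K_X+\Delta$, where $N$ is any positive integer such that $NK_{\mathcal F}$ is Cartier. Any automorphism of the triple preserves $K_{\mathcal F}$, $K_X$, and $\Delta$ separately, and hence preserves $H$. Taking $m$ sufficiently divisible so that $mH$ is very ample and Cartier, $\Aut(X,\Delta,\mathcal F)$ embeds as a closed subgroup scheme of the stabilizer of $X$ inside $PGL(H^0(X,mH))$, endowing it with the structure of a finite-type affine algebraic group.

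To show the identity component $G^0$ is trivial, since we are in characteristic zero it suffices to prove that the Lie algebra vanishes: that is, there is no nonzero global vector field $v$ on (the smooth locus of) $X$ preserving $\mathcal F$ and each irreducible component of $\Delta$. I would split into two cases. If $v$ is generically tangent to $\mathcal F$, then $v$ is a nonzero section of the rank-one reflexive sheaf $\mathcal O_X(-K_{\mathcal F})$; since $K_{\mathcal F}$ is nef by Proposition~\ref{prop_threshold}(1), this forces $K_{\mathcal F}\equiv 0$. The condition that $K_{\mathcal F}+\epsilon(K_X+\Delta)$ is ample then implies that $K_X+\Delta$ is ample, so $(X,\Delta)$ is a KSB-stable slc pair of log general type, whose automorphism group is finite by classical results, contradicting the existence of the one-parameter subgroup generated by $v$.

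The subtler case is when $v$ is generically transverse to $\mathcal F$. Here the saturation of $\mathcal O_X\cdot v$ in $T_X$ defines a second rank-one foliation $\mathcal G$ on $X$, and the flow of $v$ sweeps $X$ by orbits of a one-parameter group $G_v\subseteq\Aut(X,\Delta,\mathcal F)^0$. These orbits are rational curves meeting $\Delta$ only at $G_v$-fixed points, and the generic orbit $F$ moves in a one-parameter family and so has $F^2=0$. I would then combine foliation adjunction applied to $F$ (which is $\mathcal G$-invariant) with the well-formedness condition on $(X,\Delta,\mathcal F)$ to compute $(K_{\mathcal F}+\epsilon(K_X+\Delta))\cdot F$ and show that it cannot be strictly positive, contradicting the ampleness of $K_{\mathcal F}+\epsilon(K_X+\Delta)$ for $0<\epsilon<\eta$. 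The main obstacle is this transverse case: the tangent case reduces rapidly to standard finiteness results for stable slc pairs, while the transverse case is where the interplay between the foliated structure and the boundary divisor $\Delta$ genuinely matters, and where the foliation-specific tools (adjunction, well-formedness) developed earlier in the paper come into play.
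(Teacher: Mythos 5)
Your plan takes a genuinely different route from the paper, which dispatches the corollary as a formal consequence of Theorem~\ref{thm_separatedness}: given a one-parameter subgroup $\phi\colon\mathbb G_m\to\Aut(X,\Delta,\mathcal F)^0$, separatedness lets one extend the induced isomorphism of the two constant families over $\mathbb G_m$ across $0$ and $\infty$, so $\phi$ prolongs to a morphism $\mathbb P^1\to\Aut(X,\Delta,\mathcal F)$, which must be constant because the automorphism group is affine (this is the template of \cite[Proposition~8.64]{modbook}). That argument requires no case analysis on vector fields and no understanding of the geometry of orbits.

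Your direct Lie-algebra approach has a concrete gap in what you call the ``tangent'' case. If $v$ is generically tangent to $\mathcal F$, then $v$ is a global section of the saturated tangent sheaf $T_{\tilde{\mathcal F}}\cong\mathcal O_X(-K_{\tilde{\mathcal F}})$ of the associated foliation $\tilde{\mathcal F}$, not of $\mathcal O_X(-K_{\mathcal F})$. By Construction~\ref{fol_to_dist} one has $K_{\mathcal F}=K_{\tilde{\mathcal F}}+D$ with $D\ge 0$, so $\mathcal O_X(-K_{\mathcal F})$ is a proper subsheaf of $T_{\tilde{\mathcal F}}$ whenever $D\neq 0$, and there is no reason for $v$ to vanish along $D$. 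You therefore only obtain that $-K_{\tilde{\mathcal F}}=D-K_{\mathcal F}$ is effective, which combined with the nefness of $K_{\mathcal F}$ does \emph{not} force $K_{\mathcal F}\equiv 0$; some additional input (e.g.\ the well-formedness condition, or an argument showing $v$ must in fact vanish along $D$) is needed to close the case, or else you must also handle the situation where $K_{\mathcal F}$ is nef but nonzero with $K_{\tilde{\mathcal F}}$ anti-effective. The transverse case you explicitly leave as a sketch; the key intersection-number computation $(K_{\mathcal F}+\epsilon(K_X+\Delta))\cdot F\le 0$ for a generic orbit $F$ is not carried out, so as written the proposal is an outline rather than a proof. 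The separatedness argument the paper invokes sidesteps both issues entirely and is the mechanism I would recommend.
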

\begin{proof}
    This is a formal consequence of Theorem \ref{thm_separatedness}: indeed the proof of \cite[Proposition 8.64]{modbook} applies equally well here.
\end{proof}

\section{Local representability}

The goal of this section is to show that stability is a representable condition
in families.  This is a key technical condition which will be crucial in showing that our moduli functor is representable by a Deligne-Mumford stack.
To be precise we will prove the following:

\begin{theorem}[= Corollary \ref{cor_stab_rep}]
Fix an integer $N>0$.  Let $(X, \Delta, \mathcal F) \rightarrow T$ be a family
    of rank one integrable distributions where $T$ is a scheme of finite type over $\mathbb C$ and $\dim (X/T) = 2$.

    Then, there exists a locally closed partial decomposition $T' \rightarrow T$ such that 
    a morphism $W \rightarrow T$ factors as $W \rightarrow T' \rightarrow T$
    if and only if $(X_W, \Delta_W, \mathcal F_W) \rightarrow W$ is a stable family of integrable distributions of index $=N$.
\end{theorem}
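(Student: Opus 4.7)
The plan is to verify that each of the defining conditions $(S1)$--$(S6)$ of a stable family of integrable distributions of index $=N$, from Section \ref{s_locally_stable_def}, cuts out a locally closed partial decomposition of $T$ compatible with arbitrary base change; the desired $T'$ is then their intersection. We handle the conditions in three groups: the underlying pair, the foliation-theoretic Kollár/index conditions, and the open conditions (singularities and ampleness).

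For the underlying pair, the condition $(S1)$ that $f \colon (X, \Delta) \to T$ is a locally stable family of pairs is representable by a locally closed partial decomposition of $T$ by the KSB theory of moduli of stable pairs, see \cite[\S 4]{modbook}. For the foliation side, recall that the sheaf $L = \mathcal O(K_{\mathcal F})$ is a flat family of divisorial sheaves on $X$ by the definition of a flat family of integrable distributions. The condition $(S2)$ that $NK_{\mathcal F}$ be Cartier is then the condition that $L^{[N]}$ is a line bundle, which is locally closed by standard representability results on invertibility of coherent sheaves. The Kollár-type condition $(S3)$ on base change of $\mathcal O(jK_{\mathcal F} + k(K_{X/T} + \Delta))$ is locally closed for each fixed $(j, k)$ by the representability of the hull/husk functor, cf. \cite{Kollar2018} and \cite[\S 9]{modbook}. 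A priori condition $(S3)$ requires infinitely many representability checks, but once $(S2)$ holds and the Cartier index is bounded by $N$, the collection of reflexive powers is periodic modulo the line bundle $\mathcal O(NK_{\mathcal F})$, so only finitely many pairs $(j,k)$ modulo this periodicity need to be verified. The well-formedness condition $(S5)$ constrains only the support of $\Delta$ relative to the foliation-invariant/non-invariant decomposition on fibres, and is easily seen to cut out a locally closed condition by upper-semicontinuity on supports, after passing to the normalisation of the fibres.

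Finally, on the locus where $(S1)$--$(S3)$ and $(S5)$ already hold, the remaining conditions are open. Condition $(S4)$, that $\mathcal F_t$ be semi-log canonical at every closed point, is open by Theorem \ref{thm_lc_deform}. For $(S6)$, on the locus where the previous conditions already hold we may invoke Proposition \ref{prop_threshold} to replace the existence of $\eta > 0$ with $K_{\mathcal F} + \epsilon(K_{X/T}+\Delta)$ being $f$-ample for $0 < \epsilon < \eta$ by the single ampleness condition that $(2d+1)NK_{\mathcal F} + K_{X/T} + \Delta$ be $f$-ample, which is an open condition by Kleiman's criterion applied in families. The main obstacle, as highlighted in the introduction and Example \ref{example_index}, is that without the a priori bound on the Cartier index, neither $(S2)$ nor $(S3)$ alone is sensibly locally closed in the whole family; fixing $N$ is precisely what makes the joint condition $(S2)+(S3)$ cut out a well-behaved locally closed partial decomposition. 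Once this compatibility is established, the intersection of the loci obtained from $(S1)$--$(S6)$ yields the required partial decomposition $T' \to T$, and its universal property follows from Proposition \ref{prop_base_change}.
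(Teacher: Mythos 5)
The proposal correctly identifies the decomposition of the stability conditions $(S1)$--$(S6)$ into groups to be treated by representability, locally closed conditions, and open conditions, and the treatment of $(S1)$, $(S4)$, $(S5)$ and the Kollár-type conditions $(S2)$--$(S3)$ is, in broad strokes, the same strategy as the paper's Theorem~\ref{thm_local_stab_rep} (though the paper handles $(S2)$--$(S3)$ uniformly via \cite[Proposition 3.31]{modbook}, rather than treating them ad hoc).

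However, there is a genuine gap in your treatment of condition $(S6)$. You claim that Proposition~\ref{prop_threshold} lets you \emph{replace} the existence of $\eta > 0$ with $K_{\mathcal F}+\epsilon(K_{X/T}+\Delta)$ $f$-ample for $0<\epsilon<\eta$ by the single ampleness condition that $(2d+1)NK_{\mathcal F}+K_{X/T}+\Delta$ is $f$-ample. But Proposition~\ref{prop_threshold} is a one-way implication: it asserts that stability of index $N$ implies (1) $K_{\mathcal F}$ is nef, and (2) $(2d+1)NK_{\mathcal F}+K_X+\Delta$ is ample. To run the argument in the other direction---to deduce $(S6)$ from an ampleness condition---you need \emph{both} conclusions (1) and (2): if $K_{\mathcal F}$ is nef and $(2d+1)NK_{\mathcal F}+K_X+\Delta$ is ample, then $mK_{\mathcal F}+(K_X+\Delta)$ is ample for all $m\geq (2d+1)N$, and hence $K_{\mathcal F}+\epsilon(K_X+\Delta)$ is ample for all small $\epsilon>0$. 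Your reduction drops the nefness requirement on $K_{\mathcal F}$, which is not automatic and without which a single adjoint ampleness condition does \emph{not} imply $(S6)$. The nefness of $K_{\mathcal F}$ on fibres is precisely the subtle part: it is not a priori an open condition, and the paper's Proposition~\ref{prop_positivity_is_open} establishes its openness via Lemma~\ref{lem_mmp_implies_nef_open}, which in turn requires the existence of $K_{\mathcal F}$-minimal models over the base (Theorem~\ref{thm_mmp}). This reliance on the foliated MMP is an essential ingredient that your argument omits, so the openness of $(S6)$ is not established as stated.
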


\subsection{Representability of positivity}

\begin{lemma}
\label{lem_mmp_implies_nef_open}
    Suppose that $X \rightarrow T$ is a proper morphism, and let $D$ be a $\mathbb Q$-Cartier divisor on $X$. Suppose that there exists a  $D$-minimal model over $T$, i.e., a $D$-negative rational contraction $\phi\colon X \dashrightarrow X'/T$ such that $D' = \phi_*D$ is nef over $T$.
    Then the set 
    \begin{align*}\{t \in T: D\vert_{X_t} \text{ is nef }\} \subset T\end{align*} is open.\end{lemma}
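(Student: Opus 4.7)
The plan is to use the existence of the $D$-minimal model $\phi \colon X \dashrightarrow X'/T$ to localise the non-nef locus to a closed subset of $T$ arising geometrically from the exceptional locus of $\phi$.

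First I would resolve the birational map $\phi$ by choosing a proper birational model $r\colon W \to X$ and $s\colon W \to X'$ with $s = \phi \circ r$ on the open locus where $\phi$ is defined. Since $\phi$ is a $D$-negative rational contraction and $D' = \phi_* D$ is nef over $T$, the negativity lemma \cite[Lemma 3.39]{KM98}, applied to $r^*D - s^*D'$ viewed relatively to $s$, gives an equality
\begin{align*}
r^*D = s^*D' + E,
\end{align*}
with $E \ge 0$ effective and $s$-exceptional.

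Next I would show that every $D$-negative curve in a fibre of $X \to T$ is automatically contained in $\mathrm{exc}(\phi)$. Given a closed point $t \in T$ and an irreducible curve $C \subset X_t$ with strict transform $\tilde C \subset W_t$, the projection formula yields
\begin{align*}
D \cdot C = D' \cdot s_* \tilde C + E \cdot \tilde C.
\end{align*}
Because $D'$ is nef over $T$, $D'|_{X'_t}$ is nef, so $D' \cdot s_* \tilde C \ge 0$. Hence $D \cdot C < 0$ forces $E \cdot \tilde C < 0$, so $\tilde C \subset \mathrm{Supp}(E)$, and consequently $C \subset r(\mathrm{Supp}(E)) \subset \mathrm{exc}(\phi)$.

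Finally I would conclude by a properness argument. Set $Z := \mathrm{exc}(\phi)$, a closed subset of $X$ which is proper over $T$ via $f$. The non-nef locus
\begin{align*}
\{t \in T : D|_{X_t} \text{ is not nef}\}
\end{align*}
coincides with the image in $T$ of those curves $C \subset Z \cap X_t$ with $D \cdot C < 0$. Such curves are parametrised by a suitable Chow (or Hilbert) scheme relative to $Z/T$; the condition $D\cdot C < 0$ is constant on irreducible components, so the locus of $D$-negative curves is a union of connected components of this relative moduli space, and in particular is proper over $T$. Its image is therefore closed in $T$, and passing to the complement yields the desired openness.

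The main obstacle is the Noetherian/boundedness book-keeping needed to ensure that the relative Chow scheme of curves in $\mathrm{exc}(\phi) \to T$ decomposes into components of fixed numerical type, so that the locus $\{D \cdot C < 0\}$ is genuinely a proper subscheme and not an uncountable union. In the surface setting of interest ($\dim(X/T) = 2$), this is not a real problem, because $\phi$ factors as a finite sequence of divisorial contractions and flips, so $\mathrm{exc}(\phi)$ decomposes into finitely many irreducible divisors and flipping loci, each of bounded complexity; one can alternatively circumvent the issue entirely by arguing step-by-step along the MMP run producing $\phi$ and using the fact that openness of nefness is stable under composition of such steps.
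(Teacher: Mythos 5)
Your Step 2 is correct and well executed, but it proves a genuinely weaker statement than the paper's key claim, and the gap this opens in Step 3 is real, not mere book-keeping.

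The paper's proof hinges on showing that if $D|_{X_t}$ is nef, then the \emph{entire} fibre $X_t$ is disjoint from $\mathrm{Exc}(\phi)$. This is done by restricting $E = p^*D - q^*D'$ to (the normalisation of) the fibre $W_t$ of the graph, observing that this restriction is $\mu$-nef once $D|_{X_t}$ is nef (since $q^*D'$ is $\mu$-numerically trivial), and then applying the negativity lemma \emph{on the fibre} to conclude $E|_{W_t}=0$. Contrapositively, $X_t \cap \mathrm{Exc}(\phi) \neq \emptyset$ forces $D|_{X_t}$ not nef. Combined with the easy inclusion you prove (that any $D$-negative curve in a fibre must lie in $\mathrm{Exc}(\phi)$, so the non-nef locus is contained in $f(\mathrm{Exc}(\phi))$), this gives $\{t: D|_{X_t}\text{ not nef}\} = f(\mathrm{Exc}(\phi))$, a closed set, with no need for any moduli of curves. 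Your projection-formula argument only gives one of the two inclusions, namely that the non-nef locus is \emph{contained in} $f(\mathrm{Exc}(\phi))$; it does not rule out points of $f(\mathrm{Exc}(\phi))$ where $D$ nonetheless restricts to a nef divisor, which is exactly why you are forced to invoke a parameter space of curves.

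The Chow-scheme step is where the proposal breaks down. The relative Chow (or Hilbert) scheme of curves in $\mathrm{Exc}(\phi)/T$ has infinitely many components, one for each numerical type, and while each individual component is proper over $T$, the union of all components carrying $D$-negative curves need not be of finite type. Its image in $T$ is then a priori only a countable union of closed subsets, not a closed subset. Nothing in the hypotheses bounds the degree of $D$-negative curves in $\mathrm{Exc}(\phi) \cap X_t$ uniformly in $t$, so the proper-image argument does not go through as stated. You flag this as ``boundedness book-keeping,'' but it is actually the crux of the matter; the paper's fibre-wise negativity-lemma argument is precisely what avoids having to produce such a bound. Your suggested fallback of arguing step-by-step along an MMP run is plausible in the two-dimensional situation ultimately used in the paper, but it is not carried out, and the lemma as stated (and as proved in the paper) holds for an arbitrary proper $X \to T$ with a $D$-minimal model, with no dimensional restriction.
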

    \begin{proof}
   To prove the Lemma we may freely replace $X$ by its normalisation.   
        Let $W = \exc \phi$.  We claim that if $t \in T$ is a point such that $D\vert_{X_t}$ is nef then 
        $X_t \cap W = \emptyset$.  Supposing the claim, then the Lemma follows by Noetherian induction on $T$.

        We now prove the claim. Let $W$ be the normalisation of the closure of the graph of $\phi$ inside $X \times_T X'$, and let $p\colon W \rightarrow X$ and $q\colon W \rightarrow X'$ be the projections.  On one hand,  since $\phi$ is $D$-negative, $p^*D-q^*D' = E \ge 0$.  On the other hand, if $n\colon \tilde{W}_t \rightarrow W_t$ is the normalisation and $\mu\coloneqq q\circ n$, then $n^*(p^*D-q^*D')$ is $\mu$-nef.

        Since $q_*E = 0$ it follows that $\mu_*n^*E = 0$, 
        and so the Negativity Lemma, \cite[Lemma 3.39]{KM98} implies that $-n^*E \ge 0$.
        Thus $n^*E = 0$ and so $X_t \cap E = \emptyset$, and so $X_t \cap W = \emptyset$.
    \end{proof}

\begin{proposition}
\label{prop_positivity_is_open}
    Let $(X, \Delta, \mathcal F) \rightarrow T$ be a proper
    locally stable family of rank $1$ integrable distributions where $T$ is a scheme of finite type over $\mathbb C$ and such that $\dim (X/T) = 2$.
    Fix $m, n \in \mathbb Z$.
    Then, the subsets of $T$
    \begin{enumerate}
        \item $T_{m, n}  := \{t \in T : mK_{\mathcal F_t}+nK_{X_t} \text{ is ample}\}$; and 
        \item $T_+ := \{t \in T : rK_{\mathcal F_t}+K_{X_t} \text{ is ample for all } r \gg 0\}$
    \end{enumerate}
are Zariski open.
\end{proposition}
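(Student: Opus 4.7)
The plan is to treat the two parts separately, deriving (1) directly from the Koll\'ar condition together with classical openness of ampleness, and then reducing (2) to (1) via a minimal model argument.

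For part (1), I would begin by noting that conditions (S2)--(S3) of local stability imply both that $K_{\mathcal F}$ and $K_{X/T}+\Delta$ are $\mathbb Q$-Cartier and that $\mathcal O_X(jK_{\mathcal F}+k(K_{X/T}+\Delta))$ commutes with base change along $T$ for every $j,k\in\mathbb Z$. Choosing a positive integer $N'$ clearing denominators, the Weil divisor $N'(mK_{\mathcal F}+n(K_{X/T}+\Delta))$ is Cartier, and its restriction to each fibre $X_t$ is the line bundle $\mathcal O_{X_t}(N'(mK_{\mathcal F_t}+n(K_{X_t}+\Delta_t)))$. The classical openness of ampleness for line bundles in proper flat families then immediately gives that $T_{m,n}$ is Zariski open.

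For part (2), I would first observe the containment $T_+\subset U:=\{t\in T : K_{\mathcal F_t}\text{ is nef}\}$: for any $t\in T_+$ and any irreducible curve $C\subset X_t$, the inequality $(rK_{\mathcal F_t}+K_{X_t})\cdot C>0$ for $r\gg 0$ forces $K_{\mathcal F_t}\cdot C\geq 0$. To show that $U$ is Zariski open, I would run a relative $K_{\mathcal F}$-MMP over $T$ to obtain a birational contraction $\phi\colon X\dashrightarrow X'$ over $T$ with $\phi_* K_{\mathcal F}$ nef over $T$, and then apply Lemma~\ref{lem_mmp_implies_nef_open} to deduce openness. Since openness is local on $T$, by Noetherian stratification I may reduce the MMP step to the case in which $T$ is a smooth curve, so that $X$ is a threefold and the foliated threefold MMP of \cite{SS23, CS21, SS22} applies. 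Finally, on $U$ the identity $T_+=\bigcup_{r\geq 1}(T_{r,1}\cap U)$ holds: if $r_0K_{\mathcal F_t}+K_{X_t}$ is ample and $K_{\mathcal F_t}$ is nef, then for every $r\geq r_0$ the divisor $rK_{\mathcal F_t}+K_{X_t}=(r-r_0)K_{\mathcal F_t}+(r_0K_{\mathcal F_t}+K_{X_t})$ is a sum of a nef and an ample divisor and hence ample. Each $T_{r,1}$ being open by (1), the set $T_+$ is thus open in $U$, and so in $T$.

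The main obstacle will be the construction of the relative $K_{\mathcal F}$-minimal model over $T$: while the foliated MMP for normal threefolds over a curve base is well developed, running it in families (and in particular over possibly higher-dimensional bases) requires a careful Noetherian reduction of the base, together with some care to accommodate the possibly non-normal total space of the family.
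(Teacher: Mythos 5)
Your proposal is correct and follows essentially the same strategy as the paper: part~(1) is an immediate consequence of the openness of ampleness in proper families, and part~(2) proceeds by introducing the nef locus $U=\{t: K_{\mathcal F_t}\text{ nef}\}$, proving its openness via a relative $K_{\mathcal F}$-MMP and Lemma~\ref{lem_mmp_implies_nef_open} after reducing to a curve base and normalising $X$, and then expressing $T_+$ in terms of $U$ and the sets $T_{r,1}$. The one genuine (small) difference lies in the final combination step: the paper writes $T_+$ as the \emph{intersection} $T_{5N,1}\cap V$, where $N$ is the Cartier index of $K_{\mathcal F}$, and invokes Proposition~\ref{prop_threshold} to prove the forward containment; you instead write $T_+=\bigcup_{r\ge 1}\bigl(T_{r,1}\cap U\bigr)$, which is established by the elementary observation that $rK_{\mathcal F_t}+K_{X_t}=(r-r_0)K_{\mathcal F_t}+(r_0K_{\mathcal F_t}+K_{X_t})$ is ample once $K_{\mathcal F_t}$ is nef and $r_0K_{\mathcal F_t}+K_{X_t}$ is ample. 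Your route avoids the explicit use of Proposition~\ref{prop_threshold} and the bound on the Cartier index, at the cost of expressing $T_+$ as an infinite union of opens rather than a single open intersection; since openness is all that is needed here, the two are equally good, and yours is marginally more self-contained. One caveat worth tightening in both your write-up and the paper's: the reduction of the openness of $U$ to a curve base is asserted rather than justified (one needs either constructibility of $U$ plus stability under generization, or a valuative-criterion style argument), but since you mirror the paper's level of rigour at this point this is not a defect in your proposal specifically.
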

\begin{proof}
We may freely replace $T$ by an open affine around an arbitrary point and by its reduction, so we may assume that $T$ is a reduced affine scheme.

    The openness of $T_{m, n}$ for fixed $m, n \in \mathbb N$ is an immediate consequence of the fact that for any line bundle $L$ on $X$ 
    the subset $\{t \in T: L_t \text{ is ample}\}\subset T$ 
    is open.

    We now show that $T_+$ is open. 
  We define 
    \begin{align*}V := \{t \in T: K_{\mathcal F_t} \text{ is nef }\} \subset T.\end{align*}
We claim that $V$ is open.
To see this, we are free to base change over the normalisation of a 
curve $C \to B \subset T$, and so we may freely assume that $\dim X = 3$.  Moreover, we may freely replace $X$ by its normalisation
    and so we may assume that $X$ is normal.

Let $(\tilde{\mathcal F}, \Gamma)$ be the foliated pair associated to $\mathcal F$.
Since $X \rightarrow T$ is locally stable it follows that 
$X$ has klt singularities and therefore by Theorem \ref{thm_mmp} 
a minimal model over $T$ for $(\tilde{\mathcal F}, \Gamma)$ exists. 
We may then apply Lemma \ref{lem_mmp_implies_nef_open} to
conclude.

    \begin{claim}
     Let $N$ denote the Cartier index of $K_{\mathcal F}$, then
    $T_+ = T_{5N, 1} \cap V$.
    \end{claim}
    \begin{proof}

   We first show that $T_+ \subset T_{5N, 1}\cap V$.
Let $t \in T_+$. Proposition \ref{prop_threshold}
shows that $K_{\mathcal F_t}$ is 
nef and $5K_{\mathcal F_t}+(K_{X_t}+\Delta_t)$ is ample, hence $t \in T_{5N, 1} \cap V$.

   The reverse inclusion follows from observing that 
   $T_{5N, 1} \cap V = T_{m, 1} \cap V$
   for $m \geq 5N$ and that $T_+ = \cap_{r \gg 0} T_{r, 1}$.       
    \end{proof}

The above claim and the openness of $T_{5N, 1}$ and $V$ imply that $T_+$ is open as required.
 \end{proof}

\subsection{Representability of local stability}

We recall the following result found in \cite[Proposition 3.31]{modbook}.
We refer to \cite[Defintion 3.27]{modbook} for the definition 
of the hull pull-back $F^H_W$ of a mostly flat family of divisorial sheaves and
  we refer to \cite[Definition 10.83]{modbook} for the definition of a locally closed partial decomposition.

\begin{theorem}
\label{t_prop3.31}
    Let $f:X\to S$ be a flat, projective morphism with $S_2$ fibers and $N_1,\dots,N_s,L_1,\dots,L_r$ mostly flat families of divisorial sheaves. Then there is a locally closed partial decomposition $S^{NL} \to S$ such that, a morphism $q: T\to S$ factors through $S^{NL}$ if and only if the following hold.
    \begin{enumerate}
        \item The hull pull-backs $(L_j)^H_T$ are invertible, and
        \item the $(N_i[\otimes] L_1^{[m_1]}[\otimes]\dots[\otimes]L_r^{[m_r]})^H_T$ are flat families of divisorial sheaves for every $m_i\in \ZZ$.
    \end{enumerate}
\end{theorem}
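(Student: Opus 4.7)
The plan is to construct $S^{NL}$ as a nested intersection of locally closed partial decompositions, each handling one block of representability conditions. The first step addresses condition (1) for each $L_j$ separately. The locus over which a flat family of divisorial sheaves becomes an honest line bundle after arbitrary base change is a representable locally closed condition on $T$: it is cut out by the standard flattening stratification combined with the fiberwise constraint that the generic rank equals one and that the sheaf is locally free in a neighborhood of every point of every fiber (so in particular, the support of the non-free locus does not meet the fiber). Intersecting the resulting partial decompositions for $j = 1, \dots, r$ yields a locally closed partial decomposition $S^L \to S$ characterizing simultaneous invertibility of the hull pull-backs $(L_j)^H_T$.

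Second, over $S^L$ each $L_j$ is an honest line bundle after arbitrary base change, so for every tuple of integers $(m_1, \dots, m_r)$ the divisorial power $L_j^{[m_j]}$ coincides with the ordinary tensor power $L_j^{\otimes m_j}$, and the divisorial hull commutes with twisting by these line bundles. Consequently, condition (2) for all $(m_1, \dots, m_r) \in \mathbb Z^r$ reduces to the finite collection of conditions that each $N_i$ be a flat family of divisorial sheaves. The latter is itself a representable locally closed condition by the representability results for flat families of divisorial sheaves (cf. \cite[Theorem 7.20]{Kollar13} and the representability discussion in \cite[\S 3]{modbook}). Intersecting over $i = 1, \dots, s$ and with $S^L$ then produces the desired $S^{NL} \to S$; its universal property follows by combining the universal properties of the constituent strata.

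The main obstacle is justifying the reduction performed in the second paragraph, namely that the a priori infinite family of conditions indexed by $\mathbb Z^r$ collapses to a finite one once (1) holds. This depends on the fact that on an $S_2$ scheme, tensoring a divisorial sheaf by a genuine line bundle commutes with both the formation of the hull and with pullback; without the invertibility guaranteed by (1), neither commutation holds in general, and this is precisely why condition (1) must be arranged as the first stratum. A further technical subtlety is that the commutations in question must continue to hold after further base change to an arbitrary $T \to S^L$; this is handled by cohomology-and-base-change arguments using that $f$ is projective with $S_2$ fibers, so that the hullization (i.e., double dualization along $f$) commutes with fiberwise restriction and pullback to $T$.
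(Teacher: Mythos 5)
The paper does not prove this statement: it is quoted directly from Koll\'ar's moduli book (the reference tagged \texttt{modbook}), where it appears as Proposition 3.31, so there is no in-paper proof to compare against. On its own terms, the organizing reduction in your proposal is correct. Once each hull pull-back $(L_j)^H_T$ is invertible, tensoring by these line bundles commutes with pull-back and hullization (by the $S_2$-uniqueness of the hull), so $(N_i[\otimes]L_1^{[m_1]}[\otimes]\cdots[\otimes]L_r^{[m_r]})^H_T$ agrees with $(N_i)^H_T$ twisted by $((L_1)^H_T)^{\otimes m_1}\otimes\cdots\otimes((L_r)^H_T)^{\otimes m_r}$; since tensoring by an invertible sheaf preserves both flatness and the divisorial-sheaf property, the $\mathbb Z^r$-indexed conditions in (2) collapse to the finite list that each $(N_i)^H_T$ be a flat family of divisorial sheaves. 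Putting condition (1) first is exactly the right order, and your third paragraph correctly identifies why.

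The weak point is step 1 itself. You claim the locus where $(L_j)^H_T$ is invertible is carved out by a flattening stratification of $L_j$ together with fiberwise rank-one and local-freeness constraints, but $L_j$ is only \emph{mostly} flat, and $(L_j)^H_T$ is the hull of the pull-back, not the naive pull-back; a flattening stratification of $L_j$ itself does not govern the hullization. The genuinely non-trivial inputs are the representability, as locally closed partial decompositions, of the two single-sheaf loci: where the hull pull-back of one mostly flat family of divisorial sheaves is invertible, and where it is a flat family of divisorial sheaves. These are exactly the results in Koll\'ar's book that immediately precede the cited proposition, and their proofs go through the theory of hulls and husks (bounding Hilbert polynomials of hulls, comparison with Quot-type functors), not a bare flattening stratification. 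Your proposal is therefore a correct and clean reduction of the multi-sheaf statement to the single-sheaf case, but the single-sheaf representability is asserted with a misleading justification (and imprecise citations) rather than established, so it is an outline of the reduction rather than a proof.
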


We now show that local stability of a family of foliations is a representable condition.

\begin{theorem}
\label{thm_local_stab_rep}
Fix an integer $N>0$.  Let $(X, \Delta, \mathcal F) \rightarrow T$ be a flat family
    of rank one integrable distributions where $T$ is a scheme of finite type over $\mathbb C$
    and suppose that $\dim (X/T) = 2$.

    Then, there exists a locally closed partial decomposition $T' \rightarrow T$ such that 
    a morphism $W \rightarrow T$ factors as $W \rightarrow T' \rightarrow T$
    if and only if $(X_W, \Delta_W, \mathcal F_W) \rightarrow W$ is a locally stable family of integrable distributions of index $=N$.
\end{theorem}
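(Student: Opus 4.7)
The plan is to realise $T' \to T$ as a finite iteration of locally closed partial decompositions, each of which enforces one of the conditions (S1)--(S5) from Section \ref{s_locally_stable_def} in a representable way.

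\textbf{Step 1: representability of (S1).} By the theory of moduli of stable pairs with reduced boundary (cf.~\cite[\S 6]{modbook}), there is a locally closed partial decomposition $T_1 \to T$ characterised by the property that a morphism $W \to T$ factors through $T_1$ if and only if $(X_W, \Delta_W) \to W$ is locally stable; along $T_1$ both $K_{X/T_1}+\Delta$ and $K_{X/T_1}$ become relatively $\mathbb Q$-Cartier. We now work throughout over $T_1$.

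\textbf{Step 2: representability of (S2) and (S3).} We apply Theorem \ref{t_prop3.31} to the mostly flat family of divisorial sheaves \[L_1 := \mathcal O_X(NK_{\mathcal F}), \qquad L_2 := \mathcal O_X(I(K_{X/T_1}+\Delta)),\] where $I$ is an integer, locally constant on $T_1$, that bounds the Cartier index of $K_{X/T_1}+\Delta$ in a neighbourhood (such $I$ exists after passing to a locally closed partial decomposition using the Kollár hull machinery, cf.~\cite[\S 9]{modbook}). Together with the collection of reflexive hulls $N_{j,k}^H = \mathcal O_X(jK_{\mathcal F}+k(K_{X/T_1}+\Delta))^H$ for $0 \le j < N$ and $0 \le k < I$, condition (S3) for all $j,k \in \mathbb Z$ reduces to flatness of finitely many such hulls, because higher $j,k$ are obtained by tensoring with the line bundles $L_1$, $L_2$. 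This yields a further locally closed partial decomposition $T_2 \to T_1$ along which $NK_{\mathcal F}$ is Cartier and the Kollár base change condition holds universally.

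\textbf{Step 3: representability of (S4).} Over $T_2$, $K_{\mathcal F}$ is $\mathbb Q$-Cartier and the fibres are deminormal (inherited from (S1)). Hence Theorem \ref{thm_lc_deform} applies and shows that \[T_3 := \{t \in T_2 : \mathcal F_t \text{ is semi-log canonical}\}\] is a Zariski open subset of $T_2$, and this openness propagates after any base change because slc is a fibrewise condition.

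\textbf{Step 4: representability of (S5).} It remains to impose well-formedness fibrewise. Passing to the relative normalisation $n\colon X^n \to X$ over $T_3$ (which is well-behaved because $(X_t, \Delta_t)$ is slc), for each irreducible component $D_i$ of $\Delta$ the locus \[Z_i := \{t \in T_3 : D_{i,t} \text{ contains an $\mathcal F_t$-invariant component which is not dominated by the polar divisor of } n^{-1}\mathcal F_t\}\] is constructible (being invariant is cut out by the vanishing of the Pfaff field along the divisor, and the polar divisor is defined by the cokernel of the same Pfaff field). By noetherian induction on $T_3$ and a further locally closed partial decomposition $T' \to T_3$ we can exclude these loci on every stratum, so that (S5) holds fibrewise along $T'$ and hence after any base change.

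Composing the four locally closed partial decompositions $T' \to T_3 \to T_2 \to T_1 \to T$ produces the required $T' \to T$. The main obstacle in the above is Step 2: one must genuinely combine the Kollár-style reflexive hull machinery of \cite[\S 9]{modbook} with the extra condition that $NK_{\mathcal F}$ is Cartier in order to bound the range of $(j,k)$ that need to be tested, and the fact that Example \ref{example_index} shows the index can jump in flat families is exactly what forces us to insist on $NK_{\mathcal F}$ Cartier rather than merely $K_{\mathcal F}$ being $\mathbb Q$-Cartier.
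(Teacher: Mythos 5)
Your proposal follows essentially the same route as the paper: impose (S1) via the representability of locally stable families of pairs, impose (S2)--(S3) via Theorem~\ref{t_prop3.31} applied to $\mathcal O(NK_{\mathcal F})$, $\mathcal O(M(K_{X/T}+\Delta))$, and the finite collection of intermediate reflexive hulls $\mathcal O(jK_{\mathcal F}+k(K_{X/T}+\Delta))$, then impose (S4) by the openness of semi-log canonical (Theorem~\ref{thm_lc_deform}), and finally impose (S5). Two small corrections: in Step~4 the locus $Z_i$ should be phrased in terms of \emph{non-invariant} components of $\Delta^n$ not dominated by the polar divisor $\Gamma$, since well-formedness is precisely $\Gamma \ge \Delta^n_{\mathrm{non\text{-}inv}}$ --- you wrote ``invariant'', which is backwards. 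Also, the paper observes (via the adjunction argument in the proof of Proposition~\ref{prop_family_criterion}) that the well-formed locus is actually Zariski \emph{closed} in $T_K$, which is tidier than your constructible-plus-Noetherian-induction step, though both suffice for the locally closed partial decomposition.
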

\begin{proof}
    By \cite[Theorem 3.2 (see also the proof of Theorem 6.24)]{modbook}
    we know that $(X, \Delta) \rightarrow T$ being locally stable is a representable condition and so we may freely assume that $(X, \Delta) \rightarrow T$ is locally stable and so Condition (S1) in the definition of local stability holds. Since $(X, \Delta) \to T$ is locally stable, $K_{X/T}+\Delta$ is $\mathbb Q$-Cartier and so there exists $M>0$ such that $M(K_{X/T} +\Delta)$ is Cartier. We then apply Theorem \ref{t_prop3.31} with $L_1 = \mathcal O(NK_{\mathcal F})$, $L_2 = \mathcal O(M(K_{X/T}+\Delta))$ and \begin{align*}\{N_1, \dots, N_s\} = \{\mathcal O(jK_{\mathcal F}+k(K_{X/T}+\Delta)): j \in \{0, \dots, N\}, k \in \{0, \dots, M\}\}\end{align*} to give us a locally closed partial decomposition $T_K \to T$
 such that a morphism
 morphism $W \to T$ factors as $W \to T_K \to T$
    if and only if $(X_W, \Delta_W, \mathcal F_W) \to W$
    satisfies conditions (S1)-(S3) of the definition of local stability of index $=N$.

    We now show that after replacing $T_K$ by a locally closed subset that we can ensure that conditions (S4) and (S5) hold as well.
    Indeed, by Theorem \ref{thm_lc_deform} the subset \begin{align*}\{t \in T_K: \mathcal F_t \text{ is semi log canonical}\} \subset T_K\end{align*} is open.
Arguing as in the proof of Proposition \ref{prop_family_criterion} we see that the set of 
    points \begin{align*}\{t \in T_K: (X_t, \Delta_t, \mathcal F_t) \text{ is well-formed}\} \subset T_K\end{align*}
    is closed.

    Thus, taking $T' \subset T_K$ to be the locally closed subset of points corresponding to well-formed semi-log canonical integrable distributions gives our desired partial decomposition.
\end{proof}

\begin{corollary}
\label{cor_stab_rep}
Fix an integer $N>0$.  Let $(X, \Delta, \mathcal F) \rightarrow T$ be a flat family
    of rank one integrable distributions where $T$ is a scheme of finite type over $\mathbb C$ and suppose that $\dim (X/T) = 2$.

    Then, there exists a locally closed partial decomposition $T' \rightarrow T$ such that 
    a morphism $W \rightarrow T$ factors as $W \rightarrow T' \rightarrow T$
    if and only if $(X_W, \Delta_W, \mathcal F_W) \rightarrow W$ is a stable family of integrable distributions of index $=N$.
\end{corollary}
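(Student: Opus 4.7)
The plan is to combine Theorem \ref{thm_local_stab_rep} with the positivity criterion of Proposition \ref{prop_positivity_is_open}. Stability of index $=N$ is the conjunction of local stability of index $=N$ (conditions (S1)--(S5)) together with the positivity condition (S6), so it will suffice to show that (S6) carves out an open subset once local stability has been achieved.

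First I would apply Theorem \ref{thm_local_stab_rep} to obtain a locally closed partial decomposition $T'' \to T$ which represents local stability of index $=N$. Replacing $T$ by $T''$ and the family by its base change, I may assume that $(X, \Delta, \mathcal F) \to T$ is itself locally stable of index $=N$; in particular, both $K_{X/T}+\Delta$ and $K_{\mathcal F}$ are $\mathbb Q$-Cartier. Next I would appeal to Proposition \ref{prop_positivity_is_open} -- whose proof applies verbatim with $K_{X_t}+\Delta_t$ in place of $K_{X_t}$, since the required $\mathbb Q$-Cartierness is now guaranteed by local stability -- to conclude that
\begin{align*}
T' := \{t \in T : rK_{\mathcal F_t}+(K_{X_t}+\Delta_t) \text{ is ample for all } r \gg 0\} \subset T
\end{align*}
is Zariski open. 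Composing $T' \hookrightarrow T'' \to T$ then gives the candidate locally closed partial decomposition.

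Finally I would verify that $T' \to T$ represents stability. The forward direction is clear: if $(X_W,\Delta_W, \mathcal F_W) \to W$ is stable of index $=N$, then it is locally stable of index $=N$ (so $W \to T$ factors through $T''$), and by Proposition \ref{prop_threshold} every fibre satisfies the ampleness defining $T'$. For the reverse direction, suppose $W \to T$ factors through $T'$; local stability of index $=N$ is then automatic, and Proposition \ref{prop_threshold} shows that on each fibre $K_{\mathcal F_t}$ is nef and $(2\dim X+1)NK_{\mathcal F_t}+(K_{X_t}+\Delta_t)$ is ample. Writing
\begin{align*}
K_{\mathcal F_t}+\epsilon(K_{X_t}+\Delta_t) = (1-\epsilon(2\dim X+1)N)K_{\mathcal F_t} + \epsilon\bigl((2\dim X+1)NK_{\mathcal F_t}+(K_{X_t}+\Delta_t)\bigr)
\end{align*}
exhibits this divisor as a nonnegative combination of a nef and an ample class for all $0 < \epsilon < \eta := \tfrac{1}{(2\dim X+1)N}$, hence ample on every fibre. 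The main technical step is to upgrade this fibrewise ampleness of the $\mathbb Q$-Cartier divisor $K_{\mathcal F_W}+\epsilon(K_{X_W/W}+\Delta_W)$ to relative $f_W$-ampleness; this follows from the standard openness of ampleness in proper flat families, crucially using that the interval $(0,\eta)$ is uniform over $W$ (depending only on $N$ and the relative dimension). This verifies (S6) and completes the argument.
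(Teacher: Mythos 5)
Your proposal is correct and follows the same route as the paper, which simply cites Theorem \ref{thm_local_stab_rep} and Proposition \ref{prop_positivity_is_open}. You helpfully flesh out the details: the remark that the openness argument for $T_+$ needs $K_{X_t}+\Delta_t$ rather than $K_{X_t}$ (the proposition as stated has $K_{X_t}$, but its own proof already invokes Proposition \ref{prop_threshold} which involves $\Delta_t$, so this is the intended reading), and the uniform threshold $\eta = \tfrac{1}{(2\dim X+1)N}$ used to pass from fibrewise ampleness to condition (S6) are both things the paper leaves implicit.
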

\begin{proof}
   This follows from Theorem \ref{thm_local_stab_rep} 
   and Proposition \ref{prop_positivity_is_open}.
\end{proof}

\section{Locally stable reduction}

In this section we prove a locally stable reduction statement for families of foliations over curves.  This will form a key ingredient in verifying the valuative criterion of properness for our moduli functor.

\begin{theorem}
\label{thm_locally_stable_reduction}
Let $f\colon X \rightarrow S$ be a projective morphism where $X$ is normal, 
$\dim X = 3$ and $S$ is a smooth curve.  Let $\mathcal F$ be a foliation on $X$ such that
$T_{\mathcal F} \subset T_{X/S}$.  Let $D$ be a reduced divisor.

Then there exists a finite morphism $T \rightarrow S$ and a birational modification 
$\mu\colon Y \rightarrow X\times_ST$
such that 
\begin{enumerate}
\item $Y$ is smooth, $(Y, B) \rightarrow S$
is semi-stable in the sense of \cite{KKMS} 
where $B = \mu_*^{-1}D_T+{\rm Exc}(\mu)$, in particular, $(Y, B) \to T$ is toroidal with reduced fibres;
and 

\item  $(\mathcal F_Y, B_{{\rm non-inv}})$ has canonical singularities, 
where $\mathcal F_Y$ is the pull-back of $\mathcal F$ along $Y \rightarrow X$.
\end{enumerate}
In particular, $(Y, B, \mathcal G) \rightarrow T$ is a locally stable family of integrable distributions
where $\mathcal G$ is the integrable distribution associated to
$(\mathcal F_Y, B_{{\rm non-inv}})$.
\end{theorem}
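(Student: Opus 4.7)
My strategy is in two stages: first apply semistable reduction to the underlying family of pairs, then further resolve the singularities of the foliation via blow-ups along invariant strata of the resulting toroidal boundary, and finish by a direct application of Proposition~\ref{prop_family_criterion}.

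\textbf{Stage 1 (semistable reduction).} By~\cite{KKMS}, there exists a finite cover $T \to S$ and a projective birational modification $\mu_0 \colon Y_0 \to X \times_S T$ with $Y_0$ smooth such that $(Y_0, B_0) \to T$ is toroidal with reduced simple normal crossings fibres, where $B_0 := (\mu_0)_\ast^{-1}D_T + \exc(\mu_0)$. In particular $(Y_0, B_0) \to T$ is already locally stable as a family of pairs. Pulling $\mathcal F$ back yields an integrable distribution $\mathcal F_{Y_0}$ on $Y_0$ satisfying $T_{\mathcal F_{Y_0}} \subset T_{Y_0/T}$ at every generic point; consequently every fibre component of $Y_0 \to T$ is $\mathcal F_{Y_0}$-invariant, and so is every codimension-$\ge 2$ stratum of $B_0$ contained in a special fibre.

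\textbf{Stage 2 (resolution of foliation singularities).} Starting from $(Y_0, B_0, \mathcal F_{Y_0})$, I apply reduction of singularities for rank-one foliations on threefolds (Seidenberg fibre-by-fibre on the two-dimensional special fibres, together with Cano's theorem on the three-dimensional ambient variety) to produce a birational morphism $\pi \colon Y \to Y_0$ consisting of blow-ups along smooth centres contained in codimension-$\ge 2$ strata of $B_0$ (hence invariant by the previous step). Setting $B := \pi_\ast^{-1}B_0 + \exc(\pi)$, each such blow-up preserves the toroidal structure of $(Y_0, B_0) \to T$, so $(Y, B) \to T$ remains locally stable. By choosing centres carefully one arranges that the log foliated pair $(\mathcal F_Y, B_{\rm non-inv})$ becomes canonical: since every new exceptional divisor is contained in a fibre and hence is $\mathcal F_Y$-invariant, a discrepancy computation in the spirit of Lemma~\ref{lem_sing_adjunction_families} shows that the foliated discrepancies along these divisors are non-negative, so there is no obstruction coming from invariant contributions.

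\textbf{Conclusion and main obstacle.} The final verification is an application of Proposition~\ref{prop_family_criterion} to $(Y, B, \mathcal G) \to T$, where $\mathcal G$ is the integrable distribution associated to $(\mathcal F_Y, B_{\rm non-inv})$. Smoothness of $Y$ together with the SNC property of $B$ give that $K_\mathcal{G} = K_{\mathcal F_Y}+B_{\rm non-inv}$ is Cartier and $\mathcal G$ is log canonical (in fact, canonical before passage to the associated distribution); well-formedness on the generic fibre is built in by construction, since the divisorial part of $\mathcal G$ dominates $B_{\rm non-inv}$; and Proposition~\ref{prop_locally_stable_adjunction} supplies the semi-log canonical property on special fibres. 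The main obstacle is Stage 2: standard foliation desingularisation theorems yield simple (or at best log canonical) rather than canonical singularities, so additional blow-ups must be performed in such a way that their centres lie entirely inside the invariant strata of the toroidal boundary---otherwise one would destroy the semistability achieved in Stage~1---while simultaneously handling the non-invariant components of $B$ coming from the strict transform of $D_T$.
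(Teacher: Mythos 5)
Your order of operations is reversed relative to the paper's proof, and this reversal creates a genuine gap in Stage 2. The paper's proof runs the steps in the opposite order: it \emph{first} performs a reduction of singularities of the foliated pair $(\mathcal F, D_{\rm non-inv})$ (using \cite[III.iii.4.bis]{MP13}, and the observation that only smooth blow-ups are needed when $\mathcal F$ is tangent to a fibration, cf.~\cite{CR14}), \emph{then} takes a functorial log resolution of $(\tilde X, \tilde D+\tilde f^{-1}(0))$, and only \emph{afterwards} applies semistable reduction à la \cite{KKMS}. The crucial point is that once the foliation has canonical singularities it is non-dicritical (\cite[Corollary III.i.4]{MP13}), and every toroidal stratum of $D_W+W_0$ is invariant; the semistable-reduction blow-ups are therefore blow-ups in invariant centres, and these are known to preserve canonical foliation singularities (\cite[Proposition I.2.4]{bm16}). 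That is what keeps condition (2) alive through the final birational modification.

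In your version, after Stage 1 the foliation $\mathcal F_{Y_0}$ has in general lost nothing: its singular locus can be a curve that is \emph{not} contained in any codimension-$\ge 2$ stratum of $B_0$ and can dominate $T$ (e.g. a multisection of the family passing through the smooth locus of each fibre). The centres required by Seidenberg/Cano to desingularise such a foliation are then not contained in the strata of $B_0$, so your Stage 2 cannot resolve them by blow-ups "along smooth centres contained in codimension-$\ge 2$ strata of $B_0$"; and if you allow arbitrary resolution centres you no longer have any control over semistability or over invariance of the new exceptional divisors. The observation that "every new exceptional divisor is contained in a fibre and hence is $\mathcal F_Y$-invariant" is exactly the thing that fails here: a horizontal centre (which you genuinely need) produces a horizontal exceptional divisor, which may well be dicritical, yielding a negative discrepancy contribution. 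This is precisely the obstruction the paper avoids by resolving the foliation first, so that by the time semistable reduction enters, non-dicriticality forces all the new exceptional divisors to be invariant.

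A secondary point: describing the output of reduction of singularities as "simple (or at best log canonical) rather than canonical" is misleading—simple singularities for rank-one foliations are canonical—but the real issue is the location of the required blow-up centres, not the class of singularities one lands in.
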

\begin{proof}
The problem is local about any point $0 \in S$, so without loss of generality we may shrink $X$
about a neighbourhood of $f^{-1}(0)$.

{\bf Step 1: }{\it We perform a reduction of singularities of $K_{\mathcal F}$.}
After replacing $(X, D)$ by a resolution of singularities, we may apply \cite[III.iii.4.bis]{MP13} to find a reduction of singularities $\nu\colon \tilde{X} \rightarrow X$ of 
$(\mathcal F, D_{{\rm non-inv}})$, 
so that $(\tilde{\mathcal F}, E_{{\rm non-inv}})$ has canonical singularities where 
$\tilde{\mathcal F} \coloneqq \nu^{-1}\mathcal F$ and 
$\tilde{D} = {\rm exc}(\nu)+\nu_*^{-1}D$.
We remark that a priori \cite{MP13} requires the use of weighted blow ups, 
but since $\mathcal F$ is tangent to a fibration only smooth blow ups are required, cf. \cite{CR14}, and so 
we have that $\tilde{X}$ is smooth and that $K_{\tilde{\mathcal F}}+\tilde{D}_{{\rm non-inv}}$ is Cartier.
Set $\tilde{f}:= f \circ \nu \colon \tilde{X} \to S$.

{\bf Step 2: }{\it  We resolve the singularities of $(\tilde{X}, \tilde{D})$.}
Since $K_{\tilde{\mathcal F}}+\tilde{D}_{{\rm non-inv}}$ is Cartier we see that \begin{align*}T_{\tilde{\mathcal F}}(-\log \tilde{D}_{{\rm non-inv}}) := T_{\tilde{\mathcal F}} \cap T_X(-\log \tilde{D}_{{\rm non-inv}}) = T_{\tilde{\mathcal F}} \cap T_X(-\log \tilde{D})\end{align*}
is locally free.  
If $\partial$ is any local generator of $T_{\tilde{\mathcal F}}(-\log \tilde{D}_{{\rm non-inv}})$ we see that $\tilde{D}+\tilde{f}^{-1}(0)$ is invariant by $\partial$.  
 Let $p\colon \tilde{\tilde{X}} \rightarrow \tilde{X}$ be a functorial log resolution of $(\tilde{X}, \tilde{D}+\tilde{f}^{-1}(0))$, cf. \cite[ Theorems 3.35 and 3.45]{kollar07b}.  Set $\tilde{\tilde{D}} = \exc p+ p_*^{-1}\tilde{D}$ and $\tilde{\tilde{\mathcal F}} = p^{-1}\tilde{\mathcal F}$. By \cite[Corollary 4.7]{MR2581247} there exists a vector field 
$\partial'$ on $\tilde{\tilde{X}}$ such that $p_*\partial' = \partial$.
Since $\tilde{\mathcal F}$ has canonical singularities it is non-dicritical, \cite[Corollary III.i.4]{MP13}, and so every $p$-exceptional divisor is $\tilde{\tilde{\mathcal F}}:=p^{-1}\tilde{\mathcal F}$ invariant.  It follows that 
\begin{align*}K_{\tilde{\tilde{\mathcal F}}}+\tilde{\tilde{D}}_{{\rm non-inv}} = 
p^*(K_{\tilde{\mathcal F}}+\tilde{D}_{{\rm non-inv}})\end{align*}
and so $(\tilde{\tilde{\mathcal F}},  \tilde{\tilde{D}}_{{\rm non-inv}})$ 
has canonical singularities.

So, up to replacing $X$ by $\tilde{\tilde{X}}$ we may freely assume that 
\begin{itemize}
    \item $(X, D+f^{-1}(0)_{\rm red})$ is an snc pair; and
    \item $(\mathcal F, D_{{\rm non-inv}})$ has canonical singularities.
\end{itemize}

{\bf Step 3: }{
\it We now perform a semi-stable reduction of 
$(X, D) \rightarrow S$.} Following \cite[Chapter 2]{KKMS} (cf. also \cite[Theorem 2.58]{modbook})
we may find a base change $S' \rightarrow S$ 
and a modification $\pi\colon Y \rightarrow W$ where $W$ is the normalisation of 
$X\times_SS'$ such that 
$(Y, B\coloneqq {\rm exc}(\pi)+\pi_*^{-1}(D\times_SS')) \rightarrow S'$ is semi-stable.
Following \cite[Chapter 2]{KKMS} we see that $\pi$ is a sequence of blow ups
which are toroidal with respect to the embedding $W \setminus (D_W \cup f_W^{-1}(0')) \subset W$
where $D_W$ is the pull back of $D$ to $W$, $f_W \colon W \to S'$ is the natural map and $0'$ is the pre-image of $0$ under $S' \to S$.

{\bf Step 4: }{\it 
We will show that if $\mathcal G$ is the pull-back of $\mathcal F$ to $Y$, 
then $(\mathcal G, B_{{\rm non-inv}})$ has canonical singularities.}

Let $\mathcal F_{W}$ be the pull-back of $\mathcal F$ 
to $W$.
Since the ramification of $r\colon W \rightarrow X$ is $\mathcal F_W$ invariant,
we have by \cite[Lemma 3.4]{Druel21} (cf. also \cite[Proposition 2.2]{CS20})
that $(K_{\mathcal F_W}+D_{W, {\rm non-inv}}) = r^*(K_{\mathcal F}+D_{{\rm non-inv}})$.
The proof of \cite[Proposition 5.20]{KM98} 
applies here to show that $(K_{\mathcal F_{W}}, D_{W, {\rm non-inv}})$ has canonical singularities.
We also remark that $K_{\mathcal F_W}+D_{W, {\rm non-inv}}$ is Cartier.

As noted, $\pi$ is a sequence of blow ups
which are toroidal with respect to the embedding $W \setminus (D_W \cup f_W^{-1}(0')) \subset W$
where $f_W \colon W \to S'$ is the natural map and $0'$ is the pre-image of $0$ under $S' \to S$.
Since every component of $D_W+W_0$ is invariant by any local section of  $T_{\mathcal F_W}(-\log D_{W, {\rm non-inv}})$ 
it follows
that every toroidal strata of $D_W+W_0$ is invariant by $\tilde{\mathcal F}_W$, cf. \cite[Theorem 5]{MR0212027}.  
In particular, it follows that $\pi$ is a sequence of blow ups in $\mathcal F_W$ invariant centres.
We then apply \cite[Proposition I.2.4]{bm16} to deduce
 \begin{align*}(K_{\mathcal G}+B_{{\rm non-inv}})+F= 
 \pi^*(K_{\mathcal F_W}+D_{W, {\rm non-inv}})\end{align*} where $F \ge 0$.
Since $\mathcal F_W$ has canonical singularities, it is non-dicritical, cf. \cite[Corollary III.i.4]{MP13}, and so every $\pi$-exceptional divisors is invariant. It follows that $B_{{\rm non-inv}} 
= \pi_*^{-1}E_{W, {\rm non-inv}}$ and hence $F = 0$.
  Since
 \begin{align*}(K_{\mathcal G}+B_{{\rm non-inv}})= 
 \pi^*(K_{\tilde{\mathcal F}_W}+E_{W, {\rm non-inv}})\end{align*}
 it follows immediately that $(\mathcal G, B_{{\rm non-inv}})$ has canonical singularities
 and we are done.
\end{proof}

\begin{remark}
    Observe that Theorem \ref{thm_locally_stable_reduction} holds equally well in the case where $\mathcal F$ is only assumed to be 
    an integrable distribution.  Indeed, if $(\tilde{\mathcal F}, E)$ is the foliated pair associated to $\mathcal F$, then we can apply Theorem \ref{thm_locally_stable_reduction} to $\tilde{\mathcal F}$ and $D = E$.
    \end{remark}

\section{Index in families}
\label{s_index_families}
The goal of this section is to study how the Cartier index of the canonical divisor of an integrable distribution varies in families.
As explained earlier, in order to get a finite type moduli functor for foliations it is necessary to fix the Cartier index of the canonical divisor, moreover, as seen in Example \ref{example_index} it is not possible to bound the Cartier index of the canonical divisor of a semi-log canonical integrable distribution in terms of other invariants of the foliation.  

While we are not in general able to control the Cartier index of the canonical divisor of the foliation on the central fibre of a family in terms of the Cartier index of the canonical divisor on the generic fibre, see Example \ref{slt_index_jumps} below, we are nevertheless able to do so in a wide range of cases, see
Theorem \ref{thm_index_is_stable}.
We remark that Theorem \ref{thm_index_is_stable}
is not needed in the construction of our moduli space, nor in the proof of properness of our moduli functor, however, we do expect that it will be necessary in future investigations of the boundedness properties of the moduli functor.

\begin{proposition}
\label{prop_lt_index}
    Let $f\colon (x \in X, \Delta, \mathcal F) \rightarrow T$ be a germ of a locally stable family of rank one integrable distributions where $T$ is a smooth curve and $\dim (X/T) =2$.  Let $0 \in T$ be a closed point.  
    Suppose that $\mathcal F_0$ is log terminal and that $NK_{\mathcal F_t}$
    is Cartier for all $t \in T \setminus \{0\}$.  
    Then $NK_{\mathcal F_{0}}$ is Cartier.
\end{proposition}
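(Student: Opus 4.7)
The problem being local at $x$, I will show that the Cartier index $N_0$ of $K_{\mathcal F}$ in a neighborhood of $x$ on the threefold $X$ divides $N$. A short Nakayama argument combined with the Kollár condition (S3) shows that the Cartier index of $K_{\mathcal F_0}$ at $x$ already equals $N_0$; the task therefore reduces to exhibiting, for some $t \neq 0$ close to $0$, a point $x_t \in X_t$ at which the Cartier index of $K_{\mathcal F_t}$ is $N_0$, since the hypothesis will then force $N_0 \mid N$. To this end, I would form the index one cover $\sigma\colon Y \to X$ of $K_{\mathcal F}$ near $x$: this is a cyclic Galois cover of degree $N_0$ with group $G \cong \mathbb Z/N_0\mathbb Z$, quasi-\'etale on $X$, with $K_{\sigma^{-1}\mathcal F}$ Cartier on $Y$. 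By Lemma~\ref{lem_cover_s2}, $(Y, \sigma^{\ast}\Delta, \sigma^{-1}\mathcal F) \to T$ remains a locally stable family. Since $\mathcal F_0$ is log terminal at $x$, inversion of adjunction (Theorem~\ref{thm_inversion_of_adjunction}) ensures that $\mathcal F$ is log canonical at $x$ on the threefold; log canonical rank-one integrable distributions are automatically saturated (the divisorial part of the associated foliated pair must vanish by the discrepancy condition), so $\mathcal F$ is a foliation near $x$, $\sigma^{-1}\mathcal F$ is a foliation near $y := \sigma^{-1}(x)$, and the latter is generated there by a vector field $\partial \in T_{Y/T}$ on which $G$ acts via a primitive character $\chi\colon G \to \mathbb C^{\ast}$ of order $N_0$.

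The geometric heart of the argument is to construct a $G$-fixed curve in $Y$ through $y$ dominating $T$. Consider the $G$-equivariant short exact sequence
\[
0 \to T_y Y_0 \to T_y Y \to T_0 T \to 0,
\]
in which $T_0 T$ carries the trivial $G$-representation. Because $Y_0 \to X_0$ is itself the index one cover of $K_{\mathcal F_0}$ at $x$ (again by Kollár's condition), it is quasi-\'etale, and the absence of pseudo-reflections then forces both characters of $G$ on $T_y Y_0$ to be primitive, so $T_y Y_0$ contains no trivial $G$-sub-character. Consequently $(T_y Y)^G$ is one-dimensional and projects isomorphically onto $T_0 T$. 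Since $y$ is a fixed point of the finite cyclic group $G$ and $(T_y Y)^G$ is one-dimensional, passing to $G$-invariants in the formal neighborhood of $y$ yields a smooth $G$-fixed curve $C \subset Y^G$ through $y$ with $T_y C = T_0 T$, in particular dominating a neighborhood of $0 \in T$.

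For each $0 \neq t$ close to $0$, the point $y_t := C \cap Y_t$ is $G$-fixed, and $G$ acts on the local generator $\partial|_{y_t}$ of $T_{\sigma^{-1}\mathcal F_t}$ via the same primitive character $\chi$ of order $N_0$. The same character computation as for the central fibre therefore identifies the Cartier index of $K_{\mathcal F_t}$ at $x_t := \sigma(y_t)$ with $N_0$, and the hypothesis $NK_{\mathcal F_t}$ Cartier yields $N_0 \mid N$. The principal obstacle is the construction of $C$: it depends simultaneously on the absence of trivial $G$-sub-characters in $T_y Y_0$---a consequence of the log terminal hypothesis via the quasi-\'etale structure of the canonical cover---and on local properties of fixed-point schemes of cyclic group actions. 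Neither input is available in the semi-log terminal regime, and Example~\ref{slt_index_jumps} indeed exhibits such a family in which the Cartier index genuinely jumps.
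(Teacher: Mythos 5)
Your overall route coincides with the paper's: pass to the index-one cover $\sigma\colon Y\to X$ with Galois group $G\cong\mathbb Z/N_0\mathbb Z$, use Lemma~\ref{lem_cover_s2} to remain in the locally stable setting, use the Koll\'ar condition to identify the Cartier index of $K_{\mathcal F_0}$ at $x$ with the ambient index $N_0$, and conclude by producing, for $t\neq 0$, a $G$-fixed point $y_t\in Y_t$ at which the $G$-action on the generating vector field of $\sigma^{-1}\mathcal F$ is still a primitive character, forcing $N_0\mid N$. The gap is in the production of $y_t$. Every step of your tangent-space argument---that $T_yY_0$ carries exactly two $G$-characters, that the absence of pseudo-reflections forces both to be primitive (Chevalley--Shephard--Todd), and above all that ``passing to $G$-invariants in the formal neighbourhood of $y$ yields a smooth $G$-fixed curve $C$'' with $T_yC=(T_yY)^G$---presupposes that $Y$ (equivalently $Y_0$) is \emph{smooth} at $y$. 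Cartan linearisation and the identification of $Y^G$ near $y$ with $(T_yY)^G$ are only available in the smooth case; when $Y$ is singular the fixed locus can reduce to the single point $y$ even though $(T_yY)^G$ is positive-dimensional, and the character bookkeeping on a tangent space of the wrong dimension proves nothing. You never establish this smoothness. The paper supplies exactly this ingredient via \cite[Fact III.i.1]{MP13}: a rank-one foliation with Cartier canonical class and log terminal singularities on a normal surface is in fact terminal, and terminality forces the underlying surface to be smooth. Once $Y_0$ is known to be smooth, $Y\cong Y_0\times\mathbb A^1$ near $y$ and the $G$-fixed curve is visible in coordinates; your computation then becomes a correct rederivation of what the coordinates show directly, but without it the argument is incomplete.

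A smaller slip: the assertion ``log canonical rank-one integrable distributions are automatically saturated'' is false---log canonicity permits a non-invariant divisor of coefficient one in the associated foliated pair. What forces the divisorial part of $\mathcal F_0$ to vanish is the stronger \emph{log terminal} hypothesis; together with the requirement, built into the definition of a flat family of integrable distributions, that $\sing\mathcal F$ contain no fibre components, this shows that $\mathcal F$, and hence $\sigma^{-1}\mathcal F$, is a foliation near $x$.
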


\begin{proof}
By definition $K_{\mathcal F}$ is $\mathbb Q$-Cartier, so up to replacing $X$ by a neighbourhood of $x$ there exists an integer $m >0$ such that $mK_{\mathcal F} \sim 0$. Let $\sigma\colon Y \rightarrow X$ be the index one cover associated to $K_{\mathcal F}$ with Galois group $G = \mathbb Z/m\mathbb Z$. Notice that $\sigma$ is quasi-\'etale.
Without loss of generality we may assume that $y = \sigma^{-1}(x)$ is a single point.
Since $K_{\mathcal F}$ is a relative Mumford divisor (see Lemma \ref{lem_rel_mumford_crit}), by Lemma \ref{lem_cover_s2}, $g = f\circ \sigma \colon  (Y, \sigma^*\Delta, \mathcal G \coloneqq \sigma^{-1}\mathcal F) \rightarrow T$ is a locally stable family.

\begin{claim}
\label{cl_lt}
    $\mathcal G_0$ is a log terminal integrable distribution.
\end{claim}
\begin{proof}[Proof of Claim]
Since $\sigma$ is quasi-\'etale we have that $K_{\mathcal G} = \sigma^*K_{\mathcal F}$, and 
therefore $K_{\mathcal G_0} = \sigma_0^*K_{\mathcal F_0}$ where $\sigma_0\colon Y_0 \rightarrow X_0$ is the restricted map.  
By assumption $\mathcal F_0$ is log terminal, so arguing along the lines of the proof of \cite[Proposition 5.30]{KM98}
we see that $\mathcal G^0$ is log terminal.
\end{proof}

By \cite[Fact III.i.1]{MP13}, $\mathcal G_0$ in fact has terminal singularities and so $Y_0$ is a smooth surface, hence $Y = Y_0\times \mathbb A^1$ and in appropriate local coordinates $(x, y, t)$ on $Y$ we have that $\mathcal G$ is defined by $\partial_x$ and the action of $\mathcal G$ on $Y$ is given by $\zeta \cdot (x, y, t) = (\zeta x, \zeta^a y, t)$.  For $t\neq 0$ we then deduce that the Cartier index of $K_{\mathcal F_t}$ is in fact $=m$ and we may conclude.
\end{proof}

The above proposition does not hold if ``log terminal'' is replaced by ``semi-log terminal" as the following example shows.

\begin{ej}
\label{slt_index_jumps}
    Let $X = \{xy+t = 0\} \subset \mathbb A^4$ with coordinates $(x,y,z, t)$ and let $\mathcal F$ be the foliation generated by $\partial_z$.
    Consider the $\mathbb Z/k\mathbb Z$-action on $X$ given by $\zeta \cdot (x, y, z, t) = (\zeta^{a}x, \zeta^{-a}y, \zeta z, t)$.  Let $Y$ (resp. $\mathcal G$) be the quotient of $X$ (resp. $\mathcal F$) by this action.
    Note that we have a fibration $(Y, \mathcal G) \to \mathbb A^1_t$ which is a locally stable family of foliations.
    Note that $K_{\mathcal G_0}$ has Cartier index $=k$, but for $t \neq 0$, $Y_t$ is smooth and so $K_{\mathcal G_t}$ is Cartier.
\end{ej}

\begin{proposition}
\label{prop_index_dlt_case}
        Let $f\colon (X, \Delta, \mathcal F) \rightarrow T$
    be a family of locally stable rank one integrable distributions where $\dim (X/T) = 2$ and $T$ is a smooth curve. Fix a point $0 \in T$.  Suppose that 
    \begin{enumerate}
        \item $(X, \Delta+X_0)$ is dlt; and 
  
                \item $\mathcal F_0$ is not semi-log terminal.
    \end{enumerate}
     Then $2K_{\mathcal F_0}$ is Cartier. 
\end{proposition}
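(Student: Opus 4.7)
The plan is to prove this pointwise at each closed point $x \in X_0$. By Lemma \ref{lem_Cartier_criterion}, $K_{\mathcal F}$ is Cartier at every codimension-two point of $X$ contained in a fibre, and by the base change property of locally stable families (Condition (S3) and Proposition \ref{prop_base_change}) the Cartier index of $K_{\mathcal F_0}$ at any point $x$ divides the Cartier index of $K_{\mathcal F}$ at $x$. Thus the statement reduces to showing $2K_{\mathcal F}$ is Cartier at each closed point $x \in X_0$.

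First I would use the hypothesis that $(X, \Delta+X_0)$ is dlt together with the fact that $X_0$ is a reduced Cartier divisor to severely restrict the local analytic structure of $(X, X_0)$ at $x$. By the classification of three-dimensional dlt pairs along a Cartier divisor (cf. \cite{kollar07b}) there is a cyclic Galois cover $\sigma \colon \tilde U \to U$ of an analytic neighbourhood of $x$, with Galois group $G = \mathbb Z/N$ acting quasi-\'etale outside $x$, such that $\tilde U \subset \mathbb A^3$ is smooth with coordinates $(u, v, z)$, $\tilde U_0 = \{uv=0\}$, the composite $\tilde U \to U \to T$ is given by $t = uv$, and $G$ acts by $\zeta \cdot (u, v, z) = (\zeta u, \zeta^{-1} v, z)$. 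Lifting $\mathcal F$ to the smooth threefold $\tilde U$ produces a rank one foliation $\tilde{\mathcal F}$ generated by a vector field $\tilde\partial$, tangent to the fibration (so $\tilde\partial(uv)=0$), which must take the form $\tilde\partial = a(u\partial_u - v\partial_v) + c\partial_z$ for some $a, c \in \mathcal O_{\tilde U}$. The group $G$ acts on $\tilde\partial$ by a character $\chi$ whose order equals the Cartier index of $K_{\mathcal F}$ at $x$, and it suffices to prove $\chi^2 = 1$.

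Next I would exploit the non-slt hypothesis on $\mathcal F_0$ to constrain $\chi$. If $\chi = 1$ there is nothing to prove; otherwise $a, c$ must be non-trivial $\chi$-semi-invariant functions, and the structure of the $G$-eigenspaces forces them to be divisible by a power of $u$ or $v$. After saturating $\tilde\partial$ by this common divisor (which changes $\chi$ by an integer power of the character of $u$), one reduces to the case where $\chi$ has minimal order. In this minimal situation, the restrictions of $\tilde\partial$ to the two sheets $\{u = 0\}$ and $\{v = 0\}$ of $\tilde U_0$ generate the pullback of $\mathcal F_0$ to the two components of the normalisation of $X_0$ at $x$. The non-slt hypothesis says that on at least one sheet this restriction defines either a non-log-terminal rank one foliation (governed by \cite{MP13}) or has codimension one non-saturation, and applying Proposition \ref{prop_lc_nonnilp} together with Lemma \ref{lem_nonnilp_normal} shows that such a configuration is compatible with the $G$-equivariance only when $\chi^2 = 1$. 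The geometric intuition is that the involution $(u, v) \leftrightarrow (v, u)$ of $\tilde U$ exchanges the two sheets of $\tilde U_0$ and is respected by any strictly log canonical rank one foliation singularity; the order two phenomenon is then forced by compatibility with this involution.

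The main obstacle will be the final case analysis: one must classify all possible $G$-equivariant non-slt local models and verify that none admits a character $\chi$ of order $\geq 3$. Concretely this requires treating separately the saddle-node singularities (where the non-slt phenomenon comes from the non-semi-simple linear part of $\tilde \partial$ restricted to a sheet) and the resonant hyperbolic singularities (where the ratio of non-zero eigenvalues is a positive rational forced by $G$-invariance to have only the symmetries of the toric coordinates $u, v$). In each case the asymmetric toric action $(\zeta, \zeta^{-1})$ essentially squares out to trivial on the space of allowable characters, yielding $\chi^2 = 1$. Combining with Lemma \ref{lem_picard_node} to glue across the double locus of $X_0$, one concludes that $2K_{\mathcal F_0}$ is globally Cartier.
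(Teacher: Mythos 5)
Your reduction to a pointwise statement at $x \in X_0$ is sound, and the overall strategy — lift $\mathcal F$ to a smooth quasi-\'etale cover, realize $T_{\mathcal F}$ as a $\chi$-eigensheaf of vector fields, and bound the order of $\chi$ — is the same one the paper uses. However, your argument has a genuine gap in two places.

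First, your claimed local normal form is not complete. You assert that the dlt hypothesis on $(X, \Delta + X_0)$ forces $(X, X_0)$ at $x$ to be a $\mathbb Z/N$-quotient of $(\mathbb A^3_{(u,v,z)}, \{uv=0\})$ with the group acting by $(\zeta u, \zeta^{-1}v, z)$. This captures the $cA_{N-1}$ degenerations where $X_0$ has two sheets meeting along a double curve, but it omits the pinch point and the quotients of a pinch point (cases (ii) and (v) of \cite[Theorem 4.23]{KSB88}), which the paper analyzes separately. These cases are genuinely compatible with the dlt hypothesis: for $X_0 = \{x^2 = zy^2\} \subset X = \mathbb A^3$, a single blow-up of the double curve is a log resolution on which the only lc place has centre the double curve itself, not the origin, so $(X, X_0)$ is dlt at the pinch point. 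In these cases $X_0$ is irreducible and is not the quotient of a normal crossings pair by a group acting separately on the two sheets, so your model $\tilde U_0 = \{uv = 0\}$ with $(\zeta,\zeta^{-1},1)$-action cannot produce it. The paper instead presents the pinch point as a $\mathbb Z/4\mathbb Z$-quotient of $\{\xi\eta = 0\}$ under the \emph{wreathing} action $(\xi, \eta, \zeta) \mapsto (\eta, -\xi, \tau\zeta)$, which your framework does not accommodate. Relatedly, the residual cyclic actions in cases (iii)--(v) of \cite[Theorem 4.23]{KSB88} do act nontrivially on the transverse coordinate $z$, so insisting that $G$ acts trivially on $z$ is also too restrictive.

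Second, the decisive step — showing that the character $\chi$ of $\tilde\partial$ has $\chi^2 = 1$ — is not actually proven. You acknowledge this yourself: ``The main obstacle will be the final case analysis.'' The heuristic about the involution $(u,v) \leftrightarrow (v,u)$ is suggestive, but it is not a deduction; the paper's proof does five separate and rather delicate computations, including identifying the explicit $G$-eigenbasis of vector fields in each case and tracking how $\partial$ or $\partial^{\otimes 2}$ transforms. Without that case analysis, the proposal does not establish the conclusion. In short, the student has identified the right kind of argument but the classification input is missing a branch and the central quantitative claim is left open.
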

\begin{proof}
The problem is local about a point $x \in X_0$, 
so we may freely replace $X$ by a neighbourhood of this point 
whenever necessary.

If $x$ is a log canonical centre of $X_0$ then by inversion of adjunction, \cite{Kawakita05}, it is a log canonical centre of $(X, X_0)$.
Since $(X, X_0)$ is dlt it follows that $X$ is smooth at $x$, 
in particular $K_{\mathcal F}$, and hence $K_{\mathcal F_0}$,
 is Cartier.

So, we may assume that $X_0$ is semi-log terminal at 
$x$.  By \cite[Theorem 4.23]{KSB88}
it follows that $x \in X_0$ is one of the following
where $\sigma$ is a primitive $r$-th root of unity and 
$a, b \in \mathbb Z$:
\begin{enumerate}
    \item a quotient of $\mathbb C^2$;
    \item a normal crossing point or a pinch point;
    \item the quotient of a normal crossing point $\{xy = 0\}$
    under the action $x \mapsto \sigma^a x, y\mapsto \sigma^b y, z \mapsto \sigma z$;
    \item the quotient of a normal crossing point $\{xy = 0\}$
    under the action $x \mapsto \sigma^a y, y \mapsto x, z \mapsto \sigma z$;
    \item the quotient of the pinch point $x^2 = zy^2$ under the action $x\mapsto \sigma^{a+1}x, y\mapsto \sigma^a y, z\mapsto \sigma^2 z$.
\end{enumerate}

In Case (1) either $\mathcal F_0$ is canonical or log canonical and not canonical.  We see by the classification in \cite[Fact I.2.4]{McQuillan08} 
if $\mathcal F_0$ has canonical singularities, then $2K_{\mathcal F_0}$
is Cartier.  By \cite[Lemma 2.11]{SS23} if $\mathcal F_0$ is log canonical 
and not canonical then $K_{\mathcal F_0}$ is Cartier. 
In either case $2K_{\mathcal F_0}$ is Cartier.

\medskip

In Case (2) if $X_0$ is a normal crossing point, then Lemma \ref {lem_picard_node} implies that $K_{\mathcal F_0}$ is Cartier.  So assume that $X_0$ is a pinch point.  Recall that $X_0$ is a $\mathbb Z/4\mathbb Z$ quotient of
 a normal crossings point.
 Indeed, the pinch point  $\{x^2=yz^2\}$ can be realised as a quotient of $\{\xi\eta = 0\} \subset \mathbb A_{(\xi, \eta, \zeta)}^3$ under the action of $\mathbb Z/4\mathbb Z$ given by
$(\xi, \eta, \zeta) \mapsto (\eta, -\xi, \tau \eta)$ where $\tau$ is a primitive fourth root of unity.
Let $q\colon \{\xi\eta = 0\} \rightarrow X_0$ be the quotient map.

The pull back of  $\mathcal F_0$
to $\{\xi\eta = 0\}$ is defined by a vector field 
$\partial$ on $\mathbb A^3$ which leaves 
$\{\xi \eta = 0\}$ invariant and such that the $\mathbb Z/4\mathbb Z$ action on $\partial$ is given by $g \cdot \partial = \chi(g)\partial$ for $g \in \mathbb Z/4\mathbb Z$ for some character $\chi\colon \mathbb Z/4\mathbb Z \to \mathbb C^*$.  

Suppose for sake of contradiction that $\partial$ is non-singular.  Then, up to scaling by a unit, $\partial = \partial_\zeta+\delta$ where $\delta \in \mathfrak mT_{\mathbb C^3}$ and defines a foliation 
which descends to a foliation $\mathcal F_0$ on $X_0$
such that $K_{\mathcal F_0}$ is not Cartier but $4K_{\mathcal F_0}$ is Cartier.  However, it is easy to check (arguing along the lines of Claim \ref{cl_lt})
that in this case $\mathcal F_0$ is semi-log terminal, 
contrary to assumption.

Since $\mathcal F_0$ is semi-log canonical, and $q$ is quasi-\'etale, it follows that $q^{-1}\mathcal F_0$ is semi-log canonical.  So, by Proposition \ref{prop_lc_nonnilp} and Lemma \ref{lem_nonnilp_normal} $\partial$ has non-nilpotent linear part. The space of linear vector fields on $\mathbb A^3$ which vanish at the origin and leave $\{\xi\eta = 0\}$ invariant is spanned by
$\xi\partial_\xi-\eta\partial_\eta, \xi\partial_\xi+\eta\partial_\eta$ and $\zeta\partial_\zeta$, each of which are eigenvectors of the $\mathbb Z/4\mathbb Z$-action.
Both $\zeta\partial_\zeta$ and $\xi\partial_\xi+\eta\partial_\eta$
are invariant by the $\mathbb Z/4\mathbb Z$ action, and 
$(\xi\partial_\xi-\eta\partial_\eta)\otimes( \xi\partial_\xi-\eta\partial_\eta)$ is invariant by the $\mathbb Z/4\mathbb Z$-action.

Since $\partial$ is an eigenvector of the $\mathbb Z/4\mathbb Z$ action, to compute the eigenvalue of $\partial$ it suffices to compute the eigenvalue of the linear part of $\partial$ under the $\mathbb Z/4\mathbb Z$ action.  By the previous paragraph we know that this eigenvalue is $\pm 1$ so it follows that 
 $\partial\otimes \partial$ is invariant by the $\mathbb Z/4\mathbb Z$ action and so $2K_{\mathcal F_0}$ is Cartier.

\medskip

 In Case (3) we see that $X_0$ has two irreducible components, each 
 of which is a quotient singularity.  It follows that if a $\mathbb Q$-Cartier divisor $D$ is Cartier on each component of $X_0$, then in fact $D$ is Cartier.  By case (1), we know that 
 $2K_{\mathcal F_0}$ is Cartier on each irreducible component, and we may apply Lemma \ref{lem_picard_node} to conclude that $2K_{\mathcal F_0}$ is Cartier.
\medskip

 In Case (4), let $q\colon \{xy = 0\} \rightarrow X_0$
 be the quotient map, and let $\partial$ be a vector field which 
 generates $q^{-1}\mathcal F_0$ and which is an eigenvector of the $\mathbb Z/r\mathbb Z$ action.
 Since $\mathcal F_0$ is semi-log canonical and not semi-log terminal, the same holds true for $q^{-1}\mathcal F_0$.  Thus, $\partial$ must vanish at $(0, 0, 0)$ and have non-nilpotent linear part, see Lemma \ref{lem_nonnilp_normal}.
 The action of $\mathbb Z/r\mathbb Z$
 on the linear parts of vector fields which vanish at the origin
 part has three eigenvectors:
 $x\partial_x - y\partial_y, x\partial_x+y\partial_y$ and $z\partial_z$.  These eigenvectors 
 all have eigenvalue $\pm 1$
 and so, 
 as in Case (2), we conclude that  $\partial\otimes \partial$
 is invariant by the $\mathbb Z/r\mathbb Z$
 action and so $2K_{\mathcal F_0}$
 is Cartier.

\medskip

We now consider Case (5).  Let $Y$ be the pinch point, let $r\colon Y \rightarrow X_0$ be the quotient morphism and 
let $\mathcal G = r^{-1}\mathcal F_0$.
There are two cases: either $K_{\mathcal G}$ is Cartier or it is not.  If $K_{\mathcal G}$ is Cartier then it is generated by a vector field $\partial$ which leaves $\{x^2+zy^2 = 0\}$ invariant.
The space of linear parts of such vector fields is spanned by 
$y\partial_y-2z\partial_z$ and $x\partial_x+y\partial_y$.
Both of these vector fields are invariant by the 
$\mathbb Z/r\mathbb Z$-action, and hence 
$\partial$ is invariant by this action.
It follows that $K_{\mathcal F_0}$ is Cartier.

Now suppose that $K_{\mathcal G}$ is not Cartier.
As noted earlier, the pinch point  $Y = \{x^2=zy^2\}$ can be realised as a quotient of $\{\xi \eta = 0\} \subset \mathbb A_{(\xi, \eta, \zeta)}^3$ under the action of $\mathbb Z/4\mathbb Z$ given by
$(\xi, \eta, \zeta) \mapsto (\eta, -\xi, \tau \zeta)$ where $\tau$ is a primitive fourth root of unity.
The invariant functions under this action are
$x = \zeta^2(\xi^2-\eta^2), z = \zeta^4$ and $y = \xi^2+\eta^2$.

The pull back of  $\mathcal G$
to $\{\xi\eta = 0\}$ is defined by a vector field 
$\partial$ on $\mathbb A^3$ which leaves 
$\{\xi \eta = 0\}$ invariant.  
As in Case (2)
we see that the pull-back of $\mathcal G$ is defined by a vector field with linear part equal to either 
$\xi\partial_\xi-\eta\partial_\eta, \xi\partial_\xi+\eta\partial_\eta$ or $\zeta\partial_\zeta$.
Both $\zeta\partial_\zeta$ and $\xi\partial_\xi+\eta\partial_\eta$
are invariant by the $\mathbb Z/4\mathbb Z$ action, 
so they will induce foliations on $Y$ such that $K_{\mathcal G}$ is Cartier, in which case by our above analysis $K_{\mathcal F_0}$ is Cartier

It remains to consider the case 
where the linear part of $\partial$ is 
$\xi\partial_\xi-\eta\partial_\eta$. 
We claim that $\partial \otimes \partial$ descends to a section of $T^{\otimes 2}_{\mathcal G}$ which is invariant by the $\mathbb Z/r\mathbb Z$ action on the pinch point.  As in the previous cases, to verify this claim we may freely replace $\partial$ by its linear part, and $r^{-1}\mathcal G$ by foliation generated by the linear part of $\partial$.

The local generators of $T_{\mathcal G}$ are given by  
$\mathbb Z/4\mathbb Z$-invariant sections of $T_{r^{-1}\mathcal G}$.  This submodule is generated by $\partial_1 = \zeta^2(\xi\partial_\xi-\eta\partial_\eta)$ and $\partial_2 = (\xi^2-\eta^2)(\xi\partial_\xi-\eta\partial_\eta)$.
We can calculate that 
\begin{align*}
&\partial_1(x) = \partial_1(\zeta^2(\xi^2-\eta^2)) = 2\zeta^4(\xi^2+\eta^2) = 2zy \\
&\partial_1(y) = \partial_1(\xi^2+\eta^2) = 2\zeta^2(\xi^2-\eta^2) = 2x \\
&\partial_1(z) = \partial_1(\zeta^4)  = 0
\end{align*}
and so $\partial_1$ descends to the vector field $\delta_1 = 2x\partial_y + 2zy\partial_x$.
A similar calculation shows that  $\partial_2$ descends to the vector field 
$\delta_2 = \frac{x^2}{z}\partial_y+yx\partial_x$.

Recall from above that $(\xi\partial_\xi-\eta\partial_\eta)\otimes (\xi\partial_\xi-\eta\partial_\eta)$ descends to a generator of $T_{\mathcal G}^{\otimes 2}$ and we may write
\begin{align*}(\xi\partial_\xi-\eta\partial_\eta)\otimes (\xi\partial_\xi-\eta\partial_\eta) = \frac{1}{\zeta^2(\xi^2-\eta^2)}\partial_1\otimes \partial_2.\end{align*} 
Since $x = \zeta^2(\xi^2-\eta^2)$ we deduce that $\frac{1}{x}\delta_1\otimes\delta_2$ is a generator of  $T_{\mathcal G}^{\otimes 2}$.
The action of $\mathbb Z/r\mathbb Z$ 
sends $\delta_1 \mapsto \sigma \delta_1$ and $\delta_2 \mapsto \sigma^a \delta_2$.  In particular, $\frac{1}{x}\delta_1\otimes\delta_2$ is invariant by the $\mathbb Z/r\mathbb Z$-action on the pinch point.
It follows that $\frac{1}{x}\delta_1\otimes\delta_2$
descends to a generator of $\mathcal O(2K_{\mathcal F_0})$
and so $2K_{\mathcal F_0}$ is Cartier, as required.
\end{proof}

\begin{theorem}
\label{thm_dlt_mod_fam}
       Let $f\colon (X, \Delta, \mathcal F) \rightarrow T$
    be a locally stable family of rank one integrable distributions where $X$ is normal,  $\dim (X/T) = 2$, the generic fibre $X_\eta$ is klt and $T$ is a smooth curve. Fix a point $0 \in T$.

    Then there exists a finite morphism $r\colon S \rightarrow T$ birational modification $p\colon X' \rightarrow X\times_TS$
    such that if $\pi\colon X' \rightarrow X$ is the natural map and 
    $\mathcal F' = \pi^{-1}\mathcal F$
is the pulled back foliation then,     \begin{enumerate}
\item $(X', \mathcal F') \rightarrow S$ is a locally stable family of integrable distributions;
        \item $\pi^*K_{\mathcal F} = K_{\mathcal F'}$;
        \item for any $n \in \mathbb Z_{>0}$, if $nK_{\mathcal F'}$ is Cartier, then $nK_{\mathcal F}$ is Cartier; and 
        \item $(X', X'_0)$ is dlt.
    \end{enumerate}
\end{theorem}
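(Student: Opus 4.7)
The strategy is to first apply the foliated semistable reduction provided by Theorem \ref{thm_locally_stable_reduction}, and then run a foliated MMP to contract the extra exceptional divisors while preserving both the $\mathcal F$-crepancy and the dlt structure of the central fibre.

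First, I apply Theorem \ref{thm_locally_stable_reduction} with $D = 0$ to produce a finite base change $r\colon S \to T$ and a modification $\mu\colon Y \to X\times_T S$ with $Y$ smooth, $(Y, Y_0)$ snc, and $(\mathcal F_Y, B_{\rm non-inv})$ canonical, with $(Y, B, \mathcal G) \to S$ locally stable in the sense of Section \ref{s_locally_stable_def}. A crucial preliminary observation is that, because the generic fibre $X_\eta$ is klt (and hence normal), $\mu$ is an isomorphism over $X_\eta$, so every $\mu$-exceptional divisor lies inside $Y_0$ and is therefore $\mathcal F_Y$-invariant (since $Y_0$ is $\mathcal F_Y$-invariant and $\mathcal F_Y$-invariance passes to components). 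Consequently $B_{\rm non-inv} = 0$, the integrable distribution $\mathcal G$ associated to $(\mathcal F_Y, B_{\rm non-inv})$ coincides with $\mathcal F_Y$ itself, and $(\mathcal F_Y, 0)$ has canonical singularities on $Y$.

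Next, I run a $K_{\mathcal F_Y}$-MMP relative to $X\times_T S$, whose termination is guaranteed by Theorem \ref{thm_mmp} in Appendix \ref{mmp}. Call the outcome $p\colon X' \to X\times_T S$, and set $\pi\colon X' \to X$. Writing
\begin{align*}
K_{\mathcal F_Y} = \mu^* K_{\mathcal F}\vert_{X\times_T S} + \sum_i a_i E_i,
\end{align*}
with $a_i \geq 0$ and each $E_i$ invariant $\mu$-exceptional inside $Y_0$, the Negativity Lemma \cite[Lemma 3.39]{KM98} applied to $X' \to X\times_T S$ forces the $\mu$-exceptional effective divisor $\sum a_i E_i$ to be entirely contracted by $Y \dashrightarrow X'$, yielding $K_{\mathcal F'} = \pi^* K_{\mathcal F}$; this is property (2). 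Property (3) then follows because crepancy preserves Cartier index along $\pi$: any local Cartier trivialisation of $nK_{\mathcal F'}$ on $X'$ descends to a Cartier trivialisation of $nK_{\mathcal F}$ on $X$ via the proper morphism $\pi$ combined with Lemma \ref{lem_rel_mumford_crit}. Property (1), local stability of $(X', \mathcal F') \to S$, is verified via Proposition \ref{prop_family_criterion}: crepancy transfers the semi-log-canonical and well-formedness conditions from $(Y, \mathcal F_Y) \to S$, which were established in Step 1 through Theorem \ref{thm_locally_stable_reduction}.

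The main obstacle is property (4), that $(X', X'_0)$ is dlt. The plan is to analyze each step of the $K_{\mathcal F_Y}$-MMP and show that the dlt property of the pair $(\cdot, \cdot_0)$ is preserved throughout. Since $(Y, Y_0)$ is snc, the MMP starts from a dlt pair, and every $K_{\mathcal F_Y}$-negative extremal ray over $X\times_T S$ is contained in $Y_0$ and contracts or flips an invariant divisor $E \subset Y_0$ of positive $\mathcal F$-discrepancy. The claim is that, in both divisorial and flipping cases, the transformation leaves a central fibre whose singularities remain dlt: this rests on the fact that the intersection of $E$ with the other snc components of $Y_0$ corresponds, via foliated adjunction along the invariant divisor $E$ (see \cite[Proposition-Definition 3.12]{CS23b}), to the divisorial part of the singular locus of the restricted foliation, whose components are simultaneously contracted when $a(E, \mathcal F)>0$. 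Consequently the remaining stratification of $Y_0$ after each contraction is again toroidal with reduced fibres, i.e., dlt. The detailed verification proceeds by local analysis at each contraction, parallel in spirit to the local classification arguments employed in Proposition \ref{prop_index_dlt_case}; this verification is the main technical content of the proof.
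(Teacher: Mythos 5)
Your plan—semistable reduction via Theorem \ref{thm_locally_stable_reduction} followed by a foliated MMP—matches the paper's first two steps, but three of your key steps are incorrect or incomplete.

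\textbf{The claim that $B_{\rm non\text{-}inv}=0$ is false.} You argue that $X_\eta$ klt forces $\mu$ to be an isomorphism over $X_\eta$, so that every $\mu$-exceptional divisor sits in $Y_0$ and is invariant. But klt singularities of the \emph{surface} $X_\eta$ say nothing about the \emph{foliation} $\mathcal F_\eta$, and the hypotheses only give that $\mathcal F$ is log canonical, not canonical. Step 1 of Theorem \ref{thm_locally_stable_reduction} performs a reduction of singularities of $(\mathcal F, D_{\rm non\text{-}inv})$, and this will blow up horizontal centres whenever $\mathcal F_\eta$ has lc but non-canonical (e.g. radial, hence dicritical) singularities, producing $\mu$-exceptional divisors that dominate $T$ and are \emph{not} invariant. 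Consequently your discrepancy formula $K_{\mathcal F_Y}=\mu^*K_{\mathcal F}+\sum a_iE_i$ with $a_i\ge 0$ can fail (non-invariant exceptionals only satisfy $a_i\ge -1$), and the correct divisor to run the MMP with is $K_{\mathcal G}+\Gamma+E$ where $E=\exc(\pi)_{\rm non\text{-}inv}$, as the paper does—not $K_{\mathcal F_Y}$ alone.

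\textbf{Your proof of (3) does not work.} ``Crepancy preserves Cartier index along $\pi$'' is simply not true: the pushforward of a Cartier divisor under a birational contraction need not be Cartier, and Lemma \ref{lem_rel_mumford_crit} (about existence of relative Mumford representatives) is irrelevant to descending Cartierness. The paper's argument for (3) is genuinely non-trivial: it runs a second MMP $\psi\colon X'\dashrightarrow X''$ in the direction of $K_{X'}+\phi_*D-\eta\phi_*\pi_*^{-1}\Delta$, observes that each step is $K_{\mathcal F'}$-trivial, and uses the relative base-point-free theorem (over a klt base with anti-ample boundary) to conclude that the Cartier index of the pushforward of $K_{\mathcal F'}$ does not increase. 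It then uses the klt hypothesis on $X_\eta$ together with the locally stable hypothesis (so that no lc centre of $(X,\Delta)$ sits in $X_0$) to prove that $X''\to X$ is small, and applies BPF once more to descend the Cartier index to $X$. None of this is captured by your one-sentence appeal to crepancy.

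\textbf{Your proof of (4) is incomplete.} You gesture at a local analysis of contractions and flips ``parallel in spirit to Proposition \ref{prop_index_dlt_case}'' and leave the verification to the reader. The paper instead makes this clean: the well-formedness condition (S5) of local stability yields $\Gamma+E\ge(\pi_*^{-1}\Delta+\exc(\pi)+Y_0)_{\rm non\text{-}inv}$, so that Lemma \ref{lem_KFmmp_singularities} applies directly and shows the $(K_{\mathcal G}+\Gamma+E)$-MMP takes the dlt pair $(Y,\pi_*^{-1}\Delta+\exc(\pi)+Y_0)$ to a dlt pair. Both well-formedness and the boundary $\Gamma+E$ (rather than $0$) are essential here, so this step also depends on fixing the error in the first point.
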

\begin{proof}
Let $r\colon S \rightarrow T$ be a finite morphism.
By Proposition \ref{prop_base_change}, 
$(X_S, \Delta_S, \mathcal F_S) \rightarrow S$ is a 
locally stable family of foliations.
It is clear that the Cartier index of $K_{\mathcal F_{S, s}}$ is equal to the Cartier index  of $K_{\mathcal F_{r(s)}}$ for all 
points $s \in S$.
Thus we are free to replace $(X, \Delta, \mathcal F)$
by $(X_S, \Delta_S, \mathcal F_S)$ and
so by Theorem \ref{thm_locally_stable_reduction}
there exists a  birational modification $\pi\colon Y \rightarrow X$ such that 
$(Y, \pi_*^{-1}\Delta+Y_0+D)$ is a simple normal crossings pair where $D = {\rm exc}(\pi)$ and 
$(\mathcal G, \Gamma+E)$ has canonical singularities, where
$(\mathcal G, \Gamma)$ is the associated foliated pair to $\pi^{-1}\mathcal F$
and $E = {\rm Exc}(\pi)_{{\rm non-inv}}$.

Let $\phi\colon Y \dashrightarrow X'$ be a run of the  $K_{\mathcal G}+\Gamma+E$-MMP over $X$ which is guaranteed to exist by Theorem \ref{thm_mmp}, let $p\colon X' \rightarrow X$ be the induced morphism , let $\mathcal G' \coloneqq p_*\mathcal G$,
$\Gamma' = p_*\Gamma$, $E' = p_*E$
and let $\mathcal F'$ be the integrable distribution associated to $(\mathcal G', \Gamma'+E')$.
Since $\mathcal F'$ is log canonical, Proposition \ref{prop_family_criterion} implies that (1) holds.

Since $\mathcal F$ has log canonical singularities we see that 
$K_{\mathcal G}+\Gamma+E = \pi^*K_{\mathcal F}+F$ where $F \ge 0$.
Since $K_{\mathcal G'}+\Gamma'+E'$ is nef over $X$, the negativity lemma implies that $\phi_*E = 0$ and so $K_{\mathcal F'} = p^*K_{\mathcal F}$, thus (2) holds.

By well-formedness we see that $\Gamma+E \ge (\pi_*^{-1}\Delta+{\rm Exc}(\pi)+Y_0)_{{\rm non-inv}}$ and so by Lemma \ref{lem_KFmmp_singularities} we see that  $(X', p_*^{-1}\Delta+{\rm Exc}(p)+X'_0)$ is dlt, 
in particular $(X', X'_0)$ is dlt 
and so (4) holds. 

We now show that (3) holds. 
We first make two general observations.
\begin{enumerate}
    \item \label{no_lc_places_central} Every log canonical centre of $(X, \Delta)$ 
dominates $T$.
Indeed, since $(X, \Delta) \to T$ is locally stable,  $(X, \Delta+X_0)$
is log canonical, and so any non-klt centre of $(X, \Delta)$ is not contained in 
$X_0$. 

\item \label{bpf_index_doesnt_change} Let $\rho\colon U \to V$ be a projective morphism between normal quasi-projective varieties, let $B \ge 0$
be a $\mathbb Q$-divisor such that $(U, B)$ is klt and $-(K_U+B)$ is $\rho$-ample.  Then, an application of the base point free theorem, see \cite[Theorem 3.7.(4)]{KM98}, implies that if $M$ is a Cartier divisor on $U$ such that $M$ is $\rho$-numerically trivial, then there exists a Cartier divisor $N$ on $V$ such that $M \sim \rho^*N$.
\end{enumerate}

Since $(X', \phi_*D)$ is dlt and $X'$ is $\mathbb Q$-factorial, 
we may run a $K_{X'}+\phi_*D - \eta\phi_*\pi_*^{-1}\Delta$-MMP for $0<\eta \ll 1$ over $X$, call it 
$\psi\colon X' \dashrightarrow X''$.  Each step of this MMP is $K_{\mathcal F'}$ trivial and so by Observation  (\ref{bpf_index_doesnt_change}) above we see that the Cartier index of $K_{\mathcal F'}$ does not increase after each step of the MMP.  Thus, if $nK_{\mathcal F'}$ is Cartier, then $n\phi_*K_{\mathcal F'}$ is Cartier.

Since the generic fibre of $X \to T$ is klt, an application of the negativity lemma shows that $X'' \to X$ is an isomorphism in a neighbourhood of the generic fibre.  Another application of negativity lemma shows that the exceptional divisors of $X'' \to X$ are all log canonical places of $(X, \Delta)$.  Observation (\ref{no_lc_places_central}) above therefore implies that there are therefore no exceptional divisors contained in the central fibre of $X'' \to T$ and so the natural morphism $\mu\colon X'' \to X$ contracts no divisors, i.e., $\mu$ is a small modification.  
So, if we set $\Delta'' = \mu_*^{-1}\Delta$, then $K_{X''}+\Delta'' = \mu^*(K_X+\Delta)$.  Again, by Observation (\ref{no_lc_places_central}), no log canonical centre of $(X'', \Delta'')$ is contained in the fibre over $0$ we may find a $\mathbb Q$-divisor $B \ge 0$ so that $(X'', \Delta''+B)$ is log canonical and $-(K_{X''}+\Delta''+B)$ is $\mu$-ample.  Since the generic fibre of $X'' \to T$ is klt, by perturbing $\Delta''+B$ slightly we may find a $\mathbb Q$-divisor $\Theta \ge 0$ so that $(X'', \Theta)$ is klt and $-(K_{X''}+\Theta)$ is $\mu$-ample.
A final application of Observation (\ref{bpf_index_doesnt_change}) above
shows that if $n\phi_*K_{\mathcal F'}$ is Cartier then 
$\mu_*n\phi_*K_{\mathcal F'} = nK_{\mathcal F}$ is Cartier.
\end{proof}

\begin{theorem}
\label{thm_index_is_stable}
Fix a positive integer $N$ such that $4|N$.
    Let $f\colon (x \in X, \Delta, \mathcal F) \rightarrow T$
    be a germ of a locally stable family of rank one integrable distributions where $\dim (X/T) = 2$ and $T$ is a smooth curve.  Fix a point $0 \in T$.  
    Suppose that 
    \begin{itemize}
        \item $NK_{\mathcal F_t}$ is Cartier for all $t \in T\setminus \{0\}$; and
        \item either $\mathcal F_0$ is not semi-log terminal or $X_0$ is normal.
    \end{itemize}

     Then $NK_{\mathcal F_0}$ is Cartier.   
\end{theorem}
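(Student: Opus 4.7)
We work in the germ setting at the point $x\in X_0$ and proceed by a case analysis depending on which branch of the dichotomous hypothesis applies at $x$: either $X_0$ is normal at $x$, or $\mathcal F_0$ is not semi-log terminal at $x$. In each case we will apply one of the two preceding Cartier-index deformation statements, Proposition \ref{prop_lt_index} or Proposition \ref{prop_index_dlt_case}.

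Suppose first that $\mathcal F_0$ is semi-log terminal at $x$. Then the hypothesis of the theorem forces $X_0$ to be normal at $x$. Semi-log terminal on a normal germ coincides with log terminal, so $\mathcal F_0$ is in fact log terminal at $x$, and Proposition \ref{prop_lt_index} gives directly that $NK_{\mathcal F_0}$ is Cartier in a neighbourhood of $x$.

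Suppose now that $\mathcal F_0$ is not semi-log terminal at $x$. The strategy is to place ourselves in the hypotheses of Proposition \ref{prop_index_dlt_case}. First, up to replacing $X$ by its normalisation (using Lemma \ref{lem_norm_loc_stable} to preserve the locally stable structure, and Lemma \ref{lem_picard_node} to recover Cartier-index information afterwards on the non-normal central fibre), we may assume that $X$ is normal. We then apply Theorem \ref{thm_dlt_mod_fam} to obtain, after a finite base change $r:S\to T$, a birational modification $\pi:X'\to X_S$ such that $(X',X'_0)$ is dlt, the pullback foliation $\mathcal F'$ is locally stable, $K_{\mathcal F'}=\pi^{\ast}K_{\mathcal F}$, and such that Cartier-ness of $nK_{\mathcal F'}$ descends to Cartier-ness of $nK_{\mathcal F}$. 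Since $\pi$ is crepant for $\mathcal F$ and $\mathcal F_0$ is not slt at $x$, at least one point $x'\in\pi^{-1}(x)\cap X'_0$ has $\mathcal F'_0$ not slt. At such a point $x'$ Proposition \ref{prop_index_dlt_case} yields that $2K_{\mathcal F'_0}$ is Cartier at $x'$, and since $4\mid N$ this upgrades to $NK_{\mathcal F'_0}$ Cartier at $x'$. At the remaining points of $\pi^{-1}(x)$ the foliation $\mathcal F'_0$ is slt, and the first paragraph applied on the dlt model (in conjunction, where needed, with a direct gluing analysis via Lemma \ref{lem_picard_node} on the snc strata of $X'_0$) yields the same conclusion. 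Descending along $\pi$ via part (3) of Theorem \ref{thm_dlt_mod_fam} and finally along the normalisation map gives $NK_{\mathcal F_0}$ Cartier at $x$.

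The main obstacle is the bookkeeping in the non-slt branch: ensuring that, after the dlt modification, there is a point $x'\in\pi^{-1}(x)$ at which $\mathcal F'_0$ remains non-slt (so that Proposition \ref{prop_index_dlt_case} is applicable) and simultaneously controlling Cartier indices at the other points of $\pi^{-1}(x)$ and on the potentially non-normal central fibre $X'_0$, propagating everything back through $\pi$ and the normalisation. A secondary, more technical point is that the applicability of Theorem \ref{thm_dlt_mod_fam} requires the generic fibre $X_\eta$ to be klt; this will have to be arranged by shrinking $X$ around $x$ so that no log canonical centre of $(X,\Delta)$ dominating $T$ passes through $x$, which is harmless since the statement is local at $x$.
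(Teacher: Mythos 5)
Your argument in the semi-log-terminal branch is fine and matches the paper. The non-slt branch, however, has a real gap. You reduce to the normalisation and then invoke Theorem \ref{thm_dlt_mod_fam}, but that theorem requires the generic fibre $X_\eta$ to be klt, and your proposed fix---shrinking $X$ around $x$ to avoid lc centres of $(X,\Delta)$ dominating $T$---does not work: a lc centre through $x$ that dominates $T$ survives any shrinking around $x$, so klt-ness of the generic fibre is simply not for free. The paper bridges this by first passing to the index one cover of $K_{\mathcal F^n}$ near $x$ and distinguishing whether the linear part of the generating vector field is radial or not. If it is radial, the argument of \cite[Lemma 2.11]{SS23} shows the vector field is invariant under the Galois action, so $K_{\mathcal F^n}$ is already Cartier and no dlt modification is needed. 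Only in the non-radial case is Theorem \ref{thm_dlt_mod_fam} invoked, and there \cite[Fact III.i.3]{MP13} gives that $\mathcal F^n$ is canonical, hence (via Remark \ref{rem_very_general_canonical} and \cite[Fact I.2.4]{McQuillan08}) that $X^n_\eta$ has quotient singularities and is klt. This dichotomy is the missing idea in your proposal.

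Your descent step also does not quite add up. To pass from a Cartier statement on the normalisation back to $K_{\mathcal F_0}$ you invoke Lemma \ref{lem_picard_node}, but that lemma only treats two normal components meeting along a smooth curve; the central fibre of a locally stable family can have pinch points and more complicated configurations, which Lemma \ref{lem_picard_node} does not cover. The paper instead uses Lemma \ref{lem_index_on_normalisation}, which handles the general slc situation but introduces an extra factor of $2$. This is precisely why the hypothesis is $4\mid N$: one factor of $2$ comes from Proposition \ref{prop_index_dlt_case} and a second from Lemma \ref{lem_index_on_normalisation}. Your bookkeeping via Lemma \ref{lem_picard_node} obscures where this factor is spent, and if that lemma were sufficient the theorem would need only $2\mid N$, a sign that something is off.
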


\begin{proof}
If $X_0$ is normal and $\mathcal F_0$ is log terminal, we may apply Proposition \ref{prop_lt_index} to conclude.  So, it only remains to consider the case where $\mathcal F_0$ is not semi-log terminal.

By Lemma \ref{lem_index_on_normalisation} it suffices to show that if $(X^n, \Delta^n, \mathcal F^n)\to S$ is the 
normalised family of integrable distributions, cf. Lemma \ref{lem_norm_loc_stable},
then $2K_{\mathcal F^n}$ is Cartier.
Let $x \in X^n$ be a closed point.  Up to replacing $X^n$ by a neighbourhood of $x \in X^n$, we may form the index one cover associated to $K_{\mathcal F^n}$, denote it $s\colon W \to X^n$, with Galois group $G = \mathbb Z/m\mathbb Z$.  Let $\partial$ be a vector field which generates $s^{-1}\mathcal F^n$ in a neighbourhood of $s^{-1}(x)$.

We argue in cases based on whether or not $\partial$ is a radial vector field.

If the linear part of $\partial$ at $s^{-1}(x)$ is radial, i.e., up to scaling 
$\partial = \sum n_ix_i\partial_{x_i}$ where $n_i \in \mathbb Z_{>0}$, then
the proof of \cite[Lemma 2.11]{SS23} applies to show that $\partial$ is $G$-invariant, in particular, $K_{\mathcal F^n}$ is Cartier.

If the linear part of $\partial$ is not radial \cite[Fact III.i.3]{MP13}
implies that $\mathcal F^n$ has canonical singularities, and so the restriction 
of $\mathcal F^n$ to the generic fibre $X^n_\eta$ has canonical singularities, see Remark \ref{rem_very_general_canonical}.
By \cite[Fact I.2.4]{McQuillan08} we deduce that $X^n_\eta$ has quotient singularities, in particular it is klt.
    We may then apply Theorem \ref{thm_dlt_mod_fam} so that
     we may freely assume that $(X^n, X^n_0)$ is dlt
     in which case Proposition 
     \ref{prop_index_dlt_case} implies that $2K_{\mathcal F^n}$ is Cartier, as required.
\end{proof}

\begin{lemma}
\label{lem_index_on_normalisation}
Let $(X, \Delta) \to S$ be a locally stable family where $\dim (X/S) = 2$ and let $K$ be a relative Mumford divisor on $X$.
Let $n\colon X^n \to X$ be the normalisation.

Suppose that for some positive integer $m$ we have
\begin{itemize}
\item $mn^*K$ is Cartier; and 
\item $mK_{\eta}$ is Cartier where $K_\eta$ is the restriction of $K$ to the generic fibre of $X \to S$.
\end{itemize}
Then $2mK$ is Cartier.
\end{lemma}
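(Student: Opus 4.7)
The proof is local on $X$, so the plan is to show that $2mK$ is Cartier at each closed point $x \in X$. When $X$ is normal at $x$, the normalization $n$ is an isomorphism in a neighbourhood of $n^{-1}(x)$, so the hypothesis that $mn^*K$ is Cartier directly yields that $mK$ (and hence $2mK$) is Cartier at $x$. So the substantive case is when $x$ lies in the non-normal locus $N \subset X$, which is codimension one since $X$ is deminormal.

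At such a point, I would invoke the gluing description of deminormal schemes (cf.\ \cite[\S 5]{Kollar13}): in a neighbourhood of $x$, line bundles on $X$ correspond to pairs $(L, \phi)$, where $L$ is a line bundle on $X^n$ near $n^{-1}(x)$ and $\phi \colon L|_{D^n} \to \tau^*L|_{D^n}$ is a gluing isomorphism on the normalization $D^n$ of the conductor $D \subset X^n$, satisfying the cocycle condition $\tau^*\phi \circ \phi = \mathrm{id}$ (where $\tau$ is the gluing involution). Taking $L = \mathcal{O}_{X^n}(mn^*K)$, which is a line bundle by hypothesis, I would observe that $K$ being a Mumford divisor means its support avoids $N$ at generic points, so $n^*K|_{D^n}$ makes sense as a Weil divisor on $D^n$, and the identity $n \circ \tau = n$ on $D^n$ renders this divisor $\tau$-invariant. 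Consequently $L|_{D^n}$ and $\tau^*L|_{D^n}$ are isomorphic as line bundles on $D^n$, and any choice of isomorphism $\phi_0$ has cocycle defect $\tau^*\phi_0 \circ \phi_0$ equal to multiplication by some $\tau$-invariant unit $c \in \Gamma(D^n, \mathcal{O}_{D^n}^\ast)^\tau$.

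Passing to the square $L^{\otimes 2} = \mathcal{O}_{X^n}(2mn^*K)$ with descent datum $\phi_0^{\otimes 2}$, the cocycle defect becomes $c^2$. Because $c$ is $\tau$-invariant, the modified isomorphism $c^{-1}\phi_0^{\otimes 2}$ satisfies $\tau^*(c^{-1}\phi_0^{\otimes 2}) \circ (c^{-1}\phi_0^{\otimes 2}) = c^{-2} \cdot c^2 = \mathrm{id}$, so $L^{\otimes 2}$ descends to a line bundle on $X$. This descended line bundle must coincide with the reflexive sheaf $\mathcal{O}_X(2mK)^{\ast\ast}$, which gives that $2mK$ is Cartier. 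The role of the hypothesis $mK_\eta$ Cartier enters in this last identification: on the generic fibre it pins down $c$ to be $1$ (up to adjustment), so the line bundle produced by descent matches $\mathcal{O}_X(2mK)^{\ast\ast}$ rather than some twist.

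The main technical obstacle is to carry out this descent rigorously, and in particular to match the descended line bundle on $X$ with the intended reflexive sheaf $\mathcal{O}_X(2mK)^{\ast\ast}$. Subtleties arise when components of $D^n$ do not dominate $S$, since then the value of the unit $c$ on those components is not controlled by the generic fibre hypothesis; but the crucial point is that the $\tau$-invariance of $c$ always makes $c^2$ a norm, so the obstruction vanishes after squaring regardless.
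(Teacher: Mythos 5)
Your proposal takes a genuinely different route from the paper, and it contains a gap that makes it fail.

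The crux of your argument is the claim that the descent obstruction for a line bundle on a deminormal scheme, expressed via the involution $\tau$ on $D^n$, is measured by a $\tau$-invariant unit $c$, and that squaring kills such an obstruction because $c^2 = c\cdot\tau^*c$ is a norm. The abstract computation showing $c$ is $\tau$-invariant is fine, but the involution cocycle $\tau^*\phi\circ\phi = \mathrm{id}$ is \emph{not} the full descent condition in the presence of triple (or higher) points of the conductor. At a closed point of $X$ where three or more branches of $X^n$ meet, there is a genuine extra cocycle condition around the cycle of identifications, and this extra cocycle lives on a cycle of length $\ge 3$, not on an orbit of the involution. Your argument, which works component-by-component of $D^n$, simply never sees it; you check $\tau^*\phi\circ\phi = \mathrm{id}$ on each pair of conductor components, and this can hold while the triple-point cocycle is nontrivial and of \emph{arbitrary} order. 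This is precisely the phenomenon illustrated by Example~\ref{example_index} in the paper: for $X = \{xyz = 0\}$, the sheaf $\mathcal{O}_X(K_{\mathcal{F}_\lambda})$ pulls back to a line bundle on the (smooth) normalization, the pairwise gluings match up under $\tau$, and yet $nK_{\mathcal{F}_\lambda}$ is Cartier if and only if $\lambda^n = 1$ --- so the obstruction has order equal to the order of $\lambda$, not dividing $2$. Your final sentence, that the obstruction vanishes after squaring ``regardless'', is therefore false. (The example does not contradict the lemma, because there the hypothesis ``$mK_\eta$ Cartier'' fails in any family incarnation; but it does demolish the fallback claim on which your treatment of vertical components of $D^n$ rests.)

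A second, related symptom is that your argument makes no essential use of the local stability of $(X,\Delta)\to S$, whereas this hypothesis is what the paper uses to control the triple-point obstruction. The paper's proof passes to Koll\'ar's canonical double cover $r\colon\tilde X\to X$ (cf.\ \cite[\S 5.23]{Kollar13}), which has the effect of making every irreducible component of $\tilde X$ normal; the factor of $2$ in $2mK$ comes from this $\mathbb{Z}/2\mathbb{Z}$-Galois cover, not from any cocycle $2$-torsion. On $\tilde X$ the components $Y_1,\dots,Y_k$ are glued inductively, and local stability is used to show that the relevant point is not a log canonical centre of $(Y_j,\Delta_{Y_j})$; this forces $\mathrm{depth}_y\mathcal{O}_{Y_j}(-B_j)\ge 3$, hence $H^1(Y_j\setminus\{y\},\mathcal{O}_{Y_j}(-B_j))=0$, which by \cite[Proposition~5.21.(3)]{Kollar13} makes the gluing unobstructed. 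The hypothesis ``$mK_\eta$ Cartier'' is used together with the relative Mumford property to see that $mK$ is already Cartier at all codimension $\le 2$ points, reducing the question to isolated closed points where the depth argument applies. If you want to salvage a descent-style argument, you would need to formulate the full cocycle condition (involving the gluing data along the whole equivalence relation, not just $\tau$) and then show, using local stability, that the obstruction class vanishes --- at which point you would in effect be reproving Koll\'ar's Proposition 5.21.
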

\begin{proof}
Let $r\colon \tilde{X} \rightarrow X$ be the natural double cover 
    of $X$ described in \cite[\S 5.23]{Kollar13}.  We remark
    that by the construction given there we see that $\tilde{X}$
    is $S_2$ and $r$ is quasi-\'etale and Galois with Galois group $\mathbb Z/2\mathbb Z$.
    (Note that if $X$ is normal, then $\tilde{X}$ is a disjoint union of two copies of $X$.)
    In particular, $(\tilde{X}, r^*\Delta) \to S$ is locally stable.  We next note that the irreducible components of $\tilde{X}$ are all normal.
    Indeed, by construction the irreducible components are $R_1$.  By the previous paragraph they are  $S_2$, hence they are normal.
    Let $Y_1, \dots, Y_k$ be the the irreducible components of $\tilde{X}$.  
    
    If $\tilde{n}\colon \tilde{X}^n \to \tilde{X}$ denotes the normalisation, then observe that $m\tilde{n}^*r^*K$ is Cartier.
    If we can show that $mr^*K$ is Cartier, then since $r$ is Galois with Galois group $\mathbb Z/2\mathbb Z$ we can conclude 
    that $2mK$ is Cartier.

Since $K$ is a relative Mumford divisor and since $mK_\eta$ is Cartier, it follows that $mK$ is Cartier in a neighbourhood of any codimension $\le 2$ point of $X$.  So, suffices to show that $mr^*K$ is Cartier in a neighbourhood of
an arbitrary closed point $x \in \tilde{X}$.
Moreover, up to shrinking $X$ we may assume that $mr^*K|_{Y_i} \sim 0$ for any $i \in \{1, \dots, k\}$.

For any $j \in \{1, \dots, k\}$ set $B_j = Y_j \cap (\bigcup_{i =1}^{j-1}Y_i)$ considered as a divisor on $Y_j$.
Let $\sqcup_{i = 1}^k(Y_i, \Delta_{Y_i}) \to S$ denote the normalisation of the locally stable family $(\tilde{X}, r^*\Delta)\to S$.
Since $(Y_j, \Delta_{Y_j}) \rightarrow T$ is locally
stable, it follows that for any $j \in \{1, \dots, k\}$ if $y \in Y_j$ is the pre-image of $x$ in $Y_j$
is not a log canonical centre of $(Y_j, \Delta_{Y_j})$.
Since $B_j \leq \Delta_{Y_j}$ we then apply \cite[Theorem 7.20]{Kollar13}
to see that ${\rm depth}_y\mathcal O_{Y_j}(-B_j) \ge 3$ and so 
\begin{align*}H^1(Y_j \setminus \{y\}, \mathcal O_{Y_j}(-B_j)|_{Y_j \setminus \{y\}}) = 0.\end{align*}
Note that away from $y$, $\mathcal O_{Y_j}(-B_j)|_{Y_j\setminus y}$ is precisely the restriction of the kernel of \begin{align*}\mathcal O(mr^*K)|_{\bigcup_{i  = 1}^j Y_i} \to O(mr^*K)|_{\bigcup_{i  = 1}^{j-1}Y_i}\end{align*} and so we may then use \cite[Proposition 5.21.(3)]{Kollar13} applied to $mr^*K$ to conclude that $\mathcal O(mr^*K) \cong \mathcal O_{\tilde{X}}$, in particular, $mr^*K$
is Cartier, as required.
\end{proof}

\section{Properness}

In this section we verify that our moduli functor satisfies the valuative criterion of properness for DVRs which are finite type over $\mathbb C$.

\begin{theorem}
\label{thm_compactify_to_family}
Let $S$ be a smooth curve, let $s \in S$ be a closed point and set $S^\circ = S \setminus \{s\}$.  Let $f\colon (X^\circ, \Delta^\circ, \mathcal F^\circ) \rightarrow S^\circ$ be a stable
family of rank one integrable distributions where $\dim (X^\circ/S^\circ) = 2$.

Then there exists a finite morphism $S' \rightarrow S$
and a family of integrable distributions $(X', \Delta', \mathcal F') \rightarrow S'$ such that 
\begin{enumerate}
\item $(X', \Delta', \mathcal F')\rightarrow S'$ is stable; and 

\item $(X', \Delta', \mathcal F')\times_{S'}(S')^\circ \cong (X^\circ, \Delta^\circ, \mathcal F^\circ) \times_{S^\circ}(S')^\circ$.
\end{enumerate}

\end{theorem}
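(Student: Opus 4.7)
The plan has two main stages, following the strategy announced in the introduction: first, realise $(X^\circ, \Delta^\circ, \mathcal F^\circ)$ as the restriction to $S^\circ$ of a locally stable family over some finite cover of $S$; then contract to the stable model via a relative MMP on the perturbed polarisation $K_{\mathcal F}+\epsilon(K_{X/S}+\Delta)$.

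For the first stage, I would pick any projective extension $\bar f\colon (\bar X, \bar \Delta)\to S$ of $(X^\circ, \Delta^\circ)\to S^\circ$ (obtainable by embedding with $K_{\mathcal F^\circ}+n(K_{X^\circ/S^\circ}+\Delta^\circ)$, which is $f^\circ$-ample for $n\gg 0$ by Proposition \ref{prop_threshold}, and taking schematic closure), and extend $\mathcal F^\circ$ to a rank-one integrable distribution $\bar{\mathcal F}$ on $\bar X$ by taking the closure of its Pfaff field. After passing to a resolution of singularities we may assume $\bar X$ is smooth and $T_{\bar{\mathcal F}}\subset T_{\bar X/S}$; a small additional blow-up ensures that every component of $\bar X_s$ is invariant. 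Now Theorem \ref{thm_locally_stable_reduction} applies to $\bar f$ with $D=\supp(\bar\Delta+\bar X_s)$, yielding a finite cover $S'\to S$ and a birational modification $\mu\colon Y\to \bar X\times_S S'$ with $B\coloneqq \mu_\ast^{-1}(\bar\Delta_{S'})+\exc(\mu)$ such that $(Y, B,\mathcal G)\to S'$ is a locally stable family of integrable distributions.

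For the second stage, I would run the relative $(K_{\mathcal G}+\epsilon(K_{Y/S'}+B))$-MMP over $S'$ for some fixed $0<\epsilon\ll 1$. Since $K_{\mathcal G}$ is nef in a neighbourhood of any fibre whose foliation sits inside our moduli class (Proposition \ref{prop_threshold}), and since each extremal ray contracted must be both $(K_{\mathcal G}+\epsilon(K_{Y/S'}+B))$-negative and $\mu$-exceptional on $(S')^\circ$ by stability of the generic fibre, this MMP is supplied by Theorem \ref{thm_mmp} and terminates with a model $\psi\colon Y\dashrightarrow X'$ on which $\psi_\ast(K_{\mathcal G}+\epsilon(K_{Y/S'}+B))$ is nef. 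Passing to the ample model (or, equivalently, to the generalised log canonical model of the generalised pair $(Y, B, K_{\mathcal G})$ perturbed by $\epsilon(K_{Y/S'}+B)$, as in the construction used in Section \ref{s_boundedness}) produces the candidate $(X',\Delta',\mathcal F')\to S'$, which agrees with $(X^\circ,\Delta^\circ,\mathcal F^\circ)\times_{S^\circ}(S')^\circ$ by the negativity lemma applied over the stable generic fibre.

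The last step is to verify stability of $(X',\Delta',\mathcal F')\to S'$. Conditions (S1), (S2) and (S6) are immediate from the MMP construction. Condition (S4) reduces, by Theorem \ref{thm_inversion_of_adjunction_full} and Proposition \ref{prop_locally_stable_adjunction}, to the fact that discrepancies are non-increasing along the steps of a foliated MMP (Lemma \ref{lem_KFmmp_singularities}). The Koll\'ar condition (S3) and well-formedness (S5) then follow from Proposition \ref{prop_family_criterion}, using Lemma \ref{lem_Cartier_criterion} to upgrade $\mathbb Q$-Cartier divisors to relative Mumford divisors; uniqueness up to finite base change is a consequence of Theorem \ref{thm_separatedness}. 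I expect the main obstacle to be the relative MMP in the second stage: because abundance fails for $K_{\mathcal F}$ alone (cf.\ Section \ref{s_intro_challenges}), the contraction to the stable model must be performed for the perturbed divisor, and one must argue carefully that the resulting log canonical model inherits the fibred structure, the foliation with semi-log canonical singularities on the central fibre, and the well-formedness needed for Condition (S5). This is precisely where the interplay between the foliated MMP of Theorem \ref{thm_mmp} and the generalised pair formalism (with $K_{\mathcal G}$ serving as the nef moduli b-divisor) becomes essential.
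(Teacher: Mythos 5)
Your outline captures the broad strokes for the \emph{normal} generic fibre case—extend, resolve, apply Theorem~\ref{thm_locally_stable_reduction}, contract to the ample model of the perturbed polarisation—but there are two genuine gaps.

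The most serious omission is the case where the generic fibre of $X^\circ\to S^\circ$ is \emph{not normal}. Since semi-log canonical surfaces with several irreducible components are allowed (and indeed unavoidable, cf.\ Section~\ref{s_intro_challenges}), your construction cannot even get started: neither resolution of singularities nor Theorem~\ref{thm_locally_stable_reduction} applies directly to a non-normal total space. The paper handles this by normalising, compactifying each normal component separately, and then \emph{gluing} the compactifications along their double loci via an involution $\tau$ (Steps~4--6). Extending $\tau$ from the generic fibre to the closure is the heart of the argument and occupies several delicate substeps (5a--5d), in which one has to track differents along a sequence of foliated and variety MMPs and check that they agree on both sides of the gluing. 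This part has no analogue in your proposal.

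Within the normal case, your Stage~2 is also underspecified in a way that matters. You propose a \emph{single} MMP with respect to $K_{\mathcal G}+\epsilon(K_{Y/S'}+B)$, citing Theorem~\ref{thm_mmp}, but that theorem furnishes a $K_{\mathcal F}+\Delta$-MMP for a foliated boundary $\Delta$, not for a perturbation in the $K_X$ direction. Moreover, on the resolution $Y$ the divisor $K_{\mathcal G}$ is not nef (exceptional divisors will typically be $K_{\mathcal G}$-negative), so one cannot yet view $(Y,B,K_{\mathcal G})$ as a generalised pair with nef moduli part. This is precisely why the paper's Step~2 runs a \emph{two-stage} MMP: first a $K_{\mathcal G}+\Gamma_Y$-MMP (to make $K_{\mathcal G}$ nef while controlling foliated discrepancies), then a partial $K_Y+\Delta_Y$-MMP contracting only $K_{\mathcal G}$-trivial rays (to control variety discrepancies without disturbing the foliated ones); only afterwards, in Step~3, does the perturbed divisor become nef and the generalised-pair semi-ampleness result (\cite[Theorem~1.3]{MR4596359}) become available. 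Without this separation, your appeal to Lemma~\ref{lem_KFmmp_singularities} to get Condition~(S4) does not go through, because the steps of a $K_{\mathcal F}+\epsilon K_X$-MMP are not steps of a foliated MMP and need not decrease foliated discrepancies.
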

\begin{proof}
{\it We first consider the case 
where the generic fibre of $X^\circ \rightarrow S^\circ$ is normal.}

Let $(\tilde{\mathcal F}^\circ, \Gamma^\circ)$ be the associated foliated pair
to the integrable distribution $\mathcal F^\circ$.

By the well-formedness assumption in the definition of stability, (S5), we have 
that $\Delta^\circ_{{\rm non-inv}} \leq \Gamma^\circ$.

\medskip

{\bf Step 1: }{\it  In this step we resolve and compactify our family to a
locally stable model.}

Let $(X, \Delta, \tilde{\mathcal F}) \rightarrow S$ be an arbitrary extension of $(X^\circ, \tilde{\mathcal F}^\circ, \Delta^\circ)$
to a family of log integrable distributions over $S$ (this is always possible by Lemma \ref{lem_unique_extension})
and let $\Gamma$ be the closure of $\Gamma^\circ$.
By Theorem \ref{thm_locally_stable_reduction},
up to replacing $S$ by a finite cover, we may assume that $(\tilde{\mathcal F}, \Gamma)$ admits a locally stable
resolution of singularities.
Call this resolution $p\colon Y \rightarrow X$, let $\mathcal G = p^{-1}\tilde{\mathcal F}$, let $\Delta_Y = p_*^{-1}\Delta+\sum E_i $ and let $\Gamma_Y = p_*^{-1}\Gamma+\sum \varepsilon(E_i)E_i$
where $E_i$ are all the $p$-exceptional divisors.

\medskip

{\bf Step 2: }{
\it In this step we run a $K_{\mathcal G}+\Gamma_Y$-MMP followed by a partial $K_{Y}+\Delta_Y$-MMP.}

By Theorem \ref{thm_mmp} we may run a $K_{\mathcal G}+\Gamma_Y$-MMP over $X$, 
call it $\phi\colon Y \dashrightarrow Y_1$, set $\mathcal G_1 = \phi_*\mathcal G$, 
$\Delta_1 = \phi_*\Delta_Y$ and $\Gamma_1 = \phi_*\Gamma_Y$.
 
By Lemma \ref{lem_KFmmp_singularities} $(Y_1, \Delta_1+Y_{1, s})$ is log canonical
for any $s \in S$. So we may run 
a partial $K_{Y_1}+\Delta_1$-MMP over $X$ only contracting/flipping 
$K_{\mathcal G_1}+\Gamma_1$-trivial extremal rays.
Note that this MMP will also be a $K_{Y_1}+\Delta_1+Y_{1, s}$-MMP
for any $s \in S$.

Call this MMP $\psi\colon Y_1 \dashrightarrow Y_2$ and set $\mathcal G_2 = \psi_*\mathcal G_1$, $\Delta_2 = \psi_*\Delta_1$ and $\Gamma_2 = \psi_*\Delta_2$.
Note that $(\mathcal G_2, \Gamma_2)$ is log canonical and $(Y_2, \Delta_2+Y_{2, s})$
is log canonical for all $s \in S$, 
in particular $(Y_2, \Delta_2) \rightarrow S$ is a locally stable, cf.  \cite[Definition-Theorem 2.3]{modbook}.

Observe that since $\phi$ and $\psi$ are two (partial) MMPs they preserve $\mathbb Q$-factorial singularities, and so $Y_2$ is $\mathbb Q$-factorial.

\medskip

{\bf Step 3: }{\it In this step we contract all those curves which are 
$K_{\mathcal G_2}+\Gamma_2$ and $K_{Y_2}+\Delta_2$-trivial.}

Now, $(K_{\mathcal G_2}+\Gamma_2)+t(K_{Y_2}+\Delta_2)$ is relatively big and nef for all $0<t \ll1$
and $K_{Y_2}+\Delta_2$ is nef and $\mathbb Q$-Cartier.
Taking ${\bf M}$ to be the Cartier closure of  $\frac{1}{t}(K_{\mathcal G_2}+\Gamma_2)$
we see that the generalised pair $(Y_2, \Delta_2+{\bf M})$ satisfies
\begin{align*}K_{Y_2}+\Delta_2+{\bf M} \sim_{\mathbb Q} \frac{1}{t}((K_{\mathcal G_2}+\Gamma_2)+t(K_{Y_2}+\Delta_2)).\end{align*}
Since $(Y_2, \Delta_2)$ is log canonical 
it follows that $(Y_2, \Delta_2+{\bf M})$
is generalised log canonical.
Since $(Y_2, \Delta_2) \rightarrow S$ is locally stable it follows that if $Z$ is a log canonical centre 
of $(Y_2, \Delta_2)$, then $Z$ dominates $S$. In particular, any generalised log canonical centre of $(Y_2, \Delta_2+{\bf M})$ dominates $S$.

Next, by construction we have a morphism between generic fibres 
$\rho\colon Y_{2, \eta} \rightarrow X_{\eta}$ and by the negativity lemma, \cite[Lemma 3.38]{KM98}  we have \begin{align*}(K_{\mathcal G_2}+\Gamma_2)+t(K_{Y_2}+\Delta_2) = \rho^*((K_{\mathcal F}+\Gamma)+t(K_X+\Delta)),\end{align*} 
in particular, $(K_{\mathcal G_2}+\Gamma_2)+t(K_{Y_2}+\Delta_2)\vert_{Y_{2, \eta}}$ is semi-ample. 
It follows that $(Y, \Delta_2+{\bf M})$ 
is a good minimal model on a neighbourhood of the generic fibre. We may then apply 
\cite[Theorem 1.3]{MR4596359}
to conclude that 
$K_Y+\Delta_2+{\bf M}$, and hence $(K_{\mathcal G_2}+\Gamma_2)+t(K_{Y_2}+\Delta_2)$, is semi-ample over $S$.

Let $r\colon Y_2 \rightarrow Y_3$ be the associated contraction for some $0<t \ll 1$.
Note that $r$ only contracts curves which are $K_{\mathcal G_2}+\Gamma_2$ and $K_{Y_2}+\Delta_2$-trivial.
Let $\mathcal G_3 = r_*\mathcal G_2$, 
$\Delta_3 = r_*\Delta_2$ and $\Gamma_3 = r_*\Gamma_2$.

By construction $K_{\mathcal G_3}+\Gamma_3+t(K_{X_3}+\Delta_3)$
is $\mathbb Q$-Cartier. Since $K_{Y_2}+\Delta_2$ is $r$-numerically trivial it follows that $(Y_2, \Delta_2)$ is a good minimal model over $Y_3$ and so $K_{Y_3}+\Delta_3$
is in fact $\mathbb Q$-Cartier as well and $(Y_3, \Delta_3+Y_{3, s})$
is log canonical for all $s \in S$ (hence $(Y_3, \Delta_3) \to S$ is locally stable, \cite[Definition-Theorem 2.3]{modbook}).
From this we also deduce that $K_{\mathcal G_3}+\Gamma_3$ is $\mathbb Q$-Cartier.

Take $(X', \Delta', \mathcal F') = (Y_3, \Delta_3, \mathcal F_3)$ where $\mathcal F_3$ is the integrable distribution associated to $(\mathcal G_3, \Gamma_3)$.

We apply Proposition \ref{prop_family_criterion}
to conclude that $(Y_3, \Delta_3, \mathcal F_3) \rightarrow S$
is a locally stable family 
of integrable distributions.
By construction $K_{\mathcal G_3}+\Gamma_3+t(K_{X_3}+\Delta_3)$
is ample for all $0<t \ll 1$ and so in fact 
it is a stable family of integrable distributions.

{\it We now consider the case where the generic fibre of $X^\circ \rightarrow S^\circ$ 
is not normal.}

{\bf Step 4: }{\it  In this step we construct a closure of the normalisation of $(X^\circ, \mathcal F^\circ)$.}

Let $n\colon \bigsqcup_{i \in I} (X_i^\circ, \Delta_i^\circ+D_i^\circ) \rightarrow (X^\circ, \Delta^\circ)$ be the normalisation of $(X^\circ, \Delta^\circ)$ (here $D_i^\circ$ is the pre-image of the double locus) and let $\mathcal F_i^\circ$ be the induced integrable distribution on $X_i^\circ$.  

By Lemma \ref{lem_norm_loc_stable}, $(X_i^\circ, \Delta_i^\circ+D_i^\circ) \to S$ is stable and so by the previous part, after replacing $S$ by a finite cover $S' \rightarrow S$,
we can find compactifications 
of $(X_i^\circ, \Delta_i^\circ+D_i^\circ, \mathcal F^\circ_i)$ to a stable family  $(X_i, \Delta_i+D_i, \mathcal F_i) \to S$.

{\bf Step 5: }{\it In this step we glue the $X_i$ along $D_i$ and extend $\mathcal F^\circ$.}

Associated to the normalisation \begin{align*}\bigsqcup_{i \in I} X_i^\circ \rightarrow X^\circ\end{align*} we have gluing involutions
$\tau^\circ\colon \bigsqcup_{i \in I} (D_i^\circ)^n \rightarrow \bigsqcup_{i \in I} (D_i^\circ)^n$ where $(D_i^\circ)^n$ is the normalisation of $D_i^n$ such that $\tau^\circ\circ\tau^\circ = {\rm id}$.  We claim that this extends uniquely to a morphism 
$\tau \colon \bigsqcup_{i \in I} D^n_i \rightarrow \bigsqcup_{i \in I} D^n_i$. 
Note that if $\tau$ exists then it is immediate that it is an involution, i.e., $\tau\circ\tau = {\rm id}$.

There are two cases to consider, either 
\begin{enumerate}
    \item $D_i$ is invariant by $\tilde{\mathcal F_i}$ (the foliation associated to $\mathcal F_i$); or
    \item $D_i$ is not invariant by $\tilde{\mathcal F_i}$ (equivalently, the Pfaff field associated to $\mathcal F_i$ vanishes along $D_i$).
\end{enumerate}

Let us suppose we are in Case (2).  In this case, since $D_i$ 
is not invariant, the restricted foliated pair is log canonical by \cite[Theorem 3.16]{CS23b} (so in fact we see that $K_{\mathcal F_i}|_{D_i} \equiv 0$).  We may then apply Theorem \ref{thm_separatedness}
to conclude.

Now let us suppose we are in Case (1).  By adjunction we may write
\begin{align*}(K_{X_i}+\Delta_i+D_i)|_{D_i} = K_{D_i}+\Delta_{D_i}\end{align*}
and
\begin{align*}K_{\mathcal F_i}|_{D_i} = K_{\mathcal H_i}+B_i\end{align*} where $(\mathcal H_i, B_i)$ is the foliated pair associated to the restricted integrable distribution.  Note that $\mathcal H_i$ is induced by the fibration $D_i \to S$.  If $(\mathcal H_i, B_i)$ were log canonical, then we could apply Theorem \ref{thm_separatedness} to conclude.  In general, however, $(\mathcal H_i, B_i)$ is not log canonical.

To show that $\tau$ is an isomorphism we need to consider the construction of $X_i$ in more detail.  As above, Let $Y_i$ be a semi-stable resolution of some compactification of $X^\circ_i$, so that we have birational contraction $Y_i \dashrightarrow X_i$ which may be factored as a sequence of steps in a $K_{\mathcal G_i}$-MMP (equivalently, a $K_{\tilde{\mathcal G}_i}+\Gamma_i$-MMP where $(\tilde{\mathcal G}_i, \Gamma_i)$ is the foliated pair associated to $\mathcal G_i$), which we will denote $\phi_i\colon Y_i \dashrightarrow Y_{i1}$, followed by a sequence of steps in a $K_{Y_{i1}}+\tilde \Delta_{i1}+\tilde D_{i1}$-MMP and a crepant contraction, denoted $\psi_i\colon Y_{i1} \dashrightarrow X_i$,
where $\tilde{\Delta}_{i1}$ (resp. $\tilde D_{i1}$) is the strict transform of $\Delta_i$ (resp. $D_i$) on $Y_{i1}$.  Denote by $\tilde D_i$ the strict transform of $D_i$ on $Y_i$ and denote by $\tilde \Delta_i$ the strict transform of $\Delta_i$.
Perhaps passing to a higher resolution we may assume that $\tilde D_i$ is smooth.
For ease of notation, suppose that for $i_1, i_2 \in I$, $\tau^\circ$
gives an isomorphism $\tau^\circ\colon D_{i_1}^\circ \to D^\circ_{i_2}$.
Up to replacing $Y_{i_1}$ and $Y_{i_2}$ by higher models, we may assume that $\tau^\circ$ extends to an isomorphism $\tilde \tau\colon \tilde D_{i_1} \to \tilde D_{i_2}$.  

By adjunction we may write \begin{align*}K_{\mathcal G_i}|_{\tilde D_i} = K_{\tilde{\mathcal H_i}}+\tilde B_i\end{align*}
and \begin{align*}(K_{Y_i}+\tilde \Delta_i+\tilde D_i)|_{\tilde D_i} = K_{\tilde D_i}+\Delta_{\tilde D_i}\end{align*}
where $\tilde {\mathcal H}_i$ is the foliation induced by the fibration $\tilde D_i \to S$ and $\tilde B_i, \Delta_{\tilde D_i} \ge 0$.

Since $(\tilde D_i, \Delta_{\tilde D_i}) \to S$
is a locally stable family, $\supp \Delta_{\tilde D_i}$ does not
contain any fibres of $\tilde D_i \to S$.  From this we deduce that 
\begin{align}
\label{boundary_perserved}
\tilde \tau_*\Delta_{\tilde D_{i_1}} = \Delta_{\tilde D_{i_2}}
\end{align}

{\bf Step 5a: }{\it In this substep we show that for any divisor $C \subset \tilde D_{i_2}$ which is contained in a fibre of $\tilde D_{i_2} \to S$ we have 
$m_C\tilde \tau_*\tilde B_{i_1} = m_C\tilde B_{i_2}$
}

Let $(\underline X, \underline \Delta) \to S$ be an arbitrary compactification 
of $(X^\circ, \Delta^\circ)$.  In fact we may take $(\underline X, \underline \Delta) \to S$ to be a locally stable compactification (as a family of varieties).
Indeed, let $(\underline X^n, \underline \Delta^n+\underline D^n)$ be the induced pair on the normalisation of $\underline X^n \to \underline X$ and  let $\underline X^n_0$ be the fibre of $\underline X^n \to S$ over $0$.  We may then produce our desired compactification by taking a log canonical modification of $(\underline X^n, \underline \Delta^n+\underline D^n+\underline X_0^n)$, see \cite{MR2955764} cf. also \cite[Theorem 11.30]{modbook}, and (after perhaps taking taking a finite base change $S' \to S$)  applying \cite[Corollary 11.41]{modbook} to this log canonical modification.

Let $W \subset \underline X$ be the closure of the image of $D_{i_1}$ (equivalently the image of $D_{i_2}$)
in $\underline X$.  Perhaps replacing $\underline X$ by a higher birational model we may assume that
if $\sqcup \underline X_i^n \to \underline X$ is the normalisation of $\underline X$, then we have a birational morphism 
$\underline X_i^n \to Y_i$.

Observe that the Pfaff field defining $\mathcal F^\circ$
canonically determines a Pfaff field $\Omega^1_{\underline{X}/S} \to \underline{L}$.
Let $\underline{\mathcal F}_i^n$ be the transform of $\mathcal F$ on $\underline X^n_i$.  By foliation adjunction we may write 
\begin{align*}K_{\underline{\mathcal F}_i^n}|_{W^n} = K_{\mathcal H_{W^n}}+R_i\end{align*}
where $R_i \ge 0$.  We observe that in fact $R_{i_1} = R_{i_2}$.  Indeed, by construction the different is uniquely determined by the restriction of the Pfaff field defining $\underline{\mathcal F}_i^n$ to $W^n$. However, the Pfaff field defining $\underline{\mathcal F}_i^n$ is given by the restriction of the Pfaff field $\Omega^1_{\underline X/S} \to \underline L$ to $\underline X^i_n$.  In particular, it follows that the restriction of the Pfaff fields defining 
$\underline{\mathcal F}_{i_1}^n$
and $\underline{\mathcal F}_{i_2}^n$ are the same, so in fact the differents are the same, i.e. $R_{i_1} = R_{i_2}$.

We now need to compare $\tilde B_{i_1}$ and $R_{i_1}$ 
(resp. $\tilde B_{i_2}$ and $R_{i_2}$).  To do this, consider a  curve $C \subset \tilde D_{i_1}$ which is contained in a fibre of $\tilde D_{i_1} \to S$.  We will determine how the coefficient of $C$ in the different changes under a blow up.  If $b\colon Y'_{i_1} \to Y_{i_1}$ is a blow up not centred in $C$, then it is clear that the coefficient of the different of $b^{-1}\mathcal G_i$
restricted to $b_*^{-1}\tilde D_{i_1}$ is unchanged.  If $b\colon Y'_{i_1} \to Y_{i_1}$ is a blow up centred in $C$, then since $C$ is an invariant centre and $\mathcal G_i$
has log canonical singularities we see by \cite[Lemma 1.1.4]{bm16} that $b^*K_{\mathcal G_i} = K_{\mathcal G'_i}$
where $\mathcal G'_i$ is the transform of $\mathcal G_i$.
Again, this implies that the coefficient of $C$ in the different is unchanged, cf. \cite[Remark 3.11]{CS23b}.
Since $W^n \to \tilde D_{i_1}$ can be realised as a sequence of blow ups, this shows that $m_C\tilde B_{i_1} = m_{C'}R_{i_1}$
where $C'$ is the strict transform of $C$ on $W^n$.
Similarly, $m_C\tilde B_{i_2} = m_{C'}R_{i_2}$.
Thus we can conclude that $m_C \tilde B_{i_1} = m_C \tilde B_{i_2}$ as required.

{\bf Step 5b. }{\it We show that the rational map $\tilde D_{i} \dashrightarrow \phi_{i*}\tilde D_i$ given by the restriction of $\phi_i$ to $\tilde D_i$ is in fact a morphism.}

We first claim that the restriction to $\tilde D_i$ of each step of the $K_{\mathcal G_i}$-MMP $\phi_i$ is a contraction.
To verify this let us denote the steps of this MMP 
as 
\begin{align*}Y_i^0 := Y_i \dashrightarrow Y_i^1 \dashrightarrow \dots \dashrightarrow Y_i^{m_i} := Y_{i1}\end{align*}
where each step $\phi_i^k\colon Y_i^k \dashrightarrow Y_i^{k+1}$ is either a divisorial contraction or a flip. Let $\mathcal G_i^k$ denote the transform of $\mathcal G_i$ on $Y_i^k$. If $\phi_i^k$ is a divisorial contraction, then it is clear that the restriction of $\phi_i^k$ to $\tilde{D}_i^k$ on $Y_i^k$, the strict transform of $\tilde D_i$, is a contraction.  So it suffices to consider the case where $\phi_i^k$
is a flip.  Let $C$ be a component of $\exc (\phi_{i}^k)^{-1}$, i.e., a flipped curve.  In order to prove the claim we must show that $C$ is not contained in $\tilde D_i^{k+1}$.
Suppose for sake of contradiction that $C \subset \tilde D_i^{k+1}$.
Note that $C$ is contained in a fibre of $\tilde D_i^{k+1} \to S$.
Since every component of a fibre of $\tilde D_i^{k+1} \to S$ is $\mathcal G_i^{k+1}$ invariant, every fibre component is a log canonical centre of $\mathcal G_i^{k+1}$, cf. \cite[Lemma 1.1.4]{bm16}.  In particular, $C$
is a log canonical centre of $\mathcal G_i^{k+1}$.
 On the other hand, as a consequence of the the negativity lemma, we see that a flipped curve is never a log canonical centre, cf. \cite[Lemma 3.38]{KM98}. This is our sought after contradiction, and so the restriction of $\phi_i^k$ to $\tilde D_i^k$
 is a contraction.  For ease of notation let us set $f_i^k := \phi^i_k|_{\tilde D_i^k}$
 and set $f\colon \tilde{D}_i \to \tilde D_i^{m_i}$ to be the restriction of $\phi$ to $\tilde{D}$.

Moreover, the following two properties hold.
\begin{enumerate}
\item Another application of \cite[Lemma 3.38]{KM98}
shows that if $\phi^k_i$ is a divisorial contraction, then 
either $\phi^k_i(\exc \phi^k_i) \cap \tilde D^{k+1}_i$ is a point, or it is a curve dominating $S$.

\item Moreover, by foliation adjunction we may write \begin{align*}K_{\mathcal G^k_i}|_{\tilde{D_i^k}} = K_{\mathcal H^k_i}+B_i^k\end{align*}
where $\mathcal H^k_i$ is the foliation induced by the fibration $\tilde D^k_i \to S$.
Since $f_i^k$ is a $K_{\mathcal G^k_i}$-negative we deduce that $f_{i*}^kB^k_i \ge B^{k+1}_i$
with equality along divisors which are vertical with respect to $\tilde D^{k+1}_i \to S$.
\end{enumerate}

{\bf Step 5c: }{\it We show that the 
rational map $\phi_{i*}\tilde D_i \dashrightarrow \psi_{i*}\phi_{i*}\tilde D_i = D_i$ given by the restriction of $\psi_i$ to $\phi_{i*}\tilde D_i$ is in fact a morphism.}
Indeed we can argue exactly as in Step 5a after observing that 
\begin{enumerate}
    \item each step in the MMP making up $\psi_i$
is also a step in a $K_{Y_{i1}}+\Delta_{i1}+Y_{i1, 0}$-MMP where $\Delta_{i1}$ is the transform of $\tilde{\Delta}_i$ and $Y_{i1, 0}$ is the fibre over the point $0 \in S$; and 

\item every component of the fibre over $0$ of the transform of $\tilde D_i$
is a log canonical centre of $(Y_{i1}, \Delta_{i1}+Y_{i1, 0})$.
\end{enumerate}

Let $g_i\colon \tilde D_i \to D_i^n$ be restriction of $\psi_i \circ \phi_i$ to $\tilde D_i$.  Set $\bar B_i = g_{i*}^{-1}B_i^h+\tilde B_i^v$ where $B_i^h$ is the horizontal part of $B_i$ and $\tilde B_i^v$ is the vertical part of $\tilde B_i$.  Steps 5b and 5c show that the morphism
$g_i$ is in fact the contraction of the negative part of the (relative $/S$) Zariski decomposition of 
\begin{align*}(K_{\tilde {\mathcal H}_i}+\bar B_i)+t(K_{\tilde D_i}+\tilde \Delta_i)-sA\end{align*}
where $A$ is a relatively ample divisor
and $0 <s \ll t \ll 1$.
Note that for $0< s \ll t \ll 1$ the Zariski decomposition is independent of $s$ and $t$.  

By Step 5a we see that $\tilde \tau_*\bar B_{i_1} = \bar B_{i_2}$.  
We therefore deduce that the isomorphism $\tilde \tau \colon \tilde D_{i_1} \to \tilde D_{i_2}$ descends to an isomorphism
$D_{i_1}^n \to D_{i_2}^n$.

{\bf Step 5d: }{\it In this sub-step we perform the required gluing operation.}

The involution $\tau\colon \bigsqcup_{i \in I} D^n_i \to \bigsqcup_{i \in I} D^n_i$ defines gluing data for the 
$(X_i, \Delta_i+D_i)$- we will verify that the hypotheses of \cite[Corollary 5.33]{Kollar13} hold in which case we may glue $(X_i, \Delta_i)$ along the involution 
giving us a morphism $(X, \Delta) \rightarrow S$
whose normalisation is precisely $\bigsqcup_{i \in I} X_i \rightarrow S$.

To see that the hypotheses of \cite[Corollary 5.33]{Kollar13} hold we first observe that condition (1) holds since $(X_i, \Delta_i+D_i)$ is locally stable and condition (2) follows from equality (\ref{boundary_perserved}).  To show that condition (3) holds, we need to show that the equivalence relation on $\bigsqcup_{i \in I} X_i$ generated by the
involution $\tau$ has finite equivalence classes. We do this now.

For points not contained in the non-normal locus of  $D_i$ it is clear that the associated equivalence class is finite. 
Similarly, for a point $x \in \bigsqcup_{i \in I}X_i$ not contained in the fibre over $s \in S$, the associated equivalence class is finite: indeed this set is just $n^{-1}(n(x))$ where $n$ is the normalisation morphism of $X^\circ$.
So it only remains to check that for points contained in the intersection of the non-normal locus of $D_i$ and the fibre of $s \in S$ that the associated equivalence class is finite.  Note, however, that there are only finitely many points contained in the intersection of the non-normal locus of $D_i$ and the fibre of $s \in S$, so these equivalence classes are finite as well.

By Lemma \ref{lem_unique_extension}, 
there is a unique extension of $\mathcal F^\circ$ to an integrable distribution on $X$.

{\bf Step 6: }{\it In this step we verify that we 
have produced a stable family of log integrable distributions.}

By \cite[Theorem 5.38]{Kollar13} $(X, \Delta) \to S$ is a locally stable family.
We next show that $K_{\mathcal F}$ is $\mathbb Q$-Cartier.

Note that  $K_{\mathcal F}$ is $\mathbb Q$-Cartier on $X^\circ$ by assumption and by Lemma \ref{lem_Cartier_criterion}
$K_{\mathcal F}$ is Cartier at any codimension two point contained in a fibre of of $X\to S$.  
Since the pullback of $K_{\mathcal F}$ to the normalisation is $\mathbb Q$-Cartier we may apply Lemma \ref{lem_index_on_normalisation}
to conclude that $K_{\mathcal F}$ is $\mathbb Q$-Cartier.

We apply Proposition \ref{prop_family_criterion}
to verify that we have a locally stable family of integrable distributions.
By construction, the pull-back of $(K_{\mathcal F}+\Gamma)+t(K_{X/S}+\Delta)$ to the normalisation
is ample for all $0<t \ll 1$, hence $(K_{\mathcal F}+\Gamma)+t(K_{X/S}+\Delta)$
is ample for all $0<t \ll 1$.  Thus, the family we have produced 
is a stable family of integrable distributions.
\end{proof}

\begin{lemma}
\label{lem_unique_extension}
    Let $X$ be an $S_2$ scheme, let $X_0 \subset X$ be an open subset such that
    $(X \setminus X_0) \cap \sing X$ is 
    codimension at least 2 in $X$.

Let $E$ be a coherent sheaf on $X$, let $L_0$ be a divisorial sheaf on $X_0$ and suppose that we have a morphism 
\begin{align*}\phi_0\colon E\vert_{X_0} \rightarrow L_0.\end{align*}
Then there exists a unique divisorial sheaf $L$ on $X$
and a unique morphism $\phi\colon E \rightarrow L$
such that 
\begin{enumerate}
    \item there exists an isomorphism $\psi\colon L\vert_{X_0} \rightarrow L_0$;
    \item $\psi\circ (\phi\vert_{X_0}) = \phi_0$; and 
    \item $\phi$ is surjective at all the codimension one points of $X$ contained in $X \setminus X_0$.
\end{enumerate}
\end{lemma}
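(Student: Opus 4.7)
The plan is to exploit the adjunction for the open immersion $j \colon X_0 \hookrightarrow X$ in order to realize $L$ as a carefully chosen coherent subsheaf of the quasi-coherent sheaf $j_\ast L_0$. The morphism $\phi_0$ corresponds by adjunction to a canonical extension $\wt\phi \colon E \to j_\ast L_0$ restricting to $\phi_0$ on $X_0$ via the identification $j^\ast j_\ast L_0 = L_0$; the sheaf $L$ will be singled out inside $j_\ast L_0$ by the requirement that $\wt\phi$ factor surjectively onto it at each codimension one point of $X \setminus X_0$, a requirement which, as the argument will show, determines $L$ uniquely.

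I first construct $L$ on the open set $U := X \setminus \bigl( \sing X \cap (X \setminus X_0) \bigr)$. Let $\eta \in U \setminus X_0$ be a codimension one point of $X$; by the codimension hypothesis in the statement, $R := \OO_{X, \eta}$ is a discrete valuation ring with uniformizer $t$ and fraction field $K$, and $L_0$ is locally free in a neighbourhood of $\eta$ inside $X_0$. After choosing a local trivialisation, $(j_\ast L_0)_\eta$ is identified with $K$, and the image of $\wt\phi_\eta \colon E_\eta \to K$ is a finitely generated fractional ideal of $R$, hence of the form $R\cdot t^{-v_\eta}$ for a unique integer $v_\eta$ (the nondegeneracy $\wt\phi_\eta \neq 0$ is automatic under the implicit assumption that $\phi_0$ does not vanish at the generic point of the component of $X$ containing $\eta$, which holds in the intended applications). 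I define $L$ near $\eta$ to be the coherent $\OO_X$-submodule of $j_\ast L_0$ corresponding to this fractional ideal, i.e., $\OO_X(v_\eta \ov{\{\eta\}}) \otimes L_0$ in the chosen trivialisations. Because distinct codimension one points lie on distinct prime divisors and all the local extensions agree with $L_0$ over $X_0$, they glue to a divisorial sheaf $L_U$ on $U$ together with a morphism $\phi_U \colon E\vert_U \to L_U$ refining $\wt\phi\vert_U$ and surjective at every codimension one $\eta \in U \setminus X_0$.

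Setting $L := (j_U)_\ast L_U$, with $j_U \colon U \hookrightarrow X$, yields a coherent $S_2$ sheaf, since $X$ is $S_2$ and $X \setminus U$ has codimension at least two in $X$; as $L$ remains a line bundle off a codimension two subset, it is divisorial. The morphism $\phi_U$ extends uniquely to the required $\phi \colon E \to L$ by the universal property of $S_2$-extension. For uniqueness, given two solutions $(L, \phi, \psi)$ and $(L', \phi', \psi')$, the isomorphisms $\psi$ and $\psi'$ identify $L\vert_{X_0}$ and $L'\vert_{X_0}$ with $L_0$ compatibly with $\phi_0$; at each codimension one $\eta \in X \setminus X_0$, condition $(3)$ forces both $L_\eta$ and $L'_\eta$ to coincide with the $R_\eta$-submodule of the common generic stalk generated by $\wt\phi_\eta(E_\eta)$. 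The resulting isomorphism $L\vert_U \cong L'\vert_U$ then extends uniquely to $L \cong L'$ by the same $S_2$-extension argument. The main technical point is the valuation-theoretic local computation at codimension one points of $X \setminus X_0$; the hypothesis on $(X \setminus X_0) \cap \sing X$ is precisely what reduces it to the regular case and makes the valuation $v_\eta$ well defined.
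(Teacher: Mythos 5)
Your argument is correct and ends at the same place as the paper's, though organized differently: the paper first extends $L_0$ to an arbitrary divisorial sheaf on $X$ and then \emph{corrects} it by a twist $L(-D)$ supported on $X\setminus X_0$ to enforce condition (3), whereas you bypass the arbitrary choice entirely by identifying the correct stalk of $L$ at each codimension-one point $\eta\in X\setminus X_0$ directly as the fractional ideal $\tilde\phi_\eta(E_\eta)\subset K$ and then taking the $S_2$-extension across the codimension-$\geq 2$ locus $\sing X\cap(X\setminus X_0)$. What your route buys is a transparent uniqueness proof: the paper's one-line remark invokes uniqueness of divisorial extensions across a codimension-$\geq 2$ complement, but $X\setminus X_0$ need not be codimension $\geq 2$; the actual uniqueness argument requires \emph{both} that condition (3) pins down $L$ at the codimension-one points of $X\setminus X_0$ \emph{and} the $S_2$-hull argument over the remaining codimension-$\geq 2$ set, which your proof makes explicit. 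Your observation that $\tilde\phi$ must be nonzero at the generic points of the components of $X$ meeting $X\setminus X_0$ is a genuine (mild) hypothesis left implicit in the statement; it is automatic in the intended application, where $\phi_0$ is the Pfaff field of an integrable distribution and hence surjective at those generic points.
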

\begin{proof}
This follows by recalling that a divisorial sheaf on any open subset of $X$ can be extended to a divisorial sheaf on all of $X$, and moreover, in the case where the complement is codimension at least 2 in $X$, then the extension is unique.  Moreover, this extension can be chosen in a way so that the morphism $\phi_0$
extends to a morphism $\phi\colon E \to L$.  Perhaps replacing $L$ by $L(-D)$ where $D$ is supported on $X \setminus X_0$ we may assume also that $\phi$ is surjective at all the codimension one points of $X$ contained in $X \setminus X_0$.
\end{proof}

\section{The moduli stack}
We are now ready to prove the representability results stated in the introduction. 
We first consider an embedded version of our moduli problem.

\begin{lemma}  \label{emb}
Fix $N \in \mathbb Z_{>0}$ and $v \in \mathbb R_{>0}$.  Then
    there exist natural numbers $m_0,n_0,k_0>0$ such that for every family $f: (X, \Delta, \FF)\to S$ in $\mathcal M^{2,1}_{N, v}(S)$, the sheaf $\OO_X(m_0 K_\FF + n_0 (K_{X/S}+\Delta))$ is $f$-very ample and $f_*\OO_X(m_0 K_\FF + n_0 (K_{X/S}+\Delta))$ is locally free of rank $k_0+1$.
\end{lemma}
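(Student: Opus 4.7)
The plan is to combine the boundedness results of Section \ref{s_boundedness} with the base-change property built into the definition of a stable family and standard cohomology-and-base-change.

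First, fix a stable family $f\colon (X,\Delta,\mathcal F)\to S$ in $\mathcal M^{2,1}_{N,v}(S)$. Proposition \ref{prop_threshold} (applied fibrewise with $d=2$) shows that the Cartier divisor $5NK_{\mathcal F_t}+K_{X_t}+\Delta_t$ is ample on every closed fibre $X_t$. By Corollary \ref{boundedness.slc.triples.cor} (applied with $I=\{1\}$, since $\Delta$ is reduced), the collection of closed fibres is strongly log bounded, so its elements appear as fibres of a single bounded family $(\mathcal X,\mathcal D,\mathcal G)\to T$ with $T$ of finite type. The proof of that corollary (in particular Steps 2 and 3 therein, which invoke \cite[Corollary~1.4]{fujino-2016}) produces a positive integer $m_1=m_1(N,v)$, independent of the triple, such that $m_1(K_{X_t}+\Delta_t+5NK_{\mathcal F_t})$ is Cartier and very ample on every fibre. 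Set $m_0:=5Nm_1$, $n_0:=m_1$, and $L:=\mathcal O_X(m_0K_{\mathcal F}+n_0(K_{X/S}+\Delta))$.

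Next I would verify the relative very ampleness. By axiom (S3) of a stable family of integrable distributions (applied with $j=m_0$, $k=n_0$), $L$ is a flat family of divisorial sheaves that commutes with arbitrary base change; in particular each $L_t$ is a line bundle on $X_t$, isomorphic to $\mathcal O(m_1(5NK_{\mathcal F_t}+K_{X_t}+\Delta_t))$, and very ample by the previous step. Since $L$ is a line bundle on $X$ that is very ample on every fibre of the proper morphism $f$, the standard criterion (see e.g.\ \cite[II.5]{Hartshorne77}) gives that $L$ is $f$-very ample.

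For local freeness of $f_*L$ I would enlarge $m_1$ once more, uniformly on the bounded family, to arrange a uniform Castelnuovo–Mumford vanishing: using the $f$-ample polarisation and boundedness, after replacing $m_1$ by a large enough multiple we may assume $H^i(X_t, L_t)=0$ for all $i>0$ and all $t$. Cohomology and base change (\cite[III.12]{Hartshorne77}) then implies that $f_*L$ is locally free, with fibrewise rank equal to $\chi(X_t, L_t)$. By boundedness the Hilbert function $\chi(X_t,\mathcal O(mK_{\mathcal F_t}+n(K_{X_t}+\Delta_t)))$ takes only finitely many values across $\mathcal M^{2,1}_{N,v}$, so after decomposing the moduli functor according to this Hilbert function we obtain a fixed rank $k_0+1:=\chi(X_t,L_t)$ on each piece.

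The main subtlety is the last point: the fixed volume $v$ determines only the leading coefficient of the Hilbert polynomial, so a priori $\mathcal M^{2,1}_{N,v}$ splits (as is standard in KSB moduli) into finitely many open-and-closed substacks on which the Hilbert polynomial is locally constant, and the integer $k_0$ is the one corresponding to the chosen piece; the uniform choice of $m_0,n_0$ works across all pieces. A secondary technical point is the uniform vanishing used for cohomology-and-base-change, which I would handle by applying Serre vanishing to the bounded family $(\mathcal X,\mathcal D,\mathcal G)\to T$ with the Cartier polarisation $m_1(5N K_{\mathcal G}+K_{\mathcal X/T}+\mathcal D)$ and passing to a large enough multiple.
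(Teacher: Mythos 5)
Your proof follows essentially the same route as the paper: boundedness via Corollary~\ref{boundedness.slc.triples.cor} and Proposition~\ref{prop_threshold}, uniform Cartier index and effective very ampleness on the bounding family, then the Koll\'ar base-change condition (S3) together with cohomology-and-base-change to get local freeness, with the rank $k_0$ only being constant after decomposing by Hilbert polynomial (a point the paper defers to the discussion immediately following the lemma). One small reordering is worth flagging: deducing relative very ampleness from fibrewise very ampleness already uses the cohomological flatness of $L$ (so that $f_*L$ commutes with base change and $f^*f_*L\to L$ is fibrewise, hence globally, surjective), so the uniform $H^1$-vanishing should be secured \emph{before}, not after, the very ampleness step — this is precisely how the paper proceeds, citing the Lazarsfeld uniform vanishing result once to get both conclusions simultaneously.
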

\begin{proof}
Corollary \ref{boundedness.slc.triples.cor} guarantees the existence of a family $f\colon (X_T,\Delta_T, \FF_T)\to T$ bounding $\mathcal M^{2,1}_{N, v}$. By Corollary \ref{cor_stab_rep}, up to replacing $T$ by a locally closed partial decomposition,  we can freely assume that $(X_T, \Delta_T, \FF_T) \to T$ is  stable of index $=N$ and hence by Proposition \ref{prop_threshold} the divisor $5NK_{\mathcal F_T}+(K_{X_T/T}+\Delta_T)$
is $f$-ample. Since $K_{X_T/T}+\Delta_T$ is $\mathbb Q$-Cartier 
there exists an integer $m_1>0$ (depending only on $f\colon (X_T, \Delta_T, \FF_T) \to T$) 
such that $m_1(5NK_{\mathcal F_T}+(K_{X_T/T}+\Delta_T)$ is Cartier. Moreover, by \cite[Theorem 1.7.6]{Lazarsfeld04a} there exists an integer $m_2>0$ (depending only on $f\colon (X_T, \Delta_T, \FF_T) \to T$) 
such that  $\LL\simeq \OO_X(m_2m_1(5N K_{\FF_T}+(K_{X_T/T}+\Delta_T)))$ is $f$-very ample and $R^if_*(X,\LL^{\otimes \ell})=0$ for every $i,\ell>0$ -- implying also that $f_*\LL$ is locally free.
In particular, setting $k=rank(f_*\LL)-1$ and $(m,n)=(5Nm_1m_2,m_1m_2)$ we can conclude that for every log-foliated surface $(X_0,\Delta_0,\FF_0)$  the sheaf $\LL_0=\OO_X(m_0 K_{\FF_0}+n_0(K_{X_0}+\Delta_0)))$ is a very ample line bundle with space of global sections of dimension $k+1$ and satisfying  $H^i(X_0,\LL_0^{\otimes \ell})=0$ for every $i,m>0$.

If now $f\colon (X,\Delta,\FF)\to S$ is any family in $\mathcal M^{2,1}_{N, v}(S)$, then, being a flat family of line bundles, the sheaf $\LL'=\OO_X(m_0K_\FF +n_0(K_{X/S}+\Delta))$ is itself a line bundle. Since both properties are local on the base, we may freely assume that $S$ is the spectrum of a local ring. The Cohomology and Base Change Theorem and the vanishing of $H^i(X_s,\LL_s^m)=0$ at the closed point $s$ now imply that $f_*\LL'$ is free of rank $k+1$. In particular,  
we can extend any embedding $X_s\hookrightarrow \PP^k_\CC$ defined by $m_0K_{\FF_s}+n_0(K_{X_s}+\Delta_s)$ to some $X\hookrightarrow \PP^k_S$ induced by a basis of $f_*\LL'$ and the Lemma follows.
\end{proof}

By Corollary \ref{boundedness.slc.triples.cor} there is a finite collection of polynomials $p_1(l), \dots, p_r(l) \in \mathbb Q[l]$ such for any $(X, \Delta, \mathcal F) \in \mathcal M^{2, 1}_{N, v}(\mathbb C)$ we have that the Hilbert polynomial $\chi(X, \mathcal O(l(m_0K_{\mathcal F}+n_0(K_X+\Delta)))) = p_i(l)$ for some $i \in \{1, \dots, r\}$
(here $m_0, n_0$ are the positive integers guaranteed to exist by Lemma \ref{emb}). Distinct Hilbert polynomials give disjoint components of the moduli space, so to construct the moduli space, it suffices to fix a polynomial $p(l) \in \mathbb Q[l]$ and consider only those stable foliated surfaces with 
$\chi(X, \mathcal O(l(m_0K_{\mathcal F}+n_0(K_X+\Delta)))) = p(l)$.  For $p(l) \in \mathbb Q(l)$ we denote 
by $\mathcal M^{2, 1}_{N, v, p}$ the subfunctor of $\mathcal M^{2, 1}_{N, v}$ whose $\mathbb C$-valued points are 
triples $(X, \Delta, \mathcal F)$ such that $\chi(X, \mathcal O(l(m_0K_{\mathcal F}+n_0(K_X+\Delta)))) = p(l)$. 
    Similarly, the Hilbert polynomial of both $K_X$ and $\Delta$ varies in a finite set $F$, provided that $(X,\Delta,\FF)\in \mathcal M^{2, 1}_{N, v}(\CC)$, and  different choices of these polynomials also correspond to disjoint components in moduli.

Let $H$ be the Hilbert scheme of closed subschemes of $\PP^{k_0}$ with Hilbert polynomial $p(l)$ and let $X_H \to H$ be the universal family.
Let $M_r$ be the subscheme of the relative Hilbert scheme ${\rm Hilb}(X_H/H)$ parametrising flat families of relative Mumford divisors with fixed Hilbert polynomial $q\in F$ and its associated universal family
$$ (X_{M_q},\Delta_{M_q})\to M_q.$$
We will first construct a space encoding all possible families of integrable distributions along $(X_{M_q},\Delta_{M_q})$. 
Following \cite[Chapter 9]{modbook}, let us consider the algebraic space 
$${\rm QHusk}_r(\Omega^1_{X_{M_q}/ {M_q}})\to M,$$ which is a fine moduli space for the functor of quotient husks for $\Omega^1_{X_{M_q}/{M_q}}$ with full support and Hilbert polynomial $r\in F$. By construction, a morphism $S\to {\rm QHusk}_r(\Omega^1_{X_{M_q}/{M_q}})$ corresponds to a canonically defined triple 
$$(X_T,\Omega^1_{X_T/T}\to \Q ,\Delta_T)$$ 
where $\Q$ is a flat sheaf with Hilbert polynomial $r$ and $\Omega^1_{X_t}\to \Q_t$ is surjective (and non-zero) at all generic points of $X_t$.

\begin{lemma}
    There is an algebraic space ${\rm Fol}_{q,r}(X_{H_p}/H_p)$ of finite type and a locally closed immersion 
    $${\rm Fol}_{q,r}(X_{H_p}/H_p)\hookrightarrow {\rm QHusk}_r(\Omega^1_{X_{M_q}/{M_q}})$$ 
    such that a morphism $S\to{\rm QHusk}_r(\Omega^1_{X_{M_q}/{M_q}})$ factorizes through ${\rm Fol}_{q,r}(X_H/H)$ if and only if
    \begin{enumerate}
        \item the induced family $f: (X_T,\Omega^1_{X_T/T}\to \Q ,\Delta_T)\to T$ is a stable family of integrable distributions, and
        \item there is an isomorphism $\OO_{X_T}(1)\simeq \OO_X((m_0 K_\FF + n_0 (K_{X/S}+\Delta)))\otimes f^*B$ for some line bundle $B$ on $T$.
    \end{enumerate}
\end{lemma}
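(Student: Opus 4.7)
The plan is to cut out ${\rm Fol}_{q,r}(X_{H_p}/H_p)$ as a locally closed algebraic subspace of ${\rm QHusk}_r(\Omega^1_{X_{M_q}/{M_q}})$ by imposing two successive conditions: first, that the universal quotient husk defines a stable family of integrable distributions of index $=N$; second, that the Hilbert-scheme polarization agrees, up to pullback from the base, with $\mathcal O(m_0 K_{\mathcal F}+n_0(K_{X/S}+\Delta))$.

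First I promote the universal husk to a family of integrable distributions. By definition of ${\rm QHusk}_r$, the universal morphism $\phi\colon \Omega^1_{X_T/T}\to \mathcal Q$ has $\mathcal Q$ a flat family of divisorial sheaves, surjective at the generic points of each geometric fibre; since we are in the rank one setting, the Frobenius integrability condition from Section~\ref{s_integrable_distribution} is automatic, so this is already, in the sense of Section~\ref{s_flat_families}, a flat family of rank one integrable distributions. I may therefore apply Corollary~\ref{cor_stab_rep} to obtain a locally closed partial decomposition $T^{\rm st}\to {\rm QHusk}_r(\Omega^1_{X_{M_q}/{M_q}})$ whose $S$-points are exactly those families that pull back to stable families of integrable distributions of index $=N$. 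Since by Lemma~\ref{emb} we are free to enlarge $m_0, n_0$ (multiplying by a positive integer divisible by $N$ and by the universal bound on the Cartier index of $K_X+\Delta$ coming from Corollary~\ref{boundedness.slc.triples.cor}), we may assume $\mathcal L_1:=\mathcal O_{X_{T^{\rm st}}}(m_0K_{\mathcal F}+n_0(K_{X/T^{\rm st}}+\Delta))$ is an honest line bundle on the universal family over $T^{\rm st}$ whose formation commutes with base change by Condition (S3) of local stability of index $=N$.

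On $T^{\rm st}$ I then compare $\mathcal L_1$ with $\mathcal L_0:=\mathcal O_{X_{T^{\rm st}}}(1)$ pulled back from the Hilbert scheme. The condition $\mathcal L_0 \simeq \mathcal L_1 \otimes f^\ast B$ is equivalent to asking that the classifying morphism $T^{\rm st}\to \underline{\mathrm{Pic}}(X_{T^{\rm st}}/T^{\rm st})$ attached to $\mathcal L_0\otimes \mathcal L_1^{-1}$ factor through the zero section. The fibres of $f\colon X_{T^{\rm st}}\to T^{\rm st}$ are projective, connected, reduced deminormal surfaces, so $h^0(\mathcal O_{X_t})=1$, and the relative Picard space exists as an algebraic space locally of finite type over $T^{\rm st}$, with zero section a closed immersion. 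Pulling back the zero section along the classifying morphism yields the desired closed subspace ${\rm Fol}_{q,r}(X_{H_p}/H_p)\subset T^{\rm st}$; composed with the locally closed immersion $T^{\rm st}\hookrightarrow {\rm QHusk}_r$, this exhibits ${\rm Fol}_{q,r}(X_{H_p}/H_p)$ as a locally closed subspace of ${\rm QHusk}_r(\Omega^1_{X_{M_q}/{M_q}})$, and finite typeness is inherited from ${\rm QHusk}_r\to M_q$, cf.\ \cite[Chapter~9]{modbook}, together with the finite typeness of $M_q$ inside the Hilbert scheme.

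The main obstacle is packaged entirely into the application of Corollary~\ref{cor_stab_rep}, which in turn rests on Theorem~\ref{t_prop3.31} for the Koll\'ar-type base change conditions, Theorem~\ref{thm_lc_deform} for the openness of the semi-log canonical locus, and Proposition~\ref{prop_positivity_is_open} for the openness of the perturbed-ample polarization; once these are in hand, the remainder of the proof is a routine zero-section construction in the relative Picard scheme, with no additional delicate ingredient required.
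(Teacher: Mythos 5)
Your argument follows the same broad outline as the paper's proof, but it contains a genuine (if readily repairable) gap at the first step. You assert that ``by definition of ${\rm QHusk}_r$, the universal morphism $\phi\colon \Omega^1_{X_T/T}\to \mathcal Q$ has $\mathcal Q$ a flat family of divisorial sheaves.'' This is not correct: the space ${\rm QHusk}_r$ from \cite[Chapter 9]{modbook} parametrises quotient husks, i.e.\ flat quotients that are \emph{pure} on fibres, but purity only guarantees $S_1$, not the $S_2$ condition required in the paper's definition of a divisorial sheaf. Consequently the universal object on ${\rm QHusk}_r$ is not, a priori, a flat family of integrable distributions in the sense of Section~\ref{s_flat_families}, and you cannot directly invoke Corollary~\ref{cor_stab_rep}. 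The paper handles this by first appealing to \cite[Theorem 12.1.6]{EGAIV} to pass to the maximal open substack of ${\rm QHusk}_r$ over which $\Q$ actually is a flat family of divisorial sheaves; only then does the representability of stability apply. Your proposal silently collapses this open restriction into the quoted definition of ${\rm QHusk}_r$.

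The remainder of your argument is fine. For condition (2), you use the zero section of the relative Picard space, whereas the paper appeals to \cite[Corollary 3.22]{modbook}; these amount to the same thing, since the Koll\'ar-type condition (S3) ensures $\mathcal O(m_0K_{\mathcal F}+n_0(K_{X/T}+\Delta))$ commutes with base change, so the locus where it differs from $\mathcal O_X(1)$ by a pullback from the base is a closed condition by either route. Your observation that the integrability condition is vacuous for rank one is also correct.
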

\begin{proof}
By \cite[Theorem 12.1.6]{EGAIV}, there is a maximal open substack parametrising triples such that  $\Q_T$ is a flat family of divisorial sheaves.
Applying Theorem \ref{thm_local_stab_rep}, we get a locally closed partial decomposition 
parametrizing stable foliated surfaces of index $=N$. To conclude, we only have to restrict to the closed substack where condition (2) holds, whose existence is guaranteed by \cite[Corollary 3.22]{modbook}.
\end{proof}

Let us consider the algebraic space of finite type
$$ {\rm Fol}(X_{H_p}/{H_p})=\coprod_{q,r\in F} {\rm Fol}_{q,r}(X_{H_p}/H_p). $$
By construction, this is a fine moduli space for the embedded moduli problem.
In other words, 
$$ \Hom (S,{\rm Fol}(X_{H_p}/H_p)) = \left\{  \begin{array}{l}
    ((X,\Delta, \FF)\xrightarrow[]{\pi} S , \varphi) \, | \, (X, \Delta, \FF)\to S \in\mathcal M^{2,1}_{N, v,p}(S)  \\
  \mbox{ and }  \varphi: X\hookrightarrow\PP^k_S \mbox{ satisfies } \chi(X,\OO_X(\ell))=p(\ell) \mbox{ and} \\ 
  \varphi^*\OO_{\PP^k_S}(1)=\OO_X(m_0 K_\FF + n_0 (K_{X/S}+\Delta))\otimes \pi^*B  \\
  \mbox{for some line bundle } B \mbox{ on } S. 
  \end{array}\right\}
 $$
A morphism between elements $((X_1,\Delta_1, \FF_1)\xrightarrow[]{\pi_1} S , \varphi_1)$ and $(X_2,\Delta_2, \FF_2)\xrightarrow[]{\pi_2} S , \varphi_2)$ in the category $\Fol(X_{H_p}/H_p)(S)$ consists of an isomorphism $f:(X_1,\Delta_1, \FF_1)\to (X_2,\Delta_2, \FF_2)$ over $S$, as in Section \ref{s_precise_definition}, satisfying $\varphi_1=\varphi_2\circ f$. This yields a canonical isomorphism 
\begin{align*}
    \OO_X(m K_{\FF_1} + n (K_{X_1/S}+\Delta_1))\otimes \pi_1^*B_1&\simeq f^*(\OO_X(m_0 K_{\FF_2} + n_0 (K_{X_2/S}+\Delta_2))\otimes \pi_2^*B_2)\\
&\simeq f^*(\OO_X(m_0 K_{\FF_2} + n_0 (K_{X_2/S}+\Delta_2)))\otimes \pi_1^*B_2.
\end{align*}
On the other hand, the isomorphism $f$ satisfies $f(\Delta_1)=\Delta_2$ and $f^*\FF_2=\FF_1$, implying that $\OO_X(m_0 K_{\FF_1} + n_0 (K_{X_1/S}+\Delta_1))\simeq f^*(\OO_X(m_0 K_{\FF_2} + n_0 (K_{X_2/S}+\Delta_2)))$ canonically, from which we can conclude that $B_1$ and $B_2$ are also canonically isomorphic.

\begin{remark} \label{rmk_univfamily}
There is a natural transformation
$$\Fol(X_{H_p}/H_p)\to \mathcal{M}^{2,1}_{N, v,p}$$
which forgets the embedding $\varphi$ and corresponds to the universal family over $\Fol(X_{H_p}/H_p)$.
    Observe further that the group $\PP \rm GL({k+1})$ acts on ${\rm Fol}(X_{H_p}/H_p)$ by means of changing $\varphi$, and the above morphism is invariant under this action.
\end{remark}

Naturally, the arrow above descends to a morphism from the quotient stack $$[{\rm Fol}(X_{H_p}/H_p)/{\rm \PP GL}(k+1)]\to \mathcal{M}^{2,1}_{N, v,p}.$$ 
We are now ready to prove the main statement of this section.

\begin{theorem} \label{thm:rep}
    Fix a positive integer $N$
    and a positive real number $v$.

         Then, $\mathcal M_{N, v}^{2, 1}$ is a Deligne-Mumford stack and is coarsely represented by the geometric quotient $\mathsf M_{N, v}^{2, 1}=\coprod_{p \in F} {\rm Fol}(X_{H_p}/H_p)/\!\!/GL(k+1)$, an algebraic space of finite type over $\mathbb C$.
\end{theorem}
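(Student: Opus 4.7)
The plan is to identify $\mathcal M^{2,1}_{N,v,p}$ with the quotient stack $[{\rm Fol}(X_{H_p}/H_p)/\PP\mathrm{GL}(k+1)]$ for each $p\in F$, from which the Deligne--Mumford property will follow from the finiteness of automorphism groups, and the coarse moduli space from the Keel--Mori theorem.

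First I would use Corollary \ref{boundedness.slc.triples.cor} together with Lemma \ref{emb} to decompose $\mathcal M^{2,1}_{N,v} = \coprod_{p \in F} \mathcal M^{2,1}_{N,v,p}$ as a finite disjoint union indexed by the Hilbert polynomials $p(\ell) = \chi(X,\OO(\ell(m_0 K_\FF+n_0(K_X+\Delta))))$ and Hilbert polynomials $q\in F$ of $\Delta$ and $K_X$; since the disjoint union of algebraic stacks is well-behaved, it is enough to treat one component $\mathcal M^{2,1}_{N,v,p}$ at a time.

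Next I would show that the natural transformation of Remark \ref{rmk_univfamily},
\begin{align*}
\Phi_p\colon [{\rm Fol}(X_{H_p}/H_p)/\PP\mathrm{GL}(k+1)] \longrightarrow \mathcal M^{2,1}_{N,v,p},
\end{align*}
is an equivalence of stacks. Essential surjectivity is a direct consequence of Lemma \ref{emb}: given $(X,\Delta,\FF) \to T$ in $\mathcal M^{2,1}_{N,v,p}(T)$, the sheaf $L = \OO_X(m_0 K_\FF + n_0(K_{X/T}+\Delta))$ is relatively very ample with $f_\ast L$ locally free of rank $k+1$, so after an étale base change $T' \to T$ over which $f_\ast L$ trivialises we obtain a closed immersion $X_{T'} \hookrightarrow \PP^k_{T'}$ defining a $T'$-point of ${\rm Fol}(X_{H_p}/H_p)$ that lifts the given family. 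For full faithfulness one checks that two such embeddings inducing the same family are related (canonically, up to a line bundle twist on the base) by a unique element of $\PP\mathrm{GL}(k+1)$, using Proposition \ref{prop_base_change} together with the definition of equivalence of families of integrable distributions recalled in Section \ref{s_precise_definition}.

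To conclude, I would verify that the stabiliser of a closed point $(X,\Delta,\FF)$ under the $\PP\mathrm{GL}(k+1)$-action on ${\rm Fol}(X_{H_p}/H_p)$ coincides with $\Aut(X,\Delta,\FF)$, which is finite by Corollary \ref{cor_finite_automorphisms}. Since ${\rm Fol}(X_{H_p}/H_p)$ is an algebraic space of finite type and $\PP\mathrm{GL}(k+1)$ is smooth, the quotient stack is a Deligne--Mumford stack of finite type over $\mathbb C$; applying the Keel--Mori theorem yields a coarse moduli space $\mathsf M^{2,1}_{N,v,p}={\rm Fol}(X_{H_p}/H_p)/\!\!/\mathrm{GL}(k+1)$ as an algebraic space of finite type (note that the centre of $\mathrm{GL}(k+1)$ acts trivially, so this is the same as the $\PP\mathrm{GL}(k+1)$-quotient), and taking the disjoint union over $p \in F$ completes the proof. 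The delicate point I expect is verifying that $\Phi_p$ is an equivalence: one must correctly handle the line-bundle twist $\pi^\ast B$ appearing in the embedding data --- so that embeddings of a fixed family form a $\PP\mathrm{GL}(k+1)$-torsor rather than a $\mathrm{GL}(k+1)$-torsor --- and ensure descent works over non-reduced bases, where the equivalence must be checked directly at the level of the Pfaff fields defining the integrable distributions.
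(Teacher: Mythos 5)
Your overall strategy matches the paper's: identify $\mathcal M^{2,1}_{N,v,p}$ with the quotient stack $[{\rm Fol}(X_{H_p}/H_p)/\PP\mathrm{GL}(k+1)]$, deduce the Deligne--Mumford property from finiteness of automorphisms (Corollary \ref{cor_finite_automorphisms}), and produce the coarse moduli space as a quotient. The verification of essential surjectivity via Lemma \ref{emb} and of full faithfulness via the canonical identification of the polarisation line bundle (handling the twist $\pi^*B$) is exactly what the paper does.

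However, there is a genuine gap. You appeal to Keel--Mori to produce a coarse moduli space and then assert that this is the \emph{geometric} quotient ${\rm Fol}(X_{H_p}/H_p)/\!\!/\mathrm{GL}(k+1)$. Keel--Mori, applied to a stack with finite inertia, gives a coarse moduli space as an algebraic space; but to conclude that the coarse space is actually a \emph{geometric quotient} (orbits are closed, with the strong universal properties), and in particular that it is \emph{separated}, one needs the $\PP\mathrm{GL}(k+1)$-action on ${\rm Fol}(X_{H_p}/H_p)$ to be \emph{proper}, not just to have finite stabilisers. Properness of the action is precisely the valuative-criterion statement that two $\PP\mathrm{GL}(k+1)$-equivalent families over a punctured DVR have a unique $\PP\mathrm{GL}(k+1)$-equivalent extension over the DVR; this is the content of Theorem \ref{thm_separatedness}, which your proposal never invokes. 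The paper establishes properness of the action from Theorem \ref{thm_separatedness} and then applies Koll\'ar's quotient theorem \cite[Theorem 1.5]{Kollar97} to get a \emph{separated} algebraic space of finite type as the geometric quotient. You should insert this step: without it, you cannot identify the coarse moduli space with a geometric quotient, nor conclude its separatedness.
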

\begin{proof}
As noted above, it suffices to fix $p(l) \in \mathbb Q[l]$ and restrict to 
$\mathcal M_{N, v, p}^{2, 1}$ in place of $\mathcal M_{N, v}^{2, 1}$.

We claim that the above morphism is fully faithful and essentially surjective, and hence an isomorphism of stacks (see for instance \cite[Proposition 3.1.10]{olsson2016algebraic}).

For the first part of the claim, it suffices to show that the map of prestacks 
$$[{\rm Fol}(X_{H_p}/H_p)/{\rm \PP GL}(k+1)]^{pre}\to \mathcal{M}^{2,1}_{N, v,p}$$
is fully faithful, since so is the stackification morphism \begin{align*}[{\rm Fol}(X_{H_p}/H_p)/{\rm \PP GL}(k+1)]^{pre}\to [{\rm Fol}(X_{H_p}/H_p)/{\rm \PP GL}(k+1)].\end{align*}
Let $(X,\Delta,\FF,\varphi)$ be a family over $S$ corresponding to some arrow $S\to {\rm Fol}(X_{H_p}/H_p)$. Observe that 
\begin{align}\label{eq:iso_polarization}
\begin{split}
    H^0(\PP^k_\CC,\OO_{\PP^k_\CC}(1))\otimes \OO_S&\simeq \pi_*\OO_X(1) \\
    &\simeq \pi_*(\OO_X(m_0 K_\FF + n_0 (K_{X/S}+\Delta))\otimes \pi^*B) \\
    &\simeq \pi_*\OO_X(m_0 K_\FF + n_0 (K_{X/S}+\Delta))\otimes B
\end{split}
\end{align}
for some line bundle $B$ on $S$.
An automorphism $f$ of $(X,\Delta,\FF)$ in $\mathcal M^{2,1}_{N, v,p}(S)$ as in Section \ref{s_precise_definition} induces a unique automorphism of $\OO_X((m_0 K_\FF + n_0 (K_{X/S}+\Delta))$ and therefore an  automorphism of $\pi_*\OO_X(m_0 K_\FF + n_0 (K_{X/S}+\Delta))\otimes B$ 
which indeed corresponds to a (unique) canonically defined element in $\PP \rm GL(k+1,S)$ preserving $(X,\Delta,\FF)$. This is, the natural map 
$${\rm Aut}_{\PP \rm GL(k+1)}(X,\Delta,\FF,\varphi)\to {\rm Aut}(X,\Delta,\FF)$$ is bijective, which ensures the first part of the claim.

In order to show essential surjectivity, it is enough to check that for any family $(X,\Delta,\FF)\in \mathcal M^{2,1}_{N, v,p}(S)$, there is an \'etale cover $\{S_i \to S \}$ such that 
$(X,\Delta,\FF)\vert_{S_i}$ is in the image of $[{\rm Fol}(X_{H_p}/H_p)/{\rm \PP GL}(k+1)](S_i)\to \mathcal{M}^{2,1}_{N, v,p} (S_i)$. 
By Lemma \ref{emb}, this holds automatically for an open cover trivializing the vector bundle $\pi_*\OO_X((m_0 K_\FF + n_0 (K_{X/S}+\Delta))$ on $S$.

The above argument shows that our moduli stack satisfies 
\begin{align*}
\mathcal M^{2,1}_{N, v,p} \simeq [{\rm Fol}(X_H/H)/{\rm \PP GL}(k+1)]
\end{align*}
and is therefore an algebraic stack.
On the other hand,  Corollary \ref{cor_finite_automorphisms} establishes that every point of $\mathcal M^{2,1}_{N, v,p}$ has finite stabilizer group. 
Since we are working over characteristic zero, this group is also reduced and hence 
$\mathcal M^{2,1}_{N,v,p}$ is Deligne-Mumford, see \cite[Theorem 4.21]{DM69}.

It is now clear that if the geometric quotient ${\rm Fol}(X_{H_p}/H_p)/\!\!/GL(k+1)$ exists, then it is a coarse moduli space for the functor $\mathcal M^{2,1}_{N, v,p}$. In order to address its existence, 
we claim that the properness of the action is equivalent to Theorem \ref{thm_separatedness}. Indeed, for two families $(X_1,\Delta_1,\FF_1,\varphi_1)$ and $(X_2,\Delta_2, \FF_2,\varphi_2)$ over some germ $S$, an isomorphism $(X_1, \Delta_1,\FF_1)\simeq (X_2,\Delta_2,\FF_2)$ over $S$ also induces isomorphisms 
   $$\pi_1^*\OO_X(m_0K_{\FF_1} + n (K_{X_1/S}+\Delta_1)) \simeq \pi_2^*\OO_X(m_0K_{\FF_2} + n (K_{X_2/S}+\Delta_2))$$
   and hence by \eqref{eq:iso_polarization} a $\PP \rm GL(k+1)$-equivalence. 
   If now $T$ is a germ of smooth curve with generic point $T^0$ and the families $(X_1,\Delta_1,\FF_1,\varphi_1)$ and $(X_2,\Delta_2, \FF_2,\varphi_2)$ are $\PP \rm GL(k+1)$-equivalent over $T^0$, then by Theorem \ref{thm_separatedness} and the argument above they are also $\PP \rm GL(k+1)$-equivalent over $T$. This is, the action is proper.
   We can now make use of  \cite[Theorem 1.5]{Kollar97} in order to guarantee that ${\rm Fol}(X_{H_p}/H_p)/\!\!/\PP \rm GL(k+1)$ is a separated algebraic space of finite type. 
\end{proof}

\begin{corollary} \label{cor:rigid_univ_family}
The open subset $\mathsf M^{2, 1, {\rm rigid}}_{N, v}$ of $\mathsf M^{2, 1}_{N, v}$ corresponding to integrable distributions without automorphisms admits a universal family.    
\end{corollary}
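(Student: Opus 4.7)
The plan is to leverage the quotient presentation $\mathcal{M}^{2,1}_{N,v,p} \simeq [\Fol(X_{H_p}/H_p)/\PP{\rm GL}(k+1)]$ from the proof of Theorem \ref{thm:rep} and to observe that, on the rigid locus, the $\PP{\rm GL}(k+1)$-action is free so that the universal family on $\Fol(X_{H_p}/H_p)$ descends to one on $\mathsf M^{2,1,{\rm rigid}}_{N,v,p}$. As $\mathsf M^{2,1}_{N,v}$ is a disjoint union over finitely many Hilbert polynomials $p$ of the spaces $\mathsf M^{2,1}_{N,v,p}$, it suffices to treat each component separately.

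First, I would verify that the rigid locus is open. Let $G \to \Fol(X_{H_p}/H_p)$ denote the stabiliser group scheme of the $\PP{\rm GL}(k+1)$-action. The argument in the proof of Theorem \ref{thm:rep} identifies each fibre $G_u$ at a point parametrising $(X, \Delta, \FF, \varphi)$ with $\Aut(X, \Delta, \FF)$. By Corollary \ref{cor_finite_automorphisms}, all such groups are finite, so $G$ is a finite group scheme over $\Fol(X_{H_p}/H_p)$; since we are in characteristic zero, it is moreover \'etale and hence of locally constant rank. Thus $U := \{u : G_u = 1\} \subset \Fol(X_{H_p}/H_p)$ is open and $\PP{\rm GL}(k+1)$-invariant, and its image under the open quotient map is precisely $\mathsf M^{2,1,{\rm rigid}}_{N,v,p}$.

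Next, on $U$ the $\PP{\rm GL}(k+1)$-action is free by definition and proper by Theorem \ref{thm_separatedness} (as exploited in the proof of Theorem \ref{thm:rep}). Hence the quotient map $q \colon U \to \mathsf M^{2,1,{\rm rigid}}_{N,v,p}$ is a $\PP{\rm GL}(k+1)$-torsor, in particular an fppf (indeed smooth) cover. The universal family $(\mathcal X, \Delta_{\mathcal X}, \FF_{\mathcal X}) \to \Fol(X_{H_p}/H_p)$ of Remark \ref{rmk_univfamily} is $\PP{\rm GL}(k+1)$-equivariant by construction, so its restriction to $U$ carries canonical descent data along $q$. Applying fppf descent componentwise -- to the flat morphism $\mathcal X|_U \to U$, to the relative Mumford divisor $\Delta_{\mathcal X}|_U$, and to the Pfaff field $\Omega^1_{\mathcal X/U} \to L_{\mathcal X}$ together with its target divisorial sheaf -- yields a flat family of integrable distributions $(\mathcal Y, \Delta_{\mathcal Y}, \FF_{\mathcal Y}) \to \mathsf M^{2,1,{\rm rigid}}_{N,v,p}$ whose $q$-pullback recovers the restriction of the universal family. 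Stability and the numerical invariants descend by Proposition \ref{prop_base_change} applied fibrewise.

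For the universal property, any family $(X, \Delta, \FF) \to S$ in $\mathcal{M}^{2,1,{\rm rigid}}_{N,v,p}(S)$ induces a classifying map $S \to \mathsf M^{2,1,{\rm rigid}}_{N,v,p}$ such that the pullback of $(\mathcal Y, \Delta_{\mathcal Y}, \FF_{\mathcal Y})$ becomes isomorphic to $(X, \Delta, \FF)$ after further base change to the torsor $S \times_{\mathsf M^{2,1,{\rm rigid}}_{N,v,p}} U \to S$; the triviality of automorphism groups guarantees the isomorphism is canonical and hence descends to $S$ itself. The main technical step will be fppf descent of the Pfaff field along $q$, which reduces to standard descent of the divisorial sheaf $L_{\mathcal X}$ and of a morphism of coherent sheaves; the integrability condition, being a property at the generic points of fibres, descends automatically.
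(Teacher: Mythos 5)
Your argument is correct in outline but takes a genuinely different route from the paper, whose proof is a one-line citation to a general result (\cite[Theorem 2.2.5]{conrad2007arithmetic}): a separated Deligne--Mumford stack with trivial stabilisers is an algebraic space, and the open locus in the coarse moduli corresponding to rigid objects then carries the (now universal) family for free. You instead unwind what that citation proves, working directly with the atlas $\Fol(X_{H_p}/H_p)$ and the $\PP{\rm GL}(k+1)$-action, showing that on the rigid locus the quotient map is a torsor and descending the tautological family along it. This is more self-contained and makes the mechanism visible, at the cost of re-deriving a standard result.

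One small inaccuracy to flag: you write that the stabiliser group scheme $G \to \Fol(X_{H_p}/H_p)$ is ``\'etale and hence of locally constant rank.'' That inference is too strong. What you actually have is that $G$ is finite (separatedness gives properness of the inertia, and Corollary \ref{cor_finite_automorphisms} gives quasi-finiteness) and that each geometric fibre $G_u$ is a finite \emph{reduced} group scheme in characteristic zero; this does not make $G \to \Fol(X_{H_p}/H_p)$ \'etale or flat, and the fibre rank can jump up at special points. The conclusion you need, however, survives: for a finite morphism $\pi \colon G \to S$ the function $s \mapsto \dim_{\kappa(s)} (\pi_*\mathcal{O}_G \otimes \kappa(s))$ is upper semicontinuous, so the locus where this dimension equals $1$ (equivalently, where $G_s$ is trivial) is open. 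With that correction, the openness of the rigid locus, the torsor structure of the restricted quotient map, and the fppf descent of the Pfaff field and divisorial sheaf (together with the automatic descent of the generic-fibre integrability condition) go through, and the uniqueness of isomorphisms in the absence of automorphisms provides the cocycle condition needed for the universal property.
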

\begin{proof}
    This is a special case of \cite[Theorem 2.2.5]{conrad2007arithmetic}. 
\end{proof}

\begin{theorem}
\label{thm:rep2}
    The moduli functor of stable foliated surfaces, denoted $\mathcal M^{2, 1}$
    is a Deligne-Mumford stack locally of finite type over $\mathbb C$ and satisfies the valuative criterion for properness with respect to DVRs which are finite type over $\mathbb C$.
\end{theorem}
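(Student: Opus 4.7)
The plan is to realize $\mathcal M^{2,1}$ as an open cover by the Deligne-Mumford stacks $\mathcal M^{2,1}_{N,v}$ of finite type established in Theorem~\ref{thm:rep}, and then invoke Theorems~\ref{thm_separatedness} and~\ref{thm_compactify_to_family} to verify the valuative criterion.

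First I will show that the natural forgetful morphisms $\iota_{N,v}\colon \mathcal M^{2,1}_{N,v}\to \mathcal M^{2,1}$ are open immersions whose images cover $\mathcal M^{2,1}$. Given $(X,\Delta,\mathcal F)\to T$ in $\mathcal M^{2,1}(T)$ with $T$ Noetherian, condition (S2) of stability guarantees that $K_{\mathcal F/T}$ is $\mathbb Q$-Cartier; a standard covering argument (taking a least common multiple across a finite affine cover) yields a global positive integer $N$ such that $NK_{\mathcal F/T}$ is Cartier, so the family factors through $\mathcal M^{2,1}_N$. Within $\mathcal M^{2,1}_N$, the function $t\mapsto \vol(5NK_{\mathcal F_t}+K_{X_t}+\Delta_t)$ is locally constant, since it can be read off from the Hilbert polynomial of the relatively very ample invertible sheaf of Lemma~\ref{emb}, which is locally constant in flat proper families. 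Hence $\mathcal M^{2,1}_N$ decomposes as a disjoint union $\bigsqcup_v \mathcal M^{2,1}_{N,v}$ of open-and-closed substacks. The condition ``$NK_{\mathcal F/T}$ is Cartier'' is itself open on $T$: the locus in $X$ where $\mathcal O(NK_{\mathcal F})$ is a line bundle is open, and properness of $f\colon X\to T$ implies its complement has closed image. Since each $\mathcal M^{2,1}_{N,v}$ is Deligne-Mumford of finite type by Theorem~\ref{thm:rep}, the open cover $\{\iota_{N,v}\}$ above exhibits $\mathcal M^{2,1}$ as a Deligne-Mumford stack locally of finite type over $\mathbb C$.

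For the valuative criterion with respect to DVRs of finite type over $\mathbb C$, let $R$ be such a DVR with fraction field $K$ and consider a morphism ${\rm Spec}(K)\to \mathcal M^{2,1}$. After spreading out, we may assume we have a stable family $(X^\circ,\Delta^\circ,\mathcal F^\circ)\to S^\circ$, where $S^\circ$ is the complement of a closed point $s$ in a smooth affine curve $S$ with $\mathcal O_{S,s}\simeq R$. Theorem~\ref{thm_compactify_to_family} provides a finite cover $S'\to S$ and a stable family $(X',\Delta',\mathcal F')\to S'$ extending the pullback of $(X^\circ,\Delta^\circ,\mathcal F^\circ)$; localising at a preimage of $s$ supplies the desired extension after a finite extension of $R$, giving the existence portion of the valuative criterion. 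Uniqueness (separatedness) is exactly Theorem~\ref{thm_separatedness}.

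The proof itself is essentially a formal assembly; the substantive content lies entirely in the boundedness results of Section~\ref{s_boundedness}, the local representability statement of Corollary~\ref{cor_stab_rep}, the finiteness of automorphisms in Corollary~\ref{cor_finite_automorphisms}, and the locally stable reduction and separatedness of Theorems~\ref{thm_compactify_to_family} and~\ref{thm_separatedness}. The one potentially delicate point is verifying that ``$NK_{\mathcal F/T}$ is Cartier'' is open (not merely locally closed) on $\mathcal M^{2,1}(T)$, but this is immediate from the fact that the non-Cartier locus of a divisorial sheaf is closed and $f$ is proper, once we know that $K_{\mathcal F/T}$ is already $\mathbb Q$-Cartier.
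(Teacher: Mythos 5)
Your proposal is correct and follows the same route as the paper: cover $\mathcal M^{2,1}$ by the open substacks $\mathcal M^{2,1}_{N,v}$, apply Theorem~\ref{thm:rep} to each, and verify the valuative criterion via Theorems~\ref{thm_compactify_to_family} and~\ref{thm_separatedness}. The paper's proof is a three-line version of exactly this; the only thing it leaves implicit (and you spell out) is why the $\iota_{N,v}$ are open immersions and jointly surjective, and that the separatedness part of the valuative criterion rests on Theorem~\ref{thm_separatedness}.
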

\begin{proof}
The natural morphisms $\bigsqcup_{N \in \mathbb N, v \in \mathbb Q_{>0}} \mathcal M^{2, 1}_{N, v} \hookrightarrow \mathcal M^{2, 1}$ define an open cover of the stack $\mathcal M^{2, 1}$. Since being Deligne-Mumford is a local condition, the first part of the statement follows from Theorem \ref{thm:rep} above. 
Our functor satisfies the valuative criterion of properness by Theorem \ref{thm_compactify_to_family}.
\end{proof}

\begin{theorem}
\label{thm_versalilty_stable_def}
    Let $(X_0,\Delta_0,\FF_0)$ be a stable foliated triple, where $X_0$ is a surface. Then there exists a  stable deformation of $(X,\Delta,\FF)\to (D,0)$ which is miniversal among stable deformations, i.e. for every stable deformation $(X',\Delta',\FF')\to (V,s)$ of $(X_0,\Delta_0,\FF_0)$ there exists an analytic neighbourhood $V\ni s$ and a morphism $\varphi: (V,s)\to (D,0)$ such that $(X',\Delta',\FF')\simeq \varphi^*(X,\Delta,\FF)$. Moreover, this yields an isomorphism between $T_0D$ and the tangent space to the functor of stable deformations of $(X_0,\Delta_0,\FF_0)$.
\end{theorem}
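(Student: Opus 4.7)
The plan is to deduce the theorem directly from the representability result of Theorem \ref{thm:rep}. Since $(X_0, \Delta_0, \mathcal F_0)$ is a stable foliated triple with $X_0$ a surface, it fixes a positive integer $N$ (the Cartier index of $K_{\mathcal F_0}$) and a positive rational number $v := {\rm vol}((2\dim X_0+1)N K_{\mathcal F_0}+K_{X_0}+\Delta_0)$, and hence it defines a $\mathbb C$-valued point of the Deligne-Mumford stack $\mathcal M^{2, 1}_{N, v}$ constructed in Theorem \ref{thm:rep}.

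As $\mathcal M^{2, 1}_{N, v}$ is a DM stack of finite type over $\mathbb C$, I can choose an \'etale atlas $\pi\colon U \to \mathcal M^{2, 1}_{N, v}$ where $U$ is a scheme of finite type over $\mathbb C$, equipped with the pulled-back universal family $(X_U, \Delta_U, \mathcal F_U) \to U$. By essential surjectivity of $\pi$ on $\mathbb C$-valued points there exists a closed point $u \in U$ together with an isomorphism $(X_u, \Delta_u, \mathcal F_u) \cong (X_0, \Delta_0, \mathcal F_0)$. Since $U$ is a finite type $\mathbb C$-scheme, I can choose an analytic open neighbourhood $0 \in D \subset U^{\rm an}$ of $u$ and define $(X, \Delta, \mathcal F) \to (D, 0)$ to be the restriction of the universal family; by Proposition \ref{prop_base_change} this is a stable family of integrable distributions, and it deforms $(X_0, \Delta_0, \mathcal F_0)$.

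To verify miniversality, let $(X', \Delta', \mathcal F') \to (V, s)$ be another stable deformation of $(X_0, \Delta_0, \mathcal F_0)$. It defines a morphism $V \to \mathcal M^{2, 1}_{N, v}$ sending $s$ to the point represented by $(X_0, \Delta_0, \mathcal F_0)$. As $\pi$ is \'etale, the fibre product $V \times_{\mathcal M^{2, 1}_{N, v}} U$ admits an \'etale morphism to $V$ with a point lying over both $s \in V$ and $u \in U$ (the choice of such a lift depends on the chosen isomorphism $(X'_s, \Delta'_s, \mathcal F'_s) \cong (X_0, \Delta_0, \mathcal F_0)$, which accounts for the non-uniqueness of $\varphi$). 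Since \'etale morphisms of finite type $\mathbb C$-schemes are local isomorphisms in the analytic topology, after shrinking $V$ to a small enough analytic neighbourhood of $s$ I obtain a morphism $\varphi\colon (V, s) \to (D, 0)$ such that $(X', \Delta', \mathcal F')\vert_V \simeq \varphi^\ast (X, \Delta, \mathcal F)$ by construction of the fibre product and of the universal family.

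The tangent space statement then follows from the fact that $\pi\colon U \to \mathcal M^{2, 1}_{N, v}$ being \'etale induces an isomorphism $T_0 D \cong T_{[(X_0,\Delta_0,\mathcal F_0)]}\mathcal M^{2, 1}_{N, v}$, and the right-hand side is by definition the tangent space to the functor of stable deformations of $(X_0, \Delta_0, \mathcal F_0)$. The only potentially delicate step is ensuring that the morphism $\varphi$ really exists at the level of analytic germs, but this is standard once the stack is known to be Deligne-Mumford of finite type and the atlas $U$ has been chosen, so no substantial obstacle arises beyond what Theorem \ref{thm:rep} already provides.
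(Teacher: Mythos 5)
Your proposal is correct and takes essentially the same route as the paper: fix $N$ and $v$ from $(X_0,\Delta_0,\mathcal F_0)$, pass to the Deligne--Mumford stack $\mathcal M^{2,1}_{N,v}$ of Theorem \ref{thm:rep}, use an \'etale atlas with its universal family, and conclude via the fact that \'etale morphisms of finite type $\mathbb C$-schemes are local analytic isomorphisms. The paper's proof is terser (it simply declares $(X_U,\Delta_U,\mathcal F_U)\to U$ to be the miniversal family and leaves the tangent space claim implicit), while you spell out the passage to an analytic neighbourhood and the tangent space identification, but the mathematical content is the same; the one point you elide slightly is the explicit check that any stable deformation of $(X_0,\Delta_0,\mathcal F_0)$ is automatically of index $N$ and adjoint volume $v$, which is what legitimizes viewing it as a map into $\mathcal M^{2,1}_{N,v}$, though this does follow from the Koll\'ar condition (S3) plus constancy of the adjoint volume in stable families.
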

\begin{proof}
Fix $N>0$ such that $NK_{\mathcal F_0}$ is Cartier and $\nu = (5NK_{\mathcal F_0}+(K_{X_0}+\Delta_0))^2$.  

Observe that any stable deformation of $(X_0, \Delta_0, \mathcal F_0)$ is necessarily stable deformation of index $=N$ and of adjoint volume $=\nu$.

Being a Deligne-Mumford stack, $\mathcal M^{2, 1}_{N, v}$ has an \'etale atlas $U\to \mathcal M^{2, 1}_{N, v}$ where $U$ is a scheme equipped with a family $(X_U,\Delta_U,\FF_U)\to U$. 
Let $(X_0,\Delta_0,\FF_0)$ be a stable foliated triple and $u\in U$ a closed point together with an isomorphism $(X_0,\Delta_0,\FF_0)\simeq (X_u,\Delta_u,\FF_u)$. If $(X,\Delta,\FF)\to (S,s)$ is another family such that $(X_s,\Delta_s,\FF_s)\simeq^{\alpha} (X_0,\Delta_0,\FF_0) $, then there exists an \'etale neighbourhood $V$ of $s$ and a morphism $\varphi:V\to U$ such that $(X,\Delta,\FF)\vert_V\simeq \varphi^*(X_U,\Delta_U,\FF_U)$ (this arrow is not unique, but rather depends on the choice of the automorphism $\alpha$).

We may take $(X_U,\Delta_U,\FF_U)\to U$
as our desired miniversal family.
\end{proof}

\section{Examples}
\label{s_examples}

\begin{ej}\label{ex:QHusk is necessary} 
To make use of the theory of Quot schemes in a moduli problem one relies on the flatness of a certain sheaf — a condition that is not always automatic. The following examples illustrate why the use of quotient husks is better suited for constructing compact moduli spaces of foliated varieties.
\begin{enumerate}
    \item Let $\FF_1$ be a codimension $1$ foliation on $\mathbb{P}^3$ and let $H$ be a linearly embedded $\PP^2$ that is not tangent to $\FF_1$ in codimension $1$. After a change of coordinates, we can freely assume $H=\{x_0=0\}$. Let us consider the family of foliations $\FF$ along $\PP^3\times \mathbb A^1_t\to \mathbb A^1_t$ such that $\FF_t=\varphi_t^*\FF_1$, where $\varphi_t:$ is the map defined in homogeneous coordinates by $[x_0:x_1:x_2:x_3]\mapsto [tx_0:x_1:x_2:x_3]$. For a general $\FF_1$, this construction yields a flat family of foliations. At the central fibre, the foliation $\FF_0$ is a pullback under a linear projection and has a tangent sheaf of the form $T_{\FF_0}\simeq \OO_{\PP^3}(1)\oplus \OO_{\PP^3}(d)$. In fact, it is not difficult to see that foliations admitting such a splitting are exactly the ones that arise as linear pullbacks of a foliation on $\mathbb P^2$ of the same degree. On the other hand, the sheaf $T_{\FF_0}$ is rigid, and therefore the flatness of $N_\FF$ implies that for $t\neq0$ the tangent sheaf of $\FF_t\simeq \FF_1$ has the same splitting type. We can then conclude that $N_\FF$ is not flat whenever $\FF_1$ is not a linear pullback from a foliation on $\PP^2$.
    \item Let $\FF_0$ be a $1$-dimensional foliation on $\PP^3$ with $K_{\FF_0}$ ample arising as the intersection of two codimension $1$ distributions which are transversal in codimension $1$ (see, for instance, \cite[Example 5.5]{correa2023classification})
  $\FF_0$ can be constructed as the intersection of two codimension 1 distributions $\DD_1$ and $\DD_2$ in general position is equivalent to the kernel of its associated Pfaff field splitting as a direct sum of line bundles. In particular, this implies that its singular scheme is (locally) defined by the maximal minors of the $2\times 3$ matrix whose rows are the coefficients of the $1$ forms inducing $\DD_1$ and $\DD_2$ and is therefore of pure dimension $1$ by \cite[Theorem 5.1]{artin1976lectures}. This stands in contrast with the behaviour of a generic deformation of 
$\FF_0$, whose singularities consist of a finite set of points. Arguing as in the previous example, we see that for a generic deformation $\FF$ of $\FF_0$, the image of its associated Pfaff field fails to be flat over the base.
\end{enumerate}

We refer to \cite{quallbrunn2015families}
for more interesting examples along these lines.

\end{ej}

\begin{ej}
\label{not_ample_example}
    In contrast to the case of varieties, a foliation of general type does not always admit a log canonical model, i.e., a birational model with log canonical singularities such that $K_{\mathcal F}$ is ample.  Consider the following examples.

    \begin{enumerate}
        \item Let $X$ be the Bailey-Borel compactification of a Hilbert modular surface, and let $\mathcal F$ be one of the tautological foliations on $X$.  Let $r\colon Y \to X$ be a finite cover ramified along a general ample divisor and let $p\colon \tilde{Y} \to Y$ be a resolution of the cusp singularities of $Y$ and let $E$ be a connected component of the $p$-exceptional locus. 
Let $\tilde{\mathcal G} = (r \circ p)^{-1}\mathcal F$. It is easy to verify that $\tilde{\mathcal G}$ has canonical singularities and that $K_{\tilde{\mathcal G}}$
is big.  However, $K_{\tilde{\mathcal G}}\vert_E$ is a numerically trivial line bundle, which is not torsion, \cite[Theorem IV.2.2]{McQuillan08}- in particular $K_{\tilde{\mathcal G}}$ is big and nef, but not semi-ample.  This implies that $\tilde{\mathcal G}$ does not have a log canonical model.  See also \cite[Corollary IV.2.3]{McQuillan08}.

\item Let $C$ be a curve of genus $\ge 2$ and let $\mathcal F$ be the foliation on $C \times C$ induced by projection onto the first coordinate.  Let $\Delta \subset C \times C$ be the diagonal.  Then $K_{\mathcal F}+\Delta$
is big and nef, but not semi-ample.  Again 
$(\mathcal F, \Delta)$ does  not admit a log canonical model.
 
\item     In fact it is too much to hope that $K_{\mathcal F}$ is even big on every component of stable integrable distribution.
\\
Let $g \ge 3$, and fix $C$ a smooth curve of genus $g-1$ and an elliptic curve $E$.  We may find $C \subset \overline{M}_g$ such that the family $X$ over $C$ corresponds to the gluing of the diagonal in $C\times C$ to a section of $C \times E$.  Observe that the projection
$X \to C$ gives a semi-log canonical foliation, however, $K_{\mathcal F}\vert_{C \times E} = p^*(K_{E}+P)$ where $P$ is a point and $p\colon C \times E \to E$ is the projection.  In particular, $K_{\mathcal F}\vert_{C\times E}$ is not big.
\\
We may find a family of complete intersection curves $C_t \subset \overline{M}_g$ such that $C_t$ is smooth and such that $C_0 = C+C'_0$.  
If we consider the families of curve corresponding to $C_t$, $X_t \to C_t$, 
this gives us a family of integrable distributions such that for $t \neq 0$, 
$K_{\mathcal F_t}$ is ample, but such that there are components of $X_0$ on which $K_{\mathcal F_0}$ is not big.
\end{enumerate}
These examples show that the ampleness of $K_{\mathcal F}$ is not a natural or achievable condition. Moreover, even if we are only interested in the locus of the moduli space corresponding to foliations with ample canonical divisor, (3) shows that this locus is not closed.
\end{ej}

\begin{ej}
\label{example_index}
Let $X = \{xyz = 0\} \subset \mathbb A^3$.
We can define an integrable distribution on $X$
by specifying a vector field on each of the three components which agree (up to scalar multiple) on the coordinate axes. 

Take the vector fields
\begin{align*} 
&x\partial_x+y\partial_y \text{ on the component } \{z = 0\}, \\
&y\partial_y+z\partial_z \text{ on the component } \{x = 0\},  \text{ and } \\
&\lambda x\partial_x+z\partial_z
\text{ on the component } \{y = 0\}.
\end{align*}
This defines an integrable distribution $\mathcal F_\lambda$ on $X$, and in fact, these integrable distributions fit into a flat family of integrable distributions $(X \times \mathbb A_\lambda^1, \{\mathcal F_\lambda\}) \to \mathbb A_\lambda^1$.
An easy calculation shows that $nK_{\mathcal F_\lambda}$ is Cartier if and only if $\lambda^n = 1$.  This is a local example, but can be easily compactified into a global example where $K_{\mathcal F_\lambda}$
is ample and $\mathcal F_\lambda$ is semi-log canonical if and only if $\lambda$ is a root of unity. 

This example shows (at least) two things.
\begin{enumerate}
    \item On a non-normal surface, the Cartier index of the canonical class of an integrable distribution cannot be bounded in terms of the Hilbert polynomial 
$\chi(X, \mathcal O(mK_{\mathcal F}+nK_X))$.

\item In order to get a bounded family of semi-log canonical integrable distributions one must also fix the index.
\end{enumerate}
\end{ej}

\begin{ej}
\label{easiest_example}
    Consider the vector field $\partial = x\partial_x+ty\partial_y$ on $\mathbb A^3$.
    This vector field is tangent to the projection $\mathbb A^3 \to \mathbb A^1_t$, and so we can think of it as giving
    a family of foliations $(\mathbb A_{(x, y)}^2 \times \mathbb A_t^1, \mathcal F) \to \mathbb A_t^1$.

We make the following observations about this family.

\begin{enumerate}
    \item 
    The integrable distribution restricted to the fibre over $t$ is always log canonical, and is in fact a foliation whenever $t \neq 0$.  However, when $t=0$, the restricted integrable distribution is not a foliation: its singular locus is the divisor $\{x = t = 0\} \subset \{t = 0\}$.  In particular, we see that a family of foliations may degenerate to an integrable distribution.

    \item The integrable distribution restricted to the fibre over $t$ has canonical singularities whenever $t \notin \mathbb Q_{\ge 0}$. It follows that having canonical singularities is not a Zariski open condition in a family.  This is in contrast to the case of varieties.  This implies that to have a representable moduli functor it is necessary to work with log canonical foliation singularities.
    \end{enumerate}
\end{ej}

The following examples show that it is necessary to allow our underlying variety to have singularities in order to get a proper moduli space.

\begin{ej}
\begin{enumerate}
    \item Let $X = \{xy+z^q+w^p = 0\}\subset \mathbb A^4$ and consider the function $f\colon X \to \mathbb A^1$ given by $f(x, y, z, w) = w$.
The vector field $x\partial_x-y\partial_y$
defines a foliation $\mathcal F$ which is tangent to $f$ and so defines a locally stable family of foliations $f\colon (X, \mathcal F) \to \mathbb A^1$.  
\\
For $w \neq 0$, the fibre $X_w$ is smooth, but the fibre $X_0$ is singular.  We remark that Theorem \ref{thm_separatedness} shows that $(X_0, \mathcal F_0)$ is the unique locally stable limit of this family, and so singular fibres are unavoidable.

\item Let $X = \{xy+zw = 0\}$ and let $f\colon X \to \mathbb A^1$ be given by
$f(x, y, z, w) = xy$. The vector field $x\partial_x-y\partial_y+\lambda z\partial_z-\lambda w\partial_w$ defines a foliation $X$ which is tangent to $f$ and so defines a locally stable family of foliations $(X, \mathcal F) \to \mathbb A^1$. For $t \neq 0$ the fibre $X_t$ is smooth and $\mathcal F_t$ is smooth, however, the fibre $X_0$ is not normal with 4 irreducible components and $\mathcal F_0$ is singular.

    \end{enumerate}
\end{ej}

\appendix

\section{Results from the Minimal Model Program}
\label{mmp}

We recall some basic terminology from the Minimal Model Program.

Let 
$\varphi\colon X\dashrightarrow Y$ be a birational map between normal varieties.  The {\bf exceptional locus}  of $\varphi$ is the closed subset 
${\rm Exc\, }\varphi$ of $X$ where $\varphi$ is not an isomorphism. We say that $\varphi$ is  a {\bf birational contraction} if ${\rm Exc\, } \varphi^{-1}$ does not contain any divisor. Let $\varphi\colon X\dashrightarrow Y$ be a birational contraction between normal varieties and let  $D$ be a $\mathbb Q$-Cartier $\mathbb Q$-divisor on $X$ such that $D_Y\coloneqq \varphi_*D$ is also $\mathbb Q$-Cartier. Then $\varphi$ is called {\bf $D$-negative} (resp. {\bf $D$-trivial}) if there exist a normal variety $W$ and birational morphisms $p\colon W\to X$ and $q\colon W\to Y$  resolving the indeterminacy locus of $\varphi$  such that $p^*D=q^*D_Y+E$ where  $E\ge 0$ is a $\mathbb Q$-divisor whose support coincides with the exceptional locus of $q$ 
(resp. $p^*D=q^*D_Y$). Note that by the negativity lemma (cf. \cite[Lemma 3.39]{KM98}), it follows that if $\varphi$ is a composition of steps of a $D$-MMP, then $\varphi$ is $D$-negative.

Consider a projective morphism $f\colon X \to Z$, 
a foliation $\mathcal F$ on $X$ and a $\mathbb Q$-Weil divisor on $X$ such that $K_{\mathcal F}+\Delta$ is $\mathbb Q$-Cartier and $f$-pseudo-effective.
We say that $(\mathcal F, \Delta)$
admits a minimal model over $Z$ if 
there exists a $K_{\mathcal F}+\Delta$-negative birational contraction $\varphi\colon X \dashrightarrow Y/Z$ such that $p_*(K_{\mathcal F}+\Delta)$ is $g$-nef, where $g\colon Y \to Z$ is the structure morphism. 

\subsection{Existence of Minimal Models for rank one foliations on threefolds}

For convenience of the reader, we state the main result on the existence of minimal models of rank one foliations
needed in this paper.  
There is are several papers proving versions of the minimal model theorem for rank one foliations.  To our knowledge, the version stated below does not appear in any of the references, however, it can be proven as consequence of the results currently in the literature using standard techniques in the Minimal Model Program.

\begin{theorem}
\label{thm_mmp}
    Let $f\colon X \to Z$ be a projective morphism between quasi-projective varieties.
    Suppose that $\dim X = 3$ and $X$ is klt.

    Let $\mathcal F$ be a rank one foliation on $X$ and let $\Delta \ge 0$ be a $\mathbb Q$-Weil divisor such that $(\mathcal F, \Delta)$ has log canonical singularities.
Suppose that $K_{\mathcal F}+\Delta$ is $f$-pseudo-effective.

Then $(\mathcal F, \Delta)$ admits a minimal model over $Z$.
\end{theorem}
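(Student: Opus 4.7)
The strategy is to run a $(K_\mathcal{F}+\Delta)$-MMP over $Z$ and show that it terminates in a minimal model, using the MMP machinery for rank one foliations on threefolds established in \cite{McQ05, CS20, Spicer20, CS21, SS22, 2025basepointfreenessrank}.

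First I would reduce to the case where $X$ is $\mathbb Q$-factorial. Since $X$ is klt, one has a small $\mathbb Q$-factorialization $\pi\colon X' \to X$. Setting $\mathcal F' := \pi^{-1}\mathcal F$ and $\Delta' := \pi_\ast^{-1}\Delta$, the pair $(\mathcal F', \Delta')$ remains log canonical, and smallness of $\pi$ together with the negativity lemma yields $K_{\mathcal F'}+\Delta' = \pi^\ast(K_\mathcal{F}+\Delta)$. Thus any minimal model of $(\mathcal F', \Delta')$ over $Z$ descends to one of $(\mathcal F, \Delta)$, and we may assume $X$ itself is $\mathbb Q$-factorial.

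Next, I would apply the cone and contraction theorem for log canonical rank one foliated pairs on $\mathbb Q$-factorial klt threefolds to produce contractions of $(K_\mathcal{F}+\Delta)$-negative extremal rays over $Z$; existence of the necessary flips follows from \cite{CS21, 2025basepointfreenessrank}. The hypothesis that $K_\mathcal{F}+\Delta$ is $f$-pseudo-effective rules out Mori fibre contractions in this relative MMP, so only divisorial contractions and flips can occur. At each step, the klt-ness of the ambient threefold and the log canonicity of the foliated pair are preserved by standard arguments analogous to \cite[Proof of Proposition 3.5]{CS20}, together with the fact that negativity of an extremal contraction with respect to a log canonical pair only improves discrepancies.

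The main obstacle will be termination of flips. For this I would adapt the special termination argument familiar from the classical MMP: termination of flips in a neighbourhood of the open klt locus of $(\mathcal F, \Delta)$ follows from the termination results of \cite{Spicer20, SS22}, while termination along minimal log canonical centres can be handled by a two-dimensional termination statement for the foliation restricted to the normalisation of such a centre (using foliation adjunction, cf. \cite[Proposition-Definition 3.12]{CS23b}, and the fact that the lc centres of $(\mathcal F,\Delta)$ are preserved by a $(K_\mathcal{F}+\Delta)$-MMP). Alternatively, one can perturb by writing $K_\mathcal{F}+\Delta \sim_{\mathbb Q} (K_\mathcal{F}+\Delta-\epsilon E)+\epsilon E$ with $E\geq 0$ concentrated on the lc centres, so that the residual pair is klt, and then combine termination in the klt setting with the finite generation and pseudo-effective threshold techniques of \cite{SS23, 2025basepointfreenessrank}.
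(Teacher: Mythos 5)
Your proposal and the paper's proof agree on the first reduction (pass to a small $\mathbb Q$-factorialisation via \cite[Corollary 1.4.3]{BCHM10}), but from that point on you take a genuinely different and considerably longer route. The paper's argument is a two-line citation: the absolute case ($Z$ a point) is exactly \cite[Theorem 8.10]{CS20}, and the relative case is then obtained by a standard ``relativisation'' of the absolute minimal model theorem, for which they point to \cite[Theorem 5.9]{MR4669316}. You instead propose to \emph{reprove} the statement by directly running a relative $(K_\mathcal{F}+\Delta)$-MMP (cone theorem, contractions, flips) and establishing termination yourself. That buys you nothing the citation does not, while exposing you to the most delicate part of the whole foliated MMP story.

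The termination step is where your outline has a real gap. Special termination for foliations is not a routine import from the classical MMP: adjunction to log canonical centres behaves very differently for foliations than for varieties (the paper itself stresses that adjunction fails along invariant divisors, $\varepsilon(S)=0$), so the ``restrict to the normalisation of a minimal lc centre and use a two-dimensional termination statement'' argument you invoke is not available off the shelf in the way it is for pairs. Your fallback perturbation argument, writing $K_\mathcal{F}+\Delta \sim_{\mathbb Q}(K_\mathcal{F}+\Delta-\epsilon E)+\epsilon E$ with $E\ge 0$ supported on lc centres, does not in general produce a klt foliated pair: whether $(\mathcal{F},\Delta-\epsilon E)$ is klt depends on the discrepancies of exceptional divisors over the lc centres, not merely on removing a piece of the boundary, and for foliations lc centres need not be cut out by components of $\Delta$ at all. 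In short, your sketch silently assumes termination ingredients that are precisely the hard content of \cite[Theorem 8.10]{CS20}. The efficient move is the one the paper makes: cite the absolute result and relativise it by standard techniques (e.g.\ twisting by an $f$-ample divisor pulled back from $Z$), rather than re-deriving the MMP.
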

\begin{proof}[Proof sketch]
Since $X$ is klt it admits a small $\mathbb Q$-factorialisation $p\colon X' \to X$, see \cite[Corollary 1.4.3]{BCHM10}.  We may freely replace $X'$ by $X$ and so we may assume that $X$ is $\mathbb Q$-factorial.

    If $Z$ is a point, $\Delta = 0$, $\mathcal F$ has canonical singularities and $X$ has quotient singularities, this was first proven in \cite{McQ05}.
    More generally, in the case that $Z$ is a point this follows from \cite[Theorem 8.10]{CS20}.

    The case where $Z$ is general follows from \cite[Theorem 8.10]{CS20} via standard techniques in the MMP, cf. for instance \cite[Theorem 5.9]{MR4669316}.
\end{proof}

\begin{lemma}
\label{lem_KFmmp_singularities}
Let $\pi\colon X \rightarrow W$ be a projective morphism 
between quasi-projective varieties of dimension $\leq 3$ and let $\mathcal F$ be a rank one foliation on $X$.

Let $D \ge 0$ be a $\mathbb Q$-divisor such that $(\mathcal F, D_{{\rm non-inv}})$ is log canonical and 
$(X, D)$ is dlt (resp. log canonical).  Let $\phi\colon X \dashrightarrow X'/W$ be a sequence 
of steps in a $K_{\mathcal F}+D_{{\rm non-inv}}$-MMP over $W$.  Then $(X', \phi_*D)$ is dlt (resp. log canonical).
\end{lemma}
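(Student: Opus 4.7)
The plan is to argue by induction on the number of elementary steps in the MMP and thus reduce to the case where $\phi$ is a single step $\psi\colon X \dashrightarrow X'$ of the $K_{\mathcal F} + D_{\rm non-inv}$-MMP over $W$: either a divisorial contraction or a flip of a $(K_{\mathcal F} + D_{\rm non-inv})$-negative extremal ray $R$. The heart of the argument is to compare the discrepancies of $(X, D)$ against those of $(\mathcal F, D_{\rm non-inv})$ under such a step.

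Choose a common resolution $p\colon W \to X$, $q\colon W \to X'$ of $\psi$. The negativity lemma (\cite[Lemma 3.39]{KM98}) applied to $K_\mathcal F + D_{\rm non-inv}$ yields
\begin{align*}
p^*(K_\mathcal F + D_{\rm non-inv}) \;=\; q^*(K_{\mathcal F'} + \psi_*D_{\rm non-inv}) + E,
\end{align*}
with $E \ge 0$ supported on the exceptional/flipped locus. Setting $\Theta := (K_X + D) - (K_\mathcal F + D_{\rm non-inv})$, it then suffices to show that $p^*\Theta - q^*\psi_*\Theta$ is effective along the contracted locus: combining this with the identity above would give $p^*(K_X+D) - q^*(K_{X'} + \psi_*D) \ge 0$, and hence $a(G, X', \psi_*D) \ge a(G, X, D)$ for every divisor $G$ over $X$, propagating the log canonicity of $(X, D)$ to $(X', \psi_*D)$.

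To establish this effectivity, I would invoke the structural results on $K_\mathcal F$-negative extremal rays for rank one foliations on threefolds from \cite{CS20, SS23}: the prime divisors contracted by $\psi$ (or appearing on the flipped side) are $\mathcal F$-invariant, and for an $\mathcal F$-invariant prime divisor $F$ one has $\varepsilon(F) = 0$. Together with the log canonicity of $(\mathcal F, D_{\rm non-inv})$, this forces the coefficients of $p^*\Theta - q^*\psi_*\Theta$ along the relevant divisors to be non-negative. The main obstacle lies in controlling these contributions at points where $\mathcal F$ is singular, since there the naive decomposition of $\Theta$ into the conormal class $K_X - K_\mathcal F$ and $D_{\rm inv}$ breaks down and one has to rely on the detailed local description of the $K_\mathcal F$-MMP contractions in the cited works.

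For the dlt case, it remains to verify that the generic snc structure of $(X, D)$ survives under $\psi$. Since $\psi$ is an isomorphism away from the exceptional/flipped locus, and since by the discrepancy bound obtained above any divisor $G$ over $X'$ extracted by $\psi^{-1}$ must have centre contained in the non-snc locus of $(X, D)$, one obtains an open snc subset of $(X', \psi_*D)$ whose complement lies inside the image of the non-dlt locus of $(X,D)$ under $\psi$, which is exactly what is needed for dlt.
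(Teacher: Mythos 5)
Your reduction to a single MMP step by induction matches the paper, and the intuition that one should compare the behaviour of $(X,D)$ against $(\mathcal F,D_{\mathrm{non\text{-}inv}})$ under the step is the right one. But the core of your argument has a genuine gap: you want $p^*\Theta - q^*\psi_*\Theta \ge 0$ for $\Theta = (K_X+D) - (K_\mathcal F+D_{\mathrm{non\text{-}inv}})$, and you assert this follows from the contracted/flipped divisors being $\mathcal F$-invariant together with $\varepsilon = 0$ for invariant divisors and log canonicity of $(\mathcal F, D_{\mathrm{non\text{-}inv}})$. That list of ingredients does not give the sign you need. The quantity $p^*\Theta - q^*\psi_*\Theta$ is controlled by the negativity lemma only if you know $\psi$ is $\Theta$-non-positive, which is equivalent to knowing $\Theta\cdot C \le 0$ for the contracted ray $R=\mathbb R_+[C]$; and invariance of the contracted divisor, or $\varepsilon(F)=0$, says nothing about the sign of $(K_X+D)\cdot C$ or of $\Theta\cdot C$. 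You correctly sense that the obstacle lies in the local structure of the contraction, but you do not supply the input that actually closes the gap.

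What the paper uses instead, from the construction of divisorial contractions and flips for $K_\mathcal F$-negative extremal rays (\cite[Theorems 6.2 and 6.5]{CS20}), is that one can produce an auxiliary effective $\mathbb Q$-divisor $G$ with $(X,D+G)$ log canonical and $(K_X+D+G)\cdot C < 0$. Then $(X,D+(1-t)G)$ is dlt (resp.\ lc) for $0<t\ll 1$ and is still negative on $C$, so the $K_\mathcal F+D_{\mathrm{non\text{-}inv}}$-contraction/flip is simultaneously a $(K_X+D+(1-t)G)$-contraction/flip; now the ordinary negativity lemma plus $\mathbb Q$-factoriality of $X'$ gives the conclusion. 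This cleanly replaces the need to control $\Theta$ directly. You also omit the second stage of the paper's argument: when $(\mathcal F,D_{\mathrm{non\text{-}inv}})$ is merely log canonical rather than having simple singularities, one first passes to a foliated plt modification $Y\to X$ (which preserves dlt/lc of the underlying pair) and realizes the step as the output of a partial MMP on $Y$, reducing to the simple-singularities case. Without this reduction your argument does not apply in the stated generality. The dlt claim at the end of your write-up is also too loose: once $X'$ is $\mathbb Q$-factorial and $(X',\phi_*(D+(1-t)G))$ is dlt by the negativity lemma, one removes $\phi_*(1-t)G$ directly, which is cleaner than the snc-locus argument you sketch.
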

\begin{proof}
Arguing by induction on the number of steps of the MMP we see that it suffices
to prove the case where $\phi\colon X \dashrightarrow X'$ is a single step in the MMP, i.e., 
$\phi$ is a divisorial contraction or a flip associated to an extremal ray $R = \mathbb R_+[C]$.

Suppose that $(\mathcal F, D_{{\rm non-inv}})$ has simple singularities (we refer to \cite[Definition 2.32]{CS20} for the definition of simple singularities).
The proof of existence of divisorial contractions/flips in \cite[Theorem 6.2 and Theorem 6.5]{CS20}
shows that there exists a $\mathbb Q$-divisor $G \ge 0$
such that $(X, D+G)$ is log canonical and $(K_X+D+G)\cdot C<0$.
For $0<t \ll 1$ we have that $(X, D+G-tG)$ is dlt (resp. log canonical) and that $(K_X+D+G-tG)\cdot C<0$.  The $K_{\mathcal F}+D_{{\rm non-inv}}$-contraction (resp. flip) is therefore the 
$K_X+D+G-tG$- contraction (resp. flip).  The negativity lemma implies that 
$(X', \phi_*(D+G-tG))$ is dlt (resp. log canonical), and since $X'$ is $\mathbb Q$-factorial    it follows that $(X', \phi_*D)$ is dlt (resp. log canonical), as required.

Now suppose that $(\mathcal F, D_{{\rm non-inv}})$ is log canonical. Let
$\pi\colon Y \rightarrow X$ be a foliated plt modification of $(\mathcal F, D_{{\rm non-inv}})$ (see \cite[Theorem 8.4]{CS20}) and  
let $D_Y = \pi_*{-1}D+{\rm exc}(\pi)$.
By construction, $(\mathcal F_Y, D_{Y,{\rm non-inv}})$ has simple singularities 
and $(Y, D_Y)$ is log canonical.  The proof of \cite[Theorem 8.4]{CS20} in fact shows that if $(X, D)$ is dlt then $(Y, D_Y)$ is also dlt.
Following the proof of existence of log canonical flips in \cite[Theorem 8.8]{CS20} we see that
the $K_{\mathcal F}+D_{{\rm non-inv}}$-flip/divisorial contraction is the output of a (partial) $K_{\mathcal F_Y}+D_{Y, {\rm non-inv}}$-MMP, call it $\psi\colon Y \dashrightarrow X'$.  By the previous case, we deduce that $(X', \psi_*D_Y)$ is dlt
(resp. log canonical) and we may conclude.
\end{proof}

\bibliographystyle{alpha}
\bibliography{bib}

\end{document}